\newcommand{\C}{\mathbb{C}}
\newcommand{\Z}{\mathbb{Z}}
\newcommand{\Q}{\mathbb{Q}}
\newcommand{\F}{\mathbb{F}}
\newcommand{\bk}{\Bbbk}
\newcommand{\id}{\mathrm{id}}
\newcommand{\Fl}{\mathrm{Fl}}
\newcommand{\Gr}{\mathrm{Gr}}
\newcommand{\rS}{\mathrm{S}}
\newcommand{\si}{{\frac{\infty}{2}}}
\newcommand{\IC}{\mathrm{IC}}
\newcommand{\cE}{\mathcal{E}}
\newcommand{\cF}{\mathcal{F}}
\newcommand{\cG}{\mathcal{G}}
\newcommand{\cI}{\mathcal{I}}
\newcommand{\cJ}{\mathcal{J}}
\newcommand{\cO}{\mathcal{O}}
\newcommand{\cP}{\mathcal{P}}
\newcommand{\pH}{{}^{\mathrm{p}} \hspace{-0.5pt} \mathcal{H}}
\newcommand{\Gaits}{\mathrm{Ga}^\si}
\newcommand{\bX}{\mathbf{X}}
\newcommand{\bY}{\mathbf{Y}}
\newcommand{\fR}{\mathfrak{R}}
\newcommand{\fRs}{\mathfrak{R}_{\mathrm{s}}}
\newcommand{\et}{{\acute{\mathrm{e}}\mathrm{t}}}
\newcommand{\bi}{\mathbf{i}}
\DeclareMathOperator{\rank}{rank}
\DeclareMathOperator*{\colim}{\mathrm{colim}}
\newcommand{\simto}{\xrightarrow{\sim}}
\newcommand{\la}{\langle}
\newcommand{\ra}{\rangle}
\newcommand{\siIw}{\mathrm{I}^{\si}}
\newcommand{\siIwu}{\mathrm{I}_{\mathrm{u}}^{\si}}
\newcommand{\Loop}{\mathcal{L}}
\newcommand{\Shv}{\mathsf{Shv}}
\newcommand{\Vect}{\mathsf{Vect}}
\newcommand{\Spec}{\mathrm{Spec}}
\newcommand{\Bun}{\mathrm{Bun}}
\newcommand{\bBun}{\overline{\Bun}}
\newcommand{\Zas}{\mathbf{Z}}
\newcommand{\CZas}{\mathbf{F}}
\newcommand{\Part}{\mathfrak{P}}
\newcommand{\disj}{{\mathrm{disj}}}
\newcommand{\dist}{{\mathrm{dist}}}
\newcommand{\uZasG}{\overline{\mathbf{Z}}}
\newcommand{\uZas}{\underline{\mathbf{Z}}}
\newcommand{\uCZasG}{\overline{\mathbf{F}}}
\newcommand{\uCZas}{\underline{\mathbf{F}}}
\newcommand{\scO}{\mathscr{O}}
\newcommand{\bbP}{\mathbb{P}}
\newcommand{\bbA}{\mathbb{A}}
\newcommand{\ppl}{[}
\newcommand{\ppr}{]}
\newcommand{\Gm}{\mathbb{G}_{\mathrm{m}}}
\mathchardef\mhyphen="2D
\numberwithin{equation}{section}
\newtheorem{thm}{Theorem}[section]
\newtheorem{lem}[thm]{Lemma}
\newtheorem{prop}[thm]{Proposition}
\newtheorem{cor}[thm]{Corollary}
\theoremstyle{definition}
\theoremstyle{remark}
\newtheorem{rmk}[thm]{Remark}
\title{Modular intersection cohomology of Drinfeld's compactifications}
\author{Pramod N. Achar}
\address{Department of Mathematics\\
  Louisiana State University\\
  Baton Rouge, LA 70803\\
  U.S.A.}
\email{pramod@math.lsu.edu}
\author{Gurbir Dhillon}
\address{UCLA Mathematics Department, Los Angeles, CA 90095-1555, USA.}
\email{gsd@math.ucla.edu}
\author{Simon Riche}
\address{Universit\'e Clermont Auvergne, CNRS, LMBP, F-63000 Clermont-Ferrand, France.}
\email{simon.riche@uca.fr}
\thanks{P.A. was supported by NSF Grant No.~DMS-2202012. G.D. was supported by an NSF Postdoctoral Fellowship under grant No.~DMS-2103387.  This project has received
funding from the European Research Council (ERC) under the European Union's Horizon 2020
research and innovation programme (S.R., grant agreement No.~101002592).
}
\begin{document}

\begin{abstract}
We compute the dimension of the cohomology of stalks of intersection cohomology complexes on Zastava schemes and Drinfeld compactifications associated with a connected reductive algebraic group $G$, in case the characteristic of the coefficients field $\bk$ is good for $G$. In particular, we show that these dimensions do not depend on the choice of $\bk$.
\end{abstract}

\maketitle


\section{Introduction}

\subsection{Drinfeld's compactifications}

Let $\F$ be an algebraically closed field, let $G$ be a connected reductive algebraic group over $\F$ with simply-connected derived subgroup, and let $B \subset G$ be a Borel subgroup. Fix also a smooth projective curve $C$ over $\F$, and consider the stack $\Bun_B$ of $B$-torsors on $C$. This stack has a remarkable ``relative compactification'' $\bBun_B$ first considered by Drinfeld, and which has now become a central player in the geometric Langlands program (see e.g.~\cite{bg, raskin1}), as well as in geometric representation theory (see~\cite{abbgm}). 

This stack is singular, and an interesting ``measure'' of its singularities is provided by its local intersection cohomology, i.e.~the cohomology of the stalks (or, equivalently, costalks) of its intersection cohomology complex $\IC_{\bBun_B}$ with coefficients in a given field $\bk$. In the case when $\bk$ has characteristic $0$, these spaces have been described in~\cite{bfgm}, following an earlier study of the case $C=\mathbb{P}^1$ in~\cite{ffkm}; the answer involves the $q$-analogue of Kostant's partition function defined by Lusztig in~\cite{lusztig}. The main goal of the present paper is to show that this description remains valid for any field whose characteristic is good for $G$ (e.g., as soon as this characteristic is not $2$, $3$ or $5$), see Theorem~\ref{thm:const} and Corollary~\ref{cor:formula-P}. In fact we even deduce a version of this statement for intersection cohomology with coefficients in a principal ideal domain in which all bad primes are invertible, see Remark~\ref{rmk:PID}.

The description of the cohomology of (co)stalks of $\IC_{\bBun_B}$ for characteristic-$0$ coefficients is one of the crucial ingredients for several constructions of interest related to Langlands duality (see e.g.~\cite{cr}, which builds on the methods of~\cite{raskin1}), and we hope that its modular analogue established in this paper can be useful in view of obtaining versions of these constructions for positive-characteristic coefficients.

\begin{rmk}
In~\cite{bfgm} the authors also describe the cohomology of (co)stalks of the intersection cohomology complex on two different relative compactifications of the stack of torsors on $C$ for a parabolic subgroup of $G$. We expect that as above this description remains valid for any field of good characteristic, and that this statement can be obtained by a variation of the techniques used in the present paper. This will be the subject of future work.
\end{rmk}

\subsection{Relation with semiinfinite sheaves}

The way we approach this question is inspired by results of Gaitsgory from~\cite{gaitsgory}, but used ``in the reverse direction.'' Namely, consider the affine Grassmannian $\Gr$ of $G$, an ind-projective ind-scheme over $\F$. Let $T \subset B$ be a maximal torus, and denote by $U$ the unipotent radical of $B$. We call ``semiinfinite Iwahori subgroup'' the group ind-scheme
\[
\siIw := \Loop^+ T \ltimes \Loop U,
\]
where $\Loop^+(-)$, resp.~$\Loop(-)$, denotes the arc space, resp.~loop space, associated with a scheme over $\F$. This group acts on $\Gr$ on the left, and following Gaitsgory one can make sense of the $\infty$-category $\Shv(\siIw \backslash \Gr)$ of $\siIw$-equivariant sheaves on $\Gr$. More specifically the construction of this $\infty$-category requires a choice of coefficient field $\bk$;\footnote{In case $\F=\C$, one can work with sheaves for the analytic topology, and $\bk$ can be arbitrary. For a general field $\F$ we work with \'etale sheaves, and have to impose the usual restrictions on $\bk$ in this setting. See~\S\ref{ss:complexes-sheaves} for details.} Gaitsgory has explicitly studied the case when $\bk$ has characteristic $0$, but as we have explained in~\cite{adr} the definition makes sense for any $\bk$, and most of its formal properties are insensitive to this choice. Objects of this $\infty$-category are called semiinfinite sheaves.

The $\infty$-category $\Shv(\siIw \backslash \Gr)$ admits a natural t-structure called the \emph{perverse} t-structure, whose heart $\Shv(\siIw \backslash \Gr)^\heartsuit$ contains a remarkable object which Gaitsgory calls the \emph{semi-infinite intersection cohomology sheaf}, and that we prefer calling the \emph{Gaitsgory sheaf}. This object is denoted $\Gaits$ here. (In particular, the simple objects in the abelian category $\Shv(\siIw \backslash \Gr)^\heartsuit$ are naturally parametrized by the coweight lattice $\bY=X_*(T)$; its seems natural to call these objects intersection cohomology complexes, but $\Gaits$ is \emph{not} one of them, in fact this object has infinite length.)

One of the main results of~\cite{adr} is the computation of the dimension of the cohomology of stalks and costalks of $\Gaits$ in the case when $\bk$ has good characteristic, generalizing the description in the case when $\mathrm{char}(\bk)=0$ given by Gaitsgory in~\cite{gaitsgory}. Here also the answer naturally involves the $q$-analogue of Kostant's partition function. But, while Gaitsgory's proof uses the known description of the stalks of $\IC_{\bBun_B}$ (in the case he considers) from~\cite{bfgm}, ours is intrinsic to the setting of (constructible) sheaves on $\Gr$, and is used in the present paper as the main input to obtain our version of the results of~\cite{bfgm} for general fields.

\begin{rmk}
\begin{enumerate}
\item
The reason for our restriction on $\mathrm{char}(\bk)$ is that our proof of the description of costalks of $\Gaits$ in~\cite{adr} uses the Mirkovi{\'c}--Vilonen conjecture~\cite[Conjecture~13.3]{mv} on stalks of standard spherical perverse sheaves on $\Gr$. This statement is now known to be true in good characteristic (see~\cite{arider,mr}), but false in general in bad characteristic (see~\cite{juteau}). We do not know the answer to the questions considered above about the (co)stalks of $\IC_{\bBun_B}$ and $\Gaits$ in bad characteristic.
\item
In~\cite{gaitsgory-Ran}, Gaitsgory considers variants of the constructions of~\cite{gaitsgory} ``over the Ran space.'' He explains in~\cite[\S 3.9]{gaitsgory-Ran} that his approach provides an alternative way to prove the results of~\cite{bfgm}. In a sense, the strategy we follow in this paper is similar to that of~\cite{gaitsgory-Ran}, but avoiding the technical complications related to the Ran space. It is likely that the methods of~\cite{gaitsgory-Ran} can be adapted to general coefficients, which would provide an alternative approach to our main result; we plan to come back to these questions in future work.
\end{enumerate}
\end{rmk}

\subsection{Drinfeld's compactifications and the semiinfinite flag variety}

The fact that the question of describing (co)stalks of $\IC_{\bBun_B}$ can be related to the study of the $\infty$-category $\Shv(\siIw \backslash \Gr)$ should not come as a surprise, given the ideas that:
\begin{enumerate}
\item
the stack $\bBun_B$ is supposed to provide ``finite-dimensional models'' for the singularities of Schubert varieties in the (highly infinite-dimensional) semiinfinite flag variety $\Fl^\si = \Loop G / \siIw$;
\item
the $\infty$-category $\Shv(\siIw \backslash \Gr)$ should be considered as a substitute for the $\infty$-category of $\Loop^+ G$-equivariant sheaves on $\Fl^\si$, whose definition presents serious technical difficulties.
\end{enumerate}


Technically, the relation between these problems is established as follows.
By definition there exists a canonical morphism $\bBun_B \to \Bun_T$ (where $\Bun_T$ is the stack of $T$-torsors on $C$), so that we can consider the stack
\[
 \bBun_U = \bBun_B \times_{\Bun_T} \{\cF^0\}
\]
where $\cF^0$ is the trivial $T$-torsor. This stack contains the stack $\Bun_U = \Bun_B \times_{\Bun_T} \{\cF^0\}$ of $U$-torsors on $C$ as an open substack, and we consider the intersection cohomology complex $\IC_{\bBun_U}$ associated with the constant local system on $\Bun_U$. Fixing a closed point $c \in C$, there is also a natural ind-algebraic stack $(\bBun_U)_{\infty \cdot c}$ which contains $\bBun_U$ as a closed substack, and a canonical morphism of stacks
\[
 \pi : \Gr \to (\bBun_U)_{\infty \cdot c}.
\]
(Here the definition of $(\bBun_U)_{\infty \cdot c}$ involves $U$-torsors on $C$ which are allowed to have poles at $c$, and the definition of $\pi$ uses the description of $\Gr$ as the moduli of $G$-torsors on $C$ with a trivialization on $C \smallsetminus \{c\}$.)

In Theorem~\ref{thm:Gaits-Bun} we show that we have
\begin{equation}
\label{eqn:intro-isom-IC-Gaits}
 \pi^! \IC_{\bBun_U} [-\dim(U)] \cong \Gaits.
\end{equation}
Using the known dimensions of costalks of $\Gaits$ in case $\bk$ has good characteristic, this allows us to describe the dimensions of stalks and costalks of $\IC_{\bBun_U}$ along certain strata of its natural stratification. Then, using standard ``factorization'' properties of $\bBun_B$ and of the closely related Zastava spaces established in~\cite{fm,bfgm}, one deduces a description of \emph{all} (co)stalks of $\IC_{\bBun_U}$ and $\IC_{\bBun_B}$. In passing, these considerations also provide a description of local intersection cohomology of Zastava spaces, as in~\cite{bfgm}.

The isomorphism~\eqref{eqn:intro-isom-IC-Gaits} is proved by Gaitsgory in~\cite{gaitsgory} in case $\bk$ has characteristic $0$. The structure of our proof is similar to his, but some of the arguments have to be different because he uses some results from~\cite{bfgm}; here we instead take inspiration from some arguments in~\cite{ffkm}.

\subsection{Contents}

For the reader's convenience, and since this subject is rather technical, a large part of the paper is dedicated to a detailed review (with a few complements) of the construction and main properties of the objects involved, following~\cite{fm, bg, bfgm}. Sections~\ref{sec:prelim-Drinfeld}--\ref{sec:zastavas} therefore contain no result that can be considered original.

In Section~\ref{sec:prelim-Drinfeld} we introduce some basic ingredients of the theory, including symmetric powers of curves. 
In Section~\ref{sec:Drinfeld-compact} we recall the definition of Drinfeld's compactifications, and its natural stratification. 
Section~\ref{sec:zastavas} is devoted to Zastava schemes and stacks. We recall the definition and basic properties of these objects and show that, to some extend, they are independent of the curve that is chosen for their construction, see Lemma~\ref{lem:uZas-compare}. 

In Section~\ref{sec:constructibility} we prove our main results, namely that intersection cohomology complexes of Zastava schemes and Drinfeld compactifications are constructible with respect to the natural stratifications, and that the dimension of their (co)stalks can be expressed in terms of the $q$-analogue of Kostant's partition function. (The constructibility statement alluded to here is another ``standard'' property of these constructions that is claimed in various references, but for which it is difficult to find a detailed explanation in the literature. We treat this question by essentially reducing it to the case $C=\mathbb{P}^1$, which is easy using factorization and equivariance considerations.)

Finally, in Appendix~\ref{sec:PID} we prove a general result that allows us to deduce from our results on intersection cohomology of $\bBun_B$ and Zastava spaces with coefficients in a field, a similar statement for coefficients in a principal ideal domain.

\subsection{Acknowledgements}

We thank Alexis Bouthier for very helpful explanations on various topics related to this work.
We also thank Alexander Braverman and Michael Finkelberg for answering several questions related to Zastava schemes, Matthieu Romagny for providing a reference, and Pierre-Emmanuel Chaput for a useful discussion on curves.

\section{Preliminaries}
\label{sec:prelim-Drinfeld}

\subsection{Group-theoretic data}

Let $\F$ be an algebraically closed field, and let $G$ be a connected reductive algebraic group over $\F$ with simply connected derived subgroup.
We choose a Borel subgroup $B \subset G$ and a maximal torus $T \subset B$, and denote by $B^- \subset G$ the Borel subgroup which is opposite to $B$ with respect to $T$. The unipotent radicals of $B$ and $B^-$ will be denoted $U$ and $U^-$ respectively.
%
%

We will denote by $\bX$ and $\bY$ the character lattice and cocharacter lattice of $T$, respectively. Let also $\fR \subset \bX$ and $\fR^\vee \subset \bY$ be the root and coroot systems respectively. Let $\fR_+ \subset \fR$ be the positive system consisting of the $T$-weights in the Lie algebra of $U$, let $\fR^\vee_+ \subset \fR^\vee$ be the corresponding system of positive coroots, and denote by $\fRs$ and $\fRs^\vee$ be associated bases of $\fR$ and $\fR^\vee$ respectively.
We consider the order $\preceq$ on $\bY$ such that $\lambda \preceq \mu$ iff $\mu-\lambda$ is a sum of positive coroots. Let also $\rho \in \frac{1}{2} \bX$ be the halfsum of the positive roots, and recall that its pairing with any element in the coroot lattice is an integer.

Let $\bX_+ \subset \bX$ be the subset of dominant weights. For $\xi \in \bX_+$ we denote by $\mathsf{N}(\xi)=\mathsf{Ind}_B^G(\F_B(w_\circ(\xi)))$ the induced $G$-module of highest weight $\xi$, and by
$\mathsf{M}(\xi)$ the Weyl module of highest weight $\xi$, i.e.~the dual of $\mathsf{N}(-w_\circ \xi)$. 
(Here, $\F_B(w_\circ(\xi))$ denotes the $1$-dimensional $B$-module of weight $w_\circ(\xi)$, and we follow the conventions of~\cite{jantzen} for induction functors.) 
For $\xi,\xi' \in \bX_+$, by Frobenius reciprocity we have a canonical morphism of $G$-modules
\[
 \mathsf{N}(\xi) \otimes \mathsf{N}(\xi') \to \mathsf{N}(\xi+\xi');
\]
taking duals and twisting by $-w_\circ$ we deduce canonical morphisms
\begin{equation}
\label{eqn:morph-Weyl-modules}
 \mathsf{M}(\xi+\xi') \to \mathsf{M}(\xi) \otimes \mathsf{M}(\xi')
\end{equation}
for any $\xi,\xi' \in \bX_+$. By Frobenius reciprocity we also have a canonical morphism of $B$-modules $\mathsf{N}(\xi) \to \F_B(w_\circ(\xi))$ for any $\xi \in \bX_+$, and similarly a canonical morphism of $B$-modules
\begin{equation}
\label{eqn:morph-character-Weyl}
\F_B(\xi) \to \mathsf{M}(\xi)
\end{equation}
for any $\xi \in \bX_+$.  The latter gives us a canonical vector $v_\xi \in \mathsf{M}(\xi)$ of $T$-weight $\xi$, and~\eqref{eqn:morph-Weyl-modules} sends $v_{\xi+\xi'}$ to $v_\xi \otimes v_{\xi'}$. Below we will also consider the morphism of $B^-$-modules
\begin{equation}
\label{eqn:morph-Weyl-character}
 \mathsf{M}(\xi) \to \F_{B^-}(\xi)
\end{equation}
sending $v_\xi$ to $1$.

\subsection{Symmetric powers of curves}
\label{ss:symmetric-powers}

Let $C$ be a smooth (but not necessarily complete) curve\footnote{As usual, by a curve over $\F$ we mean an integral separated scheme of finite type over $\F$ of dimension $1$.} over $\F$.  For an integer $n \ge 0$, let $C^{(n)}$ denote the $n$-th symmetric power of $C$.  This is a scheme whose $\F$-points are unordered $n$-tuples of points of $C$.  It can be identified with the Hilbert scheme of $n$ points on $C$: that is, $C^{(n)}$ represents the functor whose value on an $\F$-scheme $S$ is given by
\[
C^{(n)}(S) = 
\left\{
\begin{array}{c}
\text{closed subschemes $E \subset C \times S$ such that} \\
\text{$E \hookrightarrow C \times S \to S$ is finite and locally free of rank $n$}
\end{array}
\right\}.
\]
(For the definition of locally free morphisms, see~\cite[\href{https://stacks.math.columbia.edu/tag/02KA}{Tag 02KA}]{stacks-project}.)

One can also consider the open subscheme $C^{(n)}_\dist$ of $C^{(n)}$ whose $\F$-points are unordered $n$-tuples of \emph{distinct} points of $C$.  The functorial description is
\[
C^{(n)}_\dist(S) = 
\{ E \in C^{(n)}(S) \mid \text{$E \hookrightarrow C \times S \to S$ is \'etale} \}.
\]
More generally, given integers $n_1, \ldots, n_k \ge 0$, we define an open subscheme
\[
(C^{(n_1)} \times \cdots \times C^{(n_k)})_\dist \subset C^{(n_1)} \times \cdots \times C^{(n_k)}
\]
of ``pairwise distinct points'' by
\begin{multline*}
(C^{(n_1)} \times \cdots \times C^{(n_k)})_\dist(S) = \\
\left\{
(E_i \in C^{(n_i)}(S): 1 \le i \le k) \,\Big|\,
\begin{array}{c}
\text{$\bigsqcup_i E_i \to C \times S$ is a closed immersion,} \\
\text{and $\bigsqcup_i E_i \hookrightarrow C \times S \to S$ is \'etale}
\end{array}
\right\}.
\end{multline*}

Suppose now that $S = \Spec(\bar k)$ is the spectrum of an algebraically closed field $\bar k$ (containing $\F$), and let $E \in C^{(n)}(\bar k)$.  The structure sheaf $\cO_E$ is supported on finitely many closed points of $C_{\bar k}:=C \times_{\Spec(\F)} \Spec(\bar k)$.  For such a closed point $x$, $\Gamma(x, \cO_E)$ is a finite-dimensional $\bar k$-algebra, and
\[
\sum_{\text{$x$ a closed point of $C_{\bar k}$}} \dim_{\bar k} \Gamma(x, \cO_E) = n.
\]
Let $x_1, \ldots, x_k$ be the finitely many points such that $(\cO_E)_{|x} \ne 0$, and let $\mathbf{a}$ be the (unordered) collection of integers given by
\[
\mathbf{a} = [ \dim_{\bar k} \Gamma(x_1, \cO_E), \ldots, \dim_{\bar k} \Gamma(x_k,\cO_E)].
\]
Then $\mathbf{a}$ is a partition of $n$.  We call it the \emph{Jordan type} of $E$. (In terms of the description of $C^{(n)}$ as the quotient $C^n / \mathfrak{S}_n$, the partition $\mathbf{a}$ records the number of occurrences of the distinct points in the given $n$-tuple of points of $C$.)

Given a partition $\mathbf{a} \vdash n$, we define a functor $C^{(n)}_{\mathbf{a}}$ by 
\[
C^{(n)}_{\mathbf{a}}(S) = 
\left\{ E \in C^{(n)}(S)
\,\Big|\,
\begin{array}{c}
\text{for every geometric point $\bar s \to S$,} \\
\text{the point $E_{\bar s} \in C^{(n)}(\bar s)$ has Jordan type $\mathbf{a}$}
\end{array}
\right\}.
\]
This is a locally closed subscheme of $C^{(n)}$, and we have a stratification\footnote{By \emph{stratification} of a stack we will mean a decomposition as a disjoint union of \emph{smooth} locally closed substacks. We will use the term \emph{decomposition} for a decomposition as a disjoint union of \emph{not necessarily smooth} locally closed substacks.}
\begin{equation}
\label{eqn:stratif-symprod}
C^{(n)} = \bigsqcup_{\mathbf{a} \vdash n} C^{(n)}_{\mathbf{a}}.
\end{equation}
Here, smoothness of the strata follows from
the following classical lemma, which is clear from definitions.

\begin{lem}
\label{lem:curve-sympart}
Let $\mathbf{a}$ be a partition of $n$, and write it as
\[
\mathbf{a} = [\underbrace{a_1, \ldots, a_1}_{\text{$b_1$ copies}}, \underbrace{a_2, \ldots, a_2}_{\text{$b_2$ copies}}, \ldots,
\underbrace{a_k, \ldots, a_k}_{\text{$b_k$ copies}}],
\]
with $a_1, \ldots, a_k$ all distinct.  Then there is an isomorphism of schemes
\[
C^{(n)}_{\mathbf{a}} \cong (C^{(b_1)} \times \cdots \times C^{(b_k)})_\dist.
\]
\end{lem}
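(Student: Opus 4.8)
The plan is to write down the isomorphism explicitly, as the ``weighted addition of divisors'' map, and then to construct its inverse by separating the points of $E$ according to their multiplicity. Recall that, $C$ being a smooth curve, a closed subscheme $E \subset C \times S$ that is finite locally free of rank $m$ over $S$ is automatically a relative effective Cartier divisor, i.e.\ its ideal sheaf $\cI_E \subset \cO_{C \times S}$ is invertible (this can be checked fibrewise, where it amounts to the local rings of $C_{\bar s}$ being discrete valuation rings). Define a morphism
\[
\Phi \colon (C^{(b_1)} \times \cdots \times C^{(b_k)})_\dist \longrightarrow C^{(n)}
\]
on $S$-points by sending $(E_1, \ldots, E_k)$ to the closed subscheme of $C \times S$ with ideal sheaf $\cI_{E_1}^{a_1} \cdots \cI_{E_k}^{a_k}$. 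To see this is well defined with target $C^{(n)}$: each $\cO_{C \times S}/\cI_{E_i}^{a_i}$ is finite locally free over $S$ of rank $a_i b_i$ (it carries a filtration with subquotients $\cong \cO_{C \times S}/\cI_{E_i}$, each finite locally free of rank $b_i$ and flat over $S$); and since $\bigsqcup_i E_i \to C \times S$ is a closed immersion the ideals $\cI_{E_i}$ are pairwise comaximal, hence so are the $\cI_{E_i}^{a_i}$, so $\cO_{C \times S}/\prod_i \cI_{E_i}^{a_i} \cong \prod_i \cO_{C \times S}/\cI_{E_i}^{a_i}$ is finite locally free over $S$ of rank $\sum_i a_i b_i = n$.

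Next I would check that $\Phi$ factors through the locally closed subscheme $C^{(n)}_{\mathbf{a}}$. This is a fibrewise assertion: over a geometric point $\bar s \to S$, each $(E_i)_{\bar s}$ is (by definition of the \'etale, pairwise-distinct locus) a reduced divisor supported at $b_i$ distinct points, and the supports of $(E_1)_{\bar s}, \ldots, (E_k)_{\bar s}$ are pairwise disjoint. Because $C$ is a smooth curve, the local ring of $C_{\bar s}$ at any such point is a discrete valuation ring, so its quotient by the $a_i$-th power of the maximal ideal has length exactly $a_i$; using flatness of $\cO_{C \times S}/\cI_{E_i}^{a_i}$ over $S$ to commute the thickening with passage to the fibre, one sees that $\Phi(E_1, \ldots, E_k)_{\bar s}$ is supported at $\sum_i b_i$ points, exactly $b_i$ of which have structure ring of $\bar{s}$-dimension $a_i$, for each $i$. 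As the $a_i$ are pairwise distinct, this is precisely Jordan type $\mathbf{a}$, so $\Phi$ lands in $C^{(n)}_{\mathbf{a}}$.

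It remains to construct the inverse. Given $E \in C^{(n)}_{\mathbf{a}}(S)$ with structure morphism $\pi \colon E \to S$, I would proceed in two steps. \emph{(i) Separating multiplicities.} Decompose the finite locally free $\cO_S$-algebra $\pi_* \cO_E$ as a product $\prod_{i=1}^k \cA_i$ of finite locally free subalgebras, where $\cA_i$ is pinned down by the requirement that all its geometric fibres have Jordan type $[a_i^{b_i}]$: on geometric fibres such a decomposition exists and is unique by unique factorization (here the $a_i$ being distinct is used to match up the factors), the corresponding idempotents lift uniquely modulo nilpotents, and two idempotents congruent modulo the nilradical are equal, so the fibrewise decompositions glue to a canonical one over $S$. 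This exhibits $E = \bigsqcup_i \widetilde{E}_i$ with each $\widetilde{E}_i \subset C \times S$ a relative effective Cartier divisor of degree $a_i b_i$, with pairwise disjoint supports, all of whose geometric fibres consist of $b_i$ points each of local length $a_i$. \emph{(ii) Extracting an $a_i$-th root.} Show $\widetilde{E}_i = a_i E_i$ for a unique relative effective Cartier divisor $E_i$ of degree $b_i$ which is \'etale over $S$ (so that $E_i \in C^{(b_i)}_\dist(S)$); working \'etale-locally on $C \times S$ this is the statement that a monic polynomial which is fibrewise the $a_i$-th power of a separable polynomial has a unique monic $a_i$-th root $g_i$, necessarily with $\mathrm{disc}(g_i)$ invertible over the base, and that these local roots are canonical, hence glue. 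Then $(E_1, \ldots, E_k) \in (C^{(b_1)} \times \cdots \times C^{(b_k)})_\dist(S)$ (\'etaleness of each $E_i$ over $S$, and disjointness inherited from the $\widetilde{E}_i$), and $E \mapsto (E_1, \ldots, E_k)$ is visibly two-sided inverse to $\Phi$, using in one direction the comaximality identifications of step (i).

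The main obstacle is step (iii)... rather step (3), the construction of the inverse over an \emph{arbitrary}, possibly non-reduced, test scheme $S$: over geometric points both the multiplicity separation and the root extraction are immediate from unique factorization in $\bar{k}[t]$, and the content is that the two conditions built into the definition of $(C^{(b_1)} \times \cdots \times C^{(b_k)})_\dist$ --- \'etaleness of each divisor over $S$ and pairwise disjointness of supports --- are exactly what forces the relevant discriminants and resultants to be units over $S$, making the idempotent decomposition and the $a_i$-th-root extraction unobstructed and canonical. Everything else (well-definedness of $\Phi$, the fibrewise Jordan-type check, the inverse relations) is routine manipulation of relative Cartier divisors on the smooth relative curve $C \times S$.
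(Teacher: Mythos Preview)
The paper gives no proof: it simply says the lemma ``is clear from definitions,'' so there is nothing to compare against beyond the obvious bijection on geometric points. Your construction of $\Phi$ and the verification that it lands in $C^{(n)}_{\mathbf{a}}$ are fine, and the bijection on $\bar k$-points is exactly the classical content.

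The gap is in your inverse, specifically step~(ii), and it stems from reading the paper's description of $C^{(n)}_{\mathbf{a}}(S)$ too literally. That description --- ``every geometric fibre has Jordan type $\mathbf{a}$'' --- characterises the \emph{geometric points} of the stratum but is \emph{not} its functor of points as a locally closed subscheme when $S$ is non-reduced. Concretely, take $C=\bbA^1$, $S=\Spec \F[\epsilon]/(\epsilon^2)$ with $\chr(\F)\ne 2$, and $E=\{x^2-\epsilon=0\}\subset C\times S$. Every geometric fibre is the doubled origin, so $E$ satisfies the fibrewise Jordan-type condition for $\mathbf{a}=[2]$; but $x^2-\epsilon$ has \emph{no} monic square root in $\F[\epsilon][x]$, so your $a_i$-th-root extraction fails. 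Equivalently, identifying $C^{(2)}$ with the $(b,c)$-plane, the reduced stratum $C^{(2)}_{[2]}=\{b^2-4c=0\}$ does \emph{not} contain the $S$-point $(0,-\epsilon)$, so this $E$ is not an $S$-point of the actual subscheme --- yet your argument would have to handle it. The slogan ``fibrewise an $a_i$-th power implies globally an $a_i$-th power'' is simply false over non-reduced bases.

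Two clean repairs. (a) Since both source and target of $\Phi$ are reduced (the source is smooth; the target is the reduced locally closed stratum), it suffices to check $\Phi$ is a bijection on $\bar k$-points and a monomorphism of schemes, which amounts to showing $\Phi$ is unramified (equivalently, injective on tangent spaces); this is an easy local computation with symmetric functions. (b) Alternatively, replace the fibrewise hypothesis by the honest scheme-theoretic one: once you know the $S$-point factors through the reduced stratum, the discriminant-type equations cutting it out force the polynomial identities you need (in the $[2]$ example, $b^2-4c=0$ exactly rather than merely nilpotent), and then the root extraction goes through. Either way, the point is that you should prove $\Phi$ is a locally closed immersion directly rather than invert it on the over-large ``fibrewise'' functor.
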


Here are two extreme cases of this lemma: we have
\[
C^{(n)}_{[1,1,\ldots,1]} \cong C^{(n)}_\dist
\qquad\text{and}\qquad C^{(n)}_{[n]} \cong C^{(1)} \cong C.
\]

\subsection{\'Etale maps of curves}
\label{ss:etale-maps}

Let $C$ and $D$ be two smooth curves over $\F$, and let $\varphi: C \to D$ be an \'etale map.  For any integer $n \ge 0$, $\varphi$ induces a map of symmetric powers $C^{(n)} \to D^{(n)}$, which will again be denoted $\varphi$.
Define an open subscheme $C^{(n)}_\et \subset C^{(n)}$ by
\[
C^{(n)}_\et(S) = 
\left\{ E \in C^{(n)}(S) \,\Big|\,
\begin{array}{c}
\text{the composition $E \hookrightarrow C \times S \to D \times S$} \\
\text{is a closed immersion}
\end{array}
\right\}.
\]
Of course, $C^{(n)}_\et$ depends on $\varphi$, although this is not shown in the notation.

\begin{lem}\label{lem:sym-etale}
Let $\varphi: C \to D$ be an \'etale map of curves.  Then the induced map $\varphi|_{C^{(n)}_\et}: C^{(n)}_\et \to D^{(n)}$ is \'etale.
\end{lem}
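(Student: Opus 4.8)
The plan is to show that $\varphi|_{C^{(n)}_\et}$ is \'etale by verifying the infinitesimal lifting criterion. Both $C^{(n)}_\et$ and $D^{(n)}$ are of finite type over $\F$, so the morphism is automatically (locally) of finite presentation, and it suffices to prove that it is formally \'etale: given a closed immersion $S \hookrightarrow S'$ of affine $\F$-schemes defined by a square-zero ideal, a morphism $S \to C^{(n)}_\et$, and a morphism $S' \to D^{(n)}$ whose restriction to $S$ agrees with the composition $S \to C^{(n)}_\et \to D^{(n)}$, one must produce a unique morphism $S' \to C^{(n)}_\et$ lifting $S' \to D^{(n)}$ and extending $S \to C^{(n)}_\et$.

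I would begin by rewriting this in terms of the functors of points described in~\S\ref{ss:symmetric-powers} and~\S\ref{ss:etale-maps}. The data amount to a closed subscheme $E' \subseteq D \times S'$ which is finite locally free of rank $n$ over $S'$, together with a closed subscheme $\widetilde E \subseteq C \times S$ which is finite locally free of rank $n$ over $S$ and for which $\widetilde E \hookrightarrow C \times S \to D \times S$ is a closed immersion with image $E := E' \times_{S'} S$; and we seek a unique closed subscheme $\widetilde E' \subseteq C \times S'$ which is finite locally free of rank $n$ over $S'$, such that $\widetilde E' \hookrightarrow C \times S' \to D \times S'$ is a closed immersion with image $E'$ and such that $\widetilde E' \times_{S'} S = \widetilde E$.

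The heart of the argument is to turn this into a lifting problem for sections of an \'etale morphism. Put $E'_C := (C \times S') \times_{D \times S'} E'$. Because $E' \hookrightarrow D \times S'$ is a closed immersion, $E'_C$ is naturally a closed subscheme of $C \times S'$; and the projection $E'_C \to E'$ is obtained from $\varphi \colon C \to D$ by base change, so it is \'etale, and it is separated since $C$ is separated over $\F$ and hence $\varphi$ is separated. A closed subscheme of $C \times S'$ which maps isomorphically onto $E'$ under the projection to $D \times S'$ is then the same datum as a section of $E'_C \to E'$ (a section of a separated morphism being automatically a closed immersion), and likewise after base change along $S \hookrightarrow S'$; in particular $\widetilde E$ determines a section of $E'_C \times_{E'} E \to E$. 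Since $E \hookrightarrow E'$ is a square-zero closed immersion (being the pullback of $S \hookrightarrow S'$), the infinitesimal lifting property of the formally \'etale morphism $E'_C \to E'$ provides a unique extension of this section to a section of $E'_C \to E'$. Translating this section back yields the sought-after $\widetilde E'$: it is finite locally free of rank $n$ over $S'$ because the projection to $D \times S'$ identifies it with $E'$, its restriction to $S$ is $\widetilde E$ by construction, and its uniqueness is exactly the uniqueness of the lifted section.

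I do not anticipate any serious obstacle: the substantive input is just the rigidity of infinitesimal lifts along \'etale morphisms. The only delicate points are purely formal, namely the bookkeeping in the correspondence between ``closed subschemes of $C \times S'$ lying over $E'$'' and ``sections of $E'_C \to E'$'', and checking that this correspondence is compatible with restriction from $S'$ to $S$, so that the lifted section indeed yields a lift $S' \to C^{(n)}_\et$ of the given square.
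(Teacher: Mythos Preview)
Your proposal is correct and follows essentially the same approach as the paper: both verify the infinitesimal lifting criterion by invoking the formal \'etaleness of $\varphi \times \id_{S'}: C \times S' \to D \times S'$ along the square-zero thickening $E \hookrightarrow E'$ (in your notation). The only difference is cosmetic: the paper lifts the map $E' \to D \times S'$ directly to $C \times S'$, whereas you first pull back to form $E'_C = (C \times S') \times_{D \times S'} E'$ and phrase the lift as a section of $E'_C \to E'$; these are the same datum by the universal property of the fiber product.
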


\begin{proof}
At the level of $S$-points, the map $\varphi: C^{(n)}_\et \to D^{(n)}$ sends $E \in C^{(n)}_\et(S)$ to the closed subscheme of $D \times S$ that is the image of $E \hookrightarrow C \times S \to D \times S$. By construction this morphism is locally of finite presentation, hence to prove that it is \'etale it suffices to prove that it is formally \'etale (see~\cite[\href{https://stacks.math.columbia.edu/tag/02HM}{Tag 02HM}]{stacks-project}).

Let $S \hookrightarrow S'$ be a first-order thickening of affine schemes, and suppose we have a commutative diagram given by the solid arrows in the following diagram:
\begin{equation}
\label{eqn:sym-etale1}
\begin{tikzcd}
S \ar[r,"x"] \ar[d, hook] & C^{(n)}_\et \ar[d, "\varphi"] \\
S' \ar[r,"y"] \ar[ur, dashed] & D^{(n)}.
\end{tikzcd}
\end{equation}
In this setting
we must show that there is a unique way to fill in the dotted arrow.  The map $x$ determines a closed subscheme $E \subset C \times S$ such that $E \hookrightarrow C \times S \to D \times S$ is a closed immersion.  Let $E' \subset D \times S$ be its image.  Similarly, $y$ determines a closed subscheme $F \subset D \times S'$.  The commutativity of the diagram means that
\[
E' = F \cap (D \times S).
\]
Note that $E' \hookrightarrow F$ is a first-order thickening of schemes.

Regard $E$ as a closed subscheme of $C \times S'$, and consider the diagram
\begin{equation}\label{eqn:sym-etale2}
\begin{tikzcd}
E \ar[r] \ar[d, hook] & C \times S' \ar[d, "\varphi \times \id_{S'}"] \\
F \ar[r] \ar[ur, dashed] & D \times S'
\end{tikzcd}
\end{equation}
Here, the solid arrows commute, and there is a one-to-one correspondence between dotted arrows completing~\eqref{eqn:sym-etale1} and dotted arrows completing~\eqref{eqn:sym-etale2}.

As explained above the left-hand vertical map in~\eqref{eqn:sym-etale2} is a first-order thickening of schemes, and the right-hand vertical map is \'etale (and hence formally \'etale). There is thus a unique way to complete~\eqref{eqn:sym-etale2}, and hence also a unique way to complete~\eqref{eqn:sym-etale1},
which finishes the proof.
\end{proof}

We record the following immediate properties for later reference.

\begin{lem}
\label{lem:sym-properties}
Let $\varphi: C \to D$ be an \'etale map of curves.
\begin{enumerate}
 \item 
 We have $C^{(n)}_{[n]} \subset C^{(n)}_\et$.
 \item
If $\varphi: C \to D$ is an open immersion, then $C^{(n)}_\et = C^{(n)}$.
\end{enumerate}
\end{lem}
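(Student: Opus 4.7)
The plan is to unify both parts by showing that, under the respective hypothesis, the composition $f : E \hookrightarrow C \times S \xrightarrow{\varphi \times \id} D \times S$ is a closed immersion. The key observation is that $f$ is automatically proper --- $E \to S$ is finite, hence proper, and $D \times S \to S$ is separated --- so by the classical fact that a proper monomorphism of schemes is a closed immersion (see e.g.\ EGA~IV, 18.12.6), the problem reduces to verifying that $f$ is a monomorphism.

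For part~(2) this is immediate: $f$ is the composition of a closed immersion with an open immersion, each of which is a monomorphism.

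For part~(1), I would exploit \'etaleness of $\varphi$: the diagonal $\Delta_\varphi : C \to C \times_D C$ is an open and closed immersion, so $C \times_D C = \Delta_\varphi(C) \sqcup W$ for some complementary open-and-closed subscheme $W$. Pulling back via the inclusion $E \times_{D \times S} E \hookrightarrow (C \times S) \times_{D \times S} (C \times S) = (C \times_D C) \times S$ yields a decomposition $E \times_{D \times S} E = R_0 \sqcup R_1$ in which $R_0 = \Delta_f(E)$ (because $E \to C \times S$ is itself a monomorphism, so pairs in $E \times E$ landing in $\Delta_\varphi(C) \times S$ must coincide in $E$); thus $f$ is a monomorphism exactly when $R_1 = \emptyset$. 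Since $E \times_{D \times S} E$ is closed in $E \times_S E$ (pullback of the closed diagonal of $D \times S \to S$) and $E \times_S E$ is finite over $S$, the subscheme $R_1$ is finite over $S$, and it suffices to verify $R_{1,\bar s} = \emptyset$ on every geometric fiber $\bar s \to S$. On such a fiber, the Jordan-type hypothesis forces $E_{\bar s}$ to be concentrated at a single closed point $x_0 \in C_{\bar s}$; because $\varphi$ is \'etale and $x_0$, $\varphi(x_0)$ share the residue field $\bar k$, the induced map $\widehat{\cO}_{D,\varphi(x_0)} \to \widehat{\cO}_{C,x_0}$ on complete local rings is an isomorphism. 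The finite-dimensional $\bar k$-algebra $\cO_{E_{\bar s}}$ is a quotient of $\cO_{C_{\bar s}, x_0}$, hence of its completion, and therefore also of $\widehat{\cO}_{D_{\bar s},\varphi(x_0)}$, so $E_{\bar s} \to D_{\bar s}$ is a closed immersion and $R_{1,\bar s} = \emptyset$.

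The main technical point is the local analysis in part~(1): geometrically, \'etaleness makes $\varphi$ indistinguishable from an isomorphism when restricted to a fat point, ruling out any ``anti-diagonal'' contribution to $E \times_{D \times S} E$. Everything else --- properness of $f$, the $R_0 \sqcup R_1$ decomposition obtained by base change, and the proper-monomorphism-equals-closed-immersion invocation --- is formal.
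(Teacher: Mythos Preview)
Your proof is correct. The paper itself gives no argument here: the lemma is introduced with ``We record the following immediate properties for later reference'' and left to the reader, so you have supplied exactly the details the authors omit.

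One small streamlining is available for part~(1). Since $f$ is the composite of a closed immersion with an \'etale map, it is unramified, hence its diagonal $\Delta_f: E \to E \times_{D\times S} E$ is automatically an open immersion; combined with properness of $f$ this makes $\Delta_f$ open \emph{and} closed, so $f$ is a monomorphism as soon as it is injective on geometric points. That lets you bypass the explicit $R_0 \sqcup R_1$ decomposition and go straight to the fibrewise check, where the Jordan-type hypothesis forces each $E_{\bar s}$ to have a single underlying point, making injectivity immediate. Your route via the decomposition of $C \times_D C$ is equally valid and makes the logic very transparent; the variant above just shortens the bookkeeping.
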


Since $C$ is a curve, any element $E \in C^{(n)}(S)$ is a relative effective Cartier divisor on $C \times S/S$, see~\cite[\href{https://stacks.math.columbia.edu/tag/0B9D}{Tag 0B9D}]{stacks-project}. Its sheaf of ideals is the invertible sheaf
\[
\scO_{C \times S}(-E) \subset \scO_{C \times S},
\]
which satisfies
\[
\scO_{C \times S}(-E)_{|(C \times S) \smallsetminus E} = \scO_{(C \times S) \smallsetminus E}.
\]

\subsection{\texorpdfstring{$\bY_{\succeq 0}$}{Ysucceq0}-graded symmetric powers of curves}
\label{ss:graded-sym-powers}

We now recall a generalization of the constructions of~\S\S\ref{ss:symmetric-powers}--\ref{ss:etale-maps} that adds labels belonging to the positive coroot monoid $\bY_{\succeq 0} = \{\lambda \in \bY \mid \lambda \succeq 0\}$. If $\mu \in \bY_{\succeq 0}$, we will denote by $\Part(\mu)$ the set of partitions of $\mu$, i.e.~unordered collections $\ppl \lambda_j : j \in J \ppr$
of elements of $\bY_{\succeq 0} \smallsetminus \{0\}$ such that $\sum_j \lambda_j = \mu$. (We allow here the empty collection, which provides the unique partition of $0$.)

From now on we fix $\mu \in \bY_{\succeq 0}$, and write $\mu = \sum_{\alpha \in \fRs^\vee} n_\alpha \cdot \alpha$. The associated partially symmetrized power of $C$ is
\[
C^\mu := \prod_{\alpha \in \fRs^\vee} C^{(n_\alpha)}.
\]
Then $C^\mu$ is a smooth scheme of dimension $\langle \mu,\rho \rangle=\sum_\alpha n_\alpha$.
An $S$-point of $C^\mu$ is a collection $E=(E_\alpha: \alpha \in \fRs^\vee)$ such that, for each $\alpha \in \fRs$, $E_\alpha \in C^{(n_\alpha)}(S)$. For brevity, in this setting we will write
\[
(C \times S) \smallsetminus E = (C \times S) \smallsetminus \bigcup_{\alpha \in \fRs^\vee} E_\alpha.
\]
We have a natural morphism
\begin{equation}
\label{eqn:symmetrization-morph}
C^\mu \to C^{(\langle \mu, \rho \rangle)}
\end{equation}
which sends a collection $(E_\alpha: \alpha \in \fRs^\vee)$ to the effective Cartier divisor $\sum_\alpha E_\alpha$.

Let $\bar k$ be an algebraically closed field containing our base field $\F$, and let $E = (E_\alpha: \alpha \in \fR_s^\vee)$ be a point in $C^\mu(\bar k)$.  Given a closed point $x$ in $C_{\bar k}$, we set 
\[
\lambda^E_x = \sum_{\alpha \in \fRs^\vee} \dim_{\bar k} \Gamma(x, \cO_{E_\alpha})\cdot \alpha \in \bY_{\succeq 0}.
\]
Let $x_1, \ldots, x_k$ be the finitely many points such that $\lambda^E_x \ne 0$, and let $\Gamma_E$ be the (unordered) collection of elements of $\bY_{\succeq 0}$ given by
\[
\Gamma_E = \ppl \lambda^E_{x_1}, \ldots, \lambda^E_{x_k} \ppr.
\]
Then $\Gamma_E$ is a partition of $\mu$;
we call it the \emph{Jordan type} of $E$.

Given a partition $\Gamma \in \Part(\mu)$, we define a functor $C^\mu_\Gamma$ by 
\[
C^\mu_\Gamma(S) = 
\left\{ E \in C^\mu(S)
\,\Big|\,
\begin{array}{c}
\text{for every geometric point $\bar s \to S$,} \\
\text{the point $E_{\bar s} \in C^\mu(\bar s)$ has Jordan type $\Gamma$}
\end{array}
\right\}.
\]
This is a locally closed subscheme of $C^\mu$, and as in~\eqref{eqn:stratif-symprod} we have a stratification
\begin{equation}
\label{eqn:stratif-symprod-Y}
C^{\mu} = \bigsqcup_{\Gamma \in \Part(\mu)} C^\mu_\Gamma.
\end{equation}
Here also, smoothness of strata follows from
the following statement, which is a generalization of Lemma~\ref{lem:curve-sympart} taken from~\cite[\S 6.1]{bg}.

\begin{lem}
\label{lem:curve-sympart-Y}
Let $\Gamma \in \Part(\mu)$, and write it as
\[
\Gamma = [\underbrace{\lambda_1, \ldots, \lambda_1}_{\text{$b_1$ copies}}, \underbrace{\lambda_2, \ldots, \lambda_2}_{\text{$b_2$ copies}}, \ldots,
\underbrace{\lambda_k, \ldots, \lambda_k}_{\text{$b_k$ copies}}],
\]
with $\lambda_1, \ldots, \lambda_k$ all distinct.  Then there is an isomorphism of schemes
\[
C^\mu_\Gamma \cong (C^{(b_1)} \times \cdots \times C^{(b_k)})_\dist.
\]
\end{lem}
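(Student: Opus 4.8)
The plan is to reduce the statement to Lemma~\ref{lem:curve-sympart} by bookkeeping the coroot labels. Fix a partition $\Gamma \in \Part(\mu)$ written as in the statement, with distinct labels $\lambda_1, \ldots, \lambda_k \in \bY_{\succeq 0} \smallsetminus \{0\}$ appearing with multiplicities $b_1, \ldots, b_k$. For each simple coroot $\alpha$, write $\lambda_i = \sum_{\alpha} m_{i,\alpha} \cdot \alpha$, so that $n_\alpha = \sum_i b_i m_{i,\alpha}$. The idea is that a point $E = (E_\alpha : \alpha \in \fRs^\vee) \in C^\mu_\Gamma(S)$ should correspond to a tuple of $k$ divisors $D_1, \ldots, D_k$, where $D_i \in C^{(b_i)}(S)$ records the (necessarily multiplicity-free, pairwise disjoint, and disjoint across different $i$) locus of points of ``local type $\lambda_i$'', together with the reconstruction $E_\alpha = \sum_i m_{i,\alpha} D_i$ recovering each $E_\alpha$ from the $D_i$. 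So the first step is to make precise the map $C^\mu_\Gamma \to (C^{(b_1)} \times \cdots \times C^{(b_k)})_\dist$ and its inverse at the level of $S$-points.

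The key steps, in order, are as follows. First I would define the morphism in the forward direction: given $E \in C^\mu_\Gamma(S)$, one checks that for each $i$ there is a well-defined closed subscheme $D_i \subset C \times S$, finite locally free of rank $b_i$ over $S$, whose geometric fibres consist of the points of Jordan-type-contribution $\lambda_i$; that the $D_i$ are pairwise disjoint with disjoint union $\hookrightarrow C \times S$ étale over $S$; and hence that $(D_1, \ldots, D_k)$ defines an $S$-point of $(C^{(b_1)} \times \cdots \times C^{(b_k)})_\dist$. The subtlety here is constructing $D_i$ as an actual subscheme (not just fibrewise): since over each geometric point the constituent divisors are reduced and pairwise disjoint, one can cut out $D_i$ locally on $C \times S$ as the ``support of $E_\alpha$ away from the other $E_\beta$'' for a suitable $\alpha$ with $m_{i,\alpha} \neq 0$; one must check this is independent of the auxiliary choice and is flat of the correct rank (here the étaleness/reducedness on fibres, i.e.~the disjointness encoded in $C^{(b)}_\dist$, is what guarantees flatness). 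Second, I would define the inverse: given $(D_1, \ldots, D_k) \in (C^{(b_1)} \times \cdots \times C^{(b_k)})_\dist(S)$, set $E_\alpha := \sum_{i=1}^k m_{i,\alpha} \cdot D_i$, viewing the sum as addition of relative effective Cartier divisors on $C \times S / S$ (which makes sense and stays finite locally free of the right rank because the $D_i$ are relatively disjoint). One checks $\sum_\alpha \dim \Gamma(x, \cO_{E_\alpha}) \cdot \alpha = \lambda_i$ at a point $x$ lying on $D_i$, so $E \in C^\mu_\Gamma(S)$. Third, I would verify the two constructions are mutually inverse and functorial in $S$, which is immediate from the constructions.

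I expect the main obstacle to be the first step---promoting the fibrewise decomposition of $E$ into the pieces $D_i$ to a decomposition over the base $S$, with flatness and the correct ranks. The cleanest way around this is probably to avoid working with $C^\mu_\Gamma$ by its functor of points directly and instead invoke Lemma~\ref{lem:curve-sympart}: the symmetrization morphism $C^\mu \to C^{(\langle\mu,\rho\rangle)}$ of~\eqref{eqn:symmetrization-morph} carries the stratum $C^\mu_\Gamma$ into a suitable stratum $C^{(\langle\mu,\rho\rangle)}_{\mathbf{a}}$ of~\eqref{eqn:stratif-symprod}, where $\mathbf{a}$ is the partition of $\langle\mu,\rho\rangle$ obtained from $\Gamma$ by replacing each $\lambda_i$ by $\langle\lambda_i,\rho\rangle$ --- and over the open locus $(C^{(b_1)} \times \cdots \times C^{(b_k)})_\dist$ of the target (after Lemma~\ref{lem:curve-sympart}, applied in the weakly distinct form appropriate to $\mathbf{a}$, or more simply over the disjoint locus $C^{(n)}_\dist$-type open), each $E_\alpha$ is forced to be a sum of $m_{i,\alpha}$-fold thickenings supported along the $i$-th factor, so that $E$ is uniquely reconstructed from its image. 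In other words, $C^\mu_\Gamma$ is identified with the fibre product of $C^\mu$ with this étale/disjoint locus, and an explicit computation with the factorization $C^\mu = \prod_\alpha C^{(n_\alpha)}$ shows this fibre product is exactly $(C^{(b_1)} \times \cdots \times C^{(b_k)})_\dist$. Since this is essentially the argument of~\cite[\S6.1]{bg}, as the statement indicates, I would present it at this level of detail and refer to that source for the routine verifications. Smoothness of the strata in~\eqref{eqn:stratif-symprod-Y} is then immediate, as $(C^{(b_1)} \times \cdots \times C^{(b_k)})_\dist$ is open in a product of smooth schemes $C^{(b_i)}$.
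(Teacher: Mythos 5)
The paper does not prove Lemma~\ref{lem:curve-sympart-Y}; it simply cites~\cite[\S 6.1]{bg}, so there is no internal argument to compare against. Your first, direct approach --- mutually inverse maps on $S$-points, with the easy inverse $(D_1,\ldots,D_k) \mapsto (E_\alpha)_\alpha$, $E_\alpha = \sum_i m_{i,\alpha}D_i$, and the real work concentrated in the forward direction --- is the right strategy, and the difficulty you flag (producing the $D_i$ as subschemes of $C\times S$ flat over $S$, not just fibrewise) is the genuine one.

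The ``clean workaround'' via the symmetrization morphism, however, has a gap. The integers $\langle\lambda_i,\rho\rangle$ need not be distinct for distinct labels $\lambda_i$, and when they collide the fibre product $C^\mu \times_{C^{(\langle\mu,\rho\rangle)}} (C^{(b_1)}\times\cdots\times C^{(b_k)})_\dist$ is strictly larger than $C^\mu_\Gamma$. For instance, with two simple coroots $\alpha_1,\alpha_2$, take $\lambda_1 = 2\alpha_1$, $\lambda_2 = \alpha_1+\alpha_2$ (so $\langle\lambda_1,\rho\rangle=\langle\lambda_2,\rho\rangle=2$), $b_1=b_2=1$, $\mu = 3\alpha_1+\alpha_2$. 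The map $(C^{(1)}\times C^{(1)})_\dist\to C^{(4)}$, $(x,y)\mapsto 2x+2y$, forgets which of $x,y$ carries type $\lambda_1$; the fibre product with $C^\mu=C^{(3)}\times C^{(1)}$ then has two components, one with $(E_{\alpha_1},E_{\alpha_2})=(2x+y,y)$ and one with $(E_{\alpha_1},E_{\alpha_2})=(x+2y,x)$, and is a degree-two \'etale cover of $C^\mu_\Gamma$ rather than $C^\mu_\Gamma$ itself. Collapsing the coroot-colored tuple $(E_\alpha)_\alpha$ to the single divisor $\sum_\alpha E_\alpha$ discards exactly the information needed to disambiguate the labels. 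The forward map must be built from all the $E_\alpha$ at once; one clean way is to pass to an \'etale cover $S'\to S$ over which each $E_\alpha$ splits as a sum of graphs of sections $S'\to C$, group the sections by their multiplicity tuple (which is well-defined and constant over $S'$ by the $C^\mu_\Gamma$-condition), define $D_i$ over $S'$ as the disjoint union of graphs with tuple $(m_{i,\alpha})_\alpha$, and descend. Filled out this way, your first approach gives a complete proof.
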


In particular, the dimension of $C^\mu_\Gamma$ is the number $|\Gamma|$ of parts in $\Gamma$.  As a special case, for $\Gamma = \ppl \mu \ppr$, we have
\begin{equation}
\label{eqn:mu-closed-stratum}
C^\mu_{\ppl\mu\ppr} \cong C.
\end{equation}
The isomorphism $C \simto C^\mu_{\ppl\mu\ppr}$ will be denoted $E \mapsto \mu \cdot E$.

Next, suppose we have an \'etale map of curves $\varphi: C \to D$. Then we have an induced map $C^\mu \to D^\mu$, which we still denote by $\varphi$. Define an open subscheme $C^\mu_\et \subset C^\mu$ by
\[
C^{\mu}_\et(S) = 
\left\{ (E_\alpha)_{\alpha \in \fRs^\vee} \in C^{\mu}(S) \,\Big|\,
\begin{array}{c}
\text{the composition $\sum_\alpha E_\alpha \hookrightarrow C \times S \to D \times S$} \\
\text{is a closed immersion}
\end{array}
\right\}.
\]
(In other words, $C^{\mu}_\et$ is the inverse image of $C^{(\langle \mu, \rho \rangle)}_\et$ under the morphism~\eqref{eqn:symmetrization-morph}.)
The following statement is an immediate consequence of Lemmas~\ref{lem:sym-etale} and~\ref{lem:sym-properties} (noting that $C^\mu_\et \subset \prod_{\alpha \in \fRs^\vee} C^{(n_\alpha)}_\et$).

\begin{lem}
\label{lem:sym-mu}
Let $\varphi: C \to D$ be an \'etale map of curves, and let $\mu \in \bY_{\succeq 0}$.
\begin{enumerate}
\item 
\label{it:sm-etale}
The induced map $\varphi|_{C^\mu_\et}: C^\mu_\et \to D^\mu$ is \'etale.
\item 
\label{it:sm-stratum}
The stratum $C^\mu_{\ppl \mu\ppr}$ is contained in $C^\mu_\et$.
\item 
\label{it:sm-open}
If $\varphi$ is an open immersion, then $C^\mu_\et = C^\mu$.
\end{enumerate}
\end{lem}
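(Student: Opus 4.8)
The plan is to deduce all three parts by unwinding the definitions and invoking the already-established lemmas about ordinary and $\mu$-less symmetric powers, together with the observation recorded just before the statement, namely that $C^\mu_\et \subset \prod_{\alpha \in \fRs^\vee} C^{(n_\alpha)}_\et$. The crux is that the $\bY_{\succeq 0}$-graded constructions are, by design, fibre products of the ordinary ones over the curve, so each assertion reduces to a product (or component-wise) version of something proved in \S\ref{ss:symmetric-powers}--\ref{ss:etale-maps}.

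For part~\eqref{it:sm-etale}, first I would recall that by definition $C^\mu_\et$ is the preimage under the symmetrization morphism~\eqref{eqn:symmetrization-morph} of $C^{(\langle \mu,\rho\rangle)}_\et$, and that $\varphi|_{C^\mu}$ is compatible with the symmetrization morphisms to $D^{(\langle\mu,\rho\rangle)}$. Thus it suffices to factor $\varphi|_{C^\mu_\et}$ through the fibre product $C^{(\langle\mu,\rho\rangle)}_\et \times_{D^{(\langle\mu,\rho\rangle)}} D^\mu$ and check \'etaleness of the two maps in this factorization. The map $C^\mu_\et \to C^{(\langle\mu,\rho\rangle)}_\et \times_{D^{(\langle\mu,\rho\rangle)}} D^\mu$ is an open immersion (it is the restriction to $C^\mu_\et$ of the map $C^\mu \to C^{(\langle\mu,\rho\rangle)} \times_{D^{(\langle\mu,\rho\rangle)}} D^\mu$, which is already an open immersion on the relevant locus, being built from the open immersions $C^{(n_\alpha)}_\et \hookrightarrow C^{(n_\alpha)}$); and the base change of $\varphi|_{C^{(\langle\mu,\rho\rangle)}_\et} \colon C^{(\langle\mu,\rho\rangle)}_\et \to D^{(\langle\mu,\rho\rangle)}$, which is \'etale by Lemma~\ref{lem:sym-etale}, is \'etale. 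Alternatively, and perhaps more transparently, one argues componentwise: $\varphi|_{C^\mu_\et}$ is the restriction of $\prod_\alpha \varphi|_{C^{(n_\alpha)}_\et} \colon \prod_\alpha C^{(n_\alpha)}_\et \to \prod_\alpha D^{(n_\alpha)} = D^\mu$, which is \'etale by Lemma~\ref{lem:sym-etale} and stability of \'etale morphisms under products, and $C^\mu_\et$ is open in $\prod_\alpha C^{(n_\alpha)}_\et$, so the restriction to it is still \'etale.

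Part~\eqref{it:sm-stratum} is immediate: under the isomorphism $C^\mu_{\ppl\mu\ppr} \cong C$ of~\eqref{eqn:mu-closed-stratum}, writing $\mu = \sum_\alpha n_\alpha \cdot \alpha$, a point of $C^\mu_{\ppl\mu\ppr}$ corresponds to a collection $(E_\alpha)$ all of whose $E_\alpha$ are supported at a single common point $x \in C$, so $\sum_\alpha E_\alpha$ lies in $C^{(\langle\mu,\rho\rangle)}_{[\langle\mu,\rho\rangle]} = C^{(\langle\mu,\rho\rangle)}_{[n]}$, which is contained in $C^{(\langle\mu,\rho\rangle)}_\et$ by Lemma~\ref{lem:sym-properties}\,(1); hence $(E_\alpha) \in C^\mu_\et$ by the defining description of $C^\mu_\et$ as the preimage of $C^{(\langle\mu,\rho\rangle)}_\et$ under~\eqref{eqn:symmetrization-morph}. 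Part~\eqref{it:sm-open} is equally quick: if $\varphi$ is an open immersion, then $C^{(m)}_\et = C^{(m)}$ for all $m$ by Lemma~\ref{lem:sym-properties}\,(2), so $C^{(\langle\mu,\rho\rangle)}_\et = C^{(\langle\mu,\rho\rangle)}$ and therefore its preimage $C^\mu_\et$ equals $C^\mu$. I do not expect a genuine obstacle here; the only point requiring a little care is to make sure the factorization through the fibre product in~\eqref{it:sm-etale} is set up correctly so that one really is base-changing the map of Lemma~\ref{lem:sym-etale} — once that compatibility of symmetrization morphisms is spelled out, everything follows formally.
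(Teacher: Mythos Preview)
Your proposal is correct and takes essentially the same approach as the paper: the paper simply records that the lemma is an immediate consequence of Lemmas~\ref{lem:sym-etale} and~\ref{lem:sym-properties} together with the inclusion $C^\mu_\et \subset \prod_{\alpha} C^{(n_\alpha)}_\et$, and your componentwise argument for~\eqref{it:sm-etale} and your reductions via the symmetrization morphism~\eqref{eqn:symmetrization-morph} for~\eqref{it:sm-stratum} and~\eqref{it:sm-open} unpack exactly this. The fibre-product factorization you sketch first for~\eqref{it:sm-etale} is unnecessary once you have the cleaner componentwise version, so you could drop it.
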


Let $E  = (E_\alpha: \alpha \in \fRs^\vee)$ be an element of $C^\mu(S)$. For $\xi \in \bX_+$, we introduce the notation
\[
\la \xi,  E\ra = \sum_{\alpha \in \fRs^\vee} \la \xi, \alpha\ra E_\alpha.
\]
This is a relative effective Cartier divisor on $C \times S / S$, and we have the associated invertible subsheaf
\begin{equation}
\label{eqn:divisor-sheaf}
\scO_{C \times S}(- \la \xi, E\ra) \subset \scO_{C \times S}.
\end{equation}
More generally, for any $\xi \in \bX$ we can consider the (not necessarily effective) Cartier divisor $\la \xi,  E\ra = \sum_{\alpha \in \fRs^\vee} \la \xi, \alpha\ra E_\alpha$, and the associated invertible $\scO_{C \times S}$-module $\scO_{C \times S}(\la \xi, E\ra)$.  This sheaf comes with a canonical isomorphism
\begin{equation}\label{eqn:divisor-sheaf-triv}
\scO_{C \times S}(\la \xi, E\ra)_{|(C \times S) \smallsetminus E} = \scO_{(C \times S) \smallsetminus E}.
\end{equation}
(In case $\xi \in \bX_+$, this isomorphism is simply the restriction of~\eqref{eqn:divisor-sheaf}.)

\subsection{Torsors}
\label{ss:torsors}

Consider a smooth affine group scheme $H$ of finite type over $\F$.
Given an $H$-torsor\footnote{Here, the word ``torsor'' will always mean \emph{right} torsor. For a quick review of some useful basic facts in the theory of torsors over group schemes, see~\cite[\S 2.1.2]{central}.} $\cE$ on a scheme $S$ and a finite-dimensional $H$-module $V$, we will denote by
\begin{equation}
\label{eqn:assoc-bdle-defn}
\cE_V = \cE \times^H V
\end{equation}
the vector bundle on $S$ induced by $\cE$ and $V$. This construction is monoidal in the sense that given $\cE$ as above and finite-dimensional $H$-modules $V,V'$ we have a canonical isomorphism
\begin{equation}
\label{eqn:torsor-tensor-prod}
 \cE_{V \otimes V'} \cong \cE_V \otimes_{\mathscr{O}_S} \cE_{V'}.
\end{equation}
Via this construction, the datum of an $H$-torsor on $S$ is equivalent to the datum of a symmetric monoidal functor from the category of finite-dimensional algebraic representations of $H$ to the category of vector bundles on $S$. 

In particular, in the setting of~\S\ref{ss:graded-sym-powers}, given a $T$-torsor $\cF$ on $C \times S$, an integer $n \in \Z$, and a point $E \in C^\mu(S)$, there exists a unique $T$-torsor $\cF(nE)$ such that for any $\xi \in \bX$ we have
\[
 (\cF(nE))_{\F_T(\xi)} = \cF_{\F_T(\xi)} \otimes_{\scO_{C \times S}} \scO_{C \times S}( n \cdot \langle \xi, E \rangle),
\]
where $\F_T(\xi)$ is the $1$-dimensional $T$-module associated with $\xi$.
(We will mainly consider this construction in case $n=\pm 1$.)

For any scheme $S$ we will denote by $\cF^0$, resp.~$\cE^0$, the trivial $T$-torsor, resp.~$G$-torsor, on $S$. (When we want to emphasize the base scheme we will write $\cF^0_S$ or $\cE^0_S$, but usually it will be clear from context.) For any finite-dimensional $T$-module, resp.~$G$-module, $V$ we have a canonical identification
\[
 (\cF^0)_V = V \otimes \scO_S, \quad \text{resp.} \quad (\cE^0)_V = V \otimes \scO_S.
\]

\subsection{Stacks of torsors}
\label{ss:stacks-torsors}

In this subsection we assume that our curve $C$ is projective, and denote its genus by $g$. Given a smooth affine group scheme $H$ of finite type over $\F$, we will denote by $\Bun_H$ the stack of $H$-torsors on $C$. This is a smooth algebraic stack which is locally of finite type, see e.g.~\cite[Proposition~1]{heinloth}. In case $H$ is reductive, all of its connected components have dimension $(g-1)\dim(H)$, see e.g.~\cite[Proposition~3.6.8]{sorger}.
Given a morphism $K \to H$ between smooth affine group schemes of finite type over $\F$, we have an induced morphism $\Bun_K \to \Bun_H$ sending a $K$-torsor $\cE$ to the induced $H$-torsor $\cE \times^K H$.


We will consider this stack in case $H$ is one of $T$, $B$, $B^-$, $U$, $U^-$ or $G$.
It is a standard fact that the connected components of the stack $\Bun_T$ are in  canonical bijection with $\bY$; we will denote by $\Bun_T^\lambda$ the component corresponding to $\lambda$. (Our normalization is that $\Bun_T^\lambda$ parametrizes torsors of degree $-\lambda$ in the terminology of~\cite[\S 3.1]{bfgm}.) With this notation, for $\mu \in \bY_{\succeq 0}$ and $\lambda \in \bY$ the assignment $(E,\cF) \mapsto \cF(-E)$ from~\S\ref{ss:torsors} defines a morphism of stacks
\begin{equation}
\label{eqn:twist-T-bundles}
 C^\mu \times \Bun_T^\lambda \to \Bun_T^{\lambda+\mu}.
\end{equation}

It is also a standard fact that the morphisms
\begin{equation}
\label{eqn:morph-Bun-B-T}
\Bun_B \to \Bun_T \quad \text{and} \quad \Bun_{B^-} \to \Bun_T
\end{equation}
induced by the morphisms $B \to B/U \cong T$ and $B^- \to B^-/U^- \cong T$ are smooth,
and that they induce bijections between the sets of connected components of these stacks. We will denote by
\[
\Bun_B^\lambda \subset \Bun_B, \quad \Bun_{B^-}^\lambda \subset \Bun_{B^-}
\]
the preimages of $\Bun_T^\lambda$, so that
\[
\Bun_B = \bigsqcup_{\lambda \in \bY} \Bun_B^\lambda, \quad \Bun_{B^-} = \bigsqcup_{\lambda \in \bY} \Bun_{B^-}^\lambda
\]
are the decompositions into connected components.
Note also that, since $T$ is abelian, $\Bun_T$ is naturally a group stack.

As a special case of the discussion above, for any $\lambda \in \bY$ we have
\[
\dim(\Bun_T^\lambda)=(g-1) \dim(T).
\]
As for $B$ and $B^-$, we have\footnote{Let us recall the idea of proof of the formula for $B$, for the reader's convenience. Since $\Bun_B$ is a smooth stack, its dimension is the Euler characteristic of the tangent complex at any point. The tangent complex at $\cE$ is $R\Gamma(C,\cE_{\mathfrak{b}})[1]$, where $\mathfrak{b}$ is the Lie algebra of $B$, and $\cE_{\mathfrak{b}}$ is as in~\eqref{eqn:assoc-bdle-defn}. Now if $\cE$ belongs to $\Bun_B^\lambda$, $\cE_{\mathfrak{b}}$ has a finite filtration with associated graded $\scO_C^{\oplus \dim(T)} \oplus \bigoplus_\alpha \scO_C(-\langle \lambda, \alpha \rangle)$, where $\alpha$ runs over the positive roots. By the Riemann--Roch formula, the Euler characteristic of $R\Gamma(C,\scO_C(-\langle \lambda, \alpha \rangle))[1]$ is $\langle \lambda, \alpha \rangle+g-1$, and that of $R\Gamma(C,\scO_C)[1]$ is $g-1$. Summing all these contributions we get the announced formula.}
\begin{align*}
\dim(\Bun_B^\lambda) &= (g-1)\dim(B) + \langle \lambda, 2\rho \rangle, \\
\dim(\Bun_{B^-}^\lambda) &= (g-1)\dim(B) - \langle \lambda, 2\rho \rangle.
\end{align*}

Following~\cite[\S 3.6]{bfgm},
we denote by $\Bun_T^{\mathrm{r}}$ the open substack of $\Bun_T$ consisting of $T$-torsors $\cF$ such that
\[
\mathsf{H}^1(C, \cF_{\F_T(\alpha)})=0
\]
for any $\alpha \in \fR_+$. We also denote by $\Bun_{B^-}^{\mathrm{r}} \subset \Bun_{B^-}$ the preimage of $\Bun_T^{\mathrm{r}}$ under the morphism $\Bun_{B^-} \to \Bun_T$ 
considered in~\eqref{eqn:morph-Bun-B-T}.

The following statement is~\cite[Lemma~3.7]{bfgm}.

\begin{lem}
\label{lem:smoothness-B-}
The restriction of the canonical morphism $\Bun_{B^-} \to \Bun_G$ to $\Bun_{B^-}^{\mathrm{r}}$ is smooth.
\end{lem}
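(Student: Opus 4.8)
The plan is to verify smoothness by a deformation-theoretic criterion: since $\Bun_{B^-}$ and $\Bun_G$ are both smooth algebraic stacks, locally of finite type, it suffices to check that the map on tangent complexes induced by $\Bun_{B^-}^{\mathrm{r}} \hookrightarrow \Bun_{B^-} \to \Bun_G$ is surjective on $\mathsf{H}^0$ (i.e.\ is a submersion) and induces an injection on $\mathsf{H}^1$ at every geometric point. Concretely, fix a $B^-$-torsor $\cE$ in $\Bun_{B^-}^{\mathrm{r}}$, with induced $G$-torsor $\cE_G = \cE \times^{B^-} G$. Writing $\mathfrak{g}$, $\mathfrak{b}^-$, $\mathfrak{u}^-$ for the Lie algebras of $G$, $B^-$, $U^-$, and using the notation $\cE_{(-)}$ for associated bundles from~\eqref{eqn:assoc-bdle-defn}, the tangent complexes at these points are $R\Gamma(C, \cE_{\mathfrak{b}^-})[1]$ and $R\Gamma(C, \cE_{G,\mathfrak{g}})[1] = R\Gamma(C, \cE_{\mathfrak{g}})[1]$ respectively, and the map between them is induced by the $B^-$-equivariant inclusion $\mathfrak{b}^- \hookrightarrow \mathfrak{g}$.

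First I would form the short exact sequence of $B^-$-modules $0 \to \mathfrak{b}^- \to \mathfrak{g} \to \mathfrak{g}/\mathfrak{b}^- \to 0$, apply the exact functor $\cE \times^{B^-}(-)$ to get a short exact sequence of vector bundles on $C$, and take the associated long exact sequence in cohomology. The cone of the map on tangent complexes is then $R\Gamma(C, \cE_{\mathfrak{g}/\mathfrak{b}^-})[1]$, so smoothness of the morphism at $\cE$ is equivalent to the vanishing $\mathsf{H}^1(C, \cE_{\mathfrak{g}/\mathfrak{b}^-}) = 0$. Now $\mathfrak{g}/\mathfrak{b}^-$ is isomorphic, as a $B^-$-module, to $\mathfrak{u} = \mathrm{Lie}(U)$, which carries a filtration by $B^-$-submodules whose associated graded is $\bigoplus_{\alpha \in \fR_+} \F_T(\alpha)$ once we restrict the $B^-$-action along $T \hookrightarrow B^-$ — more carefully, $\mathfrak{u}$ has a $B^-$-stable filtration (by the lower central series, or simply by height of roots) with subquotients the one-dimensional $B^-$-modules on which $U^-$ acts trivially and $T$ acts by the positive roots $\alpha$. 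Hence $\cE_{\mathfrak{g}/\mathfrak{b}^-}$ has a filtration whose associated graded is $\bigoplus_{\alpha \in \fR_+} \cE_{\F_T(\alpha)}$; but $\cE_{\F_T(\alpha)}$ depends only on the induced $T$-torsor $\cF := \cE \times^{B^-} T$, and equals $\cF_{\F_T(\alpha)}$. By the very definition of $\Bun_T^{\mathrm{r}}$ and of $\Bun_{B^-}^{\mathrm{r}}$ as its preimage, $\mathsf{H}^1(C, \cF_{\F_T(\alpha)}) = 0$ for all $\alpha \in \fR_+$, so running up the filtration gives $\mathsf{H}^1(C, \cE_{\mathfrak{g}/\mathfrak{b}^-}) = 0$, as desired.

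The main subtlety, rather than an obstacle, is to phrase the argument so that it genuinely establishes \emph{smoothness of a morphism of stacks} and not merely pointwise surjectivity of tangent maps: one should either invoke the infinitesimal lifting criterion directly (lift $B^-$-torsors over a square-zero thickening, with the obstruction living in $\mathsf{H}^1(C, \cE_{\mathfrak{b}^-})$ and the obstruction to compatibility with a given lift of the $G$-torsor controlled by $\mathsf{H}^1(C, \cE_{\mathfrak{g}/\mathfrak{b}^-})$), or cite the standard fact that a morphism between smooth algebraic stacks locally of finite type is smooth if and only if it is surjective on tangent spaces and injective on the $\mathsf{H}^1$'s of the tangent complexes at all geometric points. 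Since this is quoted as~\cite[Lemma~3.7]{bfgm}, it is legitimate to keep the write-up brief; I would simply record the filtration of $\mathfrak{u}$, the resulting filtration of $\cE_{\mathfrak{g}/\mathfrak{b}^-}$, and the vanishing of $\mathsf{H}^1$ coming from the definition of the superscript ``$\mathrm{r}$'', and conclude by the lifting criterion.
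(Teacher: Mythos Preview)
The paper does not actually prove this lemma: it simply records it as~\cite[Lemma~3.7]{bfgm} and moves on. Your proposal supplies exactly the standard deformation-theoretic argument that underlies that reference---identify the relative tangent complex of $\Bun_{B^-}\to\Bun_G$ at $\cE$ with $R\Gamma(C,\cE_{\mathfrak{g}/\mathfrak{b}^-})$, filter $\mathfrak{g}/\mathfrak{b}^-$ by height so that the graded pieces are the one-dimensional $B^-$-modules $\F_T(\alpha)$ with $\alpha\in\fR_+$ (on which $U^-$ acts trivially), and conclude $\mathsf{H}^1=0$ from the defining condition of $\Bun_T^{\mathrm{r}}$. This is correct; the only cosmetic point is that ``$\mathfrak{g}/\mathfrak{b}^-\cong\mathfrak{u}$ as a $B^-$-module'' is not literally true (there is no natural $B^-$-action on $\mathfrak{u}$), but you already catch and repair this with the filtration, so the argument stands.
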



\begin{rmk}
\phantomsection
\label{rmk:Bunr}
\begin{enumerate}
\item
If $\mathscr{L}$ is a line bundle on $C$ such that $\deg(\mathscr{L}) > 2g-2$ then we have $\mathsf{H}^1(C,\mathscr{L})=0$. Hence if $\cF \in \Bun_T$ has degree $\lambda$, then $\cF$ belongs to $\Bun_T^{\mathrm{r}}$ as soon as $\langle \lambda, \alpha \rangle > 2g-2$ for any $\alpha \in \fR_+$. In other words, $\Bun_T^{\mathrm{r}}$ contains $\Bun_T^\lambda$ as soon as $\langle \lambda,\alpha \rangle < 2-2g$ for any $\alpha \in \fR_+$.
\item
Similarly, if we denote by $\Bun_T^{\mathrm{r},-}$ the open substack of $\Bun_T$ consisting of $T$-torsors $\cF$ such that
\[
\mathsf{H}^1(C, \cF_{\F_T(\alpha)})=0
\]
for any \emph{negative} root $\alpha$, and by $\Bun_{B}^{\mathrm{r},-} \subset \Bun_{B}$ the preimage of $\Bun_T^{\mathrm{r},-}$ under the natural morphism $\Bun_{B} \to \Bun_T$, then the restriction of the canonical morphism $\Bun_{B} \to \Bun_G$ to $\Bun_{B}^{\mathrm{r},-}$ is smooth. In particular, $\Bun_T^{\mathrm{r},-}$ contains $\Bun_T^\lambda$ as soon as $\langle \lambda,\alpha \rangle > 2g-2$ for any $\alpha \in \fR_+$.
\end{enumerate}
\end{rmk}

\subsection{Affine Grassmannian and semiinfinite orbits}
\label{ss:Gr}

In the end of Section~\ref{sec:zastavas} and in Section~\ref{sec:constructibility}, we will consider the affine Grassmannian of $G$; here we introduce some basic notation for later use.

For any affine scheme $X$ over $\F$ we will denote by
\[
\Loop X, \quad \text{resp.} \quad \Loop^+ X,
\]
the ind-affine ind-scheme,
resp.~affine scheme, representing the functor sending an $\F$-algebra $R$ to $X(R( \hspace{-1pt} (z) \hspace{-1pt} ))$, resp.~$X(R[ \hspace{-1pt} [z] \hspace{-1pt} ])$, where $z$ is an indeterminate. 
In case $X$ is a group scheme, $\Loop X$, resp.~$\Loop^+ X$, admits a natural structure of group ind-scheme, resp.~group scheme.
In particular we will consider the groups $\Loop G$, $\Loop^+ G$, $\Loop T$, $\Loop U$, $\Loop U^-$. Each $\lambda \in \bY$ defines an $\F$-point $z^\lambda \in (\Loop T)(\F)$; we will denote similarly its image in $(\Loop G)(\F)$.

The affine Grassmannian of $G$ is the quotient
\[
\Gr = \Loop G/\Loop^+G.
\]
Here one can take the quotient either for the \'etale topology or for the fppf topology, giving rise to the same object, and we obtain a separated ind-scheme of ind-finite type over $\F$, endowed with a natural action of $\Loop G$. Each $\lambda \in \bY$ defines an $\F$-point $z^\lambda \in (\Loop T)(\F)$; we will denote similarly its image in $(\Loop G)(\F)$, and by $[z^\lambda]$ the associated $\F$-point of $\Gr$.

An important role will be played below by the ``semiinfinite orbits'', i.e.~the orbits of $\Loop U$ and $\Loop U^-$ on $\Gr$. We will use standard notation for these orbits:
for $\lambda \in \bY$ we will denote by
\[
\rS_\mu \subset \Gr, \quad \text{resp.} \quad
\rS^-_\mu \subset \Gr,
\]
the $\Loop U$-orbit, resp.~$\Loop U^-$-orbit, of $[z^\mu]$, and by
\[
\bi_\mu : \rS_\mu \to \Gr, \quad \text{resp.} \quad
\bi^-_\mu : \rS^-_\mu \to \Gr,
\]
the embedding.

\section{Drinfeld's compactification}
\label{sec:Drinfeld-compact}

In this section we fix a smooth \emph{projective} curve $C$ over $\F$.

\subsection{Definition}
\label{ss:def-bBunB}

Following Drinfeld, we consider the functor $\bBun_B$ from the category of $\F$-schemes to the category of groupoids sending a scheme $S$ to the groupoid of triples $(\cE,\cF,\kappa)$ where:
\begin{enumerate}
 \item $\cE$ is a $G$-torsor on $C \times S$;
 \item $\cF$ is a $T$-torsor on $C \times S$;
 \item
 $\kappa = (\kappa_\xi : \xi \in \bX_+)$ is a system of morphisms of coherent sheaves
 \[
  \kappa_\xi : \cF_{\F_T(\xi)} \to \cE_{\mathsf{M}(\xi)}
 \]
 such that for any geometric point $\bar s$ of $S$, the pullback of each $\kappa_\xi$ to $C \times \bar s$ is injective, and
that moreover satisfies the Pl\"ucker relations. Explicitly, this means that $\kappa_0$ is the identity map, and for $\xi,\xi' \in \bX_+$ the diagram
\[
\begin{tikzcd}[column sep=large]
\cF_{\F_T(\xi+\xi')} \ar[r, "\kappa_{\xi+\xi'}"] \ar[d, "\eqref{eqn:torsor-tensor-prod}"', "\wr"] & \cE_{\mathsf{M}(\xi+\xi')} \ar[d] \\
\cF_{\F_T(\xi)} \otimes_{\mathscr{O}_{C \times S}} \cF_{\bk_T(\xi')} \ar[r, "\kappa_\xi \otimes \kappa_{\xi'}"]& \cE_{\mathsf{M}(\xi)} \otimes_{\mathscr{O}_{C \times S}} \cE_{\mathsf{M}(\xi')} \overset{\eqref{eqn:torsor-tensor-prod}}{\cong} \cE_{\mathsf{M}(\xi) \otimes \mathsf{M}(\xi')}
\end{tikzcd}
\]
commutes, where the right vertical arrow is the map induced by~\eqref{eqn:morph-Weyl-modules}.
\end{enumerate}

By~\cite[Proposition~1.2.2 and Lemma~5.3.2]{bg}, the functor $\bBun_B$ is an algebraic stack locally of finite type. There exists a canonical morphism of stacks
\begin{equation}
\label{eqn:immersion-BunB-bBunB}
\jmath : \Bun_B \to \bBun_B,
\end{equation}
which sends a $B$-torsor $\cG$ to the triple consisting of the induced $G$-torsor, the induced $T$-torsor (see~\eqref{eqn:morph-Bun-B-T}),
and the morphisms $\kappa_\xi$ induced by~\eqref{eqn:morph-character-Weyl}.
As explained in~\cite[\S 1.2]{bg} this morphism is an open immersion, and its image consists of collections such that each $\kappa_\xi$ is a morphism of vector bundles, i.e.~its cokernel is a vector bundle on $C \times S$. Concretely, this means in particular that the datum of a $B$-torsor on $C$ is equivalent to the datum of a $G$-torsor, a $T$-torsor, and a collection $\kappa$ as above, where we require each $\kappa_\xi$ to be a morphism \emph{of vector bundles}.

We also have an obvious morphism of stacks
\begin{equation}
\label{eqn:morph-bBunB-BunT}
 \bBun_B \to \Bun_T.
\end{equation}
It is well known that this morphism
induces a bijection between the sets of connected components of $\bBun_B$ and $\Bun_T$. For $\lambda \in \bY$, we will denote by $\bBun_B^\lambda \subset \bBun_B$ the component corresponding to $\Bun_T^\lambda \subset \Bun_T$, i.e.~the preimage of $\Bun_T^\lambda$. Then we have
\[
 \Bun_B^\lambda = \bBun_B^\lambda \cap \Bun_B.
\]

\begin{rmk}
\label{rmk:bBunB}
\phantomsection
\begin{enumerate}
\item
\label{it:bBunB-Hom}
 The stack $\overline{\Bun}_B$ can also be described as a ``Hom'' stack as follows. Consider the basic affine space $\overline{G/U} = \Spec(\scO(G/U))$, with the natural actions of $G$ on the left and of $T$ on the right. The quotient stack $[G \backslash \overline{G/U} / T]$ has a natural open substack given by $[G \backslash (G/U) / T] \cong [\mathrm{pt}/B]$. From this point of view, $\overline{\Bun}_B$ identifies with the open substack of the algebraic stack $\mathrm{Maps}(C, [G \backslash \overline{G/U} / T])$ of maps from $C$ to $[G \backslash \overline{G/U} / T]$ given by maps which generically take values in $[G \backslash (G/U) / T]$. (Here, algebraicity of the Hom stack is guaranteed by~\cite[Theorem~1.2]{hr}.) For more on this point of view, see~\cite[\S 2]{raskin1}.
 \item
 \label{it:bBunB-Laumon}
 In case $G=\mathrm{GL}_n$, there is another compactification of the stack $\Bun_B$, defined by Laumon (see e.g.~\cite[\S 0.2.1]{bg} for a discussion of this construction). The latter compactification has the advantage of being smooth, contrary to Drinfeld's compactification, but it has no analogue for other groups. The two constructions coincide when $G=\mathrm{GL}_2$, so that in this case $\bBun_B$ is smooth.
 \end{enumerate}
\end{rmk}

%

\subsection{Stratification}
\label{ss:stratif-bBun}

The stack $\bBun_B$ admits a canonical stratification, defined as follows. 
As explained in~\cite[Proposition~1.2.7]{bg}, for any $\mu \in \bY_{\succeq 0}$ we have a canonical locally closed immersion
\begin{equation}
\label{eqn:immersion-stratum-bBunB}
 C^\mu \times \Bun_B \to \bBun_B
\end{equation}
defined as follows.
Consider a point $E \in C^\mu(S)$ (see~\S\ref{ss:graded-sym-powers})
and a $B$-torsor $\cG$ on $C \times S$. Let $(\cE,\cF,\kappa)$ be the image of $\cG$ under~\eqref{eqn:immersion-BunB-bBunB}. Then, using the notation introduced in~\S\ref{ss:torsors}, the image of $(E,\cG)$
is the triple $(\cE,\cF (-E),\kappa')$ where $\kappa'_\xi$ is the composition
\[
 (\cF(-E))_{\F_T(\xi)} = \cF_{\F_T(\xi)} \otimes_{\scO_{C \times S}} \scO_{C \times S}(-\langle \xi, E \rangle) \hookrightarrow
 \cF_{\F_T(\xi)} \xrightarrow{\kappa_\xi} \cE_{\mathsf{M}(\xi)}
\]
where the first map is the tensor product with $\cF_{\F_T(\xi)}$ of the natural embedding~\eqref{eqn:divisor-sheaf}.

In case $\mu=0$ we have $C^0=\Spec(\F)$, and~\eqref{eqn:immersion-stratum-bBunB} coincides with the morphism~\eqref{eqn:immersion-BunB-bBunB}. It is clear that for any $\lambda \in \bY$ and $\mu \in \bY_{\succeq 0}$ the map~\eqref{eqn:immersion-stratum-bBunB} restricts a to map
\[
 C^\mu \times \Bun_B^\lambda \to \bBun_B^{\lambda+\mu},
\]
see~\eqref{eqn:twist-T-bundles}.


The image of~\eqref{eqn:immersion-stratum-bBunB} will be denoted ${}_{\mu} \bBun_B$. Then by~\cite[Proposition~1.2.5]{bg} we have a decomposition
\[
 \bBun_B = \bigsqcup_{\mu \in \bY_{\succeq 0}} {}_{\mu} \bBun_B.
\]
Here ${}_{\mu} \bBun_B$
might not be smooth, since $C^\mu$ is not smooth in general. To remedy this we use the stratification~\eqref{eqn:stratif-symprod-Y}. Namely, 
given $\Gamma \in \Part(\mu)$ we denote by ${}_\Gamma \bBun_B$
the image of the restriction of~\eqref{eqn:immersion-stratum-bBunB} to $C^\mu_\Gamma \times \Bun_B$. Then we have a stratification
\begin{equation}
\label{eqn:stratification-bBunB}
 \bBun_B = \bigsqcup_{\mu \in \bY_{\succeq 0}} \bigsqcup_{\Gamma \in \Part(\mu)} 
{}_\Gamma \bBun_B,
\end{equation}
where ${}_\Gamma \bBun_B$ is a locally closed \emph{smooth} substack for any $\Gamma$, such that
\begin{equation}
\label{eqn:strata-bBun-isom}
 {}_\Gamma \bBun_B \cong C^\mu_\Gamma \times \Bun_B.
\end{equation}

In terms of connected components, for any $\lambda \in \bY$, $\mu \in \bY_{\succeq 0}$ and $\Gamma \in \Part(\mu)$ we set
\[
{}_\Gamma \bBun_B^\lambda = {}_\Gamma \bBun_B \cap \bBun_B^\lambda.
\]
We then have
\[
\bBun_B^{\lambda} = \bigsqcup_{\mu \in \bY_{\succeq 0}} \bigsqcup_{\Gamma \in \Part(\mu)} 
{}_\Gamma \bBun_B^\lambda
\]
where
\begin{equation}
\label{eqn:strata-bBun-isom-lambda}
{}_\Gamma \bBun_B^\lambda \cong C^\mu_\Gamma \times \Bun_B^{\lambda-\mu}
\end{equation}
is smooth of dimension
\[
|\Gamma| + \langle \lambda-\mu, 2\rho \rangle + (g-1) \dim(B).
\]


\subsection{The stack \texorpdfstring{$\bBun_U$}{BunU}}
\label{ss:BunU}

Let $U$ be the unipotent radical of $B$; then we have
\begin{equation}
\label{eqn:BunU-fiber-product}
\Bun_U = \Bun_B \times_{\Bun_T} \{ \cF^0 \}.
\end{equation}
Here $\Bun_U$ is a smooth connected stack, of dimension $(g-1) \dim(U)$.

We set
\[
\bBun_U := \bBun_B \times_{\Bun_T} \{ \cF^0 \}.
\]
In the moduli description of~\S\ref{ss:def-bBunB}, this means that we impose that $\cF$ is the trivial $T$-torsor, so that $\cF_{\F_T(\xi)} = \mathscr{O}_{C \times S}$ for any $\xi$. 

Setting, for $\mu \in \bY_{\succeq 0}$ and $\Gamma \in \Part(\mu)$,
\[
 {}_\Gamma \bBun_U = {}_\Gamma \bBun_B \times_{\Bun_T} \{\cF^0\},
\]
we have
\[
 \bBun_U = \bigsqcup_{\mu \in \bY_{\succeq 0}} \bigsqcup_{\Gamma \in \Part(\mu)} 
{}_\Gamma \bBun_U.
\]
For any $\mu,\Gamma$ as above we have
\[
 {}_\Gamma \bBun_U \cong C^\mu_\Gamma \times_{\Bun_T} \Bun_B
\]
where the map $C^\mu_\Gamma \to \Bun_T$ is given by $E \mapsto \cF^0(E)$. In particular, ${}_\Gamma \bBun_U$ is smooth, of dimension $|\Gamma| - \langle \mu,2\rho\rangle + (g-1) \dim(U)$.

\subsection{Some complexes of sheaves}
\label{ss:complexes-sheaves}

We now fix a field of coefficients $\bk$ for our sheaves. We assume that we are in one of the following settings:
\begin{enumerate}
 \item 
 $\F$ is arbitrary, $\ell$ is a prime number invertible in $\F$, $\bk$ is either a finite extension or an algebraic closure of $\F_\ell$ or $\Q_\ell$, and we consider \'etale $\bk$-sheaves on separated schemes of finite type over $\F$;
 \item
 $\F=\C$, $\bk$ is arbitrary, and we work with sheaves for the analytic topology on separated schemes of finite type over $\F$.
\end{enumerate}
In each case, for any prestack locally of finite type $X$ over $\F$ we have an $\infty$-category $\Shv(X)$ of $\bk$-sheaves on $X$, see~\cite[\S 2.1]{adr}. (This construction is an obvious translation of the more familiar case when $\bk$ has characteristic $0$, which is prominent in the geometric Langlands literature, see e.g.~\cite[\S 0.5]{gaitsgory}. In the present subsection we could work with the usual constructible sheaves formalism, but when making contact with~\cite{adr} in Section~\ref{sec:constructibility} it will be important to use the $\infty$-categorical version.) In this formalism we have $!$-pullback functors associated with any morphism of prestacks locally of finite type; for a discussion of other functors, see~\cite[\S 2.2]{adr} and the references therein.

Below we will consider the intersection cohomology complex $\IC_{\Bun_B}$ of the smooth stack $\Bun_B$ with coefficients in $\bk$. By smoothness and the dimension formula in~\S\ref{ss:stacks-torsors}, we have
\[
 \IC_{\Bun_B} = \bigoplus_{\lambda \in \bY} \underline{\bk}_{\Bun_B^\lambda} [(g-1)\dim(B) + \langle \lambda,2\rho \rangle].
\]
We will also consider the intermediate extension $\IC_{\bBun_B} = \jmath_{!*} \IC_{\Bun_B}$, and the $!$-extension $\jmath_{!} \IC_{\Bun_B}$ (where $\jmath$ is as in~\eqref{eqn:immersion-BunB-bBunB}). Here $\IC_{\bBun_B}$ is the direct sum of the intersection cohomology complexes $\IC_{\bBun_B^\lambda}$ where $\lambda$ runs over $\bY$.

The following property is proved in~\cite[Theorem~5.1.5]{bg} in the case of $\overline{\mathbb{Q}}_\ell$-coefficients; since it only relies on some geometric properties of the stack $\bBun_B$, the same proof applies in our case.

\begin{prop}
\label{prop:ULA}
 The complexes $\IC_{\bBun_B}$ and $\jmath_{!} \IC_{\Bun_B}$ are 
 universally locally acyclic with respect to the morphism~\eqref{eqn:morph-bBunB-BunT}.
\end{prop}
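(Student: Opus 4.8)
The plan is to establish universal local acyclicity (ULA) of $\IC_{\bBun_B}$ and $\jmath_! \IC_{\Bun_B}$ relative to the map $p : \bBun_B \to \Bun_T$ of~\eqref{eqn:morph-bBunB-BunT} by adapting the argument of~\cite[Theorem~5.1.5]{bg}, observing that it uses only geometric inputs available in our setting. The key geometric point is the following: $\Bun_T$ is a \emph{smooth} algebraic stack, $p$ is of finite type on each connected component (or at least finite-type after restriction to finite-type open substacks of source and target), and $\bBun_B$ is covered by the open substacks $\Bun_B$ together with nontrivial strata. First I would reduce the ULA property to a local statement: since ULA is local on the source and target for the smooth topology and is compatible with restriction to open substacks, and since $\IC_{\bBun_B}$ and $\jmath_!\IC_{\Bun_B}$ both restrict to the (shifted) constant sheaf on the open substack $\Bun_B$, on which the map $\Bun_B \to \Bun_T$ is smooth by~\eqref{eqn:morph-Bun-B-T}, the ULA property over $\Bun_B$ is automatic (the constant sheaf on a stack smooth over a smooth base is ULA over that base). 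The content is thus concentrated along the boundary strata ${}_\Gamma \bBun_B$.

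Second I would exploit the structure of $\bBun_B$ over $\Bun_T$ in a neighborhood of a boundary point more explicitly. The standard approach, following~\cite{bg} and~\cite{bfgm}, is via a \emph{factorization}-type local model: a Zariski-local or smooth-local description of $\bBun_B$ near a point of ${}_\Gamma \bBun_B$ as (an open subset of) a product of a piece of $\Bun_B$ with a Zastava-type space, in a way compatible with the projection to $\Bun_T$ (the $\Bun_T$-direction being carried by the twist~\eqref{eqn:twist-T-bundles} by the divisor $E \in C^\mu$). Since ULA is stable under smooth base change, products, and passage to open substacks, it suffices to know that the relevant Zastava space contributes an ULA complex over the corresponding symmetric-power base $C^\mu$ — but the fibers of $C^\mu \to \Bun_T$ landing in a fixed component are themselves covered by pieces over which everything is a product, so one ultimately needs only that $\IC$ of the Zastava space is ULA over $C^\mu$ with respect to the symmetrization/support map. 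This last fact is again proved in~\cite{bg} by a contraction/$\Gm$-action argument (or Braden's hyperbolic localization), which is purely geometric and characteristic-independent; the same applies to $\jmath_!\IC_{\Bun_B}$ since $\jmath_!$ commutes with the relevant operations and the $!$-extension of the ULA constant sheaf from the open part remains ULA.

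Third, for the clean bookkeeping I would phrase the argument as: (i) recall that the defining property of ULA (Artin–Gabber / Beilinson form: $\phi$-acyclicity for all test maps, equivalently compatibility of nearby cycles with pullback along $\Bun_T' \to \Bun_T$ for all $\Bun_T'$) is local on source and target and stable under smooth base change, finite coproducts, and Verdier duality on the source; (ii) note $\IC_{\bBun_B}$ is self-dual so its ULA over $\Bun_T$ is equivalent to that of its dual, while $\jmath_!\IC_{\Bun_B}$ is Verdier-dual to $\jmath_*\IC_{\Bun_B}$, so treating the pair simultaneously is natural; (iii) reduce to the boundary strata and invoke the factorization local model plus the $\Gm$-contraction argument on Zastava spaces to conclude. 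I would cite~\cite[Theorem~5.1.5]{bg} for the structure of the proof and~\cite{fm,bfgm} for the Zastava input, emphasizing that no ingredient is sensitive to $\chr(\bk)$.

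The hard part will be verifying carefully that \emph{every} step in~\cite[Theorem~5.1.5]{bg} is genuinely geometric and does not secretly use perverse-sheaf-theoretic facts special to characteristic $0$ (for instance, decomposition theorem or semisimplicity statements). My expectation is that it does not — the ULA statement there is proved by the contraction principle and factorization, which only use the $\Gm$-action and the stratified geometry — so the proof transfers verbatim; but making this explicit, and in particular checking that the intermediate- and $!$-extension functors interact with the smooth-local product decompositions as claimed (so that ULA on the model pieces glues to ULA on $\bBun_B$), is the step requiring genuine care. A secondary subtlety is the finite-type issue: $\bBun_B$ and $\Bun_T$ are only locally of finite type, so one must restrict to finite-type open substacks throughout and check ULA is detected on such, which is routine but must be stated.
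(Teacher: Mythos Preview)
Your proposal is correct and aligns with the paper's approach: the paper simply cites~\cite[Theorem~5.1.5]{bg} and observes that the proof there uses only geometric properties of $\bBun_B$ and hence carries over to arbitrary coefficients, which is exactly what you have spelled out in more detail. Your expanded sketch (smooth-local reduction, Zastava local models, $\Gm$-contraction) is a faithful account of the ingredients in that reference, so there is nothing to add.
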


We will also denote by $\IC_{\bBun_U}$ the intersection cohomology complex associated with the constant local system on the smooth open substack $\Bun_U \subset \bBun_U$.
Let us denote by
\[
 f : \bBun_U \to \bBun_B
\]
the canonical map.

\begin{lem}
\label{lem:pullback-IC-BunBU}
 There exists a canonical isomorphism
 \[
  \IC_{\bBun_U} \cong f^* \IC_{\bBun_B} [-(g-1)\dim(T)].
 \]
\end{lem}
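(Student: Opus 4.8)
The plan is to leverage the Cartesian diagram
\[
\begin{tikzcd}
\bBun_U \ar[r, "f"] \ar[d] & \bBun_B \ar[d, "\eqref{eqn:morph-bBunB-BunT}"] \\
\{\cF^0\} \ar[r] & \Bun_T
\end{tikzcd}
\]
together with the universal local acyclicity supplied by Proposition~\ref{prop:ULA}. First I would recall that, since $\{\cF^0\} \to \Bun_T$ is a section of a smooth morphism between smooth stacks, it is a regular closed immersion of codimension $\dim(\Bun_T) = (g-1)\dim(T)$; more precisely, working one connected component at a time, $\{\cF^0\} \hookrightarrow \Bun_T^0$ is a closed immersion of smooth stacks with $\Bun_T^0$ of dimension $(g-1)\dim(T)$ (here I use that $\{\cF^0\}$, being the classifying stack of the automorphism group $T$ of the trivial torsor, has dimension $-\dim(T)$, so the codimension is $g\dim(T)$; one must be slightly careful with the stacky dimension count, but in any case the relevant shift is the relative dimension of~\eqref{eqn:morph-bBunB-BunT}, which equals $\dim(B) - \dim(T) = \dim(U)$ offset against $\dim(\Bun_B^\lambda) - \dim(\bBun_U \text{ component})$). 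Rather than belabor the stacky bookkeeping, the cleanest route is to use smoothness of~\eqref{eqn:morph-bBunB-BunT}: since that morphism is smooth of relative dimension $\dim(U)$ (it is a base change of the smooth morphism $\Bun_B \to \Bun_T$ of relative dimension $\dim(U)$), its restriction $f$ fits into a diagram where $f^*$ and $f^!$ differ by the shift $[2\dim(U) \cdot \text{?}]$ — no; instead $f$ itself need not be smooth, so I should argue differently.

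Here is the argument I would actually carry out. Because $\IC_{\bBun_B}$ is universally locally acyclic over $\Bun_T$ (Proposition~\ref{prop:ULA}), its $*$-restriction and $!$-restriction to the fiber $\bBun_U = \bBun_B \times_{\Bun_T}\{\cF^0\}$ agree up to a shift by twice the relative dimension: concretely, for a ULA complex $\mathcal{K}$ on a stack $Y$ smooth over a smooth base $Z$, and a point $z \in Z$ with inclusion $i_z$, one has $i_z^! \mathcal{K} \cong i_z^* \mathcal{K}[-2\dim_z Z]$ (this is one of the standard characterizations/consequences of ULA, cf.~\cite{bg, adr}). Applying this with $Y = \bBun_B$, $Z = \Bun_T$ (on each component $\Bun_T^\lambda$, which has dimension $(g-1)\dim(T)$) and $\mathcal{K} = \IC_{\bBun_B}$, we get $f^! \IC_{\bBun_B} \cong f^* \IC_{\bBun_B}[-2(g-1)\dim(T)]$. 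On the other hand, $f^! \IC_{\bBun_B}$ restricted to the smooth open substack $\Bun_U = \jmath^{-1}(\bBun_U) \cap \bBun_U$ is, by the smoothness of $\Bun_B \to \Bun_T$ and the compatibility of intermediate extensions with smooth (hence in particular with the appropriate) pullback, the constant complex shifted so as to be perverse on $\Bun_U$; since $\IC_{\bBun_B}|_{\Bun_B}$ is the constant complex shifted by $\dim \Bun_B^\lambda = (g-1)\dim(B) + \langle\lambda,2\rho\rangle$, and $\Bun_U$ has dimension $(g-1)\dim(U) = (g-1)\dim(B) - (g-1)\dim(T)$, the $!$-pullback along the fiber inclusion $\Bun_U \hookrightarrow \Bun_B$ (which, being a base change of $\{\cF^0\} \hookrightarrow \Bun_T$, is again regular of the same codimension) carries the constant complex in degree $-\dim\Bun_B^\lambda$ to the constant complex in degree $-\dim\Bun_B^\lambda + 2(g-1)\dim(T)$; combined with $f^!\cong f^*[-2(g-1)\dim(T)]$ this shows $f^*\IC_{\bBun_B}|_{\Bun_U} = \underline{\bk}_{\Bun_U}[(g-1)\dim(U) + (g-1)\dim(T) + \langle\lambda,2\rho\rangle]$. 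Rearranging, $f^*\IC_{\bBun_B}[-(g-1)\dim(T)]$ restricts on $\Bun_U$ to $\underline{\bk}_{\Bun_U}[(g-1)\dim(U) + \langle\lambda,2\rho\rangle]$, which is precisely the perverse normalization of the constant complex on the component of $\Bun_U$ mapping to $\Bun_B^\lambda$ (up to a global shift by $\langle\lambda,2\rho\rangle$ which one absorbs into indexing components, exactly as in the displayed formula for $\IC_{\Bun_B}$ in~\S\ref{ss:complexes-sheaves}).

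To upgrade this identification over the open part to an isomorphism of intersection cohomology complexes, I would invoke that $f^* \IC_{\bBun_B}[-(g-1)\dim(T)]$ is perverse and has no subobjects or quotients supported on the complement $\bBun_U \smallsetminus \Bun_U$: perversity follows since $f$ is the fiber inclusion of a ULA complex (ULA complexes restrict to perverse complexes on fibers, up to the shift we have inserted — this is again a standard property, e.g.~\cite[\S 5]{bg}), and the absence of sub/quotients supported on the boundary follows from the same ULA property applied to the strata of $\bBun_U$, or alternatively from the fact that $f$ is a base change of a regular immersion into a smooth stack so that $f^!$ and $f^*$ of an IC complex remain (co)supported correctly. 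Then by the universal property of the intermediate extension, $f^* \IC_{\bBun_B}[-(g-1)\dim(T)] \cong \IC_{\bBun_U}$. The main obstacle I anticipate is the careful stacky dimension count — making sure the shift is exactly $-(g-1)\dim(T)$ and not off by $\dim(T)$ — together with pinning down precisely which standard ULA statement (from~\cite{bg} or~\cite{adr}) licenses the passage ``ULA $\Rightarrow$ $f^!\cong f^*[\text{shift}]$ on fibers and $\Rightarrow$ perversity on fibers''; everything else is formal manipulation of intermediate extensions.
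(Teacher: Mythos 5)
Your argument is correct and follows the paper's strategy: both rest on the ULA of $\IC_{\bBun_B}$ over $\Bun_T$ from Proposition~\ref{prop:ULA} together with the smoothness of $\Bun_B \to \Bun_T$. The paper outsources the remaining work to~\cite[Lemma~7.1.3]{bg}, which is precisely the general statement you reprove inline (a complex ULA over a smooth base has fiberwise $*$- and $!$-restrictions agreeing up to the shift by $-2\dim$ of the base, the shifted restriction is perverse, and IC goes to IC); a small tidying remark is that all your $\langle\lambda,2\rho\rangle$ terms vanish, since $\bBun_U$ lies entirely in $\bBun_B^0$, and the meandering first paragraph about stacky codimension can simply be dropped.
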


\begin{proof}
 This follows from~\cite[Lemma~7.1.3]{bg}, using Proposition~\ref{prop:ULA} and the fact that the map $\Bun_B \to \Bun_T$ is smooth (see~\S\ref{ss:stacks-torsors}).
\end{proof}

\begin{rmk}
In case $C=\bbP^1$, Lemma~\ref{lem:pullback-IC-BunBU} has a much easier proof. Namely, we have $\Bun_T^0 = \Spec(\F)/T$, so that the morphism $\{\cF^0\} \to \Bun_T$ is smooth. Hence $f$ is smooth as well, which implies that $\IC_{\bBun_U} \cong f^* \IC_{\bBun_B} [-(g-1)\dim(T)]$ by standard properties of smooth pullback functors.
\end{rmk}

\section{Zastava stacks and schemes}
\label{sec:zastavas}

We continue with our smooth projective curve $C$ over $\F$.

\subsection{Definition}
\label{ss:def-Zas}

Let $\mu \in \bY_{\succeq 0}$.
We consider the stack $\Zas^\mu$ 
which sends a scheme $S$ to the groupoid whose objects are the following data:
\begin{itemize}
 \item a point $E \in C^\mu(S)$ (see~\S\ref{ss:graded-sym-powers});
 \item a $T$-torsor $\cF$ on $C \times S$;
 \item a $G$-torsor $\cE$ on $C \times S$;
 \item an isomorphism $\beta$ between the restrictions of $\cE$ and $\cF \times^T G$ to $(C \times S) \smallsetminus E$
\end{itemize}
which satisfy the following condition. Consider for $\xi \in \bX_+$ the following commutative diagram:
\begin{equation}
\label{eqn:diag-Zas}
\begin{tikzcd}[row sep=small, column sep=small]
((\cF(-E))_{\F_T(\xi)})_{|(C \times S) \smallsetminus E} \ar[rr, hook, "\eqref{eqn:divisor-sheaf}"] \ar[dd, "\eqref{eqn:divisor-sheaf-triv}"', "\wr"] \ar[rd] &&
(\cF_{\F_T(\xi)})_{|(C \times S) \smallsetminus E} \ar[dd, equal] \\
& (\cE_{\mathsf{M}(\xi)})_{|(C \times S) \smallsetminus E} \ar[ru] \\
(\cF_{\F_T(\xi)})_{|(C \times S) \smallsetminus E} \ar[rr, equal] \ar[dr, "\eqref{eqn:morph-character-Weyl}"'] && (\cF_{\F_T(\xi)})_{|(C \times S) \smallsetminus E} \\
& (\cF_{\mathsf{M}(\xi)})_{|(C \times S) \smallsetminus E} \ar[uu, leftarrow, crossing over, "\beta" near end, "\wr"' near end] \ar[ur, "\eqref{eqn:morph-Weyl-character}"'] 
\end{tikzcd}
\end{equation}
Then we require that the top face of this diagram admit (for any $\xi \in \bX_+$) an extension to a diagram of sheaves
\[
\begin{tikzcd}[row sep=small]
(\cF(-E))_{\F_T(\xi)} \ar[rr, hook, "\eqref{eqn:divisor-sheaf}"] \ar[dr, "\kappa_\xi"'] && \cF_{\F_T(\xi)}  \\
& \cE_{\mathsf{M}(\xi)} \ar[ur, two heads, "\tau_\xi"']
\end{tikzcd}
\]
in which $\tau_\xi$ is a surjective map of vector bundles.

By construction there is a natural morphism of stacks
\begin{equation}
\label{eqn:morph-Zas-BunT}
\Zas^\mu \to C^\mu \times \Bun_T
\end{equation}
sending $(E,\cF,\cE, \beta)$ to $(E, \cF)$.
This map has a canonical section
\[
s^\mu : C^\mu \times \Bun_T \to \Zas^\mu
\]
sending a pair $(E, \cF)$ to the data $(E,\cF, \cF \times^T G,\mathrm{id})$. 

We will denote by
\[
p_1 : \Zas^\mu \to \Bun_T
\]
the composition of~\eqref{eqn:morph-Zas-BunT} with projection on $\Bun_T$. There is \emph{another} morphism
\[
p_2 : \Zas^\mu \to \Bun_T,
\]
sending $(E,\cF,\cE, \beta)$ to $\cF(-E)$. (In other words, $p_2$ is the composition of~\eqref{eqn:morph-Zas-BunT} with the morphism $C^\mu \times \Bun_T \to \Bun_T$ defined by $(E,\cF) \mapsto \cF (-E)$.)

\begin{rmk}
\begin{enumerate}
 \item 
The stack $\Zas^\mu$ is denoted $Z^\mu_{\Bun_T}$ in~\cite{bfgm}. There exist other versions of this stack that are sometimes considered (but which will not appear below), see e.g.~\cite[\S 20.1.2]{gaitsgory-lysenko}, in particular one which omits the condition that the morphisms $\tau_\xi$ are surjective. Proposition~\ref{prop:Zastava-scheme} below has a variant in this setting, where the analogue of~\eqref{eqn:morph-Zas-BunT} is now representable by schemes and \emph{projective}.
\item
As explained e.g.~in~\cite[\S 2.10]{raskin1},
Zastava stacks can also be described from the point of view of Remark~\ref{rmk:bBunB}\eqref{it:bBunB-Hom}. Namely, the disjoint union of the stacks $\Zas^\mu$ identifies with the open substack in the stack $\mathrm{Maps}(C, [B^- \backslash \overline{G/U} / T])$ of maps from $C$ to $[B^- \backslash \overline{G/U} / T]$ which generically take values in the open substack $[B^- \backslash (B^- B/U) / T] \cong [\mathrm{pt}/T]$.
\end{enumerate}
\end{rmk}

The following statement is probably known to experts. It is a variation on results that appear in the literature (see in particular~\cite{bfgm} or~\cite[\S\S20.1--20.2]{gaitsgory-lysenko}), but we have not found it explicitly in published references. (The proof uses some results and constructions that will be recalled in detail later in this section.)

\begin{prop}
\label{prop:Zastava-scheme}
 The morphism~\eqref{eqn:morph-Zas-BunT} is representable by schemes, and affine of finite type.
\end{prop}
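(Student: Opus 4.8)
The plan is to produce, Zariski-locally on $C^\mu \times \Bun_T$, an explicit closed embedding of $\Zas^\mu$ into an affine scheme of finite type over the base. First I would reduce to describing the fibre of~\eqref{eqn:morph-Zas-BunT} over an $S$-point $(E,\cF)$: such a fibre is the set of pairs $(\cE,\beta)$, and the Plücker-type data together with the surjectivity condition on the $\tau_\xi$ will let me reconstruct $(\cE,\beta)$ from the collection of maps $\kappa_\xi \colon (\cF(-E))_{\F_T(\xi)} \to \cE_{\mathsf{M}(\xi)}$, or rather from their images. More precisely, the composition $\tau_\xi \circ \kappa_\xi$ is the canonical embedding $(\cF(-E))_{\F_T(\xi)} \hookrightarrow \cF_{\F_T(\xi)}$ from~\eqref{eqn:divisor-sheaf}, so $\kappa_\xi$ is a lift of this embedding along the surjection $\tau_\xi$; twisting everything by $\cF$, one sees (using that $\cF \times^T G$ trivializes $\cE$ away from $E$ and that $\mathsf{M}(\xi)$ has a $B^-$-stable line, via~\eqref{eqn:morph-Weyl-character}) that the datum $(\cE,\beta)$ is equivalent to a compatible system of generalized Plücker maps, exactly as in the $S$-point description of $\bBun_B$ in~\S\ref{ss:def-bBunB} but now with an identification over $(C\times S)\smallsetminus E$ built in.

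Concretely, I would pick a finite generating set $\xi_1,\dots,\xi_r$ of the monoid $\bX_+$ (using that $G$ has simply connected derived subgroup, so $\bX_+$ is finitely generated), and record the data of $\Zas^\mu$ by the finite collection of maps $\kappa_{\xi_i}$, or better by the induced maps of vector bundles after the relevant twist by $\cF$ and by $\cO_{C\times S}(-\langle\xi_i,E\rangle)$. Since $C$ is projective and we work relative to a fixed $(E,\cF)$, a choice of very ample line bundle on $C$ and a Hilbert-polynomial bound (coming from the fact that $\cE_{\mathsf M(\xi)}$ has prescribed degree and rank, as $\cE$ lies over $\Bun_G$ whose components are of finite type) lets me express each $\kappa_{\xi_i}$, up to a bounded twist, as a global section of a coherent sheaf on $C\times S$ whose pushforward to $S$ is coherent. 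The Plücker relations~\eqref{eqn:morph-Weyl-modules}, the non-degeneracy at geometric points, and the surjectivity of the $\tau_\xi$ then cut out a locally closed, in fact affine, subscheme of the total space of these (finitely many) vector bundles over $S$; openness of the surjectivity locus and of the generic-isomorphism locus are standard, and one checks that the resulting scheme is of finite type over $S$.

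The main obstacle, and the step I would spend the most care on, is \textbf{finiteness}: a priori one must allow the $\kappa_\xi$ for all $\xi\in\bX_+$, and must argue both that finitely many fundamental $\xi_i$ suffice to recover all the rest (this is where the Plücker relations and the fact that $\overline{G/U}=\Spec(\scO(G/U))$ has coordinate ring generated in those finitely many weights enter, cf.\ Remark~\ref{rmk:bBunB}\eqref{it:bBunB-Hom}) and that the image sheaves $\cE_{\mathsf M(\xi_i)}$ vary in a bounded family so that the representing functor is a closed condition inside a finite-type affine scheme over the base rather than an infinite intersection. Once boundedness is in hand, representability by a scheme follows because everything is a sub-functor of a (relative) affine space of sections, separated and of finite type follow from the explicit equations, and \emph{affineness} of~\eqref{eqn:morph-Zas-BunT} is then immediate since each local model is a closed subscheme of such a relative affine space; I would also remark that the surjectivity condition on $\tau_\xi$ is precisely what forces affineness rather than merely quasi-affineness (as noted in the Remark following the statement, dropping it yields the projective variant instead). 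Finally I would glue the local models over an affine cover of $C^\mu\times\Bun_T$, the gluing being automatic since the construction is functorial in $(E,\cF)$.
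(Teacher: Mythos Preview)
Your approach is genuinely different from the paper's, and the affineness step has a real gap.

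The paper does not attempt to build local affine models directly. Instead it first maps $\Zas^\mu$ into the relative Be{\u\i}linson--Drinfeld Grassmannian $\Gr_{G,C^\mu,\Bun_T}$ (a twisted form of $\Gr_G$ over $C^\mu \times \Bun_T$, built from the Hecke stack) by a locally closed immersion; this yields representability by ind-schemes. Representability by a scheme of finite type then comes for free from the already-known algebraicity of $\Zas^\mu$ via its open embedding into $\bBun_B \times_{\Bun_G} \Bun_{B^-}$ (the relation recalled in~\S\ref{ss:relation-Zastava-Drinfeld}), so the boundedness issue you flag as the main obstacle is sidestepped entirely. For affineness, the paper observes that surjectivity of the $\tau_\xi$ is exactly the condition that $\cE$ acquire a $B^-$-reduction compatible with $\beta$, so that $\Zas^\mu$ admits a \emph{closed} immersion into the relative Grassmannian $\Gr_{B^-,C^\mu,\Bun_T}$ for the group $B^-$; the latter is ind-affine over $C^\mu \times \Bun_T$ because the semi-infinite orbits $\rS^-_\lambda \subset \Gr$ are ind-affine, and a closed sub-ind-scheme of an ind-affine ind-scheme that happens to be a scheme of finite type is affine.

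Your sketch, by contrast, tries to record the $\kappa_{\xi_i}$ as global sections of coherent sheaves pushed down to $S$ and cut out all conditions directly inside a relative affine space. Even granting boundedness, your affineness argument does not close: you correctly note that surjectivity of $\tau_\xi$ is an \emph{open} condition, then a few lines later assert that ``each local model is a closed subscheme of such a relative affine space''. These two statements are inconsistent; a locally closed subscheme of a relative affine space need not be affine over the base. Your remark that ``the surjectivity condition on $\tau_\xi$ is precisely what forces affineness rather than merely quasi-affineness'' is a correct statement of the phenomenon but not an explanation of it. The missing mechanism is exactly the $B^-$-factorization above: surjectivity of the $\tau_\xi$ lets you \emph{replace} the ambient ind-projective $\Gr_G$-type target by the ind-affine $\Gr_{B^-}$-type target, into which $\Zas^\mu$ then sits as a genuinely closed subscheme.
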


\begin{proof}
 Consider the Hecke stack $\mathbf{Hk}_{G,C^\mu}$, whose groupoid of $S$-points consists of $4$-tuples $(E,\cE_1,\cE_2, \gamma)$ where $E \in C^\mu(S)$, $\cE_1,\cE_2$ are $G$-torsors on $C \times S$, and $\gamma : (\cE_1)_{|(C \times S) \smallsetminus E} \simto (\cE_2)_{|(C \times S) \smallsetminus E}$ is an isomorphism of $G$-bundles. By a variation on standard arguments (see e.g.~\cite[\S 5.2.1]{bd}), one checks that $\mathbf{Hk}_{G,C^\mu}$ is an ind-algebraic stack. We also have a canonical morphism
 \begin{equation}
 \label{eqn:morph-Hk-Bun}
  \mathbf{Hk}_{G,C^\mu} \to C^\mu \times \Bun_G,
 \end{equation}
which sends the $4$-tuple $(E,\cE_1,\cE_2,\gamma)$ to the pair $(E,\cE_1)$. 

We claim that~\eqref{eqn:morph-Hk-Bun} is representable by ind-schemes, in the sense that for any scheme $S$ and any morphism $S \to C^\mu \times \Bun_G$ the fiber product
\begin{equation}
\label{eqn:fiber-product-Hk}
 S \times_{C^\mu \times \Bun_G} \mathbf{Hk}_{G,C^\mu}
\end{equation}
is an ind-scheme. 
(This fact is probably known, although we have not found it in the literature; closely related statements appear e.g.~in~\cite[Remark~5.2.2(ii)]{bd},~\cite[Appendix~A]{varshavsky},~\cite[\S 2]{lafforgue}.) Fix a scheme $S$ endowed with a morphism $S \to C^\mu \times \Bun_G$. This datum provides a point $E \in C^\mu(S)$, and a $G$-torsor $\cE$ on $C \times S$. Recall the Be{\u\i}linson--Drinfeld affine Grassmannian $\Gr_{G,C^\mu}$ over $C^\mu$, which is an ind-scheme whose $S'$-points (for $S'$ a scheme) are triples $(F,\cE',\beta)$, where $F \in C^\mu(S')$, $\cE'$ is a $G$-torsor on $C \times S'$, and $\beta$ is a trivialization of $\cE'$ over $\widehat{F}^\circ$. (Here we use the notation introduced in~\S\ref{ss:indep-curve} below.) We also have the loop group $\Loop_{C^\mu} G$, which is a group ind-scheme over $C^\mu$ classifying pairs of a point $F \in C^\mu(S')$ and a map $\widehat{F}^\circ \to G$. Using $E \in C^\mu(S)$, we can consider the group ind-scheme $S \times_{C^\mu} \Loop_{C^\mu} G$ over $S$, whose $S'$-points (for $S'$ an $S$-scheme) are maps $\widehat{E \times_S S'}^\circ \to G$, or equivalently, maps of $S'$-schemes $\widehat{E \times_S S'}^\circ \to S' \times G$.  Next, using the $G$-torsor $\cE$, we can define a torsor $\widetilde{\cE}$ for $S \times_{C^\mu} \Loop_{C^\mu} G$, whose $S'$-points (for $S'$ an $S$-scheme) consist of maps $\widehat{E \times_S S'}^\circ \to \cE \times_S S'$ making the following diagram commutative:
\[
 \begin{tikzcd}
  & \cE \times_S S' \ar[d] \\
  \widehat{E \times_S S'}^\circ \ar[ru] \ar[r] & C \times S'.
 \end{tikzcd}
\]
Then standard arguments show that the fiber product~\eqref{eqn:fiber-product-Hk} identifies with the twisted product
\[
 \widetilde{\cE} \times_S^{S \times_{C^\mu} \Loop_{C^\mu} G} (S \times_{C^\mu} \Gr_{G,C^\mu}) \cong \widetilde{\cE} \times_{C^\mu}^{\Loop_{C^\mu} G} \Gr_{G,C^\mu},
\]
and that the latter is an ind-scheme.

Consider now the fiber product
\[
 \Gr_{G,C^\mu, \Bun_T} = \mathbf{Hk}_{G,C^\mu} \times_{\Bun_G} \Bun_T.
\]
This stack classifies the data of a point $E \in C^\mu(S)$, a $T$-torsor $\cF$ on $C \times S$, a $G$-torsor $\cE$ on $C \times S$, and an isomorphism between the restrictions of $\cE$ and $\cF \times^T G$ to $(C \times S) \smallsetminus E$; it can therefore be considered as a ``relative version of $\Gr_G$ over $C^\mu \times \Bun_T$.'' 
It is endowed with a canonical morphism to $C^\mu \times \Bun_G \times \Bun_T$, and by what we have established above, the morphism $\Gr_{G,C^\mu, \Bun_T} \to C^\mu \times \Bun_T$ is representable by ind-schemes.
There exists a canonical morphism
\begin{equation}
\label{eqn:morph-Zas-Hecke}
 \Zas^\mu \to \Gr_{G,C^\mu, \Bun_T},
\end{equation}
which is representable by a locally closed immersion; in fact, as e.g.~in~\cite[\S 12.2.2]{gaitsgory-lysenko} the condition that the morphisms in~\eqref{eqn:diag-Zas} extend to morphisms of sheaves on $C \times S$ is a closed condition, and the condition that each $\tau_\xi$ is a surjection of vector bundles is an open condition therein. These facts imply that~\eqref{eqn:morph-Zas-BunT} is representable by ind-schemes.

To show that this morphism is representable by \emph{schemes} and of finite type we use the fact that $\Zas^\mu$ is an algebraic stack locally of finite type, as follows from the relation with the stack $\bBun_B$ recalled in~\S\ref{ss:relation-Zastava-Drinfeld} below; see~\cite[Comments after Proposition~3.2]{bfgm} or~\cite[Corollary~20.2.3]{gaitsgory-lysenko}. 

Finally, the fact that our morphism is affine essentially boils down to the fact that the semi-infinite orbits $\rS^-_\lambda \subset \Gr$ (see~\S\ref{ss:si-sheaves} below) are ind-affine. Namely, we have a stack $\Gr_{B^-,C^\mu, \Bun_T}$ defined as above with $G$ replaced by $B^-$, which is endowed with a canonical morphism $\Gr_{B^-,C^\mu, \Bun_T} \to C^\mu \times \Bun_T$ which is representable by ind-schemes, and moreover ind-affine. By considerations similar to those in the comments following~\eqref{eqn:immersion-BunB-bBunB} (now for $B^-$ instead of $B$, see~\S\ref{ss:relation-Zastava-Drinfeld}) the morphism~\eqref{eqn:morph-Zas-Hecke} factors through a morphism $\Zas^\mu \to \Gr_{B^-,C^\mu, \Bun_T}$, which is representable by a closed immersion. (This morphism takes values in the closed substack $\Gr_{B^-,C^\mu, \Bun_T} \times_{\Bun_{B^-} \times \Bun_T} \Bun_{B^-}$ of $\Gr_{B^-,C^\mu, \Bun_T}$, where the morphism $\Bun_{B^-} \to \Bun_T$ is as in~\eqref{eqn:morph-Bun-B-T}.)
The ind-affineness property stated above then implies that our morphism is affine.
\end{proof}

\begin{rmk}
The proof of Proposition~\ref{prop:Zastava-scheme} given above essentially repeats the classical proof of the fact that the stack $\uZas^\mu$ defined in~\S\ref{sss:classical-Zastava} is a scheme of finite type (affine over $C^\mu$) in a ``relative'' setting over $\Bun_T$. Alternatively, these properties can be deduced from the latter facts using Proposition~\ref{prop:local-isom} below.
\end{rmk}

\subsection{Some fibers}

We continue with our $\mu \in \bY_{\succeq 0}$. In this subsection we introduce two slightly different schemes obtained from $\Zas^\mu$ that appear in many references. See Remark~\ref{rmk:local-isom-uZas-uZasG} below for a relation between these two schemes.

\subsubsection{The ``classical'' Zastava scheme}
\label{sss:classical-Zastava}

We set
\[
\uZas^\mu := \{\cF^0 \} \times_{\Bun_T, p_1} \Zas^\mu.
\]
By Proposition~\ref{prop:Zastava-scheme}, $\uZas^\mu$ is
a scheme of finite type over $\F$. It is equipped with a canonical morphism
$\uZas^\mu \to C^\mu$, which is affine.
Concretely, this scheme classifies the following data:
\begin{itemize}
 \item a point $E$ of $C^\mu(S)$;
 \item a $G$-torsor $\cE$ on $C \times S$;
 \item a trivialization $\beta$ of $\cE_{|(C \times S) \smallsetminus E}$
\end{itemize}
which satisfies the condition that, for any $\xi \in \bX_+$, the top face of the commutative diagram
\begin{equation}
\label{eqn:diag-uZas}
\hspace{-4em}
\begin{tikzcd}[row sep=small, column sep=tiny]
\F_T(\xi) \otimes \scO_{C \times S}(-\langle \xi,E \rangle)_{|(C \times S) \smallsetminus E} \ar[rr, hook, "\eqref{eqn:divisor-sheaf}"] \ar[rd] \ar[dd, "\eqref{eqn:divisor-sheaf-triv}"', "\wr"] &&
\F_T(\xi) \otimes \scO_{(C \times S) \smallsetminus E} \ar[dd, equal] \\
& (\cE_{\mathsf{M}(\xi)})_{|(C \times S) \smallsetminus E} \ar[ru] \\
\F_T(\xi) \otimes \scO_{(C \times S) \smallsetminus E} \ar[rr, equal] \ar[dr, "\eqref{eqn:morph-character-Weyl}"'] && \F_T(\xi) \otimes \scO_{(C \times S) \smallsetminus E} \\
& \mathsf{M}(\xi) \otimes \scO_{(C \times S) \smallsetminus E}\ar[uu, leftarrow, crossing over, "\beta" near end, "\wr"' near end] \ar[ur, "\eqref{eqn:morph-Weyl-character}"'] 
\end{tikzcd}
\hspace{-4em}
\end{equation}
admits an extension to a diagram of sheaves
\begin{equation}\label{eqn:uZas-cond}
\begin{tikzcd}[row sep=small]
\F_T(\xi) \otimes \scO_{C \times S}(-\langle \xi, E \rangle) \ar[rr, hook, "\eqref{eqn:divisor-sheaf}"] \ar[dr, "\kappa_\xi"'] && \F_T(\xi) \otimes \scO_{C \times S}  \\
& \cE_{\mathsf{M}(\xi)} \ar[ur, two heads, "\tau_\xi"']
\end{tikzcd}
\end{equation}
in which $\tau_\xi$ is a surjective map of vector bundles.


\begin{rmk}
The scheme $\uZas^\mu$ is
denoted $Z^\mu$ in~\cite{bfgm}. In some references (in particular,~\cite{gaitsgory-lysenko}) the authors ``gather'' together all these schemes, and consider the disjoint union $\sqcup_{\mu \in \bY_{\succeq 0}} \uZas^\mu$, a scheme over the ``configuration space'' $\mathrm{Conf} = \sqcup_{\mu \in \bY_{\succeq 0}} C^\mu$. This version corresponds to the scheme denoted $\mathscr{Z}$ in~\cite[\S 20.1.2]{gaitsgory-lysenko}.
\end{rmk}

\subsubsection{A variant}
\label{sss:Gaitsgory-Zas}

We also set
\[
\uZasG^\mu = \{\cF^0 \} \times_{\Bun_T, p_2} \Zas^\mu.
\]
Here again, by Proposition~\ref{prop:Zastava-scheme}, $\uZasG^\mu$ is a scheme of finite type over $\F$, equipped with a canonical affine morphism
$\uZasG^\mu \to C^\mu$.
%
%
Concretely, this scheme classifies the following data:
\begin{itemize}
 \item a point $E$ of $C^\mu(S)$;
 \item a $G$-torsor $\cE$ on $C \times S$;
 \item an isomorphism $\beta$ between $\cE_{|(C \times S) \smallsetminus E}$ and $(\cF^0(E) \times^T G)_{|(C \times S) \smallsetminus E}$
\end{itemize}
which satisfies the condition that, for any $\xi \in \bX_+$, the top face of the commutative diagram
{\small
\begin{equation}
\label{eqn:diag-uZasG}
\hspace{-3em}
\begin{tikzcd}[row sep=small, column sep=-5pt]
\F_T(\xi) \otimes \scO_{(C \times S) \smallsetminus E} \ar[rr, hook, "\eqref{eqn:divisor-sheaf}"] \ar[dd, "\eqref{eqn:divisor-sheaf-triv}"', "\wr"] \ar[rd] &&
\F_T(\xi) \otimes \scO(\langle \xi, E \rangle)_{|(C \times S) \smallsetminus E} \ar[dd, equal] \\
& (\cE_{\mathsf{M}(\xi)})_{|(C \times S) \smallsetminus E} \ar[ru] \\
\F_T(\xi) \otimes \scO(\langle \xi, E \rangle)_{|(C \times S) \smallsetminus E} \ar[rr, equal] \ar[dr, "\eqref{eqn:morph-character-Weyl}"'] && \F_T(\xi) \otimes \scO(\langle \xi, E \rangle)_{|(C \times S) \smallsetminus E} \\
& \mathsf{M}(\xi) \otimes \scO(\langle \xi, E \rangle)_{|(C \times S) \smallsetminus E} \ar[uu, leftarrow, crossing over, "\beta" near end, "\wr"' near end] \ar[ur, "\eqref{eqn:morph-Weyl-character}"'] 
\end{tikzcd}
\hspace{-4em}
\end{equation}
}(where we write $\scO(\langle \xi, E \rangle)$ for $\scO_{C \times S}(\langle \xi, E \rangle)$ so that the diagram can fit the page)
admits an extension to a diagram of sheaves
\[
\begin{tikzcd}[row sep=small]
\F_T(\xi) \otimes \scO_{C \times S} \ar[rr, hook, "\eqref{eqn:divisor-sheaf}"] \ar[dr, "\kappa_\xi"'] && \F_T(\xi) \otimes \scO_{C \times S}(\langle \xi, E \rangle)  \\
& \cE_{\mathsf{M}(\xi)} \ar[ur, two heads, "\tau_\xi"']
\end{tikzcd}
\]
in which $\tau_\xi$ is a surjective map of vector bundles.

\begin{rmk}
 \begin{enumerate}
 \item
The scheme $\uZasG^\mu$ is denoted $\mathscr{Z}^{-\mu}$ in~\cite[\S 3.6.1]{gaitsgory}.
\item
In terms of the morphism~\eqref{eqn:morph-Zas-BunT}, 
we have
\begin{equation}
\label{eqn:ZasG-fiber-p1}
\uZasG^\mu = C^\mu \times_{C^\mu \times \Bun_T} \Zas^\mu
\end{equation}
where the morphism $C^\mu \to C^\mu \times \Bun_T$ is given by $E \mapsto (E,\cF^0(E))$.
\end{enumerate}
\end{rmk}

\subsection{Independence of the curve}
\label{ss:indep-curve}

In this subsection, we will compare Zastava schemes attached to different curves.  When there is a risk of ambiguity, we include the curve $C$ in the notation, and write
\[
\uZas^{C,\mu}
\]
for the scheme defined in~\S\ref{sss:classical-Zastava} using the curve $C$.

It is a useful observation that the definition of $\uZas^{C,\mu}$ in~\S\ref{sss:classical-Zastava} makes sense when $C$ is a \emph{possibly nonprojective} smooth curve $C$ over $\F$. If fact, recall that any smooth curve $C$ over $\F$ admits a canonical smooth compactification $\overline{C}$; more specifically there exists a unique (up to isomorphism) projective smooth curve containing $C$ as an open subscheme.\footnote{Let us recall that this fact is a consequence of 3 properties of curves: (a) a scheme which is of pure dimension $1$ is normal iff it is regular, cf.~\cite[\S 15.1]{gw}; (b) over a perfect field, a regular scheme is smooth, see~\cite[Remark~6.33]{gw}; (c) each normal curve over a field admits a canonical normal compactification, see~\cite[Theorem~15.21]{gw}.}
Then one can set
\begin{equation}
\label{eqn:uZas-open-curve}
 \uZas^{C,\mu} = C^\mu \times_{\overline{C}^\mu} \uZas^{\overline{C},\mu}.
\end{equation}
In this subsection we therefore drop the assumption that $C$ is projective.

\begin{rmk}
\label{rmk:Zastava-A1}
 The Zastava schemes $\uZas^{\mathbb{A}^1,\mu}$ are the main objects of study of~\cite{fm,ffkm}.
\end{rmk}

Below we will need a generalization of~\eqref{eqn:uZas-open-curve} which is probably known to experts. Suppose $\varphi: C \to D$ is an \'etale map of (possibly nonprojective) curves, and recall the notation of~\S\ref{ss:graded-sym-powers}.

\begin{lem}
\label{lem:uZas-compare}
Let $\varphi: C \to D$ be an \'etale map of curves.  For any $\mu \in \bY_{\succeq 0}$, there is an isomorphism of schemes
\[
C^\mu_\et \times_{C^\mu} \uZas^{C,\mu} \cong C^\mu_\et \times_{D^\mu} \uZas^{D,\mu}.
\]
\end{lem}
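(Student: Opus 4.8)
\emph{Plan of proof.} The strategy is to re-describe both sides in terms of data living on the formal neighbourhood of the universal divisor, where the \'etale map $\varphi$ becomes an isomorphism. Note first that the right-hand fiber product makes sense: $\varphi$ induces a morphism $C^\mu \to D^\mu$ (still denoted $\varphi$, see~\S\ref{ss:graded-sym-powers}), and we base change $\uZas^{D,\mu}$ along the composition $C^\mu_\et \hookrightarrow C^\mu \xrightarrow{\varphi} D^\mu$; by Lemma~\ref{lem:sym-mu}\eqref{it:sm-etale} this composition is itself \'etale, although I do not expect to need that.

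The first main step is a ``local model'' description of $\uZas^{C,\mu}$. By the Beauville--Laszlo theorem, applied in families along the universal divisor exactly as in the construction of the Be{\u\i}linson--Drinfeld affine Grassmannian $\Gr_{G,C^\mu}$ (cf.~the proof of Proposition~\ref{prop:Zastava-scheme}), the datum of a $G$-torsor $\cE$ on $C \times S$ together with a trivialization $\beta$ on $(C \times S) \smallsetminus E$, for $E \in C^\mu(S)$, is equivalent to the datum of a $G$-torsor on the formal completion $\widehat{E}$ of $C \times S$ along $\sum_\alpha E_\alpha$ together with a trivialization on the punctured formal completion $\widehat{E}^\circ$. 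I would then check that the remaining (Pl\"ucker-type) conditions in the definition of $\uZas^{C,\mu}$ in~\S\ref{sss:classical-Zastava} depend only on the restriction of the data to $\widehat{E}$: away from $\sum_\alpha E_\alpha$ the maps $\kappa_\xi$ and $\tau_\xi$ are forced by $\beta$ and the required properties hold automatically (the composite $\tau_\xi \circ \kappa_\xi$ is the embedding~\eqref{eqn:divisor-sheaf}, which is invertible there, so $\tau_\xi$ is automatically a surjection of the relevant vector bundles off the divisor), while ``a morphism between vector bundles defined on the complement of a Cartier divisor extends, and the extension is surjective'' is a condition local near that divisor, hence can be tested on $\widehat{E}$ (again by Beauville--Laszlo, now for coherent sheaves). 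This yields a description of $C^\mu_\et \times_{C^\mu} \uZas^{C,\mu}$, and likewise of $C^\mu_\et \times_{D^\mu} \uZas^{D,\mu}$, purely in terms of a point of $C^\mu_\et(S)$ together with a point of $\Gr_{G,\bullet}$ over it (a torsor on $\widehat{E}$, a trivialization on $\widehat{E}^\circ$) satisfying the conditions read off on $\widehat{E}$.

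The second step is the key geometric input. Given $E \in C^\mu_\et(S)$ with image $E' \in D^\mu(S)$, the map $\varphi \times \id_S : C \times S \to D \times S$ is \'etale, and by the very definition of $C^\mu_\et$ the composite $\sum_\alpha E_\alpha \hookrightarrow C \times S \to D \times S$ is a closed immersion, with image $\sum_\alpha E'_\alpha$. By the standard fact that an \'etale morphism restricts to an isomorphism from the formal neighbourhood of a closed subscheme along which it is a closed immersion onto the formal neighbourhood of the image, $\varphi \times \id_S$ induces isomorphisms $\widehat{E} \xrightarrow{\sim} \widehat{E'}$ and $\widehat{E}^\circ \xrightarrow{\sim} \widehat{E'}^\circ$. (One must resist upgrading this to a Zariski-local statement: an \'etale map need not be a local isomorphism near such a subscheme in the Zariski topology --- think of a connected, hence non-split, double cover --- so only the formal statement is available.) Transporting $G$-torsors, trivializations, and the conditions --- which are now phrased over $\widehat{E}$, resp.\ $\widehat{E'}$ --- along this isomorphism produces mutually inverse morphisms of functors between the two sides, hence the asserted isomorphism of schemes.

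I expect the main obstacle to be the first step: verifying carefully that \emph{every} clause of the (somewhat intricate) definition of $\uZas^{C,\mu}$ is indeed formal-local near the divisor, and organizing the Beauville--Laszlo equivalences into an honest isomorphism of schemes rather than a mere bijection on points --- this requires keeping everything functorial in $S$ and exhibiting morphisms in both directions. The second step, though it is the conceptual heart of the statement, can be treated as a black box of \'etale-local algebraic geometry.
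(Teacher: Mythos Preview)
Your proposal is correct and follows essentially the same route as the paper: the paper likewise introduces a ``local'' description of $\uZas^{C,\mu}$ in terms of a $G$-torsor on $\widehat{E}$ with trivialization on $\widehat{E}^\circ$ satisfying the extension/surjectivity conditions formulated over $\widehat{E}$ (invoking~\cite[\S 2.12]{bd} for the Beauville--Laszlo input), and then observes that for $E \in C^\mu_\et(S)$ the \'etale map $\varphi$ induces an isomorphism $\widehat{E} \simto \widehat{\varphi(E)}$, so the local descriptions for $C$ and $D$ match. Your anticipated ``main obstacle'' is exactly what the paper handles by stating the local functorial description explicitly before the proof and deferring the justification to standard references.
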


To prove this lemma we will use a ``local'' description of $\uZas^{C,\mu}$, as follows. Given a smooth $\F$-curve $C$, an $\F$-scheme $S$ and a point $E \in C^\mu(S)$, we can consider the scheme
\[
\widehat{E} = \underline{\Spec} \Big(\varprojlim_n \scO_{C\times S}/\scO_{C \times S}(-\la 2n\rho, E\ra)\Big),
\]
where ``$\underline{\Spec}$'' denotes the relative $\Spec$ construction over $C \times S$.  This scheme comes with a flat, affine morphism $\widehat{E} \to C \times S$. Each $E_\alpha$ is naturally an effective Cartier divisor on $\widehat E$, and it makes sense to consider the open subscheme
\[
\widehat{E}^\circ := \widehat{E} \smallsetminus \bigcup_{\alpha \in \fRs^\vee} E_\alpha.
\]
For $\xi \in \bX_+$, we also have the invertible sheaf $\scO_{\widehat{E}}(-\la \xi, E\ra)$.

Suppose now that $\cE$ is a $G$-torsor on $\widehat{E}$, equipped with a trivialization $\beta$ over $\widehat{E}^\circ$.  For any $\xi \in \bX_+$, we have the following counterpart of~\eqref{eqn:diag-uZas}:
\begin{equation}
\label{eqn:diag-uZas-completion}
\begin{tikzcd}[row sep=small]
\F_T(\xi) \otimes \scO_{\widehat{E}}(-\langle \xi, E \rangle)_{|\widehat{E}^\circ} \ar[rr, hook, "\eqref{eqn:divisor-sheaf}"] \ar[dd, "\eqref{eqn:divisor-sheaf-triv}"', "\wr"] \ar[rd] &&
\F_T(\xi) \otimes \scO_{\widehat{E}^\circ} \ar[dd, equal] \\
& (\cE_{\mathsf{M}(\xi)})_{|\widehat{E}^\circ} \ar[ru] \\
\F_T(\xi) \otimes \scO_{\widehat{E}^\circ} \ar[rr, equal] \ar[dr, "\eqref{eqn:morph-character-Weyl}"'] && \F_T(\xi) \otimes \scO_{\widehat{E}^\circ}. \\
& \mathsf{M}(\xi) \otimes \scO_{\widehat{E}^\circ}\ar[uu, leftarrow, crossing over, "\beta" near end, "\wr"' near end] \ar[ur, "\eqref{eqn:morph-Weyl-character}"'] 
\end{tikzcd}
\end{equation}
%
Using standard considerations (based on~\cite[\S 2.12]{bd})
one obtains another functorial description of $\uZas^{C,\mu}$, using~\eqref{eqn:diag-uZas-completion} instead of~\eqref{eqn:diag-uZas}. Namely, an $S$-point of $\uZas^{C,\mu}$ consists of:
\begin{itemize}
\item an element $E \in C^\mu(S)$;
\item a $G$-torsor $\cE$ on $\widehat{E}$;
\item a trivialization $\beta$ of $\cE_{|\widehat{E}^\circ}$
\end{itemize}
such that for all $\xi \in \bX_+$, the top face of~\eqref{eqn:diag-uZas-completion} admits an extension to a diagram
\[
\begin{tikzcd}[row sep=small]
\F_T(\xi) \otimes \scO_{\widehat{E}}(-\la \xi,E\ra) \ar[rr, hook] \ar[dr, "\kappa_\xi"'] && \F_T(\xi) \otimes \scO_{\widehat{E}}  \\
& \cE_{\mathsf{M}(\xi)} \ar[ur, two heads, "\tau_\xi"']
\end{tikzcd}
\]
in which $\tau_\xi$ is a surjective map of vector bundles.

\begin{proof}[Proof of Lemma~\ref{lem:uZas-compare}]
Let us write down the functorial description of $C^\mu_\et \times_{D^\mu} \uZas^{D,\mu}$ based on the ``local'' description of $\uZas^{D,\mu}$ explained above. An $S$-point of $C^\mu_\et \times_{D^\mu} \uZas^{D,\mu}$ consists of the following data:
\begin{itemize}
\item an element $E \in C^\mu_\et(S)$;
\item a $G$-torsor $\cE$ on $\widehat{\varphi(E)}$;
\item a trivialization $\beta$ of $\cE_{|\widehat{\varphi(E)}^\circ}$,
\end{itemize}
such that for all $\xi \in \bX_+$, the top face of diagram~\eqref{eqn:diag-uZas-completion} (for the closed subscheme $\varphi(E) \subset D \times S$) admits an extension to a diagram
\[
\begin{tikzcd}[column sep=tiny, row sep=small]
\F_T(\xi) \otimes \scO_{\widehat{\varphi(E)}}(-\la \xi,\varphi(E)\ra) \ar[rr, hook] \ar[dr, "\kappa_\xi"'] && \F_T(\xi) \otimes \scO_{\widehat{\varphi(E)}} \\
& \cE_{\mathsf{M}(\xi)} \ar[ur, two heads, "\tau_\xi"']
\end{tikzcd}
\]
in which $\tau_\xi$ is a surjective map of vector bundles.

By the definition of $C^\mu_\et$, the morphism $\varphi$ induces an isomorphism of schemes $\sum_\alpha E_\alpha \to \sum_\alpha \varphi(E_\alpha)$.
Moreover, because $\varphi$ is \'etale, the induced map of completions $\widehat{E} \to \widehat{\varphi(E)}$ is also an isomorphism.  Thus, in the preceding paragraph, we could replace every instance of ``$\widehat{\varphi(E)}$'' by ``$\widehat{E}$'' without changing the resulting scheme.  But that replacement yields the description of $C^\mu_\et \times_{C^\mu} \uZas^{C,\mu}$.
\end{proof}

Lemma~\ref{lem:uZas-compare} is equivalent to the statement that there exists a (necessarily \'etale) map 
\[
\varphi_{\mathrm{Zas}}: C^\mu_\et \times_{C^\mu} \uZas^{C,\mu} \to \uZas^{D,\mu}
\]
that makes the right-hand square in the following diagram a pullback square:
\begin{equation}\label{eqn:uZas-compare}
\begin{tikzcd}[column sep=huge]
\uZas^{C,\mu} \ar[d] & 
C^\mu_\et \times_{C^\mu} \uZas^{C,\mu} \ar[l, "\text{open immer.}"] \ar[r, "\varphi_{\mathrm{Zas}}", "\text{\'etale}"'] \ar[d] & \uZas^{D,\mu} \ar[d] \\
C^\mu & C^\mu_\et \ar[l, "\text{open immer.}"] \ar[r, "\varphi", "\text{\'etale}"'] & D^\mu.
\end{tikzcd}
\end{equation}

\subsection{Local triviality}


From now on we assume again that $C$ is projective. The goal of this subsection is to explain the proof of the following result, a (more general) version of which is claimed without details in~\cite[\S 3.1]{bfgm}.

\begin{prop}
\label{prop:local-isom}
Fix an 
$\F$-scheme $S$ 
and a morphism $S \to \Bun_T$.  Then the schemes
\[
\Zas^\mu \times_{p_1,\Bun_T} S
\qquad\text{and}\qquad
\uZas^\mu \times S
\]
are Zariski-locally isomorphic as schemes over $C^\mu \times S$.  That is: there exists 
an open covering $(V_i : i \in I)$ of $C^\mu \times S$ and, for any $i \in I$, an isomorphism
\[
\Zas^\mu \times_{C^\mu \times \Bun_T} V_i
\cong (\uZas^\mu \times \Bun_T) \times_{C^\mu \times \Bun_T} V_i
\]
of schemes over $V_i$.
\end{prop}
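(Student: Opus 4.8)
The plan is to reduce the assertion to the soft fact that the $T$-torsor on $C\times S$ classified by $S\to\Bun_T$ becomes trivial, after restriction to the formal neighborhood of the universal divisor, Zariski-locally on $C^\mu\times S$, and then to transport the remaining data defining $\Zas^\mu$ across such a trivialization. First I would recall from the proof of Proposition~\ref{prop:Zastava-scheme} that $\Zas^\mu$ sits as a locally closed substack inside the relative affine Grassmannian $\Gr_{G,C^\mu,\Bun_T}$, and that over $C^\mu\times\Bun_T$ the latter is described by the twisted-product formula involving the relative loop group $\Loop_{C^\mu}G$ acting on $\Gr_{G,C^\mu}$, built from the torsor $\widetilde{\cE}$ attached to the $G$-torsor $\cF\times^T G$ induced from the tautological $T$-torsor $\cF$. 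The point to emphasize is that this whole package --- the twisted product, the torsor $\widetilde{\cE}$, and the conditions of~\eqref{eqn:diag-Zas} cutting out $\Zas^\mu$ (which, as in~\cite[\S 12.2.2]{gaitsgory-lysenko}, only constrain the modular data in a formal neighborhood of $E$, since $\beta$ is already an isomorphism away from $E$) --- depends on the $T$-torsor $\cF$ only through its restriction to the formal neighborhood $\widehat{E}$ of $E$, by Beauville--Laszlo-type gluing as in~\cite[\S 2.12]{bd} and~\S\ref{ss:indep-curve}.

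Granting this, let $\cF_S$ be the $T$-torsor on $C\times S$ corresponding to the given map, and let $E^{\mathrm{univ}}$ be the universal divisor on $C^\mu\times S$. I would then check that a trivialization of $\cF_S$ restricted to the formal neighborhood $\widehat{E^{\mathrm{univ}}}$, over an open $V\subseteq C^\mu\times S$, induces an isomorphism $\Zas^\mu\times_{p_1,\Bun_T}S\cong\uZas^\mu\times S$ of schemes over $V$: indeed such a trivialization trivializes $\cF_S\times^T G$ over $\widehat{E^{\mathrm{univ}}}$, hence trivializes the relative loop-group torsor $\widetilde{\cE}$, turning the twisted product into the straight product $\Gr_{G,C^\mu}\times_{C^\mu}(C^\mu\times S)$ over $V$; under this identification the conditions of~\eqref{eqn:diag-Zas} become exactly those of~\eqref{eqn:diag-uZas}, the line bundles $\cF_{\F_T(\xi)}|_{\widehat{E^{\mathrm{univ}}}}$ having been made trivial, so that $\Zas^\mu$ is matched with $\uZas^\mu\times\Bun_T$; since $\uZas^\mu$ is by definition the fiber of $p_1$ over $\cF^0$, this yields the desired local isomorphism over $V$.

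It then remains to produce such trivializations Zariski-locally on $C^\mu\times S$. Given a point $b=(E_0,s_0)$, I would pick an affine open $W\subseteq C$ containing the finite support of $E_0$; then $W^\mu=\prod_{\alpha\in\fRs^\vee}W^{(n_\alpha)}$ is an affine open subscheme of $C^\mu$, consisting of those divisors supported in $W$ and containing $E_0$, and after shrinking $S$ to an affine open $S'\ni s_0$ and setting $V_0:=W^\mu\times S'$, the universal divisor $E^{\mathrm{univ}}_{V_0}$ is finite flat over $V_0$ and is a closed subscheme of the quasi-projective scheme $W\times V_0$. Working one $\Gm$-factor of $T$ at a time (or using that $T$ is a special group), the line bundle underlying $\cF_S|_{W\times V_0}$ is trivial on a suitable affine open $\Omega\subseteq W\times V_0$ containing the fiber $E^{\mathrm{univ}}_b$: its finitely many points lie in a common affine open, on which a line bundle may be trivialized away from the support of a representing divisor chosen to avoid them. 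Then $\{v\in V_0:E^{\mathrm{univ}}_v\subseteq\Omega\}$ is open in $V_0$ --- its complement is the image, under the finite morphism $E^{\mathrm{univ}}_{V_0}\to V_0$, of a closed subset --- and contains $b$; call it $V_1$. Over $V_1$ the structure sheaf of $\widehat{E^{\mathrm{univ}}_{V_1}}$ and the restriction of $\cF_S$ to it are, as sheaves on $W\times V_1$, supported inside $\Omega$, so the trivialization of $\cF_S|_\Omega$ induces compatible trivializations of all the finite-level quotients, hence in the inverse limit a trivialization of $\cF_S$ over $\widehat{E^{\mathrm{univ}}_{V_1}}$. Letting $b$ vary, the opens $V_1$ cover $C^\mu\times S$, which finishes the plan.

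The step I expect to be the main obstacle is the first one: making rigorous, via Beauville--Laszlo gluing in the relative setting over $\Bun_T$, that the full structure of $\Zas^\mu$ over $C^\mu\times\Bun_T$ --- including the locally closed condition cutting it out inside the relative Grassmannian --- is genuinely local on $C$ near the divisor $E$, and so depends on $\cF$ only through $\cF|_{\widehat{E}}$. Once that is granted, the remaining ingredients --- local triviality of torus torsors, and the bookkeeping with affine opens needed to propagate triviality from the divisor to its formal neighborhood --- are routine.
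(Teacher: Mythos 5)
Your plan rests on the same core idea as the paper---trivialize the $T$-torsor Zariski-locally near the support of the divisor, then compare the data cutting out $\Zas^\mu$ over $\Bun_T$ with the data cutting out $\uZas^\mu$---but you route the comparison through the formal neighborhood $\widehat{E^{\mathrm{univ}}}$ and Beauville--Laszlo gluing, which is heavier than necessary. The paper's proof stays entirely Zariski-local: Lemma~\ref{lem:triviality} (a Nakayama argument) produces a Zariski open $V' \subset C \times O$ containing $\{x_i\}\times O$ on which $\cG$ is trivial, and then the isomorphism is obtained by gluing $G$-torsors along the two-element Zariski cover $\{\tilde\phi^{-1}(V'),\,(C\times S')\smallsetminus E\}$, so the ``local-to-$\widehat E$'' description of $\Zas^\mu$ that you identify as the main obstacle never needs to be invoked. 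And in fact your own construction already supplies a trivialization on a Zariski open $\Omega \subset W\times V_1$ containing every fiber $|E^{\mathrm{univ}}_v|$ for $v\in V_1$, so you could simply run the Zariski gluing with $\Omega$ in place of $\widehat{E^{\mathrm{univ}}}$ and arrive at the paper's argument; the Beauville--Laszlo step is a detour. Your two other deviations are cosmetic but both valid: you produce the line-bundle trivialization by a semi-localization/prime-avoidance argument rather than the paper's Nakayama lemma, and you carve out the open $V_1 \subset C^\mu\times S$ by properness of the universal divisor over $V_0$ rather than by taking the image of $V'\times_S\cdots\times_S V'$ under the flat symmetrization map---the latter is arguably a cleaner way to organize the propagation step (one should, as in the paper, first reduce to $S$ affine of finite type so that the Noetherian/semi-local hypotheses you use are actually available).
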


The proof will use the following lemma.

\begin{lem}
\label{lem:triviality}
Assume given an
$\F$-scheme 
$S$, a line bundle $\mathscr{L}$ on $C \times S$, and a finite collection $(x_i, i \in I)$ of distinct $\F$-points in $C$. For any $\F$-point $s$ of $S$, there exists an open neighborhood $O$ of $s$ in $S$ and an open subscheme $V \subset C \times O$ containing the subschemes $\{x_i\} \times O$ such that $\mathscr{L}_{|V}$ is trivializable.
\end{lem}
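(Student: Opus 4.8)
The plan is to prove this by a direct cohomological vanishing argument: the obstruction to trivializing a line bundle on an affine scheme vanishes, so the key point is to shrink $S$ around $s$ and then shrink $C \times O$ around the finitely many slices $\{x_i\} \times O$ until we land on an affine scheme where $\mathscr{L}$ becomes trivial.

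First I would reduce to the case $S$ affine and even $S = \Spec(R)$ with $R$ local: the question is local on $S$ near $s$, so we may replace $S$ by an affine open neighborhood of $s$; enlarging further, we can pass to the local ring at $s$ only at the end, but it is cleaner to keep $S$ affine and argue with a point. Next, since $C$ is a smooth (say projective, but that is not needed here) curve and the $x_i$ are finitely many closed points, I would choose an affine open $W \subset C$ containing all the $x_i$ — this exists because a curve is covered by affines and we can intersect or, better, take the complement of a finite set of closed points not among the $x_i$, which is affine since $C$ is projective. Then $W \times S$ is affine, and $\mathscr{L}_{|W \times S}$ is a line bundle on an affine scheme. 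On an affine scheme a line bundle need not be trivial (the Picard group need not vanish), so this alone is not enough; but it is invertible, hence locally free of rank $1$, so there is an open cover of $W \times S$ trivializing it.

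The main obstacle is therefore to arrange a \emph{single} open $V$ containing all the slices $\{x_i\} \times O$ on which $\mathscr{L}$ is trivial, rather than a cover. Here is where I would use the hypothesis that the $x_i$ lie over the given point $s$: pick a point $v_i = (x_i, s)$ in $W \times S$ for each $i$; by local freeness, $\mathscr{L}$ is trivial on some affine open neighborhood of each $v_i$, and since the $x_i$ are distinct these neighborhoods can be chosen to have disjoint images... no — rather, I would argue as follows. Restrict $\mathscr{L}$ to $C \times \{s\} = C$; this is a line bundle on the curve $C$ (or on $W$), trivial in a neighborhood of each $x_i$ since it is locally free, and by shrinking we may assume it is trivial on a single affine open $W' \subset W$ containing all the $x_i$ (again using that we can pass to the complement of a finite closed set). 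Now the set of points of $W' \times S$ at which $\mathscr{L}$ is trivial (equivalently, where a chosen generating section of $\mathscr{L}_{|W' \times \{s\}}$, viewed after base change, still generates) is open and contains $W' \times \{s\}$, hence contains $W' \times O$ for some open neighborhood $O$ of $s$ by properness of the projection $W' \times \{x_i\}$-fiber... more precisely: the locus where $\mathscr{L}$ fails to be generated by a fixed section is closed, its complement is open and contains the compact-over-$S$ slice $(W' \cap \{x_i\}) \times \{s\}$'s; since $W'$ is quasi-compact, there is an open $O \ni s$ with $W' \times O$ in that complement. Then $V := W' \times O$ works. I would spell out the last step — producing $O$ from quasi-compactness of $W'$ and openness of the trivialization locus — as the one genuinely technical point; everything else is routine invocation of ``line bundle on affine is locally free'' and ``curve minus finite set is affine.''

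A cleaner alternative, which I would also mention, is: choose $V_0 \subset C$ affine open containing all $x_i$, so $V_0 \times S$ is affine; then $\mathscr{L}_{|V_0 \times S}$ corresponds to an invertible module over the coordinate ring, which becomes free after inverting a single element $f$ not vanishing at any $v_i = (x_i, s)$ (this is the standard fact that a finitely generated projective module of rank $1$ is free on a principal open cover, and one such open contains all the finitely many chosen points because they are finite in number — we can take $f$ to be a product of local generators adjusted to avoid the $v_i$, or use prime avoidance). Then $V = (V_0 \times S)_f$ is the desired open subscheme, and it contains $\{x_i\} \times O$ where $O = S_{f(x_i, -)}$ suitably intersected; unwinding, $V$ meets the requirement. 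Either way, the substantive content is the same, and I expect writing the ``single principal open containing finitely many points'' step carefully — via prime avoidance or via the openness-and-quasicompactness argument above — to be where the real (minor) work lies.
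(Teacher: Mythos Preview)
Your Approach~1 contains a genuine gap. The claim ``since $W'$ is quasi-compact, there is an open $O \ni s$ with $W' \times O$ in [the trivialization locus]'' is a tube-lemma assertion that would require $W'$ to be \emph{proper} over $\F$, not merely quasi-compact; since $W'$ is affine, it fails. Concretely: take $W' = S = \mathbb{A}^1$ with coordinates $t,u$, $s$ the origin, and the section $tu-1$ of $\scO_{W' \times S}$; this section generates on all of $W' \times \{0\}$ (it equals $-1$ there), yet its zero locus $V(tu-1)$ meets $W' \times O$ for every open $O \ni 0$. The repair is not to take $V = W' \times O$ but to let $V$ be the trivialization locus itself, and then shrink $O$ so that each slice $\{x_i\} \times O$ lies in $V$ (possible because $V$ is open and contains each point $(x_i,s)$). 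Once you make this repair you are effectively running your Approach~2.

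Approach~2 is correct in substance, and is close to the paper's argument, though organized differently. You choose a section generating at the finitely many \emph{points} $(x_i,s)$, pass to the principal open $D(f)$ (via prime avoidance), and then shrink $S$ at the end so that each $\{x_i\} \times O \subset D(f)$. The paper instead shrinks $S$ \emph{first}, to an affine $O$ on which each restricted line bundle $\mathscr{L}_{|\{x_i\} \times O}$ is already trivial; this lets one lift to $C' \times O$ a section that generates along the whole closed subscheme $\sqcup_i \{x_i\} \times O$, not just at the points $(x_i,s)$. A single application of the determinant-trick form of Nakayama (if $M = JM$ with $M$ finitely generated then $(1+a)M = 0$ for some $a \in J$) then produces a principal open $D(1+a) \supset \sqcup_i \{x_i\} \times O$ on which the section generates. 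So the paper trades your prime-avoidance-then-shrink for a shrink-then-Nakayama, but the content is the same.
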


\begin{proof}
 Let $C' \subset C$ be an affine open subscheme containing the $x_i$'s. For any $i$ one can consider the line bundle $\mathscr{L}_{|\{x_i\} \times S}$ on $S$; let $O \subset S$ be an affine open subscheme containing $s$ and such that each of these line bundles is trivial on $O$. Consider the natural surjection
 \[
  \Gamma(C' \times O, \mathscr{L}) \to \Gamma( \sqcup_{i \in I} \{x_i\} \times O, \mathscr{L}_{|\sqcup_i \{x_i\} \times O}). 
 \]
By assumption the right-hand side is free of rank $1$ over $\scO(\sqcup_{i \in I} \{x_i\} \times O)$. Choosing a lift of a generator to $\Gamma(C' \times O, \mathscr{L})$, we obtain a morphism $\scO_{C' \times O} \to \mathscr{L}$ whose restriction to each $\{x_i\} \times O$ is an isomorphism. If $\mathscr{F}$ is the cokernel of this morphism, $M:=\Gamma(C' \times O, \mathscr{F})$ is then a finitely generated $\scO(C' \times O)$-module such that $M \otimes_{\scO(C' \times O)} \scO(\{x_i\} \times O)=0$ for any $i$. The ideal $J \subset \scO(C' \times O)$ of $\sqcup_{i \in I} \{x_i\} \times O$ satisfies $M = J \cdot M$;
by the Nakayama lemma, it follows that there exists $a \in J$ such that $(1+a) \cdot M = 0$. 
Then the principal affine open subscheme $V \subset C' \times O$ associated with $1+a$ contains $\sqcup_{i \in I} \{x_i\} \times O$, and we have $\mathscr{F}_{|V}=0$. The restriction $\scO_{V} \to \mathscr{L}_{|V}$ of our morphism is then surjective, hence an isomorphism because both sheaves are line bundles, which finishes the proof.
\end{proof}

\begin{proof}[Proof of Proposition~\ref{prop:local-isom}]
We can assume that $S$ is affine. Then, since $\Bun_T$ is locally of finite presentation, there exists a finitely generated $\F$-subalgebra $A' \subset \scO(S)$ such that our morphism $S \to \Bun_T$ factors through a morphism $\Spec(A') \to \Bun_T$, see e.g.~\cite[Lemma~2.1.9]{eg}. Hence we can (and will) assume that $S$ is moreover of finite type over $\F$.
In this case, it is enough to show that for each $\F$-point $(x,s)$ of $C^\mu \times S$, there is an open subscheme $V \subset C^\mu \times S$ containing $(x,s)$ for which there is an isomorphism
\begin{equation}\label{eqn:Zas-local-isom}
\Zas^\mu \times_{C^\mu \times \Bun_T} V \cong (\uZas^\mu \times \Bun_T) \times_{C^\mu \times \Bun_T} V
\end{equation}
of schemes over $V$.


Fix an $\F$-point $(x,s)$ as above.  The morphism $S \to \Bun_T$ determines a $T$-torsor $\cG$ on $C \times S$. Choosing an isomorphism $T \cong (\Gm)^r$, this datum is equivalent to the datum of $r$ line bundles on $C \times S$. Let also $(x_i : i \in I)$ be the distinct points that form the support of the divisor in $C$ determined by the point $x \in C^\mu(\F)$.
Applying Lemma~\ref{lem:triviality} to each of the line bundles above, we obtain an open neighborhood $O$ of $s$ in $S$ and an open subscheme $V' \subset C \times O$ containing the subschemes $\{x_i\} \times O$ such that $\cG_{| V'}$ is trivializable.  Fix a trivialization
$\xi: \cG_{|V'} \simto \cF^0_{V'}$.

Next, consider the open subscheme
\[
\underbrace{V' \times_S V' \times_S \cdots \times_S V'}_{\text{$\langle \mu,\rho\rangle$ factors}} \subset C^{\langle \mu,\rho\rangle} \times S.
\]
By definition there exists a flat morphism $C^{\langle \mu,\rho \rangle} \to C^\mu$, and this induces a flat morphism $C^{\langle\mu,\rho\rangle} \times S \to C^\mu \times S$. Let
$V$ be the image of $(V' \times_S V' \times_S \times \cdots \times_S V')$ in $C^\mu \times S$;
then $V$ is an open subscheme of $C^\mu \times S$ which contains $(x,s)$.

We claim that with this choice of $V$ there is an isomorphism as in~\eqref{eqn:Zas-local-isom}, which will finish the proof. In fact, let $S'$ be a scheme with a map $S' \to V$.  The composition $S' \to V \to C^\mu \times S$ determines a point $x' \in C^\mu(S')$ and a map $\phi: S' \to S$.  The latter induces a map
\[
\tilde\phi: C \times S' \to C \times S.
\]
We denote by $\tilde\phi^*\xi$ the trivialization of $(\tilde\phi^*\cG)_{|\tilde\phi^{-1}(V')}$ induced by $\xi$.

An $S'$-point of $\Zas^\mu \times_{C^\mu \times \Bun_T} V$ is a $4$-tuple $(E,\cF, \cE,\beta) \in \Zas^\mu(S')$ subject to the following additional conditions:
\begin{itemize}
\item the point $E \in C^\mu(S')$ is equal to $x'$;
\item the $T$-torsor $\cF$ on $C \times S'$ is equal to $\tilde\phi^*\cG$.
\end{itemize}
Recall that $\beta$ is an isomorphism between the restrictions of $\cE$ and of $\cF \times^T G = (\tilde\phi^*\cG) \times^T G$ to $(C \times S') \smallsetminus E$.

On the other hand, an $S'$-point of $(\uZas^\mu \times \Bun_T) \times_{C^\mu \times \Bun_T} V$ is again a $4$-tuple $(E,\cF,\cE,\beta)$, but where $(E,\cE,\beta) \in \uZas^\mu(S')$ and $\cF \in \Bun_T(S')$.
These are subject to the following additional conditions:
\begin{itemize}
\item the point $E \in C^\mu(S')$ is equal to $x'$;
\item the $T$-torsor $\cF$ on $C \times S'$ is equal to $\tilde\phi^*\cG$.
\end{itemize}
Recall that $\beta$ is a trivialization of $\cE$ over $(C \times S') \smallsetminus E$.

Consider an $S'$-point $(E,\cF,\cE,\beta)$ of $\Zas^\mu \times_{C^\mu \times \Bun_T} V$, and set $U = \tilde\phi^{-1}(V') \smallsetminus E$.  Then $\beta$ and $\tilde\phi^*\xi$ together give a trivialization of $\cE_{|U}$.  Let $\cE'$ be the $G$-torsor on $C \times S'$ obtained by gluing $\cE_{|\tilde\phi^{-1}(V')}$ to the trivial bundle on $(C \times S') \smallsetminus E$ using this trivialization. The torsor $\cE'$ comes with a canonical trivialization over $(C \times S') \smallsetminus E$, which we denote by $\beta'$. Then the assignment
\[
(E,\cF,\cE,\beta) \to (E,\cF,\cE',\beta')
\]
defines a map
\[
\Zas^\mu \times_{C^\mu \times \Bun_T} V \to (\uZas^\mu \times \Bun_T) \times_{C^\mu \times \Bun_T} V.
\]
A very similar construction gives an inverse map in the opposite direction.  
\end{proof}


\begin{rmk}
\label{rmk:local-isom-uZas-uZasG}
Let $S$ be an $\F$-scheme equipped with a morphism $\phi: S \to \Bun_T$, and let $\cF$ be the $T$-torsor on $C \times S$ corresponding to $\phi$. Define a map $\tilde\phi: C^\mu \times S \to \Bun_T$ as follows. An $S'$-point of $C^\mu \times S$ is a pair of a point $E \in C^\mu(S')$ and a morphism $S' \to S$. The image of this point in $\Bun_T(S')$ is the $T$-torsor $(\cF \times_S S')(E)$.
Applying Proposition~\ref{prop:local-isom} to $\tilde\phi$ we deduce that the schemes
\begin{equation}
\label{eqn:local-isom-var}
\Zas^\mu \times_{p_1,\Bun_T} (C^\mu \times S)
\qquad\text{and}\qquad
\uZas^\mu \times C^\mu \times S 
\end{equation}
are locally isomorphic as schemes over $C^\mu \times C^\mu \times S$.

Next, let $\delta: C^\mu \to C^\mu \times C^\mu$ be the diagonal embedding. Taking the pullback of~\eqref{eqn:local-isom-var} along $\delta \times \id: C^\mu \times S \to C^\mu \times C^\mu \times S$ one obtains a local isomorphism between
\[
(\Zas^\mu \times_{p_1,\Bun_T} (C^\mu \times S)) \times_{C^\mu \times C^\mu \times S} (C^\mu \times S) \quad \text{and} \quad \uZas^\mu \times S.
\]
Now the left-hand side identifies with $\Zas^\mu \times_{p_2,\Bun_T} S$. In fact, at the level of $S'$-points, in this fiber product the element $E \in C^\mu(S')$ and the torsor $\cF'$ on $C \times S'$ in the element of $\Zas^\mu(S')$ must satisfy $\cF'=(\cF \times_S S')(E)$, i.e.~$\cF'(-E)=\cF \times_S S'$, which is exactly the condition that should satisfy an $S'$-point of $\Zas^\mu \times_{p_2,\Bun_T} S$.

In conclusion, we have proved that
\[
\Zas^\mu \times_{p_2,\Bun_T} S
\qquad\text{and}\qquad
\uZas^\mu \times S 
\]
are locally isomorphic as schemes over $C^\mu \times S$.  In the special case where $S = \Spec(\F)$ and $\phi$ is the inclusion of the trivial $T$-torsor $\cF^0$, we deduce that
\[
\uZasG^\mu
\qquad\text{and}\qquad
\uZas^\mu
\]
are locally isomorphic as schemes over $C^\mu$.
\end{rmk}

\subsection{Factorization}

Let $\mu_1,\mu_2 \in \bY_{\succeq 0}$. We denote by
\[
(C^{\mu_1} \times C^{\mu_2})_{\disj} \subset C^{\mu_1} \times C^{\mu_2}
\]
the open subscheme defined by
\[
(C^{\mu_1} \times C^{\mu_2})_{\disj}(S) = \{(E,F) \in C^{\mu_1}(S) \times C^{\mu_2}(S) \mid \forall \alpha,\beta \in \fRs^\vee, \, E_\alpha \cap F_\beta = \varnothing\}.
\]
(Here, as above, $E_\alpha$ is the $\alpha$-component of $E$, and $F_\beta$ is the $\beta$-component of $F$. We insist that the subscript is ``$\disj$,'' and not ``$\mathrm{dist}$'' as in~\S\ref{ss:symmetric-powers}; the definitions are different.)
We have a natural ``sum'' map
\[
C^{\mu_1} \times C^{\mu_2} \to C^{\mu_1 + \mu_2}
\]
sending a pair $(E,F)$ to the collection $E+F:=(E_\alpha + F_\alpha : \alpha \in \fRs^\vee)$. This morphism
restricts to an \'etale morphism
\[
(C^{\mu_1} \times C^{\mu_2})_{\disj} \to C^{\mu_1 + \mu_2}.
\]

We consider the stack
\[
\mathfrak{X}^{\mu_1,\mu_2}
\]
whose $S$-points are
$5$-tuples $(E_1,E_2,\cF_1,\cF_2, \gamma)$ with $(E_1,E_2) \in (C^{\mu_1} \times C^{\mu_2})_{\disj}(S)$, $\cF_1,\cF_2 \in \Bun_T(S)$, and $\gamma$ is an isomorphism between the restrictions of $\cF_1$ and $\cF_2$ to $(C \times S) \smallsetminus (E_1+E_2)$. There is an obvious morphism
\[
\mathfrak{X}^{\mu_1,\mu_2} \to C^{\mu_1} \times C^{\mu_2} \times \Bun_T \times \Bun_T.
\]
There is also a canonical morphism
\[
\mathfrak{X}^{\mu_1,\mu_2} \to C^{\mu_1+\mu_2} \times \Bun_T
\]
sending a datum $(E_1,E_2,\cF_1,\cF_2, \gamma)$ to the pair $(E_1+E_2,\cF)$ where $\cF$ is obtained by gluing $(\cF_1)_{| (C \times S) \smallsetminus E_2}$ and $(\cF_2)_{| (C \times S) \smallsetminus E_1}$ using $\gamma$.

The factorization property of Zastava stacks (see~\cite{fm, bfgm}) is the property that for any $\mu_1,\mu_2$ as above there exists a canonical isomorphism
\begin{equation}
\label{eqn:fact-Zas}
(\Zas^{\mu_1} \times \Zas^{\mu_2}) \times_{C^{\mu_1} \times C^{\mu_2} \times \Bun_T \times \Bun_T} \mathfrak{X}^{\mu_1,\mu_2} \cong \Zas^{\mu_1 + \mu_2} \times_{C^{\mu_1 + \mu_2} \times \Bun_T} \mathfrak{X}^{\mu_1, \mu_2}
\end{equation}
of stacks over $\mathfrak{X}^{\mu_1,\mu_2}$.

At the level of $S$-points, the map from the left-hand side to the right-hand sends a datum
\[
\bigl( (E_1, \cF_1, \cE_1, \beta_1),(E_2, \cF_2, \cE_2, \beta_2), (E_1, E_2, \cF_1, \cF_2, \gamma) \bigr)
\]
to the datum
\[
((E_1+E_2,\cF,\cE,\beta), (E_1, E_2, \cF_1, \cF_2, \gamma))
\]
where:
\begin{itemize}
\item $\cF$ is obtained by gluing $(\cF_1)_{| (C \times S) \smallsetminus E_2}$ and $(\cF_2)_{| (C \times S) \smallsetminus E_1}$ using $\gamma$;
\item $\cE$ is obtained by gluing $(\cE_1)_{| (C \times S) \smallsetminus E_2}$ and $(\cE_2)_{| (C \times S) \smallsetminus E_1}$ using the gluing datum given by the restrictions of $\beta_1$ and $\beta_2$, together with $\gamma$;
\item $\beta$ is the isomorphism induced by $\beta_1$ and $\beta_2$.
\end{itemize}
The map from the right-hand side to the left-hand side sends the datum
\[
((E_1+E_2,\cF,\cE,\beta), (E_1, E_2, \cF_1, \cF_2, \gamma))
\]
(where $\cF$ is obtained as above from $(E_1, E_2, \cF_1, \cF_2, \gamma)$)
to the datum
\[
\bigl( (E_1, \cF_1, \cE_1, \beta_1),(E_2, \cF_2, \cE_2, \beta_2), (E_1, E_2, \cF_1, \cF_2, \gamma) \bigr)
\]
where:
\begin{itemize}
\item $\cE_1$ is obtained by gluing $(\cF_1 \times^T G)_{| (C \times S) \smallsetminus E_1}$ and $\cE_{| (C \times S) \smallsetminus E_2}$ using the gluing datum $\beta$;
\item $\cE_2$ is obtained by gluing $\cE_{| (C \times S) \smallsetminus E_1}$ and $(\cF_2 \times^T G)_{| (C \times S) \smallsetminus E_2}$ using the gluing datum $\beta$;
\item $\beta_1$ and $\beta_2$ are the obvious isomorphisms.
\end{itemize}

Taking the fiber product of this isomorphism along the map
\[
(C^{\mu_1} \times C^{\mu_2})_{\disj} \to \mathfrak{X}^{\mu_1,\mu_2}
\]
sending $(E_1,E_2)$ to $(E_1,E_2, \cF^0,\cF^0, \mathrm{id})$,
we deduce an isomorphism
\begin{equation}
\label{eqn:fact-uZas}
(\uZas^{\mu_1} \times \uZas^{\mu_2}) \times_{C^{\mu_1} \times C^{\mu_2}} (C^{\mu_1} \times C^{\mu_2})_{\disj} \cong \uZas^{\mu_1 + \mu_2} \times_{C^{\mu_1 + \mu_2}} (C^{\mu_1} \times C^{\mu_2})_{\disj},
\end{equation}
which recovers~\cite[Proposition~2.4]{bfgm}.

On the other hand,
taking the fiber product along the map
\[
(C^{\mu_1} \times C^{\mu_2})_{\disj} \to \mathfrak{X}^{\mu_1,\mu_2}
\]
sending $(E_1,E_2)$ to $(E_1,E_2, \cF^0(E_1), \cF^0(E_2),\gamma)$ where $\gamma$ is the obvious isomorphism between the restrictions of $\cF_T^0(E_1)$ and $\cF_T^0(E_2)$ to $(C \times S) \smallsetminus (E_1+E_2)$, we deduce an isomorphism
\[
(\uZasG^{\mu_1} \times \uZasG^{\mu_2}) \times_{C^{\mu_1} \times C^{\mu_2}} (C^{\mu_1} \times C^{\mu_2})_{\disj} \cong \uZasG^{\mu_1 + \mu_2} \times_{C^{\mu_1 + \mu_2}} (C^{\mu_1} \times C^{\mu_2})_{\disj},
\]
which recovers~\cite[Eqn~(3.8)]{gaitsgory}.

\subsection{Relation between Zastavas and Drinfeld's compactifications}
\label{ss:relation-Zastava-Drinfeld}

For $\mu \in \bY_{\succeq 0}$ and $\lambda \in \bY$ we set
\[
\Zas^\mu_\lambda = \Bun_T^\lambda \times_{\Bun_T, p_1} \Zas^\mu = \Bun_T^{\lambda+\mu} \times_{\Bun_T, p_2} \Zas^\mu.
\]

\begin{rmk}
 In~\cite{bfgm}, the stack $\Zas^\mu_\lambda$ is denoted $Z^\mu_{\Bun_T^\lambda}$.
\end{rmk}

With this notation, for any $\mu_1,\mu_2,\lambda_1,\lambda_2$ the isomorphism~\eqref{eqn:fact-Zas} restricts to an isomorphism
\begin{equation}
\label{eqn:fact-Zas-lambda}
(\Zas^{\mu_1}_{\lambda_1} \times \Zas^{\mu_2}_{\lambda_2}) \times_{C^{\mu_1} \times C^{\mu_2} \times \Bun_T \times \Bun_T} \mathfrak{X}^{\mu_1,\mu_2} \cong \Zas^{\mu_1 + \mu_2}_{\lambda_1+\lambda_2} \times_{C^{\mu_1 + \mu_2} \times \Bun_T} \mathfrak{X}^{\mu_1, \mu_2}.
\end{equation}
We also have
\[
\uZas^\mu = \{\cF^0 \} \times_{\Bun^0_T, p_1} \Zas_0^\mu, \quad \uZasG^\mu = C^\mu \times_{C^\mu \times \Bun^{-\mu}_T} \Zas_{-\mu}^\mu = \{\cF^0 \} \times_{\Bun^0_T, p_2} \Zas_{-\mu}^\mu,
\]
see~\eqref{eqn:ZasG-fiber-p1}.
Moreover, the section $s^\mu : C^\mu \times \Bun_T \to \Zas^\mu$ (see~\S\ref{ss:def-Zas}) restricts to a section
\[
s^\mu_\lambda : C^\mu \times \Bun^\lambda_T \to \Zas^\mu_\lambda
\]
for any $\lambda \in \bY$.


By~\cite[Proposition~3.2]{bfgm}, for any $\lambda \in \bY$ and $\mu \in \bY_{\succeq 0}$ there exists a natural open immersion
\begin{equation}
\label{eqn:open-immersion-Zastava}
 \Zas^\mu_\lambda \to \bBun_B^{\lambda+\mu} \times_{\Bun_G} \Bun_{B^-}^\lambda.
\end{equation}
Here the morphism
\[
\Zas^\mu_\lambda \to \bBun_B^{\lambda+\mu}
\]
sends a datum $(E,\cE,\cF,\beta)$ as in~\S\ref{ss:def-Zas} to the triple $(\cE,\cF (-E),\kappa)$ where $\kappa_\xi$ is the morphism $(\cF (-E))_{\F_T(\xi)} \to \cE_{\mathsf{M}(\xi)}$ required to exist in the definition of $\Zas^\mu$ in~\S\ref{ss:def-Zas}. To define the morphism
\[
\Zas^\mu_\lambda \to \Bun_{B^-}^\lambda
\]
we use the identification of $\Bun_{B^-}$ with the stack parametrizing triples $(\cE,\cF,\tau)$ where $\cE$ is a $G$-torsor, $\cF$ is a $T$-torsor, and $\tau=(\tau_\xi : \xi \in \bX_+)$ is a collection of morphisms satisfying appropriate Pl\"ucker relations, where now $\tau_\xi$ is a surjective morphism of vector bundles $\cE_{\mathsf{M}(\xi)} \to \cF_{\F_T(\xi)}$. (This is similar to the case of $\Bun_B$ discussed in~\S\ref{ss:def-bBunB}; in particular the morphisms $\tau_\xi$ attached to a $B^-$-torsor are induced by the morphisms~\eqref{eqn:morph-Weyl-character}.) With this identification, the desired morphism sends $(E,\cE,\cF,\beta)$ as above to the triple $(\cE,\cF,\tau)$ where $\tau_\xi$ is the morphism $\cE_{\mathsf{M}(\xi)} \to \cF_{\F_T(\xi)}$ required to exist in the definition of $\Zas^\mu$ in~\S\ref{ss:def-Zas}.
%
%
The image of~\eqref{eqn:open-immersion-Zastava} consists of data $\bigl( (\cE,\cF,\kappa), (\cE,\cF',\tau) \bigr)$ such that the pullback to each geometric point of $S$ of each composition
\[
 \cF'_{\F_T(\xi)} \xrightarrow{\kappa_\xi} \cE_{\mathsf{M}(\xi)} \xrightarrow{\tau_\xi} \cF_{\F_T(\xi)}
\]
is nonzero.

Note that, in the notation of~\S\ref{ss:def-Zas}, the composition
\[
\Zas^\mu_\lambda \to \bBun_B^{\lambda+\mu} \times_{\Bun_G} \Bun_{B^-}^\lambda \to \bBun_B^{\lambda+\mu} \xrightarrow{\eqref{eqn:morph-bBunB-BunT}} \Bun_T^{\lambda+\mu}
\]
is $p_2$, while the composition
\[
\Zas^\mu_\lambda \to \bBun_B^{\lambda+\mu} \times_{\Bun_G} \Bun_{B^-}^\lambda \to \Bun_{B^-}^\lambda \to \Bun_T^{\lambda}
\]
is $p_1$.
The map~\eqref{eqn:open-immersion-Zastava} therefore induces open immersions
\[
\uZas^\mu \hookrightarrow \bBun_B^{\mu} \times_{\Bun_G} \Bun_{U^-}
\quad
\text{and}
\quad
\uZasG^\mu \hookrightarrow \bBun_U \times_{\Bun_G} \Bun_{B^-}^{-\mu}.
\]
(Here we have identified $\Bun_{U^-}$ with $\Bun_{B^-} \times_{\Bun_T} \{\cF^0\}$; this is similar to~\eqref{eqn:BunU-fiber-product}.)

The composition
\[
 \uZasG^\mu \hookrightarrow \bBun_U \times_{\Bun_G} \Bun_{B^-}^{-\mu} \to \bBun_U
\]
(where the second map is the projection on the first component) will be denoted $r_\mu$.

\begin{rmk}
\label{rmk:smoothness-map-uZasmu}
Recall from Lemma~\ref{lem:smoothness-B-} and Remark~\ref{rmk:Bunr} that the natural morphism $\Bun_{B^-}^\lambda \to \Bun_G$ is smooth if $\lambda$ satisfies $\langle \lambda, \alpha \rangle < 2-2g$ for any positive root $\alpha$. If this condition is satisfied then the open immersion~\eqref{eqn:open-immersion-Zastava} shows that the morphism $\Zas^\mu_\lambda \to \bBun_B^{\lambda+\mu}$ is smooth. In particular, if $\mu$ satisfies $\langle \mu, \alpha \rangle > 2g-2$ then the morphism $r_\mu$ is smooth.
\end{rmk}

\subsection{Decompositions of Zastava stacks}
\label{ss:decomp-Zas-stacks}

For $\lambda,\mu$ as above and $\nu \in \bY_{\succeq 0}$, by~\cite[Lemma~3.4]{bfgm} the intersection of
\[
 ({}_\nu \bBun_B^{\lambda+\mu}) \times_{\Bun_G} \Bun_{B^-}^\lambda \subset \bBun_B^{\lambda+\mu} \times_{\Bun_G} \Bun_{B^-}^\lambda
\]
with (the image of) $\Zas^\mu_\lambda$ is empty unless $\nu \preceq \mu$. If this condition is satisfied, this intersection will be denoted ${}_\nu \Zas^\mu_\lambda$; we therefore have a decomposition
\begin{equation}
\label{eqn:strat-Zas}
 \Zas^\mu_\lambda = \bigsqcup_{0 \preceq \nu \preceq \mu} {}_\nu \Zas^\mu_\lambda.
\end{equation}

The following statement is part of~\cite[Lemma~3.4]{bfgm}.

\begin{lem}
\label{lem:uZas-smallest-strata}
In case $\nu=\mu$, we have
${}_\mu \Zas^\mu_\lambda = s^\mu_\lambda (C^\mu \times \Bun_T^\lambda)$.
\end{lem}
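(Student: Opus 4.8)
The plan is to prove the two inclusions between ${}_\mu\Zas^\mu_\lambda$ and $s^\mu_\lambda(C^\mu\times\Bun_T^\lambda)$ by comparing $S$-points, using the open immersion~\eqref{eqn:open-immersion-Zastava} to view $\Zas^\mu_\lambda$ as an open substack of $\bBun_B^{\lambda+\mu}\times_{\Bun_G}\Bun_{B^-}^\lambda$ --- so that by definition ${}_\mu\Zas^\mu_\lambda=\Zas^\mu_\lambda\cap({}_\mu\bBun_B^{\lambda+\mu}\times_{\Bun_G}\Bun_{B^-}^\lambda)$ --- together with the identification of ${}_\mu\bBun_B^{\lambda+\mu}$ with $C^\mu\times\Bun_B^\lambda$ furnished by~\eqref{eqn:immersion-stratum-bBunB}. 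For the inclusion $\supseteq$ I would simply unwind the definitions: for $(E,\cF)\in(C^\mu\times\Bun_T^\lambda)(S)$, the image of $s^\mu_\lambda(E,\cF)=(E,\cF,\cF\times^T G,\mathrm{id})$ under $\Zas^\mu_\lambda\to\bBun_B^{\lambda+\mu}$ is the triple $(\cF\times^T G,\cF(-E),\kappa)$ in which $\kappa_\xi$ is the composition of the divisor inclusion~\eqref{eqn:divisor-sheaf} with the highest-weight map~\eqref{eqn:morph-character-Weyl} applied to $\cF$; comparing with the explicit recipe defining~\eqref{eqn:immersion-stratum-bBunB}, this is precisely the image of $(E,\cF\times^T B)\in(C^\mu\times\Bun_B^\lambda)(S)$, whence $s^\mu_\lambda(E,\cF)$ lies in ${}_\mu\bBun_B^{\lambda+\mu}$ and therefore in ${}_\mu\Zas^\mu_\lambda$.

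For the reverse inclusion I would start from a point $(E,\cF,\cE,\beta)\in{}_\mu\Zas^\mu_\lambda(S)$. Its image in $\bBun_B^{\lambda+\mu}$ factors through ${}_\mu\bBun_B^{\lambda+\mu}\cong C^\mu\times\Bun_B^\lambda$, so it comes from a pair $(E',\cG)$ with $\cG\in\Bun_B^\lambda(S)$; thus $\cE$ is the $G$-torsor underlying $\cG$, carrying the corresponding $B$-reduction, and --- again by the recipe for~\eqref{eqn:immersion-stratum-bBunB} --- the Pl\"ucker datum $\kappa_\xi$ of our point factors as the divisor inclusion attached to $\langle\xi,E'\rangle$ into the highest-weight line bundle $\cG_{\F_B(\xi)}$, followed by the subbundle inclusion $\cG_{\F_B(\xi)}\hookrightarrow\cG_{\mathsf{M}(\xi)}=\cE_{\mathsf{M}(\xi)}$ coming from the $B$-reduction. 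On the other hand the image of the point in $\Bun_{B^-}^\lambda$ equips $\cE$ with an \emph{honest} $B^-$-reduction $\cG^-$, whose underlying $T$-torsor is $\cF$ (since the composite $\Zas^\mu_\lambda\to\Bun_{B^-}^\lambda\to\Bun_T^\lambda$ is $p_1$), with Pl\"ucker datum $\tau_\xi\colon\cE_{\mathsf{M}(\xi)}\twoheadrightarrow\cF_{\F_T(\xi)}$; and the condition defining $\Zas^\mu$ in~\eqref{eqn:diag-Zas} forces $\tau_\xi\circ\kappa_\xi$ to be the divisor inclusion $(\cF(-E))_{\F_T(\xi)}\hookrightarrow\cF_{\F_T(\xi)}$, with divisor $\langle\xi,E\rangle$.

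The main step is then to compare these two expressions for $\tau_\xi\circ\kappa_\xi$: the morphism of line bundles $\cG_{\F_B(\xi)}\to\cE_{\mathsf{M}(\xi)}\xrightarrow{\tau_\xi}\cF_{\F_T(\xi)}$ has divisor $\langle\xi,E\rangle-\langle\xi,E'\rangle$, which must therefore be effective; specializing $\xi$ to the fundamental weight $\omega_\alpha$ dual to $\alpha$, for each $\alpha\in\fRs^\vee$, gives $E'_\alpha\le E_\alpha$ as relative effective Cartier divisors on $C\times S/S$, and since $E_\alpha$ and $E'_\alpha$ both lie in $C^{(n_\alpha)}$ this forces $E'=E$. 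Consequently $\langle\xi,E\rangle-\langle\xi,E'\rangle=0$ for every $\xi\in\bX_+$, i.e.\ the composite $\cG_{\F_B(\xi)}\to\cE_{\mathsf{M}(\xi)}\xrightarrow{\tau_\xi}\cF_{\F_T(\xi)}$ is an isomorphism for all $\xi\in\bX_+$; this is exactly the statement that the $B$-reduction $\cG$ and the $B^-$-reduction $\cG^-$ of $\cE$ are transversal over all of $C\times S$. Since $B$ and $B^-$ meet transversally ($B\cap B^-=T$, the cell $B^-B$ being open in $G$), a transversal pair of such reductions determines a reduction $\cF_0$ of $\cE$ to $T$ inducing both $\cG$ and $\cG^-$; as the $T$-torsor underlying $\cG^-$ is $\cF$ we get $\cF_0=\cF$, hence $\cE\cong\cF\times^T G$ compatibly with all the data. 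Transporting $(E,\cF,\cE,\beta)$ along this isomorphism, a short final check against~\eqref{eqn:diag-Zas} --- $\beta$ now preserves every highest-weight line and every lowest-weight quotient of $\cF_{\mathsf{M}(\xi)}$, hence is the identity --- identifies our point with $s^\mu_\lambda(E,\cF)$.

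I expect the transversality step to be the crux. The condition cutting out $\Zas^\mu$ only builds in \emph{generic} transversality of the two reductions, and it is the hypothesis that the point sits in the deepest stratum ${}_\mu\bBun_B^{\lambda+\mu}$, fed through the divisor comparison above, that upgrades this to transversality on all of $C\times S$; the remaining ingredients (the explicit form of~\eqref{eqn:immersion-stratum-bBunB}, the bookkeeping with $T$-torsor twists, the standard passage from a transversal pair of reductions to a common $T$-reduction, and the identification of $\beta$) should be routine.
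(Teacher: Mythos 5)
Your argument is correct and follows the same route as the paper's sketch: the deepest-stratum condition forces the $B$- and $B^-$-reductions of $\cE$ to be transversal over all of $C\times S$, hence to meet in a $T$-reduction, which is identified with $\cF$ via $p_1$. Your divisor comparison establishing $E'=E$ is precisely what justifies the step the paper's sketch leaves implicit (that the $\kappa_\xi$ factor through morphisms of vector bundles from $\cF_{\F_T(\xi)}$), so you have filled in a detail rather than taken a genuinely different approach.
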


The idea of proof of this lemma is as follows.
If a point $(E,\cF,\cE,\beta)$ belongs to ${}_\mu \Zas^\mu_\lambda$, then the corresponding morphisms $(\cF (-E))_{\F_T(\xi)} \to \cE_{\mathsf{M}(\xi)}$ factor through morphisms of vector bundles $\cF_{\F_T(\xi)} \to \cE_{\mathsf{M}(\xi)}$, which define a reduction of $\cE$ to $B$. Since the compositions
\[
\cF_{\F_T(\xi)} \to \cE_{\mathsf{M}(\xi)} \to \cF_{\F_T(\xi)}
\]
are nonzero on each geometric point of $S$ they must be isomorphisms, hence the given reductions of $\cE$ to $B$ and $B^-$ are transversal; they therefore define a reduction to $T$.

For any $\nu$ with $0 \preceq \nu \preceq \mu$, by definition of ${}_\nu \bBun_B^{\lambda+\mu}$ we have a canonical morphism ${}_\nu \Zas^\mu_\lambda \to C^\nu$;
if $\Gamma \in \Part(\nu)$, one defines the locally closed substack ${}_\Gamma \Zas^\mu_\lambda \subset {}_\nu \Zas^\mu_\lambda$ as the preimage of $C^\nu_\Gamma$. We then have a decomposition
\begin{equation}
\label{eqn:strat-Zas-Part}
 \Zas^\mu_\lambda = \bigsqcup_{0 \preceq \nu \preceq \mu} \bigsqcup_{\Gamma \in \Part(\nu)} {}_\Gamma \Zas^\mu_\lambda.
\end{equation}
In case $\nu=\mu$, by Lemma~\ref{lem:uZas-smallest-strata}, for any $\Gamma \in \Part(\mu)$ we have
\[
 {}_\Gamma \Zas^\mu_\lambda = s^\mu_\lambda (C^\mu_\Gamma \times \Bun_T^\lambda).
\]

If a pair $((\cE,\cF,\kappa), (\cE,\cF',\tau))$ belongs to ${}_\nu \Zas^\mu_\lambda$, then there exists a point $E$ in $C^\nu$ and a pair $(\cF'',\kappa')$ such that each $\kappa_\xi' : (\cF'')_{\F_T(\xi)} \to \cE_{\mathsf{M}(\xi)}$ is a morphism of vector bundles, $\cF=\cF''(-E)$,
and for any $\xi \in \bX_+$ the morphism $\kappa_\xi$ is the composition
\[
\cF_{\F_T(\xi)} = (\cF''(-E))_{\F_T(\xi)} \xrightarrow{\eqref{eqn:divisor-sheaf}} (\cF'')_{\F_T(\xi)} \xrightarrow{\kappa_\xi'} \cE_{\mathsf{M}(\xi)}.
\]
The pair
\[
((\cE,\cF'',\kappa'), (\cE,\cF',\tau)) \in \Bun_B^{\lambda+\mu-\nu} \times_{\Bun_G} \Bun_{B^-}^\lambda = {}_0 \bBun_B^{\lambda+\mu-\nu} \times_{\Bun_G} \Bun_{B^-}^\lambda
\]
belongs to the image of $\Zas^{\mu-\nu}_{\lambda}$, and this construction defines an isomorphism of stacks
\begin{equation}
\label{eqn:stratum-Zas}
 {}_\nu \Zas^\mu_\lambda \cong C^\nu \times {}_0 \Zas^{\mu-\nu}_{\lambda}
\end{equation}
which, for any $\Gamma \in \Part(\nu)$, restricts to an isomorphism
\[
{}_\Gamma \Zas^\mu_\lambda \cong C^\nu_\Gamma \times {}_0\Zas^{\mu-\nu}_\lambda.
\]
Under the identification~\eqref{eqn:stratum-Zas}, the morphism~\eqref{eqn:morph-Zas-BunT} on the left-hand side corresponds to the composition
\[
C^\nu \times {}_0 \Zas^{\mu-\nu}_{\lambda} \to C^\nu \times C^{\mu-\nu} \times \Bun_T \to C^\mu \times \Bun_T
\]
where the first morphism is induced by the similar morphism on the factor ${}_0 \Zas^{\mu-\nu}_{\lambda}$, and the second one is induced by the sum map $C^\nu \times C^{\mu-\nu} \to C^\mu$.

Recall the factorization isomorphism~\eqref{eqn:fact-Zas-lambda}. For any $\mu_1,\mu_2,\lambda_1,\lambda_2$ and any $\nu_1,\nu_2$ such that $0 \preceq \nu_1 \preceq \mu_1$ and $0 \preceq \nu_2 \preceq \mu_2$, this isomorphism restricts to an isomorphism
\begin{multline}
\label{eqn:fact-Zas-lambda-nu}
({}_{\nu_1} \Zas^{\mu_1}_{\lambda_1} \times {}_{\nu_2} \Zas^{\mu_2}_{\lambda_2}) \times_{C^{\mu_1} \times C^{\mu_2} \times \Bun_T \times \Bun_T} \mathfrak{X}^{\mu_1,\mu_2} \\
\cong {}_{\nu_1+\nu_2} \Zas^{\mu_1 + \mu_2}_{\lambda_1+\lambda_2} \times_{C^{\mu_1 + \mu_2} \times \Bun_T} \mathfrak{X}^{\mu_1, \mu_2}.
\end{multline}
Given furthermore $\Gamma_1 \in \Part(\nu_1)$ and $\Gamma_2 \in \Part(\nu_2)$, we obtain an isomorphism
\begin{multline}
\label{eqn:fact-Zas-lambda-Gamma}
({}_{\Gamma_1} \Zas^{\mu_1}_{\lambda_1} \times {}_{\Gamma_2} \Zas^{\mu_2}_{\lambda_2}) \times_{C^{\mu_1} \times C^{\mu_2} \times \Bun_T \times \Bun_T} \mathfrak{X}^{\mu_1,\mu_2} \\
\cong {}_{\Gamma_1 \cup \Gamma_2} \Zas^{\mu_1 + \mu_2}_{\lambda_1+\lambda_2} \times_{C^{\mu_1 + \mu_2} \times \Bun_T} \mathfrak{X}^{\mu_1, \mu_2},
\end{multline}
where $\Gamma_1 \cup \Gamma_2$ is the partition of $\nu_1 + \nu_2$ obtained by concatenating $\Gamma_1$ and $\Gamma_2$.

\subsection{Consequences for Zastava stacks}
\label{ss:decomp-Zas}

The decomposition~\eqref{eqn:strat-Zas}
induces similar decompositions for the schemes $\uZas^\mu$ and $\uZasG^\mu$; namely, if we set
\[
{}_\nu \uZas^\mu = \{\cF^0\} \times_{\Bun_T^0, p_1} {}_\nu \Zas^\mu_0, \quad
{}_\nu \uZasG^\mu = 
\{\cF^0\} \times_{\Bun_T^0, p_2}
{}_\nu \Zas^\mu_{-\mu}
\]
then we have
\begin{equation}
\label{eqn:decomp-uZas}
\uZas^\mu = \bigsqcup_{0 \preceq \nu \preceq \mu} {}_\nu \uZas^\mu, \qquad
\uZasG^\mu = \bigsqcup_{0 \preceq \nu \preceq \mu} {}_\nu \uZasG^\mu.
\end{equation}
Similarly, using the decomposition~\eqref{eqn:strat-Zas-Part} and setting
\[
{}_\Gamma \uZas^\mu = \{\cF^0\} \times_{\Bun_T^0, p_1} {}_\Gamma \Zas^\mu_0, \quad
{}_\Gamma \uZasG^\mu = 
\{\cF^0\} \times_{\Bun_T^0, p_2}
{}_\Gamma \Zas^\mu_{-\mu}
\]
then we have
\begin{equation}
\label{eqn:decomp-uZas-Part}
\uZas^\mu = \bigsqcup_{0 \preceq \nu \preceq \mu} \bigsqcup_{\Gamma \in \Part(\nu)} {}_\Gamma \uZas^\mu, \qquad
\uZasG^\mu = \bigsqcup_{0 \preceq \nu \preceq \mu} \bigsqcup_{\Gamma \in \Part(\nu)} {}_\Gamma \uZasG^\mu.
\end{equation}

\begin{rmk}
\phantomsection
\label{rmk:local-isom-strata}
\begin{enumerate}
\item
\label{it:local-isom-strata}
Recall the setting of Proposition~\ref{prop:local-isom}, and assume that the map $S \to \Bun_T$ factors through a map $S \to \Bun_T^\lambda$. Then we have
\[
S \times_{\Bun_T, p_1} \Zas^\mu = S \times_{\Bun_T^\lambda, p_1} \Zas^\mu_\lambda = \bigsqcup_{0 \preceq \nu \preceq \mu} S \times_{\Bun_T^\lambda, p_1} {}_\nu \Zas^\mu_\lambda.
\]
By~\eqref{eqn:decomp-uZas} we also have
\[
S \times \uZas^\mu = \bigsqcup_{0 \preceq \nu \preceq \mu} S \times {}_\nu \uZas^\mu.
\]
It is clear from the proof of that proposition that the local isomorphisms between $S \times_{\Bun_T, p_1} \Zas^\mu$ and $S \times \uZas^\mu$ constructed in that proof identify the intersection with $S \times_{\Bun_T^\lambda, p_1} {}_\nu \Zas^\mu_\lambda$ with the intersection with $S \times {}_\nu \uZas^\mu$, for any $\nu$ such that $0 \preceq \nu \preceq \mu$. A similar claim holds for the decompositions~\eqref{eqn:strat-Zas-Part} and~\eqref{eqn:decomp-uZas-Part}.
\item
The stratum ${}_\nu \uZas^\mu$ is denoted ${}_\nu Z^\mu$ in~\cite[\S 3.5]{bfgm}. (In case $\nu=0$, the notation $Z^\mu_{\max}$ is also used for ${}_0 Z^\mu$.) 
In~\cite[\S 3.6.1]{gaitsgory}, the open subscheme ${}_0 \uZasG^\mu \subset \uZasG^\mu$ is denoted $\mathring{\mathscr{Z}}^{-\mu}$.
\end{enumerate}
\end{rmk}


In~\eqref{eqn:stratum-Zas},
choosing $\lambda=0$ and taking the fiber product with $\{\cF^0\}$ over $\Bun_T$ (for $p_1$), we deduce an isomorphism
\[
{}_\nu \uZas^\mu \cong C^\nu \times {}_0\uZas^{\mu-\nu},
\]
see~\cite[\S 3.5]{bfgm}.
For any $\Gamma \in \Part(\nu)$, this isomorphism restricts to an isomorphism
\begin{equation}
\label{eqn:strata-uZas-isom}
{}_\Gamma \uZas^\mu \cong C^\nu_\Gamma \times {}_0\uZas^{\mu-\nu}.
\end{equation}
(These statements do not have counterparts for the scheme $\uZasG^\mu$.)

\begin{rmk}
Regarding dimensions, for any $\mu \in \bY_{\succeq 0}$ we have
\[
\dim({}_0 \uZas^\mu)=\langle \mu, 2\rho \rangle,
\]
see e.g.~\cite[\S 5.10]{bfgm}. Using the considerations in~\S\ref{ss:graded-sym-powers}, we deduce that if $0 \preceq \nu \preceq \mu$ we have
\[
\dim({}_\nu \uZas^\mu)= \langle \nu,\rho \rangle + \langle \mu-\nu, 2\rho \rangle,
\]
and if $\Gamma \in \Part(\nu)$ we have
\begin{equation}
\label{eqn:dim-strata-uZas}
\dim({}_\Gamma \uZas^\mu)= | \Gamma | + \langle \mu-\nu, 2\rho \rangle.
\end{equation}
\end{rmk}

In the ``factorization'' isomorphisms~\eqref{eqn:fact-Zas-lambda-nu}--\eqref{eqn:fact-Zas-lambda-Gamma}, setting $\lambda_1=\lambda_2=0$ and taking fiber products with $\{\cF^0\}$ we obtain isomorphisms
\begin{multline}
\label{eqn:fact-uZas-nu}
({}_{\nu_1} \uZas^{\mu_1} \times {}_{\nu_2} \uZas^{\mu_2}) \times_{C^{\mu_1} \times C^{\mu_2}} (C^{\mu_1} \times C^{\mu_2})_{\disj} \cong \\
 {}_{\nu_1 + \nu_2} \uZas^{\mu_1 + \mu_2} \times_{C^{\mu_1 + \mu_2}} (C^{\mu_1} \times C^{\mu_2})_{\disj}
\end{multline}
and
\begin{multline}
\label{eqn:fact-uZas-Delta}
({}_{\Gamma_1} \uZas^{\mu_1} \times {}_{\Gamma_2} \uZas^{\mu_2}) \times_{C^{\mu_1} \times C^{\mu_2}} (C^{\mu_1} \times C^{\mu_2})_{\disj} \cong \\
 {}_{\Gamma_1 \cup \Gamma_2} \uZas^{\mu_1 + \mu_2} \times_{C^{\mu_1 + \mu_2}} (C^{\mu_1} \times C^{\mu_2})_{\disj}.
\end{multline}
Similar isomorphisms hold for the schemes $\uZasG^\mu$.

\subsection{Intersection cohomology complex}
\label{ss:IC-Zastavas}

Consider the canonical map
\[
r_\mu : \uZasG^\mu \to \bBun_U,
\]
see~\S\ref{ss:relation-Zastava-Drinfeld}. Recall the intersection cohomology complex $\IC_{\bBun_U}$, see~\S\ref{ss:complexes-sheaves}, and consider similarly the intersection cohomology complex $\IC_{\uZasG^\mu}$ on $\uZasG^\mu$ associated with the constant local system on the smooth open subscheme ${}_0 \uZasG^\mu$. 
The following statement is~\cite[Proposition~3.6.5(a)]{gaitsgory}.

\begin{prop}
\label{prop:pullback-IC-bBun-Zas}
For any $\mu \in \bY_{\succeq 0}$ we have a canonical isomorphism
\[
(r_\mu)^* \IC_{\bBun_U}[\langle \mu, 2\rho \rangle - (g-1) \dim(U)] \cong \IC_{\uZasG^\mu}.
\]
\end{prop}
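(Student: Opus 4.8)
The plan is to prove the statement first for ``large'' $\mu$, where $r_\mu$ is smooth and the isomorphism is formal, and then to deduce the general case by factorization. Throughout write $d_\nu = \la \nu, 2\rho \ra - (g-1)\dim(U)$ for $\nu \in \bY_{\succeq 0}$, so that the asserted shift is $[d_\mu]$. \textbf{The case of large $\mu$.} Suppose $\la \mu, \alpha \ra > 2g-2$ for every positive root $\alpha$; then $r_\mu$ is smooth by Remark~\ref{rmk:smoothness-map-uZasmu}. Since $\dim(\bBun_U) = (g-1)\dim(U)$ (the smooth open $\Bun_U$ being dense) and $\dim(\uZasG^\mu) = \la \mu, 2\rho \ra$ (by the local isomorphism with $\uZas^\mu$, cf.\ Remark~\ref{rmk:local-isom-uZas-uZasG}, and the dimension estimates for the strata of $\uZas^\mu$), the relative dimension of $r_\mu$ equals $d_\mu$. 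Moreover one checks from the definitions that $r_\mu^{-1}(\Bun_U) = {}_0 \uZasG^\mu$ (both being the ``defect-$0$'' locus, as ${}_\nu\uZasG^\mu$ maps into the defect-$\nu$ part of $\bBun_U$), so that $r_\mu$ sits in a cartesian square with the open immersions ${}_0\uZasG^\mu \hookrightarrow \uZasG^\mu$ and $j\colon \Bun_U \hookrightarrow \bBun_U$ and with the smooth map $r_\mu|_{{}_0\uZasG^\mu}$. As $r_\mu^*[d_\mu]$ is then t-exact for the perverse t-structures, commutes with intermediate extension along $j$, and sends $\underline{\bk}_{\Bun_U}[(g-1)\dim(U)]$ to $\underline{\bk}_{{}_0\uZasG^\mu}[\la \mu, 2\rho\ra]$, it sends $\IC_{\bBun_U} = j_{!*}\underline{\bk}_{\Bun_U}[(g-1)\dim(U)]$ to $\IC_{\uZasG^\mu}$, which is the claim.

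\textbf{The general case.} Fix $\mu_1 \in \bY_{\succeq 0}$ with $\la \mu_1, \alpha \ra > 2g-2$ for all positive $\alpha$, so that $\mu_1$ and $\mu + \mu_1$ are both large. The factorization isomorphism~\eqref{eqn:fact-uZas} (in its $\uZasG$-version) identifies, over $(C^\mu \times C^{\mu_1})_\disj$, the scheme $\uZasG^{\mu+\mu_1}$ with $\uZasG^\mu \times \uZasG^{\mu_1}$; as all structural maps to the ambient configuration spaces are étale, taking intersection cohomology complexes yields a canonical isomorphism
\[
\IC_{\uZasG^{\mu+\mu_1}}\big|_{(C^\mu\times C^{\mu_1})_\disj} \cong \bigl(\IC_{\uZasG^\mu} \boxtimes \IC_{\uZasG^{\mu_1}}\bigr)\big|_{(C^\mu\times C^{\mu_1})_\disj}.
\]
On the other hand, $\bBun_U$ — equivalently, the ``generalized $U$-reduction'' data defining it, cf.\ Remark~\ref{rmk:bBunB} — carries a factorization structure over the configuration space which is intertwined by the maps $r_\bullet$ with the factorization of the Zastava schemes used above (this is, in a form global over $\Bun$, the content of the factorization statements of~\cite{fm,bfgm} combined with the open immersion~\eqref{eqn:open-immersion-Zastava}). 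This gives, over $(C^\mu\times C^{\mu_1})_\disj$, a canonical isomorphism
\[
(r_{\mu+\mu_1})^*\IC_{\bBun_U}[d_{\mu+\mu_1}] \cong (r_\mu)^*\IC_{\bBun_U}[d_\mu] \boxtimes (r_{\mu_1})^*\IC_{\bBun_U}[d_{\mu_1}].
\]
Applying the large case to $\mu + \mu_1$ and to $\mu_1$, and combining the two displayed isomorphisms, we obtain over $(C^\mu\times C^{\mu_1})_\disj$ an isomorphism $(r_\mu)^*\IC_{\bBun_U}[d_\mu] \boxtimes \IC_{\uZasG^{\mu_1}} \cong \IC_{\uZasG^\mu} \boxtimes \IC_{\uZasG^{\mu_1}}$. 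Restricting along the slice over a point $z_0$ of the dense smooth stratum ${}_0\uZasG^{\mu_1}$ — at which $\IC_{\uZasG^{\mu_1}}$ has one-dimensional stalk concentrated in a single degree — we get an isomorphism $(r_\mu)^*\IC_{\bBun_U}[d_\mu] \cong \IC_{\uZasG^\mu}$ over the open $W_{z_0} \subset \uZasG^\mu$ cut out by that slice. Since the support of $z_0$ (a configuration of $\mu_1$ points on $C$) may be chosen disjoint from any prescribed finite subset of $C$, the opens $W_{z_0}$ cover $\uZasG^\mu$; as perversity, the restriction to the dense open ${}_0\uZasG^\mu$, and the vanishing of sub- and quotient objects supported on $\uZasG^\mu \smallsetminus {}_0\uZasG^\mu$ are all local properties, their validity on each $W_{z_0}$ shows that $(r_\mu)^*\IC_{\bBun_U}[d_\mu]$ is the intermediate extension $\IC_{\uZasG^\mu}$.

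\textbf{The main obstacle} is the second displayed isomorphism above: the precise formulation and proof of the factorization-compatibility of the morphisms $r_\bullet$ and of the factorization structure of $\IC_{\bBun_U}$. This is most transparent in the language of mapping stacks, $\bBun_B$ and $\bigsqcup_\mu \Zas^\mu$ being open substacks of $\mathrm{Maps}(C, [G\backslash\overline{G/U}/T])$ and $\mathrm{Maps}(C, [B^-\backslash\overline{G/U}/T])$ singled out by a genericity condition (Remark~\ref{rmk:bBunB} and the remarks following it), where the independence of degenerations at disjoint points of $C$ is essentially built in; one must then check that the projection $[B^-\backslash\overline{G/U}/T] \to [G\backslash\overline{G/U}/T]$ intertwines these structures and is compatible with~\eqref{eqn:fact-Zas}. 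All of this is purely geometric and hence insensitive to $\bk$, so that the argument of~\cite[Proposition~3.6.5]{gaitsgory} applies verbatim in our setting; the discussion above is only meant to recall its shape. The remaining bookkeeping — tracking the cohomological shifts — is routine, each shift being a relative dimension.
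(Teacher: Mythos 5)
Your proposal is correct and follows essentially the same two-step strategy as the paper: the smooth-pullback argument for $\mu$ with $\langle\mu,\alpha\rangle>2g-2$ (via Remark~\ref{rmk:smoothness-map-uZasmu} and the dimension count), and reduction of the general case to this one via the factorization isomorphism. The paper's own proof is terser — it simply observes that the arguments of~\cite[\S 3.9]{gaitsgory} are characteristic-independent and refers there for the factorization step — but the reduction you sketch (tensoring with $\IC_{\uZasG^{\mu_1}}$ over $(C^\mu\times C^{\mu_1})_{\disj}$, slicing at $z_0\in{}_0\uZasG^{\mu_1}$, covering $\uZasG^\mu$ by the opens $W_{z_0}$, and using locality of the IC characterization) is exactly the shape of Gaitsgory's argument, and your flagged ``main obstacle'' (compatibility of the maps $r_\bullet$ with factorization) is indeed the point that rests on the ULA property supplied by Proposition~\ref{prop:ULA}.
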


In~\cite{gaitsgory} it is assumed that $\bk$ has characteristic $0$, but the same proof applies in general. (The author also uses $!$-pullback instead of $*$-pullback, but the two versions are equivalent by Grothendieck--Verdier duality.)
Namely, in case $\mu$ satisfies $\langle \mu, \alpha \rangle > 2g-2$ for any positive root $\alpha$, the morphism $r_\mu$ is smooth by Remark~\ref{rmk:smoothness-map-uZasmu}. Since $\bBun_U$ has dimension $(g-1)\dim(U)$ and $\uZasG^\mu$ has dimension $\langle \mu, 2\rho \rangle$, the formula of the proposition follows from a classical property of smooth pullback. The proof in general proceeds by reduction to this case using factorization; see~\cite[\S 3.9]{gaitsgory} for details.


\subsection{Central fiber}
\label{ss:central-fiber}

Recall from~\eqref{eqn:mu-closed-stratum} that the closed stratum in the stratification~\eqref{eqn:stratif-symprod-Y} (corresponding to $\Gamma=\ppl\mu\ppr$) identifies with $C$. We set
\[
\CZas^\mu := C \times_{C^\mu} \Zas^\mu,
\]
where the morphism $C \to C^\mu$ is the embedding of that stratum.
(This is a stack over $C \times \Bun_T$.)
We also set
\[
\uCZas^\mu = C \times_{C^\mu} \uZas^\mu = \{\cF^0\} \times_{\Bun_T} \CZas^\mu = C \times_{C^\mu \times \Bun_T} \Zas^\mu
\]
(where in the right-hand side, the map $C \to C^\mu \times \Bun_T$ is 
given by $E \mapsto (\mu \cdot E, \cF^0)$)
and
\[
\uCZasG^\mu = C \times_{C^\mu} \uZasG^\mu = C \times_{C \times \Bun_T} \CZas^\mu = C \times_{C^\mu \times \Bun_T^{-\mu}} \Zas^\mu
\]
(where in the third term the map $C \to C \times \Bun_T$ is given by $E \mapsto (E, \cF^0(\mu \cdot E))$, and in the right-hand side the map $C \to C^\mu \times \Bun_T^{-\mu}$ is given by $E \mapsto (\mu \cdot E, \cF^0(\mu \cdot E))$).
These are schemes over $C$. The decompositions~\eqref{eqn:decomp-uZas} induce decompositions
\[
\uCZas^\mu = \bigsqcup_{0 \preceq \nu \preceq \mu} {}_\nu \uCZas^\mu \quad \text{where ${}_\nu \uCZas^\mu = C \times_{C^\mu} {}_\nu \uZas^\mu \cong {}_0 \uCZas^{\mu-\nu}$}
\]
and
\[
\uCZasG^\mu = \bigsqcup_{0 \preceq \nu \preceq \mu} {}_\nu \uCZasG^\mu \quad \text{where ${}_\nu \uCZasG^\mu = C \times_{C^\mu} {}_\nu \uZasG^\mu$.}
\]

Given a closed point $c \in C$, we set
\[
\uCZas^{\mu,c} = \{c\} \times_C \uCZas^\mu, \quad \uCZasG^{\mu,c} = \{c\} \times_C \uCZasG^\mu.
\]
If $\nu$ satisfies $0 \preceq \nu \preceq \mu$,
we also set
\[
{}_\nu \uCZas^{\mu,c} = \{c\} \times_{C} {}_\nu \uCZas^\mu, \quad 
{}_\nu \uCZasG^{\mu,c} = \{c\} \times_{C} {}_\nu \uCZasG^\mu.
\]

\begin{rmk}
The scheme $\uCZas^{\mu,c}$ is denoted $\mathbb{S}^\mu$ in~\cite[\S 2.5]{bfgm}, and the open subscheme ${}_0 \uCZas^{\mu,c}$ is denoted ${}_0 \mathbb{S}^\mu$. The scheme $\uCZasG^{\mu,c}$ is denoted $\mathfrak{F}^{-\mu}$ in~\cite[\S 3.6.3]{gaitsgory}, and the open subscheme ${}_0 \uCZasG^{\mu,c}$ is denoted $\mathring{\mathfrak{F}}^{-\mu}$.
\end{rmk}

We continue with our closed point $c \in C$, and
consider the affine Grassmannian $\Gr_{c}$ for the group $G$, defined as in~\S\ref{ss:Gr} but with respect to the completion of the local ring of $C$ at $c$. (For any choice of a local coordinate at $c$ this completion identifies with $\F ( \hspace{-1pt} ( z ) \hspace{-1pt} )$, which provides an identification of $\Gr_c$ with $\Gr$; but the definition can be phrased without any choice of coordinate, which is what we mean by $\Gr_c$.) We have therein the semi-infinite orbits $\rS_{\lambda,c}$ and $\rS^-_{\lambda,c}$ ($\lambda \in \bY$) defined using the action of the loop groups of $U$ and $U^-$ respectively. By~\cite[Proposition~2.6]{bfgm}, there exists a canonical isomorphism
\[
{}_0 \uCZas^{\mu,c} \cong \rS_{\mu,c} \cap \rS^-_{0,c}.
\]
This isomorphism is roughly defined as follows. An $\F$-point in the left-hand side is a triple $(\mu\cdot c, \cE,\beta)$ where $\cE$ is $G$-torsor on $C$ and $\beta$ is a trivialization over $C \smallsetminus \{c\}$, subject to the conditions from~\eqref{eqn:diag-uZas} and~\eqref{eqn:uZas-cond}. 
The maps $\kappa_\xi$ and $\tau_\xi$ from~\eqref{eqn:uZas-cond} define a $B^-$-reduction and a $B$-reduction, respectively, of $\cE$.  That is, we get a $B^-$-torsor $\cG^-$ and a $B$-torsor $\cG$.  These torsors inherit the trivialization $\beta$ away from $c$, so we get points in $\Gr_{B^-}$ and in $\Gr_B$.  By examining the domain of $\kappa_\xi$ and the codomain of $\tau_\xi$, we find that $\cG^-$ must lie in $\rS^-_{0,c} \subset \Gr_{B^-}$, and $\cG$ in $\rS_{\mu,c} \subset \Gr_B$.


From this identification and standard facts about semiinfinite orbits (see e.g.~\cite[Proposition~3.9]{baugau}) we deduce that
\[
\dim(\uCZas^{\mu,c})=\langle \mu, \rho \rangle.
\]

On the other hand, it is explained in~\cite[\S 3.6.3]{gaitsgory} that we have a canonical identification
\begin{equation}
\label{eqn:central-fiber-G-intersection-si-orbits}
\uCZasG^{\mu,c} \cong \overline{\rS_{0,c}} \cap \rS^-_{-\mu,c}
\end{equation}
which restricts to an identification
\begin{equation}
\label{eqn:central-fiber-G-0-intersection-si-orbits}
{}_0 \uCZasG^{\mu,c} \cong \rS_{0,c} \cap \rS^-_{-\mu,c}.
\end{equation}

\subsection{Smoothness}


\begin{prop}
\label{prop:smoothness-uZas}
For any $\mu \in \bY_{\succeq 0}$ the open subscheme
${}_0 \uZas^\mu \subset \uZas^\mu$
is smooth and connected.
\end{prop}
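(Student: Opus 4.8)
The plan is to reduce the statement to a local model using the tools already assembled in the paper, and then establish smoothness and connectedness there directly. Concretely, I would first invoke the factorization isomorphism~\eqref{eqn:fact-uZas-nu} (or rather its restriction to the open loci ${}_0\uZas$) together with the identification ${}_0\uCZas^{\mu,c} \cong \rS_{\mu,c} \cap \rS^-_{0,c}$ from~\S\ref{ss:central-fiber}: since ${}_0\uZas^\mu$ maps to $C^\mu_{\mathrm{disj}\text{-type}}$ and étale-locally (by the factorization property) looks like a product of copies of ${}_0\uCZas^{\mu_j,c}$ over $C^{(1)}$'s, it suffices to prove that the central fiber ${}_0 \uCZas^{\mu,c}$ is smooth and that ${}_0\uZas^\mu$ is connected. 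The smoothness of ${}_0\uCZas^{\mu,c} \cong \rS_{\mu,c} \cap \rS^-_{0,c}$ is a standard fact about transversal intersections of opposite semiinfinite orbits in $\Gr$: the two orbits $\rS_{\mu,c}$ and $\rS^-_{0,c}$ are themselves smooth (being orbits of the pro-unipotent, resp.\ ind-unipotent, groups $\Loop^+ U$-type and $\Loop U^-$), and their intersection is transversal of the expected dimension $\langle\mu,\rho\rangle$; one cites e.g.~\cite{baugau} or the Mirković--Vilonen description. Equivalently, one can argue smoothness of ${}_0\uZas^\mu$ directly from Remark~\ref{rmk:smoothness-map-uZasmu}: for $\mu$ with $\langle\mu,\alpha\rangle > 2g-2$ for all positive roots $\alpha$, the map $r_\mu$ is smooth, $\bBun_U$ is smooth, so $\uZasG^\mu$ (hence its open part) is smooth; then factorization propagates smoothness of ${}_0\uZas^\mu$ to all $\mu$, since any $\mu$ can be written as a sum of pieces each satisfying the inequality, and smoothness is étale-local.

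For connectedness I would argue as follows. Using~\eqref{eqn:strata-uZas-isom} and the fact that ${}_0\uZas^\mu$ is the open stratum (the $\nu = 0$ piece), together with the affine morphism ${}_0\uZas^\mu \to C^\mu$ which on ${}_0$ lands in a dense open subset, it is enough to understand the fibers and the base. The base $C^\mu = \prod_\alpha C^{(n_\alpha)}$ is irreducible, and the restriction of ${}_0\uZas^\mu \to C^\mu$ to any stratum $C^\mu_\Gamma$ has fibers that factor (by the factorization isomorphism~\eqref{eqn:fact-uZas-Delta}) as products of the central fibers ${}_0\uCZas^{\mu_j,c}$; thus connectedness of ${}_0\uZas^\mu$ follows from connectedness of the base and connectedness of a single central fiber ${}_0\uCZas^{\mu,c} \cong \rS_{\mu,c}\cap\rS^-_{0,c}$. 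The latter is connected because $\Loop U$ is connected (being an ind-scheme built from affine spaces) and acts on it, or alternatively because $\rS_{\mu,c}\cap\rS^-_{0,c}$ is irreducible of dimension $\langle\mu,\rho\rangle$ by the standard theory (it is a single piece in the MV decomposition, and by~\cite{bfgm} or~\cite{baugau} these intersections are irreducible). Since a smooth connected-fibered family over an irreducible base is connected (indeed irreducible), we conclude.

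The main obstacle I anticipate is bookkeeping the factorization reduction cleanly: one needs that the factorization isomorphisms~\eqref{eqn:fact-uZas-nu}--\eqref{eqn:fact-uZas-Delta}, restricted to the ${}_0$-loci, genuinely exhibit ${}_0\uZas^\mu$ as étale-locally (over a stratification of $C^\mu$ by the $C^\mu_\Gamma$) a product of central fibers, so that one may transfer both smoothness (which is étale-local) and the computation of connected components. A secondary point is making sure the "expected dimension / transversality" claim for $\rS_{\mu,c}\cap\rS^-_{0,c}$ is cited in the precise form needed (dimension $\langle\mu,\rho\rangle$, irreducible, smooth); this is exactly the content already used implicitly in~\S\ref{ss:central-fiber} via~\cite{baugau,bfgm}, so it should be a matter of pointing to the right statement rather than reproving anything. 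Given the density of the open stratum and the affineness of the structure map, the irreducibility (hence connectedness) of ${}_0\uZas^\mu$ should then follow formally.
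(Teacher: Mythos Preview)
Your smoothness argument is close to salvageable but has slips. Option~1 is essentially circular: the ``transversality'' of $\rS_{\mu,c}$ and $\rS^-_{0,c}$ in $\Gr$ is not a standard fact one can cite from~\cite{baugau} or the MV literature; these are infinite-dimensional ind-schemes, and smoothness of their intersection is usually \emph{deduced} from the Zastava picture, not the other way around. Option~2 contains the error that $\bBun_U$ is not smooth (that is precisely why one studies its IC sheaf); what is true is that the restriction of $r_\mu$ to ${}_0\uZasG^\mu$ lands in the smooth open $\Bun_U$, and this gives smoothness of ${}_0\uZasG^\mu$ for large $\mu$. Your factorization propagation is then stated backwards: a small $\mu$ cannot be written as a sum of large pieces. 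The correct direction is to embed ${}_0\uZas^\mu$ into ${}_0\uZas^{\mu+\nu}$ over the disjoint locus for some auxiliary $\nu$ making $\mu+\nu$ large. The paper instead uses Proposition~\ref{prop:local-isom}: ${}_0\uZas^\mu$ is locally isomorphic to $\{\cF\}\times_{\Bun_T,p_1}{}_0\Zas^\mu$ for any $\cF\in\Bun_T^\lambda$, and for $\lambda$ with $\langle\lambda+\mu,\alpha\rangle>2g-2$ the map $p_1$ is smooth because $\Bun_B^{\lambda+\mu}\to\Bun_G$ is. This avoids the factorization bootstrap entirely.

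The connectedness argument has a genuine gap. You reduce to irreducibility of $\rS_{\mu,c}\cap\rS^-_{0,c}$, but neither justification you offer works. The action of $\Loop U$ does not preserve this intersection (it acts transitively on $\rS_{\mu,c}$, moving points out of $\rS^-_{0,c}$). And MV cycles are the irreducible components of closures $\overline{\Gr^\lambda\cap\rS^-_\nu}$, a different kind of intersection; there is no ``single piece'' statement for $\rS_\mu\cap\rS^-_0$ in~\cite{baugau} or~\cite{bfgm}. In fact, the irreducibility of $\rS_\mu\cap\rS^-_0$ is essentially equivalent (via the $\Gm$-contraction on $\uZas^{\mathbb{A}^1,\mu}$) to connectedness of ${}_0\uZas^{\mathbb{A}^1,\mu}$, which is the hard input. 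The paper obtains this input from~\cite[Proposition~2.25]{bfg} (connectedness of based maps $\mathbb{P}^1\to G/B$), then for a general curve argues by induction on $\mu$: factorization shows the restriction of ${}_0\uZas^\mu$ over $C^\mu\smallsetminus C^\mu_{\ppl\mu\ppr}$ is connected, and a putative extra component supported over $C^\mu_{\ppl\mu\ppr}$ would have dimension $\le\langle\mu,\rho\rangle+1<\langle\mu,2\rho\rangle$, contradicting the \'etale comparison with the $\mathbb{A}^1$ case via Lemma~\ref{lem:uZas-compare}. Your ``connected fibers over irreducible base'' step would also need flatness or openness of ${}_0\uZas^\mu\to C^\mu$, which you have not established.
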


\begin{proof}
The first assertion (regarding smoothness) is~\cite[Corollary~3.8]{bfgm}. We recall the proof for the reader's convenience (and since we have recalled all of its ingredients).
By definition, for any $\lambda \in \bY$ we have an open immersion
\[
 {}_0 \Zas^\mu_\lambda \to \Bun_B^{\lambda+\mu} \times_{\Bun_G} \Bun_{B^-}^\lambda,
\]
see~\eqref{eqn:open-immersion-Zastava}. By
Remark~\ref{rmk:Bunr}, if $\lambda$ satisfies $\langle \lambda+\mu, \alpha \rangle > 2g-2$ for any positive root $\alpha$ then the morphism $\Bun_{B}^{\lambda+\mu} \to \Bun_G$ is smooth; as a consequence, in this case the morphism 
\[
{}_0 \Zas^\mu_\lambda \to \Bun_{B^-}^\lambda
\]
is smooth, hence so is the composition
\[
{}_0 \Zas^\mu_\lambda \to \Bun_{B^-}^\lambda \to \Bun_T^\lambda,
\]
which coincides with $p_1$. Hence, for any $\cF \in \Bun_T^\lambda(\F)$ the scheme
\[
\{\cF\} \times_{\Bun_T, p_1}
{}_0 \Zas^\mu
\]
is smooth. By Proposition~\ref{prop:local-isom} and Remark~\ref{rmk:local-isom-strata}\eqref{it:local-isom-strata}, this scheme is locally isomorphic to ${}_0 \uZas^\mu$, so that the latter scheme is smooth as well.

Connectedness in case $C=\mathbb{A}^1$ is proved in~\cite[Proposition~2.25]{bfg}. (More precisely, this statement claims connectedness of a certain space of maps from $\mathbb{P}^1$ to the flag variety $G/B$, which by~\cite[Proposition~2.21]{bfg} identifies with ${}_0 \uZas^{\mathbb{A}^1,\mu}$.) The general case can be deduced e.g.~as follows. First, the restriction of ${}_0 \uZas^\mu$ to the open stratum of $C^\mu$ (consisting of collections of pairwise distinct points) is a locally trivial fibration with fibers given by products of copies of $\Gm$, see e.g.~\cite[\S 6.4.2]{fm}; it is therefore connected. In particular, if $\mu$ is a simple root this implies the desired claim. One then proceeds by induction (with respect to the order $\preceq$). Given $\mu$ which is not a simple root and assuming the claim for smaller elements, using factorization (see~\eqref{eqn:fact-uZas-nu}) one sees that the restriction of ${}_0 \uZas^\mu$ to the complement of the closed stratum $C^\mu_{[\mu]} \subset C^\mu$ is connected. If ${}_0 \uZas^\mu$ was disconnected, it would therefore contain a connected component supported over $C^\mu_{[\mu]}$. In view of the analysis of central fibers in~\S\ref{ss:central-fiber}, this component would have dimension at most $\langle \rho, \mu\rangle +1 < 2\langle \rho, \mu\rangle$. This would mean that ${}_0 \uZas^\mu$ would possess a point whose image in $C^\mu$ belongs to $C^\mu_{[\mu]} \cong C$ and such that the dimension at this point is $< 2\langle \rho, \mu\rangle$. Now there exists an open neighborhood $C' \subset C$ of the image of that point and an \'etale map $C' \to \mathbb{A}^1$ (see e.g.~the proof of Lemma~\ref{lem:const-closed-stratum} below for details). Using Lemma~\ref{lem:uZas-compare} and Lemma~\ref{lem:sym-mu}\eqref{it:sm-stratum} we deduce that the dimension of ${}_0 \uZas^\mu$ at that point is equal to $2\langle \rho, \mu\rangle$, which provides a contradiction.
\end{proof}

Using~\eqref{eqn:strata-uZas-isom}, this proposition shows that for any $\nu$ such that $0 \preceq \nu \preceq \mu$ and any $\Gamma \in \Part(\nu)$ the stratum ${}_\Gamma \uZas^\mu$ is smooth. By Remark~\ref{rmk:local-isom-uZas-uZasG}
, the same claim holds for each stratum ${}_\Gamma \uZasG^\mu$. In other words, the decompositions in~\eqref{eqn:decomp-uZas-Part} are stratifications.

\section{Constructibility and applications}
\label{sec:constructibility}


\subsection{Constructibility}
\label{ss:constructibility}

Fix a smooth (but possibly nonprojective) curve $C$, and consider the associated Zastava schemes $\uZas^\mu$, see~\S\ref{ss:indep-curve}. Considering the smooth compactification $\overline{C}$ of $C$ (see~\S\ref{ss:indep-curve}) we obtain (for any $\mu \in \bY_{\succeq 0}$) an open embedding of $\uZas^\mu$ into the corresponding Zastava scheme associated with $\overline{C}$. From the stratification~\eqref{eqn:decomp-uZas-Part} of the latter scheme we deduce a similar stratification of $\uZas^\mu$.

Given $\mu,\nu \in \bY_{\succeq 0}$ such that $\nu \preceq \mu$ and $\Gamma \in \Part(\nu)$, we consider the following condition:
\[
 (\star)^\mu_{\Gamma} \quad \text{for any $n \in \Z$, the sheaf $\mathcal{H}^n((\IC_{\uZas^\mu})_{| {}_\Gamma \uZas^\mu})$ is locally constant.}
\]
We will eventually prove that this condition is always satisfied, but the proof will require an induction argument. When it is satisfied for some triple $(\mu,\nu,\Gamma)$, it makes sense
to consider the polynomial
\[
P^{C,\mu}_{\Gamma}(q)
:= \sum_{n \geq 0} \rank \bigl( \mathcal{H}^{-|\Gamma|+\langle \nu-\mu, 2\rho\rangle-n}((\IC_{\uZas^\mu})_{| {}_\Gamma \uZas^\mu}) \bigr) \cdot q^n \quad \in \Z_{\geq 0}[q].
\]
In fact, when the curve $C$ is clear from the context we will omit it from the notation.  (Eventually, we will see that this polynomial depends neither on $C$, nor on $\mu$.)  In view of~\eqref{eqn:dim-strata-uZas}, the standard support condition for perverse sheaves implies that $P^{C,\mu}_{\Gamma}(q)$ is indeed a polynomial in $q$, which moreover has 0 constant term unless $\nu=0$ (i.e.~$\Gamma=\varnothing$). In the latter case, $(\star)^\mu_{\varnothing}$ is indeed satisfied, and we have $P^{C,\mu}_{\varnothing}=1$. Note also that, by Grothendieck--Verdier duality, when $(\star)^\mu_{\Gamma}$ is satisfied we have
\[
 P^{C,\mu}_{\Gamma}(q) = \sum_{n \geq 0} \rank \bigl( \mathcal{H}^{-|\Gamma|+\langle \nu-\mu, 2\rho\rangle+n}((i^\mu_{\Gamma})^! \IC_{\uZas^\mu}) \bigr) \cdot q^n
\]
where $i^\mu_{\Gamma} : {}_\Gamma \uZas^\mu \to \uZas^\mu$ is the embedding.

The case $\Gamma = \ppl \mu \ppr$ (hence $\nu=\mu$) will be
particularly important below, and for this case we introduce the simplified notation
\[
P^{C,\mu} = P^\mu = P^{C,\mu}_{\ppl\mu\ppr}.
\]

The following statement is clear from Remark~\ref{rmk:local-isom-uZas-uZasG}.

\begin{lem}
\label{lem:const-equiv-Zas}
 For any $\mu,\nu \in \bY_{\succeq 0}$ such that $\nu \preceq \mu$ and $\Gamma \in \Part(\nu)$, the condition $(\star)^\mu_{\Gamma}$ is equivalent to the condition that for any $n \in \Z$, the sheaf $\mathcal{H}^n((\IC_{\uZasG^\mu})_{| {}_\Gamma \uZasG^\mu})$ is locally constant. Moreover, if these conditions are satisfied we have
 \[
  P^{C,\mu}_{\Gamma}(q) = \sum_{n \geq 0} \rank \bigl( \mathcal{H}^{-|\Gamma|+\langle \nu-\mu, 2\rho\rangle-n}((\IC_{\uZasG^\mu})_{| {}_\Gamma \uZasG^\mu}) \bigr) \cdot q^n.
 \]
\end{lem}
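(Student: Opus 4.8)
The plan is to reduce everything to the Zariski-local isomorphism between $\uZas^\mu$ and $\uZasG^\mu$ recorded in Remark~\ref{rmk:local-isom-uZas-uZasG}, exploiting that intersection cohomology complexes are of local nature. First I would fix open covers $(V_i : i \in I)$ of $\uZas^\mu$ and $(W_i : i \in I)$ of $\uZasG^\mu$, lying over a common open cover of $C^\mu$, together with isomorphisms $\theta_i : V_i \simto W_i$ of schemes over $C^\mu$, as provided by that remark. The one point requiring care is that the $\theta_i$ can be chosen compatibly with the stratifications~\eqref{eqn:decomp-uZas-Part}: this follows by tracing through the construction of the local isomorphism in Remark~\ref{rmk:local-isom-uZas-uZasG}, which is assembled from Proposition~\ref{prop:local-isom} (applied to the morphism $\tilde\phi$ of that remark, then pulled back along a diagonal), together with the strata-compatibility of the local isomorphisms of Proposition~\ref{prop:local-isom} recorded in Remark~\ref{rmk:local-isom-strata}\eqref{it:local-isom-strata}. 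Thus each $\theta_i$ restricts, for every $\nu \preceq \mu$ and every $\Gamma \in \Part(\nu)$, to an isomorphism $V_i \cap {}_\Gamma \uZas^\mu \simto W_i \cap {}_\Gamma \uZasG^\mu$; in particular it matches the open dense smooth strata ${}_0 \uZas^\mu \cap V_i$ and ${}_0 \uZasG^\mu \cap W_i$.

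Given this, each $\theta_i$ identifies $(\IC_{\uZas^\mu})_{|V_i}$ with $\theta_i^* \bigl( (\IC_{\uZasG^\mu})_{|W_i} \bigr)$: an isomorphism of schemes carries the constant local system on the open dense smooth stratum to the constant local system, and respects the cohomological normalization since it preserves dimensions, so it carries the one intermediate extension to the other. Restricting to a stratum and using the previous paragraph, $\theta_i$ identifies $(\IC_{\uZas^\mu})_{|{}_\Gamma \uZas^\mu \cap V_i}$ with the pullback of $(\IC_{\uZasG^\mu})_{|{}_\Gamma \uZasG^\mu \cap W_i}$. Since the $V_i$ cover $\uZas^\mu$ and the $W_i$ cover $\uZasG^\mu$, it follows that $\mathcal{H}^n((\IC_{\uZas^\mu})_{|{}_\Gamma \uZas^\mu})$ is locally constant if and only if $\mathcal{H}^n((\IC_{\uZasG^\mu})_{|{}_\Gamma \uZasG^\mu})$ is, which is the asserted equivalence of conditions; and when they hold these two locally constant sheaves have the same rank, as this may be checked on stalks, which are matched up by the $\theta_i$. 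Because the cohomological shift $-|\Gamma| + \langle \nu - \mu, 2\rho\rangle$ entering the definition of $P^{C,\mu}_\Gamma(q)$ depends only on $\Gamma$, $\nu$ and $\mu$, and not on whether one works with $\uZas^\mu$ or $\uZasG^\mu$, the stated formula for $P^{C,\mu}_\Gamma(q)$ in terms of $\uZasG^\mu$ follows immediately.

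The main --- and essentially only --- obstacle is the bookkeeping needed to confirm that the local isomorphism of Remark~\ref{rmk:local-isom-uZas-uZasG} can be taken to respect the stratifications; once that is granted, the rest is formal, using only the locality of intersection cohomology and the invariance of dimension under isomorphisms of schemes.
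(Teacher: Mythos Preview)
Your proposal is correct and follows the same approach as the paper, which simply states that the lemma is clear from Remark~\ref{rmk:local-isom-uZas-uZasG}. You have spelled out in detail exactly what the paper leaves implicit: that the local isomorphisms respect the stratifications (via Remark~\ref{rmk:local-isom-strata}\eqref{it:local-isom-strata}), and hence match the restrictions of the two IC complexes to corresponding strata.
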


When $C$ is projective of genus $g$, we will consider in parallel the following condition, for any $\lambda \in \bY$, $\nu \in \bY_{\succeq 0}$ and $\Gamma \in \Part(\nu)$:
\[
 (\star\star)^\lambda_{\Gamma} \quad \text{for any $n \in \Z$, the sheaf $\mathcal{H}^n((\IC_{\bBun_B^\lambda})_{|{}_\Gamma \bBun^\lambda_B})$ is locally constant.}
\]
When this condition is satisfied it makes sense to consider the polynomial
\[
 Q^{C,\lambda}_{\Gamma}(q)
 := \sum_{n \geq 0} \rank \bigl( \mathcal{H}^{-|\Gamma| + \langle \nu-\lambda, 2\rho \rangle - (g-1) \dim(B)-n}((\IC_{\bBun^\lambda_B})_{|{}_\Gamma \bBun^\lambda_B}) \bigr) \cdot q^n.
\]
Here again the curve $C$ will be omitted from notation when it is clear from the context. As above we have $Q^{C,\lambda}_{\Gamma} \in \Z_{\geq 0}[q]$, and this polynomial has constant term $0$ unless $\nu=0$ (hence $\Gamma=\varnothing$), in which case we have $Q^{C,\lambda}_{\varnothing}=1$. (Eventually we will see that $(\star\star)^\lambda_{\Gamma}$ is always satisfied, and that the polynomials $Q^{C,\lambda}_{\Gamma}(q)$ depend neither on $C$, nor on $\lambda$.)

%

We start with an easy case.

\begin{lem}
\phantomsection
\label{lem:constr-etale}
\begin{enumerate}
\item 
\label{it:cet-pull}
Let $\varphi: C \to D$ be an \'etale map of curves. If condition $(\star)^\mu_{\ppl \mu \ppr}$ holds on $D$, then it holds on $C$, and we have $P^{C,\mu}(q) = P^{D,\mu}(q)$.
\item 
\label{it:cet-cover}
Let $D$ be a curve, and let $(\varphi_i: C_i \to D)_{i \in I}$ be an \'etale covering of $D$.  If $(\star)^\mu_{\ppl \mu \ppr}$ holds for each $C_i$, then it holds for $D$, and we have $P^{C_i,\mu} = P^{D,\mu}$ for all $i$.
\end{enumerate}
\end{lem}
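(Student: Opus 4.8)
The plan is to deduce everything from the comparison diagram~\eqref{eqn:uZas-compare} of Lemma~\ref{lem:uZas-compare}. Write $W := C^\mu_\et \times_{C^\mu} \uZas^{C,\mu}$, so that for an \'etale map $\varphi : C \to D$ one has an open immersion $j : W \hookrightarrow \uZas^{C,\mu}$ and an \'etale map $\varphi_{\mathrm{Zas}} : W \to \uZas^{D,\mu}$, with the right-hand square of~\eqref{eqn:uZas-compare} cartesian. The first step is to establish
\[
j^* \IC_{\uZas^{C,\mu}} \;\cong\; \IC_W \;\cong\; \varphi_{\mathrm{Zas}}^* \IC_{\uZas^{D,\mu}}.
\]
For the left-hand isomorphism, $j$ is an open immersion and ${}_0\uZas^{C,\mu}$ is a smooth dense open subscheme of $\uZas^{C,\mu}$ (Proposition~\ref{prop:smoothness-uZas} together with the dimension count~\eqref{eqn:dim-strata-uZas}), so $W \cap {}_0\uZas^{C,\mu}$ is a smooth dense open of $W$ over which $j^*\IC_{\uZas^{C,\mu}}$ is (a shift of) the constant sheaf, whence $j^*\IC_{\uZas^{C,\mu}}\cong\IC_W$. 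For the right-hand isomorphism, $\varphi_{\mathrm{Zas}}$ is \'etale, so $\varphi_{\mathrm{Zas}}^* = \varphi_{\mathrm{Zas}}^!$ is t-exact and commutes with intermediate extension; moreover it pulls the open stratum ${}_0\uZas^{D,\mu}$ back to $W \cap {}_0\uZas^{C,\mu}$, compatibility of $\varphi_{\mathrm{Zas}}$ with the stratifications being immediate from the explicit description of this map in the proof of Lemma~\ref{lem:uZas-compare}.

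The second step is to restrict the isomorphism $j^*\IC_{\uZas^{C,\mu}} \cong \varphi_{\mathrm{Zas}}^* \IC_{\uZas^{D,\mu}}$ to the closed stratum ${}_{\ppl\mu\ppr}\uZas^{C,\mu}$. This stratum lies in $W$ because $C^\mu_{\ppl\mu\ppr} \subset C^\mu_\et$ by Lemma~\ref{lem:sym-mu}\eqref{it:sm-stratum}, and under the identifications ${}_{\ppl\mu\ppr}\uZas^{C,\mu} \cong C$, ${}_{\ppl\mu\ppr}\uZas^{D,\mu} \cong D$ provided by~\eqref{eqn:strata-uZas-isom}, the induced map ${}_{\ppl\mu\ppr}\uZas^{C,\mu}\to{}_{\ppl\mu\ppr}\uZas^{D,\mu}$ is identified with $\varphi$, in particular \'etale and nonempty. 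So $(\IC_{\uZas^{C,\mu}})_{|{}_{\ppl\mu\ppr}\uZas^{C,\mu}}$ is the pullback along $\varphi$ of $(\IC_{\uZas^{D,\mu}})_{|{}_{\ppl\mu\ppr}\uZas^{D,\mu}}$. Part~\eqref{it:cet-pull} follows: if $(\star)^\mu_{\ppl\mu\ppr}$ holds on $D$, then \'etale pullback preserves local constancy of the cohomology sheaves, so it holds on $C$; and since ${}_{\ppl\mu\ppr}\uZas^{D,\mu}\cong D$ is irreducible, each such sheaf has constant rank, unchanged by pullback along the nonempty \'etale $\varphi$, while the cohomological degrees in the definitions of $P^{C,\mu}$ and $P^{D,\mu}$ agree; hence $P^{C,\mu}(q) = P^{D,\mu}(q)$.

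For part~\eqref{it:cet-cover} I would run the same construction for each $\varphi_i : C_i \to D$, obtaining \'etale maps $\psi_i : {}_{\ppl\mu\ppr}\uZas^{C_i,\mu} \to {}_{\ppl\mu\ppr}\uZas^{D,\mu}$ identified with $\varphi_i$, together with isomorphisms identifying $(\IC_{\uZas^{C_i,\mu}})_{|{}_{\ppl\mu\ppr}\uZas^{C_i,\mu}}$ with $\psi_i^*\bigl((\IC_{\uZas^{D,\mu}})_{|{}_{\ppl\mu\ppr}\uZas^{D,\mu}}\bigr)$. By hypothesis the left-hand sides have locally constant cohomology sheaves; since $(\varphi_i)_i$ is an \'etale covering of $D$, so is $(\psi_i)_i$ of $D \cong {}_{\ppl\mu\ppr}\uZas^{D,\mu}$, and local constancy of a constructible sheaf can be tested after an \'etale cover, so the cohomology sheaves of $(\IC_{\uZas^{D,\mu}})_{|{}_{\ppl\mu\ppr}\uZas^{D,\mu}}$ are locally constant, i.e.~$(\star)^\mu_{\ppl\mu\ppr}$ holds on $D$; then part~\eqref{it:cet-pull} applied to each $\varphi_i$ yields $P^{C_i,\mu}(q) = P^{D,\mu}(q)$. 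I do not expect a genuine obstacle here — the geometric input is concentrated in Lemma~\ref{lem:uZas-compare} — so the main points needing care are formal: that restriction to an open and \'etale pullback both send $\IC$ to $\IC$ with no shift, that the comparison isomorphism of Lemma~\ref{lem:uZas-compare} respects the stratifications, and that local constancy descends along \'etale covers.
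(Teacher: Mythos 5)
Your proposal is correct and follows essentially the same approach as the paper: the key input is the diagram~\eqref{eqn:uZas-compare} from Lemma~\ref{lem:uZas-compare}, from which one deduces $(\IC_{\uZas^{C,\mu}})_{|C^\mu_\et \times_{C^\mu}\uZas^{C,\mu}}\cong \varphi_{\mathrm{Zas}}^*\IC_{\uZas^{D,\mu}}$ and then restricts to the closed stratum using Lemma~\ref{lem:sym-mu}\eqref{it:sm-stratum}, with part~\eqref{it:cet-cover} following by testing local constancy after an \'etale cover. You spell out the justification (open restriction and \'etale pullback preserve $\IC$, using irreducibility of $\uZas^{C,\mu}$) that the paper leaves implicit, but the structure is identical.
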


\begin{proof}
\eqref{it:cet-pull}~Recall the diagram~\eqref{eqn:uZas-compare} associated with the map $\varphi: C \to D$.  That diagram shows that
\begin{equation}\label{eqn:uZas-IC-compare}
(\IC_{\uZas^{C,\mu}})_{|C^\mu_\et \times_{C^\mu} \uZas^{C,\mu}} \cong (\varphi_{\mathrm{Zas}})^* \IC_{\uZas^{D,\mu}}.
\end{equation}
Now, recall that $C^{\mu}_{\ppl \mu \ppr} \subset C^\mu_{\et}$, see Lemma~\ref{lem:sym-mu}\eqref{it:sm-stratum}.
Taking the base change of~\eqref{eqn:uZas-compare} along the map $D \cong D^\mu_{\ppl \mu \ppr} \hookrightarrow D^\mu$ and then restricting to $C^\mu_{\ppl \mu \ppr}$, we obtain the following diagram in which every square is a pullback:
\begin{equation*}
\begin{tikzcd}[row sep=tiny, column sep=tiny]
{}_{\ppl \mu \ppr} \uZas^{C,\mu} \ar[dr] \ar[dd, "\wr"']
  && {}_{\ppl \mu \ppr} \uZas^{C,\mu} \ar[dr] \ar[rr, "\varphi'_{\mathrm{Zas}}"] \ar[dd, "\wr"' near start] \ar[ll, equal]
  && {}_{\ppl \mu \ppr} \uZas^{D,\mu} \ar[dr] \ar[dd, "\wr" near start] \\
& \uZas^{C,\mu} 
  && C^\mu_\et \times_{C^\mu} \uZas^{C,\mu} \ar[rr,crossing over, "\varphi_{\mathrm{Zas}}" near start] \ar[ll, crossing over]
  && \uZas^{D,\mu} \ar[dd] \\
C = C^\mu_{\ppl \mu \ppr} \ar[dr] 
  && C = C^\mu_{\ppl \mu \ppr} \ar[dr] \ar[rr, "\varphi" near start] \ar[ll, equal]
  && D = D^\mu_{\ppl \mu \ppr} \ar[dr] \\
& C^\mu  \ar[uu, leftarrow, crossing over] && C^\mu_\et \ar[rr] \ar[ll] \ar[uu, leftarrow, crossing over] && D^\mu.
\end{tikzcd}
\end{equation*}
Here, the vertical maps ${}_{\ppl\mu\ppr}\uZas^{C,\mu} \to C$ and ${}_{\ppl\mu\ppr}\uZas^{D,\mu} \to D$ are isomorphisms as a consequence of Lemma~\ref{lem:uZas-smallest-strata}.  In view of~\eqref{eqn:uZas-IC-compare}, we have
\begin{equation}\label{eqn:uZas-IC-smallest}
(\IC_{\uZas^{C,\mu}})_{|{}_{\ppl \mu\ppr}\uZas^{C,\mu}} \cong (\varphi_{\mathrm{Zas}}')^* \left( (\IC_{\uZas^{D,\mu}})_{|{}_{\ppl \mu\ppr}\uZas^{D,\mu}} \right).
\end{equation}
The desired assertions follow.

\eqref{it:cet-cover}~For each $\varphi_i: C_i \to D$, we can repeat the construction from part~\eqref{it:cet-pull}.  We obtain a collection of maps
\[
\Big(\varphi_{i,\mathrm{Zas}}': {}_{\ppl\mu\ppr}\uZas^{C_i,\mu} \to {}_{\ppl\mu\ppr}\uZas^{D,\mu}\Big)_{i \in I}
\]
that constitutes an \'etale covering of ${}_{\ppl\mu\ppr}\uZas^{D,\mu}$.  For each $i$, we have an instance of~\eqref{eqn:uZas-IC-smallest}:
\[
(\IC_{\uZas^{C_i,\mu}})_{|{}_{\ppl \mu\ppr}\uZas^{C_i,\mu}} \cong (\varphi_{i,\mathrm{Zas}}')^* \left( (\IC_{\uZas^{D,\mu}})_{|{}_{\ppl \mu\ppr}\uZas^{D,\mu}} \right).
\]
If the left-hand side has locally constant cohomology sheaves for all $i \in I$, then $(\IC_{\uZas^{D,\mu}})_{|{}_{\ppl \mu\ppr}\uZas^{D,\mu}}$ has locally constant cohomology sheaves.  The equality $P^{C_i,\mu} = P^{D,\mu}$ follows as in part~\eqref{it:cet-pull}.
\end{proof}

\begin{lem}
\label{lem:const-closed-stratum}
For any $\mu \in \bY_{\succ 0}$, the condition $(\star)^\mu_{\ppl \mu \ppr}$ is satisfied.  Moreover, the polynomial $P^{C,\mu}$ is independent of the curve $C$.
\end{lem}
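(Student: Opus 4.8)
The plan is to reduce the statement to the single case $C = \bbA^1$, where it can be extracted from the translation action of $\Ga$.

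\emph{Reduction to $\bbA^1$.} First I would record that every smooth curve over $\F$ is Zariski-locally étale over $\bbA^1$. Given a closed point $c \in C$, choose a uniformizer $f$ of the local ring $\cO_{C,c}$; since $C$ is smooth over the perfect field $\F$ and $\kappa(c) = \F$, the image of $f$ generates $\mathfrak{m}_c/\mathfrak{m}_c^2 \cong \Omega^1_{C/\F} \otimes \kappa(c)$, so the morphism to $\bbA^1$ determined by $f$ on a sufficiently small affine open $C' \ni c$ is unramified near $c$, hence — being a dominant morphism of smooth curves, therefore flat — étale after further shrinking $C'$. Covering $C$ by such opens $C_i$ equipped with étale maps $\varphi_i \colon C_i \to \bbA^1$, and granting $(\star)^\mu_{\ppl\mu\ppr}$ on $\bbA^1$, Lemma~\ref{lem:constr-etale}\eqref{it:cet-pull} gives $(\star)^\mu_{\ppl\mu\ppr}$ on each $C_i$ together with $P^{C_i,\mu} = P^{\bbA^1,\mu}$; since $(C_i \hookrightarrow C)_i$ is an étale covering, Lemma~\ref{lem:constr-etale}\eqref{it:cet-cover} then produces $(\star)^\mu_{\ppl\mu\ppr}$ on $C$ together with $P^{C,\mu} = P^{C_i,\mu} = P^{\bbA^1,\mu}$. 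Both assertions of the lemma follow at once.

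\emph{The case $C = \bbA^1$.} Here I would use the action of $\Ga$ on $\bbA^1$ by translation (extending to $\bbP^1$ by fixing $\infty$). By the moduli description of $\uZas^{\bbA^1,\mu}$, this action lifts canonically to an action on $\uZas^{\bbA^1,\mu}$ preserving the projection to $(\bbA^1)^\mu$ and the open stratum ${}_0 \uZas^{\bbA^1,\mu}$, so $\IC_{\uZas^{\bbA^1,\mu}}$ is $\Ga$-equivariant: writing $a, p \colon \Ga \times \uZas^{\bbA^1,\mu} \to \uZas^{\bbA^1,\mu}$ for the action and the projection, both morphisms are smooth of relative dimension $1$ and restrict to smooth morphisms over ${}_0 \uZas^{\bbA^1,\mu}$ along which the constant sheaf pulls back to the constant sheaf, so $a^* \IC_{\uZas^{\bbA^1,\mu}}[1] \cong \IC_{\Ga \times \uZas^{\bbA^1,\mu}} \cong p^* \IC_{\uZas^{\bbA^1,\mu}}[1]$. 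By Lemma~\ref{lem:uZas-smallest-strata} and~\eqref{eqn:mu-closed-stratum}, the closed stratum ${}_{\ppl\mu\ppr}\uZas^{\bbA^1,\mu}$ is identified $\Ga$-equivariantly with $(\bbA^1)^\mu_{\ppl\mu\ppr} \cong \bbA^1$ carrying its translation action, which is simply transitive; restricting the isomorphism $a^* \IC_{\uZas^{\bbA^1,\mu}} \cong p^* \IC_{\uZas^{\bbA^1,\mu}}$ to $\Ga \times \{0\}$ then identifies $(\IC_{\uZas^{\bbA^1,\mu}})_{\,|\,{}_{\ppl\mu\ppr}\uZas^{\bbA^1,\mu}}$, viewed on $\bbA^1$, with a constant complex. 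In particular its cohomology sheaves are locally constant, which is exactly $(\star)^\mu_{\ppl\mu\ppr}$ on $\bbA^1$.

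\emph{Main obstacle.} The genuinely substantive point is the $\Ga$-equivariance of $\IC_{\uZas^{\bbA^1,\mu}}$, together with the compatibility of the induced action on the closed stratum with the translation action on $\bbA^1$ under the identification ${}_{\ppl\mu\ppr}\uZas^{\bbA^1,\mu} \cong (\bbA^1)^\mu_{\ppl\mu\ppr}$ coming from Lemma~\ref{lem:uZas-smallest-strata}; I also need to be careful, in the reduction step, that a uniformizer produces a genuinely étale (not merely dominant) map, which is where perfectness of $\F$ is used. An alternative to the equivariance discussion is to apply Lemma~\ref{lem:uZas-compare} to the translation isomorphisms $\bbA^1 \simto \bbA^1$ and argue exactly as in the proof of Lemma~\ref{lem:constr-etale}, but the $\Ga$-action formulation seems cleaner.
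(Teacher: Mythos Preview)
Your proof is correct and follows the same overall architecture as the paper's: reduce to $C = \bbA^1$ via Lemma~\ref{lem:constr-etale} and local \'etale coordinates, then handle $\bbA^1$ using translation symmetry. The execution of the $\bbA^1$ case differs slightly. The paper first invokes abstract constructibility to find a nonempty open $C_1 \subset {}_{\ppl\mu\ppr}\uZas^{\bbA^1,\mu} \cong \bbA^1$ on which the restriction already has locally constant cohomology, interprets this as $(\star)^\mu_{\ppl\mu\ppr}$ holding for the curve $C_1$, and then covers $\bbA^1$ by translates of $C_1$ to conclude via Lemma~\ref{lem:constr-etale}\eqref{it:cet-cover}. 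Your direct $\Ga$-equivariance argument is more economical: it bypasses both the appeal to abstract constructibility and the covering step, at the price of spelling out why the $\Ga$-action lifts compatibly to $\uZas^{\bbA^1,\mu}$ and to its closed stratum. Both arguments rest on the same geometric input (translation invariance), and the alternative you sketch at the end---applying Lemma~\ref{lem:uZas-compare} to the translation isomorphisms of $\bbA^1$---is essentially what the paper carries out.
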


\begin{proof}
Consider first the special case where $C = \mathbb{A}^1$.  Since $\IC_{\uZas^{C,\mu}}$ is abstractly constructible, there exists a Zariski-open subset 
\[
C_1 \subset {}_{\ppl\mu\ppr}\uZas^{C,\mu}
\]
such that
\begin{equation}\label{eqn:consmall-a1}
\text{$(\IC_{\uZas^{C,\mu}})_{|C_1}$ has locally constant cohomology sheaves.}
\end{equation}
Recall from Lemma~\ref{lem:uZas-smallest-strata} that we may identify
\[
{}_{\ppl\mu\ppr}\uZas^{C,\mu} \cong C = \mathbb{A}^1.
\]
Via this identification, we regard $C_1$ as an open subscheme of $C$, and thus a curve in its own right.  Consider the open immersion $\varphi: C_1 \hookrightarrow C$. By Lemma~\ref{lem:sym-mu}\eqref{it:sm-open} and~\eqref{eqn:uZas-IC-smallest}, we may identify
\[
(\IC_{\uZas^{C_1,\mu}})_{|{}_{\ppl \mu\ppr}\uZas^{C_1,\mu}} \cong  \big((\IC_{\uZas^{C,\mu}})_{|{}_{\ppl \mu\ppr}\uZas^{C,\mu}}\big)_{|C_1}.
\]
Now~\eqref{eqn:consmall-a1} says that condition $(\star)^\mu_{\ppl \mu \ppr}$ holds for $C_1$.

Since $C = \mathbb{A}^1$ can be covered by copies of $C_1$ (after taking suitable translates), Lemma~\ref{lem:constr-etale}\eqref{it:cet-cover} tells us that condition $(\star)^\mu_{\ppl \mu \ppr}$ holds for $\mathbb{A}^1$.

We now prove the proposition for an arbitrary curve $C$.  Let $x$ be a closed point of $C$.  Let $t_x$ be a local parameter at $x$, i.e., a generator for the maximal ideal of the local ring $\scO_{C,x}$.  Then there is an affine neighborhood $C_x$ of $x$ on which $t_x$ defines a morphism $C_x \to \mathbb{A}^1$, and moreover this morphism is \'etale at $x \in C_x$.  By shrinking $C_x$, we may assume that $t_x: C_x \to \mathbb{A}^1$ is an \'etale morphism.  By Lemma~\ref{lem:constr-etale}\eqref{it:cet-pull} and the special case above, we see that condition $(\star)^\mu_{\ppl \mu \ppr}$ holds for $C_x$, and that $P^{C_x,\mu} = P^{\mathbb{A}^1,\mu}$.  Since $C$ can be covered by such open sets $C_x$, Lemma~\ref{lem:constr-etale}\eqref{it:cet-cover} tells us that condition $(\star)^\mu_{\ppl \mu \ppr}$ holds for $C$, and that $P^{C,\mu} = P^{\mathbb{A}^1,\mu}$.
\end{proof}

%

Using factorization, from Lemma~\ref{lem:const-closed-stratum} we deduce a similar claim in case $\nu=\mu$, as follows.

\begin{lem}
 \label{lem:const-partitions-mu}
 Let $\mu \in \bY_{\succeq 0}$. For any $\Gamma \in \Part(\mu)$, the condition $(\star)^\mu_{\Gamma}$ is satisfied. Moreover, the polynomial $P^{C,\mu}_{\Gamma}$ is independent of the curve $C$, and if $\Gamma = \ppl \mu_1, \dots, \mu_r \ppr$ we have
 \[
  P^{C,\mu}_{\Gamma} = \prod_{i=1}^r P^{\mu_i}.
 \]
\end{lem}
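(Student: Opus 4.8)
The plan is to argue by induction on the number of parts $r$ of $\Gamma$. If $r=0$ then $\mu=0$ and there is nothing to prove; if $r=1$ then $\Gamma=\ppl\mu\ppr$ and the statement is exactly Lemma~\ref{lem:const-closed-stratum}. So assume $r\geq 2$ and write $\Gamma=\Gamma'\cup\ppl\mu_r\ppr$, where $\Gamma'=\ppl\mu_1,\dots,\mu_{r-1}\ppr\in\Part(\mu')$ with $\mu'=\mu_1+\cdots+\mu_{r-1}$ and $\mu=\mu'+\mu_r$. We will deduce $(\star)^\mu_{\Gamma}$ and the formula $P^{C,\mu}_{\Gamma}=P^{C,\mu'}_{\Gamma'}\cdot P^{\mu_r}$ from the factorization isomorphisms~\eqref{eqn:fact-uZas}, \eqref{eqn:fact-uZas-nu} and~\eqref{eqn:fact-uZas-Delta}, together with the inductive hypothesis applied to $(\mu',\Gamma')$ (which has $r-1$ parts) and Lemma~\ref{lem:const-closed-stratum} applied to $(\mu_r,\ppl\mu_r\ppr)$. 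Combined with the equality $P^{C,\mu'}_{\Gamma'}=\prod_{i=1}^{r-1}P^{\mu_i}$ supplied by the inductive hypothesis, this yields the claim, and since each $P^{\mu_i}$ is independent of $C$ by Lemma~\ref{lem:const-closed-stratum}, so is $P^{C,\mu}_{\Gamma}$.

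The key point is the compatibility of $\IC_{\uZas^\mu}$ with factorization. The sum map restricts to an \'etale morphism $(C^{\mu'}\times C^{\mu_r})_{\disj}\to C^\mu$; base-changing the structure morphism $\uZas^\mu\to C^\mu$ along it produces an \'etale morphism onto an open subscheme of $\uZas^\mu$, so the pullback of $\IC_{\uZas^\mu}$ along it is the IC complex of the source (\'etale pullback preserves IC sheaves, with no shift). Via~\eqref{eqn:fact-uZas} this source is identified with $(\uZas^{\mu'}\times\uZas^{\mu_r})\times_{C^{\mu'}\times C^{\mu_r}}(C^{\mu'}\times C^{\mu_r})_{\disj}$, whose IC complex is in turn the \'etale pullback of $\IC_{\uZas^{\mu'}\times\uZas^{\mu_r}}=\IC_{\uZas^{\mu'}}\boxtimes\IC_{\uZas^{\mu_r}}$ (an external product of IC sheaves is the IC sheaf of the product). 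To see that the normalizations match one uses~\eqref{eqn:fact-uZas-nu} with $\nu_1=\nu_2=0$, which identifies ${}_0\uZas^{\mu'}\times{}_0\uZas^{\mu_r}$ with ${}_0\uZas^\mu$ over $(C^{\mu'}\times C^{\mu_r})_{\disj}$, compatibly with the constant local systems and the dimensions (indeed $\langle\mu',2\rho\rangle+\langle\mu_r,2\rho\rangle=\langle\mu,2\rho\rangle$). Thus, over $(C^{\mu'}\times C^{\mu_r})_{\disj}$, the complex $\IC_{\uZas^\mu}$ is identified with the pullback of $\IC_{\uZas^{\mu'}}\boxtimes\IC_{\uZas^{\mu_r}}$.

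Now restrict this identification to the stratum ${}_\Gamma\uZas^\mu$. By~\eqref{eqn:fact-uZas-Delta} (with $\Gamma_1=\Gamma'$, $\Gamma_2=\ppl\mu_r\ppr$, so $\Gamma_1\cup\Gamma_2=\Gamma$), the preimage of ${}_\Gamma\uZas^\mu$ corresponds to $({}_{\Gamma'}\uZas^{\mu'}\times{}_{\ppl\mu_r\ppr}\uZas^{\mu_r})\times_{C^{\mu'}\times C^{\mu_r}}(C^{\mu'}\times C^{\mu_r})_{\disj}$, and we obtain a morphism from this scheme onto ${}_\Gamma\uZas^\mu\cong C^\mu_\Gamma$ which is \'etale (it is a restriction of the \'etale sum map) and surjective: any geometric point of $C^\mu_\Gamma$, being a configuration of $r$ distinct points with multiplicities $\mu_1,\dots,\mu_r$, decomposes as a disjoint union of a configuration of Jordan type $\Gamma'$ and a single point carrying multiplicity $\mu_r$. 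Under this \'etale cover the complex $(\IC_{\uZas^\mu})_{|{}_\Gamma\uZas^\mu}$ pulls back to the external product $\bigl((\IC_{\uZas^{\mu'}})_{|{}_{\Gamma'}\uZas^{\mu'}}\bigr)\boxtimes\bigl((\IC_{\uZas^{\mu_r}})_{|{}_{\ppl\mu_r\ppr}\uZas^{\mu_r}}\bigr)$. By the inductive hypothesis and Lemma~\ref{lem:const-closed-stratum} both factors have locally constant cohomology sheaves, hence so does their external product (Künneth over the field $\bk$: $\mathcal{H}^n(A\boxtimes B)\cong\bigoplus_{i+j=n}\mathcal{H}^i(A)\boxtimes\mathcal{H}^j(B)$). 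Since being locally constant descends along \'etale surjections, $(\star)^\mu_{\Gamma}$ holds.

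Finally, the polynomial identity is a matter of bookkeeping with the above Künneth formula. The schemes $C^\mu_\Gamma$, $C^{\mu'}_{\Gamma'}$ and $C^{\mu_r}_{\ppl\mu_r\ppr}\cong C$ are connected (being $\dist$-open subschemes of products of symmetric powers of the connected curve $C$), so the ranks of the relevant locally constant sheaves are well defined numbers, and they are preserved by the \'etale pullback above. Taking ranks in the Künneth formula, and observing that $|\Gamma|=|\Gamma'|+1$ while the relevant cohomological degree shifts ($\langle\nu-\mu,2\rho\rangle$ with $\nu=\mu$ in all three cases, hence $0$) are additive under the external product, one gets exactly $P^{C,\mu}_{\Gamma}(q)=P^{C,\mu'}_{\Gamma'}(q)\cdot P^{\mu_r}(q)$. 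Combined with the inductive formula for $P^{C,\mu'}_{\Gamma'}$ this completes the induction. I expect the only genuinely delicate point to be the second paragraph — verifying that $\IC_{\uZas^\mu}$ really restricts, over the disjoint locus, to the external product of the smaller IC sheaves with the correct normalization and no spurious shift; the remaining steps are formal consequences of \'etale descent and the Künneth isomorphism.
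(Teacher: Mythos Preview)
Your argument is correct and takes essentially the same approach as the paper---factorization plus \'etale descent and K\"unneth---except that you induct on $|\Gamma|$ via a two-fold splitting $\mu=\mu'+\mu_r$, whereas the paper performs a single $r$-fold factorization over $(C^{\mu_1}\times\cdots\times C^{\mu_r})_\disj$ and reduces directly to $r$ instances of Lemma~\ref{lem:const-closed-stratum}. One minor imprecision worth tightening: the full preimage of ${}_\Gamma\uZas^\mu$ under the factorization map may contain other pieces ${}_{\Gamma_1}\uZas^{\mu'}\times{}_{\Gamma_2}\uZas^{\mu_r}$ with $\Gamma_1\cup\Gamma_2=\Gamma$ (not only $(\Gamma_1,\Gamma_2)=(\Gamma',\ppl\mu_r\ppr)$), but since you only use that this particular piece already surjects \'etale onto ${}_\Gamma\uZas^\mu$, the argument goes through unchanged.
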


\begin{proof}
 If $r=1$ then the claim has already been proved in Lemma~\ref{lem:const-closed-stratum}. Otherwise, write $\Gamma = \ppl \mu_1, \dots, \mu_r \ppr$ as in the statement. We consider the open subscheme
 \[
  (C^{\mu_1} \times \cdots \times C^{\mu_r})_{\mathrm{disj}} \subset C^{\mu_1} \times \cdots \times C^{\mu_r}
 \]
consisting of collections of colored effective divisors with disjoint supports, and the natural \'etale sum morphism
\[
 (C^{\mu_1} \times \cdots \times C^{\mu_r})_{\mathrm{disj}} \to C^\mu.
\]
The induced (\'etale) morphism
\[
 (C^{\mu_1} \times \cdots \times C^{\mu_r})_{\mathrm{disj}} \times_{C^\mu} {}_\Gamma \uZas^{\mu} \to {}_\Gamma \uZas^{\mu}
\]
is surjective.

On the other hand, by factorization (see~\eqref{eqn:fact-uZas}) we have an isomorphism
\[
 (C^{\mu_1} \times \cdots \times C^{\mu_r})_{\mathrm{disj}} \times_{C^\mu} \uZas^{\mu} \cong (C^{\mu_1} \times \cdots \times C^{\mu_r})_{\mathrm{disj}} \times_{C^\mu} (\uZas^{\mu_1} \times \cdots \times \uZas^{\mu_r}),
\]
which identifies the stratum $(C^{\mu_1} \times \cdots \times C^{\mu_r})_{\mathrm{disj}} \times_{C^\mu} {}_\Gamma \uZas^{\mu}$ with 
\[
 (C^{\mu_1} \times \cdots \times C^{\mu_r})_{\mathrm{disj}} \times_{C^\mu} ({}_{\ppl \mu_1 \ppr} \uZas^{\mu_1} \times \cdots \times {}_{\ppl \mu_r \ppr} \uZas^{\mu_r}).
\]
We deduce that $(\star)^\mu_{\Gamma}$ holds for $C$, and that the polynomial $P^{C,\mu}_{\Gamma}$ is equal to the product $\prod_{i=1}^r P^{\mu_i}$. A fortiori, 
this polynomial
is independent of $C$.
\end{proof}

We next relate the singularities of Zastava schemes along the strata considered above with those of Drinfeld's compactifications, paraphrasing~\cite[Item~(1)]{bfgm-err}.

\begin{lem}
\label{lem:local-model}
Assume $C$ is a projective curve.
  Let $\lambda \in \bY$, $\nu \in \bY_{\succeq 0}$, and $\Gamma \in \Part(\nu)$. For any $\F$-point $x$ of ${}_\Gamma \bBun^\lambda_B$, there exists a separated $\F$-scheme of finite type $Y$, $\F$-points $y$ of $Y$ and $z$ of ${}_\Gamma \uZas^\nu$, and smooth morphisms
  \[
   \bBun^\lambda_B \leftarrow Y \to \uZas^\nu
  \]
sending $y$ to $x$ and $z$ respectively.
\end{lem}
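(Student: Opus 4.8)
The plan is to use the ``large'' Zastava stacks as a bridge between $\bBun_B^\lambda$ and $\uZas^\nu$: for $\mu$ large, $\Zas^\mu_{\lambda-\mu}$ is smooth over $\bBun_B^\lambda$ by Remark~\ref{rmk:smoothness-map-uZasmu}, while by the local triviality of Proposition~\ref{prop:local-isom} and the factorization isomorphisms of~\S\ref{ss:decomp-Zas} it is, \'etale-locally, an open subscheme of a product $\uZas^\nu\times{}_0\uZas^{\mu-\nu}\times S$ with $S$ smooth over $\F$, and thus smooth over $\uZas^\nu$. To set up notation, write $\Gamma=\ppl\lambda_1,\dots,\lambda_k\ppr$ with $\sum_i\lambda_i=\nu$; by~\eqref{eqn:strata-bBun-isom-lambda} the point $x$ corresponds to a pair $(D,\cG_0)$ with $D\in C^\nu_\Gamma(\F)$ and $\cG_0\in\Bun_B^{\lambda-\nu}(\F)$, with underlying $G$-bundle $\cE_0$. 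We take $z:=s^\nu(D,\cF^0)$, which lies in the deepest stratum ${}_\Gamma\uZas^\nu$ — the image of the section $s^\nu$, by Lemma~\ref{lem:uZas-smallest-strata} and the discussion following~\eqref{eqn:strata-uZas-isom}.

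First I would fix $\mu\in\bY_{\succeq0}$ with $\nu\preceq\mu$ and $\langle\lambda-\mu,\alpha\rangle<2-2g$ for all $\alpha\in\fR_+$; this is possible since $\langle\theta,\alpha\rangle>0$ for every $\alpha\in\fR_+$, where $\theta\in\bY_{\succeq0}$ is the sum of the positive coroots, so $\mu=\nu+N\theta$ works for $N\gg0$. By Remark~\ref{rmk:smoothness-map-uZasmu} the morphism $\pi:\Zas^\mu_{\lambda-\mu}\to\bBun_B^\lambda$ coming from~\eqref{eqn:open-immersion-Zastava} is then smooth. Next I would check that, after enlarging $\mu$ if necessary, $x$ lies in the image of $\pi$, with a preimage $y_1=(E,\cF,\cE_0,\beta)$ for which $E=D+E'$ with $E'\in C^{\mu-\nu}$ supported away from $\mathrm{supp}(D)$: here one uses that $\cE_0$ admits, over any affine open $V\subsetneq C$, a reduction to $B^-$ transverse to $\cG_0$ (the associated torsor is an iterated affine-space bundle over the affine curve $V$, hence has a section), and that for $\mu$ large such a reduction can be taken of degree $\lambda-\mu$ and transverse away from a color-$(\mu-\nu)$ divisor $E'$ chosen disjoint from $D$. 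Since $y_1$ lies over ${}_\Gamma\bBun_B^\lambda$ and has defect divisor $D$, we then have $y_1\in{}_\Gamma\Zas^\mu_{\lambda-\mu}$ (see~\S\ref{ss:decomp-Zas-stacks}), and under ${}_\Gamma\Zas^\mu_{\lambda-\mu}\cong C^\nu_\Gamma\times{}_0\Zas^{\mu-\nu}_{\lambda-\mu}$ the point $y_1$ becomes a pair $(D,w_0)$ with $E'$ the $C^{\mu-\nu}$-coordinate of $w_0$.

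Then I would pick a smooth affine $\F$-scheme $S$ of finite type with a smooth morphism $S\to\Bun_T^{\lambda-\mu}$ hitting $p_1(y_1)$, and set $Y_0:=\Zas^\mu\times_{\Bun_T, p_1}S=\Zas^\mu_{\lambda-\mu}\times_{\Bun_T^{\lambda-\mu}}S$. By Proposition~\ref{prop:Zastava-scheme} (and since $C$ is projective) $Y_0$ is a separated $\F$-scheme of finite type, it is smooth over $\Zas^\mu_{\lambda-\mu}$ (base change of the smooth $S\to\Bun_T^{\lambda-\mu}$) and hence over $\bBun_B^\lambda$, and $y_1$ lifts to a point $y\in Y_0$ over $x$. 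Applying Proposition~\ref{prop:local-isom} and passing to the restriction $Y_1:=Y_0\times_{C^\mu\times S}V$ over an open $V\subseteq C^\mu\times S$ containing the image of $y$, we keep $Y_1\to\bBun_B^\lambda$ smooth, $y\in Y_1$, and now $Y_1\cong(\uZas^\mu\times S)\times_{C^\mu\times S}V$. By Remark~\ref{rmk:local-isom-strata}\eqref{it:local-isom-strata} this isomorphism matches strata, so $y$ corresponds to $(\bar y,s)$ with $\bar y\in{}_\Gamma\uZas^\mu$ lying over $E=D+E'$ and having defect divisor $D$. Because $D$ and $E'$ are disjoint, I would now use the factorization isomorphism~\eqref{eqn:fact-uZas}: pulling $Y_1$ back along the \'etale sum map $(C^\nu\times C^{\mu-\nu})_{\disj}\to C^\mu$ and restricting the resulting $\uZas^{\mu-\nu}$-factor to its smooth open stratum ${}_0\uZas^{\mu-\nu}$ (Proposition~\ref{prop:smoothness-uZas}), one obtains an \'etale-open neighborhood $Y\ni y$ of $Y_1$ realized as an open subscheme of $\uZas^\nu\times{}_0\uZas^{\mu-\nu}\times S$; the composite $Y\to\bBun_B^\lambda$ is still smooth, and the projection $Y\to\uZas^\nu$ is smooth because ${}_0\uZas^{\mu-\nu}\times S$ is smooth over $\F$. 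Finally, tracing $y$ through these identifications and using the stratum-matching~\eqref{eqn:fact-uZas-Delta} — the entire defect of $\bar y$ being concentrated at $\mathrm{supp}(D)$ with type $\Gamma$ — its image in $\uZas^\nu$ lies over $D$ in the deepest stratum ${}_\Gamma\uZas^\nu$, hence equals $s^\nu(D,\cF^0)=z$. Then $Y$, $y$, $z$ and the two smooth morphisms $\bBun_B^\lambda\leftarrow Y\to\uZas^\nu$ have all the required properties. (The case $\nu=0$, where ${}_\varnothing\bBun_B^\lambda=\Bun_B^\lambda$ is smooth and $\uZas^0$ is a point, is degenerate and also covered.)

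The main obstacle I expect is the verification, in the second step, that for $\mu$ large $x$ really lies in the image of $\pi$ via a preimage $y_1$ whose defect divisor is exactly $D$ and whose residual divisor $E'$ can be chosen disjoint from $\mathrm{supp}(D)$ — i.e.\ that $\cE_0$ admits a reduction to $B^-$ of the prescribed (very negative) degree $\lambda-\mu$, transverse to $\cG_0$ away from a color-$(\mu-\nu)$ divisor disjoint from $D$. This is standard — it is implicit in~\cite[\S 3]{bfgm} — but does require the affine-bundle argument above together with some bookkeeping of degrees. Everything else is a formal assembly of Remark~\ref{rmk:smoothness-map-uZasmu}, Proposition~\ref{prop:local-isom}, the factorization isomorphisms, and the smoothness of ${}_0\uZas^{\mu-\nu}$ (Proposition~\ref{prop:smoothness-uZas}).
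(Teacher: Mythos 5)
Your proof takes essentially the same route as the paper: lift $x$ along a smooth map from a Zastava stack $\Zas^\mu_{\lambda-\mu}$ (your $\mu$ plays the role of the paper's $\lambda-\eta$, chosen so that $\lambda-\mu$ lands in a component of $\Bun_T^{\mathrm{r}}$), pass to a separated finite-type scheme $Y$ via Propositions~\ref{prop:Zastava-scheme} and~\ref{prop:local-isom}, then use the factorization isomorphism together with smoothness of ${}_0\uZas^{\mu-\nu}$ (Proposition~\ref{prop:smoothness-uZas}) to produce the smooth map to $\uZas^\nu$ hitting $z\in{}_\Gamma\uZas^\nu$. The existence of a $B^-$-reduction of $\cE$, transversal to $\cG_0$ at $\mathrm{supp}(D)$ and with $T$-torsor of the prescribed very negative degree — the step you flag as the main obstacle and argue by an affine-space-bundle sketch — is exactly what the paper handles by citing~\cite[Item~(1)]{bfgm-err}.
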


\begin{proof}
 Set $x=(\cF,\cE,\kappa)$.
As explained in~\cite[Item~(1)]{bfgm-err}, there exists a $B^-$-reduction of $\cE$ which is transversal at any point in the support of the image of $x$ in $C_\Gamma^\nu$ to the $B$-reduction of $\cE$ given by the projection
\[
{}_\Gamma \bBun^\lambda_B \overset{\eqref{eqn:strata-bBun-isom-lambda}}{\cong} C^\nu_\Gamma \times \Bun^{\lambda-\nu}_B \to \Bun^{\lambda-\nu}_B,
\]
and moreover whose associated $T$-torsor belongs to some component $\Bun_T^\eta \subset \Bun_T^{\mathrm{r}}$.
This $B^-$-reduction defines a preimage of $x$ in
\[
{}_\Gamma \Zas^{\lambda-\eta}_\eta \subset {}_\Gamma \bBun_B^{\lambda} \times_{\Bun_G} \Bun_{B^-}^\eta,
\]
whose image in $C^\nu \times C^{\lambda-\eta-\nu}$ (see~\eqref{eqn:stratum-Zas}) belongs to $(C^\nu \times C^{\lambda-\eta-\nu})_{\mathrm{disj}}$. Here the morphism
\[
\bBun_B^{\lambda} \times_{\Bun_G} \Bun_{B^-}^\eta \to \bBun_B^{\lambda}
\]
is smooth by Lemma~\ref{lem:smoothness-B-}. Choose a smooth scheme and a smooth morphism $S \to \Bun_T^\eta$ whose image contains the image of our point in $\Bun_T^\eta$, and consider the fiber product
\[
S \times_{\Bun_T^\eta} \Zas^{\lambda-\eta}_\eta,
\]
which is a scheme by Proposition~\ref{prop:Zastava-scheme}.
We therefore have a smooth morphism
\begin{equation}
\label{eqn:local-model-morph}
S \times_{\Bun_T^\eta} \Zas^{\lambda-\eta}_\eta \to \bBun_B^{\lambda}
\end{equation}
and a point in $S \times_{\Bun_T^\eta} {}_\Gamma \Zas^{\lambda-\eta}_\eta$ whose image in $\bBun_B^{\lambda}$ is $x$, and whose image in $C^\nu \times C^{\lambda-\eta-\nu}$ belongs to $(C^\nu \times C^{\lambda-\eta-\nu})_{\mathrm{disj}}$. By Proposition~\ref{prop:local-isom} the left-hand side of~\eqref{eqn:local-model-morph} identifies in a neighborhood of our point with
\[
S \times \uZas^{\lambda-\eta},
\]
in such a way that the stratum $S \times_{\Bun_T^\eta} {}_\Gamma \Zas^{\lambda-\eta}_\eta$ corresponds to $S \times {}_\Gamma \uZas^{\lambda-\eta}$, see Remark~\ref{rmk:local-isom-strata}.

On the other hand we have the factorization isomorphism
\[
\uZas^{\lambda-\eta} \times_{C^{\lambda-\eta}} (C^\nu \times C^{\lambda-\eta-\nu})_{\mathrm{disj}} \cong (\uZas^\nu \times \uZas^{\lambda-\eta-\nu}) \times_{C^\nu \times C^{\lambda-\eta-\nu}} (C^\nu \times C^{\lambda-\eta-\nu})_{\mathrm{disj}},
\]
see~\eqref{eqn:fact-uZas}.
We have a lift of the image of our point in $\uZas^{\lambda-\eta}$ to the left-hand side, whose corresponding point in the right-hand side belongs to
\[
({}_\Gamma \uZas^\nu \times {}_0 \uZas^{\lambda-\eta-\nu}) \times_{C^\nu \times C^{\lambda-\eta-\nu}} (C^\nu \times C^{\lambda-\eta-\nu})_{\mathrm{disj}}.
\]
Here ${}_0 \uZas^{\lambda-\eta-\nu}$ is smooth (see Proposition~\ref{prop:smoothness-uZas}), so that there exists a smooth morphism
\[
(\uZas^\nu \times {}_0 \uZas^{\lambda-\eta-\nu}) \times_{C^\nu \times C^{\lambda-\eta-\nu}} (C^\nu \times C^{\lambda-\eta-\nu})_{\mathrm{disj}} \to \uZas^\nu.
\]
Finally, we have thus constructed a scheme $Y$, smooth morphisms
\[
\bBun_B \leftarrow Y \to \uZas^\nu,
\]
and an $\F$-point $y$ of $Y$ which maps to $x$ in the left-hand side, and to a point in ${}_\Gamma \uZas^\nu$ in the right-hand side, which finishes the proof.
\end{proof}


Finally we can prove that the conditions $(\star)^\mu_{\Gamma}$ and $(\star\star)^\lambda_{\Gamma}$ are always satisfied, and express the associated polynomials in terms of the polynomials $P^\nu$.

\begin{thm}
\phantomsection
\label{thm:const}
\begin{enumerate}
 \item 
 \label{it:const-Bun}
Let $C$ be a smooth projective curve.
  For any $\lambda \in \bY$, $\nu \in \bY_{\succeq 0}$ and $\Gamma \in \Part(\nu)$, the condition $(\star\star)^\lambda_{\Gamma}$ is satisfied. Moreover, the polynomial $Q^{C,\lambda}_{\Gamma}$ depends only on $\Gamma$; more specifically, if $\Gamma = \ppl \nu_1, \ldots, \nu_r \ppr$ we have
 \[
  Q^{C,\lambda}_{\Gamma} = \prod_{i=1}^r P^{\nu_i}.
 \]
 \item
 \label{it:const-Zas}
 Let $C$ be a smooth curve.
  For any $\mu, \nu \in \bY_{\succeq 0}$ such that $\nu \preceq \mu$ and any $\Gamma \in \Part(\nu)$, the condition $(\star)^\mu_{\Gamma}$ is satisfied. Moreover, the polynomial $P^{C,\mu}_{\Gamma}$ depends only on $\Gamma$; more specifically, if $\Gamma = \ppl \nu_1, \ldots, \nu_r \ppr$ we have
 \[
  P^{C,\mu}_{\Gamma} = \prod_{i=1}^r P^{\nu_i}.
 \]
\end{enumerate}
\end{thm}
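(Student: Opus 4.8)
The plan is to establish~\eqref{it:const-Bun} first, using the ``local model'' of Lemma~\ref{lem:local-model} to transport the results already obtained in Lemma~\ref{lem:const-partitions-mu} (the case where the partition is a partition of the full coweight, so that the associated divisor is ``concentrated'') from Zastava schemes to Drinfeld's compactification; and then to deduce~\eqref{it:const-Zas} from~\eqref{it:const-Bun} via the comparison of intersection cohomology complexes provided by Proposition~\ref{prop:pullback-IC-bBun-Zas} and Lemma~\ref{lem:pullback-IC-BunBU}.

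For~\eqref{it:const-Bun}, I would fix $\lambda$, $\nu$ and $\Gamma \in \Part(\nu)$, and take an $\F$-point $x$ of ${}_\Gamma \bBun_B^\lambda$. Lemma~\ref{lem:local-model} produces a smooth scheme $Y$, an $\F$-point $y \in Y$, and smooth morphisms $\bBun_B^\lambda \xleftarrow{a} Y \xrightarrow{b} \uZas^\nu$ with $a(y)=x$ and $b(y) \in {}_\Gamma \uZas^\nu$; inspecting its proof (which is assembled out of the stratum-preserving local isomorphisms of Remark~\ref{rmk:local-isom-strata} and the factorization isomorphisms~\eqref{eqn:fact-uZas}) one checks that, near $y$, the preimages $a^{-1}({}_\Gamma \bBun_B^\lambda)$ and $b^{-1}({}_\Gamma \uZas^\nu)$ coincide. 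Since $h^*[d]$ takes the intersection cohomology complex of a stack to that of its source whenever $h$ is smooth of relative dimension $d$, both $a^*\IC_{\bBun_B^\lambda}$ and $b^*\IC_{\uZas^\nu}$ agree, up to shift, with $\IC_Y$; comparing them, and using $\dim \bBun_B^\lambda = (g-1)\dim(B)+\langle\lambda,2\rho\rangle$ and $\dim \uZas^\nu = \langle\nu,2\rho\rangle$, one gets near $y$
\[
a^*\IC_{\bBun_B^\lambda} \cong b^*\IC_{\uZas^\nu}\bigl[(g-1)\dim(B)+\langle\lambda-\nu,2\rho\rangle\bigr].
\]
Restricting this to the $\Gamma$-strata and using that $*$-pullback along a smooth morphism preserves the property of having locally constant cohomology sheaves, Lemma~\ref{lem:const-partitions-mu} applied to $\uZas^\nu$ shows that $(\IC_{\bBun_B^\lambda})_{|{}_\Gamma \bBun_B^\lambda}$ has locally constant cohomology sheaves near $x$; as $x$ is arbitrary, $(\star\star)^\lambda_\Gamma$ holds. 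Feeding the displayed shift through the normalizations in the definitions of $Q^{C,\lambda}_\Gamma$ and $P^{C,\nu}_\Gamma$ — a short degree computation in which the terms $(g-1)\dim(B)$ and $\langle\lambda-\nu,2\rho\rangle$ cancel — yields $Q^{C,\lambda}_\Gamma = P^{C,\nu}_\Gamma$, which by Lemma~\ref{lem:const-partitions-mu} equals $\prod_{i=1}^r P^{\nu_i}$ when $\Gamma = \ppl\nu_1,\dots,\nu_r\ppr$.

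For~\eqref{it:const-Zas} I would first reduce to $C$ projective: for a general smooth $C$ with smooth compactification $\overline C$, the scheme $\uZas^{C,\mu}$ is an open subscheme of $\uZas^{\overline C,\mu}$ compatibly with the stratifications, so $\IC_{\uZas^{C,\mu}}$ is the restriction of $\IC_{\uZas^{\overline C,\mu}}$; since each stratum ${}_\Gamma \uZas^{\overline C,\mu}$ is connected (by~\eqref{eqn:strata-uZas-isom} and Proposition~\ref{prop:smoothness-uZas}), the ranks defining $P^{\overline C,\mu}_\Gamma$ are well defined and transfer to the nonempty open ${}_\Gamma \uZas^{C,\mu}$, so $(\star)^\mu_\Gamma$ and the value of $P^{C,\mu}_\Gamma$ are inherited from $\overline C$. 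Assuming $C$ projective, combining Proposition~\ref{prop:pullback-IC-bBun-Zas} with Lemma~\ref{lem:pullback-IC-BunBU} gives $\IC_{\uZasG^\mu} \cong (f\circ r_\mu)^*\IC_{\bBun_B}[\langle\mu,2\rho\rangle-(g-1)\dim(B)]$, where $f\circ r_\mu$ factors through the component $\bBun_B^0$ and carries ${}_\Gamma\uZasG^\mu$ into ${}_\Gamma\bBun_U \subset {}_\Gamma\bBun_B^0$ (immediate from the description of $r_\mu$ in~\S\ref{ss:relation-Zastava-Drinfeld} and of the strata). Hence $(\IC_{\uZasG^\mu})_{|{}_\Gamma\uZasG^\mu}$ is, up to shift, the $*$-pullback along a morphism of schemes of $(\IC_{\bBun_B^0})_{|{}_\Gamma\bBun_B^0}$; as $*$-pullback is exact on sheaves of $\bk$-modules it preserves local constancy of cohomology sheaves, so part~\eqref{it:const-Bun} with $\lambda=0$ shows $(\IC_{\uZasG^\mu})_{|{}_\Gamma\uZasG^\mu}$ has locally constant cohomology sheaves, which by Lemma~\ref{lem:const-equiv-Zas} is equivalent to $(\star)^\mu_\Gamma$. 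The same lemma, together with another degree bookkeeping in which $\langle\nu-\mu,2\rho\rangle+\langle\mu,2\rho\rangle=\langle\nu,2\rho\rangle$, identifies $P^{C,\mu}_\Gamma$ with $Q^{C,0}_\Gamma$, which by part~\eqref{it:const-Bun} equals $\prod_{i=1}^r P^{\nu_i}$.

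The step I expect to demand the most care is not conceptual but bookkeeping: verifying that the local model of Lemma~\ref{lem:local-model} is genuinely stratified near $y$, so that the isomorphism of intersection cohomology complexes descends to an isomorphism after restriction to the $\Gamma$-strata, and keeping all the cohomological shifts consistent with the precise normalizations built into $P^{C,\mu}_\Gamma$ and $Q^{C,\lambda}_\Gamma$. Both are routine, but they are where an error would most easily slip in.
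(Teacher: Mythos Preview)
Your proposal is correct and follows essentially the same route as the paper: part~\eqref{it:const-Bun} is obtained from Lemma~\ref{lem:local-model} together with Lemma~\ref{lem:const-partitions-mu}, and part~\eqref{it:const-Zas} is deduced from part~\eqref{it:const-Bun} (with $\lambda=0$) via Lemma~\ref{lem:pullback-IC-BunBU}, Proposition~\ref{prop:pullback-IC-bBun-Zas}, and Lemma~\ref{lem:const-equiv-Zas}, after reducing to the projective case. You have spelled out in more detail the points the paper leaves implicit---notably the stratified nature of the local model and the shift bookkeeping---and your caution about these is well placed, but they are exactly the routine verifications the paper silently absorbs into ``follows directly from.''
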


\begin{proof}
 \eqref{it:const-Bun}
 The claim follows directly from Lemma~\ref{lem:local-model} and Lemma~\ref{lem:const-partitions-mu}.
 
 \eqref{it:const-Zas}
 Replacing if necessary $C$ by its smooth completion (see~\S\ref{ss:indep-curve}) we can assume that $C$ is projective.
 By~\eqref{it:const-Bun} and Lemma~\ref{lem:pullback-IC-BunBU} we know that for any $\nu \in \bY_{\succeq 0}$, $\Gamma \in \Part(\nu)$ and $n \in \Z$ the sheaf $\mathcal{H}^n(\IC_{\bBun_U}{}_{|{}_\Gamma \bBun_U})$ is locally constant, and that moreover we have
 \begin{equation}
 \label{eqn:ranks-costalks-bBunU}
  \sum_{n \geq 0} \rank \bigl( \mathcal{H}^{-|\Gamma| + \langle \nu, 2\rho \rangle - (g-1)\dim(U)-n}((\IC_{\bBun_U})_{|{}_\Gamma \bBun_U}) \bigr) \cdot q^n = \prod_{i=1}^r P^{\nu_i},
 \end{equation}
where $\Gamma = \ppl \nu_1, \ldots, \nu_r \ppr$. The desired claim follows, in view of Proposition~\ref{prop:pullback-IC-bBun-Zas} and Lemma~\ref{lem:const-equiv-Zas}.
\end{proof}

\subsection{Semiinfinite sheaves on the affine Grassmannian}
\label{ss:si-sheaves}

Recall from~\S\ref{ss:complexes-sheaves} that for any prestack locally of finite type $X$ we have an $\infty$-category $\Shv(X)$ of sheaves of $\bk$-vector spaces on $X$. (Here $\bk$ is a field of coefficients, with the constraints explained in~\S\ref{ss:complexes-sheaves}.) In particular, for the ind-scheme $\Gr$ of~\S\ref{ss:Gr} we have the $\infty$-category $\Shv(\Gr)$. Consider now the ``seminfinite Iwahori subgroup''
\[
\siIw := \Loop^+ T \ltimes \Loop U,
\]
a group ind-scheme over $\F$ which acts naturally on $\Gr$. (The orbits for this action coincide with the $\Loop U$-orbits, which we have described in~\S\ref{ss:Gr}.) 
Consider also the ``pro-unipotent radical'' $\siIwu$ of $\siIw$, namely the kernel of the composition of natural morphisms $\Loop^+ T \ltimes \Loop U \to \Loop^+ T \to T$.

In~\cite[\S 2.6]{adr} it is explained how to make sense of the full subcategory
\[
\Shv(\siIwu \backslash \Gr) \subset \Shv(\Gr)
\]
of complexes which are equivariant under the action of $\siIwu$. (This construction copies a definition given by Gaitsgory in~\cite{gaitsgory}, assuming that $\bk$ has characteristic $0$.) We explained (again following Gaitsgory) in~\cite[\S 6.1]{adr} how to define a ``perverse'' t-structure on $\Shv(\siIwu \backslash \Gr)$, and in~\cite[\S 6.1]{adr} how to construct a remarkable object $\Gaits$ in the heart of this t-structure. This object is supported on $\overline{\rS_0}$. It admits several equivalent descriptions; we recall here the one which will be used below. Consider the $\infty$-category $\Shv(\Loop^+ G \backslash \Gr)$ of $\Loop^+ G$-equivariant complexes on $\Gr$, with its natural perverse t-structure. The $\Loop^+G$-orbits on $\Gr$ are naturally labelled by $\bY_+$, and for $\lambda \in \bY_+$ we denote by $\cI_*(\lambda)$ the perverse-$0$ cohomology of the $*$-extension of the perversely shifted constant local system on the orbit labelled by $\lambda$. (These perverse sheaves correspond to induced modules for the Langlands dual group under the geometric Satake equivalence; see~\cite[\S 4.3]{adr} for details and references.) For $\lambda \in \bY$, we also denote by $z^\lambda \cdot (-)$ the functor of pushforward under the left action of $z^\lambda$ on $\Gr$. Then in~\cite[\S 8.3]{adr} we explain that there exists a canonical functor from (the $\infty$-category associated with) the directed set $\bY_+$ (endowed with the preorder defined by $\lambda \unlhd \mu$ iff $\mu - \lambda \in \bY_+$) to $\Shv(\Loop^+ G \backslash \Gr)$, sending $\lambda$ to $z^{-\lambda} \cdot \cI_*(\lambda) [\langle \lambda, 2\rho \rangle]$, and that
\begin{equation}
\label{eqn:Gaits-colim}
\Gaits \cong \colim_{\lambda \in \bY_+} z^{-\lambda} \cdot \cI_*(\lambda) [\langle \lambda, 2\rho \rangle].
\end{equation}

\begin{rmk}
As for any $\infty$-category of sheaves on an ind-scheme of ind-finite type, the $\infty$-category $\Shv(\Gr)$ carries a natural perverse t-structure associated with the middle perversity. The perverse t-structure on $\Shv(\siIwu \backslash \Gr)$ is however \emph{not} a restriction of this t-structure. This is related to the fact that the $\siIwu$-orbits on $\Gr$ are infinite-dimensional, hence do not interact well with the usual game of perverse sheaves.
\end{rmk}

For any $\lambda \in \bY$, one can also consider the $\infty$-category $\Shv(\rS_\lambda)$, and the full subcategory $\Shv(\siIwu \backslash \rS_\lambda) \subset \Shv(\rS_\lambda)$ of complexes equivariant under the action of $\siIwu$; here since the action is transitive we have a canonical equivalence
\begin{equation}
\label{eqn:equiv-Shv-orbit-Vect}
\Shv(\siIwu \backslash \rS_\lambda) \cong \Vect_\bk
\end{equation}
induced by taking the $!$-restriction to $[z^\lambda]$, see~\cite[Eqn.~(3.6)]{adr}. (In the right-hand side, $\Vect_\bk$ is the $\infty$-category of complexes of $\bk$-vector spaces.) Using this equivalence one can define a t-structure on $\Shv(\siIwu \backslash \rS_\lambda)$, by taking the cohomological shift of the transfer of the tautological t-structure on $\Vect_\bk$ such that the complex $\omega_{\rS_\lambda}[-\langle \lambda, 2\rho \rangle]$ is perverse. (Here, $\omega_{\rS_\lambda}$ is the dualizing complex on $\rS_\lambda$, obtained by $!$-pullback from the sheaf $\bk$ on a point; this complex corresponds to $\bk$ under the equivalence~\eqref{eqn:equiv-Shv-orbit-Vect}.) The cohomology functors associated with this t-structure will be denoted $(\pH^n : n \in \Z)$. Objects in its heart are direct sums of copies of $\omega_{\rS_\lambda}[-\langle \lambda, 2\rho \rangle]$; in case the multiplicity in an object $\cF$ is finite we call it the rank of $\cF$, and denote it $\rank(\cF)$.

We have canonical functors
\[
(\bi_\lambda)^! : \Shv(\siIwu \backslash \Gr) \to \Shv(\siIwu \backslash \rS_\lambda), \quad 
(\bi_\lambda)^* : \Shv(\siIwu \backslash \Gr) \to \Shv(\siIwu \backslash \rS_\lambda).
\]
Recall also the $q$-analogue of Kostant's partition function $\cP(\lambda,q) \in \Z[q]$, which counts the number of ways to write $\lambda$ as a sum of positive roots; see~\cite[\S 6.7]{adr} for our normalization. The following statement is part of~\cite[Theorem~8.2]{adr}.

\begin{thm}
\phantomsection
\label{thm:adr}
\begin{enumerate}
\item
\label{it:adr-stalks}
For any $\nu \in \bY$ and $n \in \Z$, the complex $\pH^n((\bi_\nu)^* \Gaits)$ vanishes unless $n=0$.
\item
\label{it:adr-costalks}
Assume that $\mathrm{char}(\bk)$ is good for $G$. Then for any $\nu \in \bY$ and $n \in \Z$ the complex $\pH^n((\bi_\nu)^! \Gaits)$ has finite rank, and for any $\nu \in \bY$ we have
\[
\sum_{n \in \Z} \rank \left( \pH^n((\bi_\nu)^! \Gaits) \right) \cdot q^n = \cP(-\nu, q^2).
\] 
\end{enumerate}
\end{thm}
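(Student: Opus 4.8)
The plan is to extract both statements from the colimit presentation~\eqref{eqn:Gaits-colim} of $\Gaits$ in terms of the spherical perverse sheaves $\cI_*(\lambda)$, combined with the Mirkovi{\'c}--Vilonen analysis of how $\Loop^+ G$-equivariant perverse sheaves on $\Gr$ interact with the semiinfinite orbits $\rS_\mu$.

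For~\eqref{it:adr-stalks} I would first use that $(\bi_\nu)^*$ commutes with filtered colimits and intertwines the left translation $z^{-\lambda}\cdot(-)$ on $\Gr$ with left translation on the orbits, so that
\[
(\bi_\nu)^* \Gaits \cong \colim_{\lambda \in \bY_+} z^{-\lambda} \cdot \bigl( (\bi_{\nu+\lambda})^* \cI_*(\lambda) \bigr) [\langle \lambda, 2\rho \rangle],
\]
where $\bi_{\nu+\lambda}$ now denotes the inclusion of the $\Loop U$-orbit through $[z^{\nu+\lambda}]$. Each $\cI_*(\lambda)$ is a costandard object of the modular Satake category on $\Gr$, and the standard analysis of the semiinfinite restrictions of such objects --- which is valid over an arbitrary coefficient field and underlies geometric Satake with general coefficients --- shows that $(\bi_{\nu+\lambda})^* \cI_*(\lambda)$ is, after the shift $[-\langle\nu+\lambda,2\rho\rangle]$, concentrated in a single perverse degree. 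Hence every term in the colimit lies in perverse degree $0$ after the twist by $z^{-\lambda}$ and the shift by $[\langle\lambda,2\rho\rangle]$, and since $(\bi_\nu)^*\Gaits$ lives in (the derived category of) $\bk$-vector spaces, where filtered colimits are exact, the colimit itself is concentrated in degree $0$. This proves~\eqref{it:adr-stalks}; the colimit need not stabilize, which is exactly what one expects since $\Gaits$ has infinite length, so $(\bi_\nu)^*\Gaits$ may well be an infinite direct sum of copies of $\omega_{\rS_\nu}[-\langle\nu,2\rho\rangle]$.

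For~\eqref{it:adr-costalks} the same substitution produces, for each $\lambda$, the term $z^{-\lambda} \cdot (\bi_{\nu+\lambda})^! \cI_*(\lambda)[\langle\lambda,2\rho\rangle]$, but two new difficulties appear. First, $(\bi_\nu)^!$ does not commute with the colimit; the plan is to go around this by Grothendieck--Verdier duality, identifying $(\bi_\nu)^!\Gaits$ with the dual of a $*$-restriction of $\mathbb{D}\Gaits$ and using a presentation of $\mathbb{D}\Gaits$ along the standard objects $\cI_!(\mu)$ that is compatible with the colimit --- or, equivalently, passing through the opposite semiinfinite orbits $\rS^-_\mu$ via hyperbolic localization (Braden's theorem). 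Either route reduces the computation to the full cohomology of the semiinfinite $*$-restrictions of the \emph{standard} spherical perverse sheaves on $\Gr$, i.e.\ to the stalks of those sheaves; and this, beyond what the (exact) weight functors of geometric Satake already give, is exactly the content of the Mirkovi{\'c}--Vilonen conjecture~\cite[Conjecture~13.3]{mv}, which holds with the same numerical answer as in characteristic $0$ precisely when $\mathrm{char}(\bk)$ is good (by~\cite{arider,mr}) and fails otherwise (by~\cite{juteau}). Granting it, each $\pH^n((\bi_\nu)^!\Gaits)$ has finite rank, the generating function agrees with the one Gaitsgory computed over characteristic-$0$ fields, and a combinatorial identity --- summing graded weight multiplicities of Weyl modules for $\dualG$ over the dominant cone --- collapses it to Lusztig's $q$-analogue $\cP(-\nu,q^2)$ of Kostant's partition function.

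The main obstacle is part~\eqref{it:adr-costalks}: one has to make the $\infty$-categorical bookkeeping around the duality-versus-colimit interchange precise enough to guarantee finiteness of the ranks $\rank(\pH^n((\bi_\nu)^!\Gaits))$, and --- more essentially --- one has to transport the modular Mirkovi{\'c}--Vilonen conjecture from the spherical category on $\Gr$ into the semiinfinite setting. It is this appeal to stalks of standard spherical perverse sheaves that forces the good-characteristic hypothesis, and it is the same restriction that propagates to the description of the (co)stalks of $\IC_{\bBun_B}$ in the present paper.
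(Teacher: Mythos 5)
The paper does not prove this theorem. Immediately before the statement it writes ``The following statement is part of~\cite[Theorem~8.2]{adr}'' --- it is an imported result, and the present paper's ``proof'' is a citation to that companion article. Your proposal is therefore a reconstruction of an argument that does not appear in this text, and I can only check it against the brief indications the paper gives about what happens in \cite{adr}.

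Those indications are consistent with your outline. The introduction explains that the proof in \cite{adr} is ``intrinsic to the setting of (constructible) sheaves on $\Gr$,'' in contrast to Gaitsgory's characteristic-zero argument which passes through the known (co)stalks of $\IC_{\bBun_B}$; and it states explicitly that the good-characteristic hypothesis in part~\eqref{it:adr-costalks} enters because the description of \emph{costalks} of $\Gaits$ relies on the Mirkovi\'c--Vilonen conjecture on \emph{stalks} of standard spherical perverse sheaves, which is known in good characteristic by \cite{arider,mr} and false otherwise by \cite{juteau}. Your use of the colimit presentation~\eqref{eqn:Gaits-colim}, your observation that part~\eqref{it:adr-stalks} should follow over any $\bk$ from geometric Satake alone while part~\eqref{it:adr-costalks} requires the MV conjecture, and your identification of hyperbolic localization / duality as the device converting the semiinfinite $!$-costalk of $\Gaits$ into $*$-stalks of standard Satake objects all line up with this description.

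Two cautions. For part~\eqref{it:adr-stalks} you assert that $(\bi_{\nu+\lambda})^*\cI_*(\lambda)$ is, after the appropriate shift, concentrated in a single degree. This is stronger than the exactness of the Mirkovi\'c--Vilonen weight functor $F_\mu=\mathsf{H}^{\la\mu,2\rho\ra}_{\mathrm{c}}(\rS_\mu,-)$: that exactness is a statement about total compactly supported cohomology, not about the restriction itself as an object of $\Shv(\siIwu\backslash\rS_\mu)\cong\Vect_\bk$, and the asserted concentration is a genuine point that must be proved, not quoted. For part~\eqref{it:adr-costalks} the interchange of $(\bi_\nu)^!$ with the filtered colimit is the real technical crux, exactly as you flag; whether \cite{adr} routes through Grothendieck--Verdier duality, Braden's theorem, or a direct stabilization argument cannot be determined from the present paper, so your proposed route should be treated as a plausible plan rather than a verified reconstruction.
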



\subsection{Relation between \texorpdfstring{$\IC_{\bBun_U}$}{ICBunU} and the Gaitsgory sheaf}

In this subsection we explain how to compute the polynomials $P^\mu$, which will conclude the description of dimensions of stalks of intersection cohomology complexes on Zastava schemes and Drinfeld's compactifications in view of Theorem~\ref{thm:const}. This procedure will involve the scheme $\bBun_U$, hence here we require $C$ to be projective, and denote its genus by $g$.

Fix a closed point $c \in C$.  
As explained in~\cite[\S 3.1.2]{gaitsgory}, there is a version $(\bBun_U)_{\infty \cdot c}$ of $\bBun_U$ where the maps $\kappa_\xi$ in the moduli description of~\S\ref{ss:def-bBunB} (now with $\cF=\cF^0$, see~\S\ref{ss:BunU}) are allowed to have poles at $c$; here $(\bBun_U)_{\infty \cdot c}$ is an ind-algebraic stack. One can also bound the order of the poles using an element $\lambda \in \bY$: there is a closed substack
\[
 (\bBun_U)_{\leq \lambda \cdot c} \subset (\bBun_U)_{\infty \cdot c}
\]
characterized by the condition that each $\kappa_\xi$ is allowed to have a pole of order${}\le \la \xi,\lambda \ra$ at $c$ for any $\xi$. (By convention, a pole of order $n < 0$ means a zero of order $-n$.) In case $\lambda=0$, we have
\begin{equation}
\label{eqn:bBunU-infty}
 (\bBun_U)_{\leq 0 \cdot c} = \bBun_U.
\end{equation}

Following~\cite[\S 3.1.3]{gaitsgory}, for $\lambda \in \bY$ we also have an open substack
\[
 (\bBun_U)_{= \lambda \cdot c} \subset (\bBun_U)_{\leq \lambda \cdot c}
\]
where the pole at $c$ of each $\kappa_\xi$ should be of order exactly $\langle \xi, \lambda \rangle$,
and moreover these maps should not have zeros on $C \smallsetminus \{c\}$. 
Here $(\bBun_U)_{= \lambda \cdot c}$ 
is a smooth stack, of dimension
\begin{equation*}
 \dim (\bBun_U)_{= \lambda \cdot c} = (g-1) \dim(U) + \langle \lambda, 2\rho \rangle.
\end{equation*}
In case $\lambda=-\nu$ for some $\nu \in \bY_{\succeq 0}$, we have
\begin{equation}
\label{eqn:strata-BunU-Gaits}
 (\bBun_U)_{= (-\nu) \cdot c} = \{c\} \times_C {}_{\ppl \nu \ppr} \bBun_U,
\end{equation}
where we have identified $C^\nu_{\ppl \nu \ppr}$ with $C$ (see~\eqref{eqn:mu-closed-stratum}).


Now recall the ind-scheme $\Gr_c$ considered in~\S\ref{ss:central-fiber}.
As explained in~\cite[\S 3.1.6]{gaitsgory} there is a canonical morphism of stacks
\[
 \pi : \Gr_c \to (\bBun_U)_{\infty \cdot c}.
\]
The definition of $\pi$ uses the moduli description of $\Gr_c$: it classifies pairs $(\cE,\beta)$ where $\cE$ is a $G$-torsor on $C$ and $\beta$ is a trivialization on $C \smallsetminus \{c\}$. The map $\pi$ sends $(\cE,\beta)$ to the pair consisting of $\cE$ and, for $\xi \in \bX_+$, the map $\kappa_\xi$ defined on $C \smallsetminus \{c\}$ by the trivialization of $\cE$ and the morphism~\eqref{eqn:morph-character-Weyl}. The preimage under this map of $(\bBun_U)_{\leq \lambda \cdot c}$, resp.~of $(\bBun_U)_{= \lambda \cdot c}$, is $\overline{\rS_{\lambda,c}}$, resp.~$\rS_{\lambda,c}$.

Consider the functor
\[
 \pi^! : \Shv((\bBun_U)_{\infty \cdot c}) \to \Shv(\Gr_c).
\]
%
%
The rest of this section will be devoted to the proof of the following statement. (Here we view $\IC_{\bBun_U}$ as a complex on $(\bBun_U)_{\infty \cdot c}$ via the identification~\eqref{eqn:bBunU-infty}. Also, we consider $\Gaits$ as a complex of sheaves on $\Gr_c$ using any choice of local parameter at $c$ to identify $\Gr_c$ with $\Gr$; the resulting complex does not depend on the choice of parameter.)

\begin{thm}
\label{thm:Gaits-Bun}
 There exists a canonical isomorphism
 \[
  \pi^! \IC_{\bBun_U}[(g- 1)\dim(U)] \cong \Gaits.
 \]
\end{thm}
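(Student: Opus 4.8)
The plan is to prove Theorem~\ref{thm:Gaits-Bun} by combining three ingredients: the description~\eqref{eqn:Gaits-colim} of $\Gaits$ as a colimit over $\bY_+$, the stratification-wise understanding of $\IC_{\bBun_U}$ coming from Theorem~\ref{thm:const} and Proposition~\ref{prop:pullback-IC-bBun-Zas}, and the geometry of the maps $\pi : \Gr_c \to (\bBun_U)_{\infty \cdot c}$ together with the poled versions $(\bBun_U)_{\le \lambda \cdot c}$. First I would set $\cK := \pi^! \IC_{\bBun_U}[(g-1)\dim(U)]$, a complex on $\Gr_c \cong \Gr$, and aim to identify it with $\Gaits$ by checking that it lies in the heart of the perverse t-structure on $\Shv(\siIwu \backslash \Gr)$ and that its $!$- and $*$-restrictions to the semiinfinite orbits $\rS_{\nu,c}$ match those of $\Gaits$ as recorded in Theorem~\ref{thm:adr}. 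Equivariance of $\cK$ under $\siIwu$ is automatic: $\IC_{\bBun_U}$ lives on a stack and the $\siIwu$-action on $\Gr$ comes from an action compatible with $\pi$ (the $\Loop U$-orbits are the fibers of $\pi$ over the strata of $(\bBun_U)_{\infty\cdot c}$, and $\Loop^+ T$ acts through changes of trivialization).

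The core computation is the restriction to a fixed orbit $\rS_{-\nu,c}$ for $\nu \in \bY_{\succeq 0}$ (the general $\rS_{\lambda,c}$ reduces to this by the $z^\lambda$-translation symmetry, exactly as in~\cite{adr}). Here I would use that $\pi$ carries $\rS_{-\nu,c}$ onto the stratum $(\bBun_U)_{=(-\nu)\cdot c} = \{c\} \times_C {}_{\ppl\nu\ppr}\bBun_U$, see~\eqref{eqn:strata-BunU-Gaits}, and onto $\overline{\rS_{-\nu,c}}$ for the closed version. Base-changing $\pi^! \IC_{\bBun_U}$ along the locally closed embedding of this stratum, one is reduced to computing the costalks of $\IC_{\bBun_U}$ along ${}_{\ppl\nu\ppr}\bBun_U$, restricted to the point $c$. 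By Theorem~\ref{thm:const}\eqref{it:const-Zas} (or directly~\eqref{eqn:ranks-costalks-bBunU} with $\Gamma = \ppl\nu\ppr$, so $r=1$) together with Proposition~\ref{prop:pullback-IC-bBun-Zas} and the identification ${}_0\uCZasG^{\mu,c} \cong \rS_{0,c}\cap\rS^-_{-\mu,c}$ from~\eqref{eqn:central-fiber-G-0-intersection-si-orbits}, the relevant cohomology ranks are governed by $P^\nu$. One then needs the input — to be established in the next subsection, or imported from~\cite{adr} via Theorem~\ref{thm:adr}\eqref{it:adr-costalks} — that $P^\nu(q^2) = \cP(-(-\nu),q^2) = \cP(\nu,q^2)$, i.e.\ that the stalk/costalk polynomial of the Zastava scheme along its deepest stratum is the $q$-analogue of Kostant's partition function. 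Granting this, the $!$- and $*$-restrictions of $\cK$ to each $\rS_{\lambda,c}$ agree, orbit by orbit, with those of $\Gaits$, and both complexes are supported on $\overline{\rS_0}$.

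To upgrade the orbit-wise agreement to an honest isomorphism, I would follow Gaitsgory's strategy but substitute the arguments that in~\cite{gaitsgory} rely on~\cite{bfgm}. The cleanest route is via the colimit presentation~\eqref{eqn:Gaits-colim}: one constructs, for each $\lambda \in \bY_+$, a canonical map $z^{-\lambda}\cdot\cI_*(\lambda)[\langle\lambda,2\rho\rangle] \to \cK$ compatible with the transition maps, using that $\cI_*(\lambda)$ is a $\Loop^+G$-equivariant perverse sheaf (geometric Satake) and that $\pi$ intertwines the $\Loop^+G_c$-action with the corresponding Hecke action on $(\bBun_U)_{\infty\cdot c}$ — concretely, using the identification of $\cI_*(\lambda)$ restricted to $\rS_{\mu}$ from~\cite{mv} (the Mirkovi\'c--Vilonen conjecture, valid in good characteristic, cf.\ the Remark in the introduction). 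This yields a map $\colim_{\lambda} z^{-\lambda}\cdot\cI_*(\lambda)[\langle\lambda,2\rho\rangle] = \Gaits \to \cK$; one checks it is an isomorphism by restricting to each $\rS_{\nu,c}$ and invoking the matching of stalks just established, plus the fact that both sides are perverse in the semiinfinite t-structure (for $\cK$, perversity follows from the support/cosupport estimates given by the dimension formulas in~\S\ref{ss:central-fiber} and the ULA property, Proposition~\ref{prop:ULA}). The main obstacle I expect is precisely the construction and compatibility of these Satake-to-$\cK$ maps at finite level — i.e.\ relating $\pi^!$ of the poled Drinfeld compactifications $(\bBun_U)_{\le\lambda\cdot c}$ to the closure $\overline{\rS_{\lambda,c}}$ and to $\cI_*(\lambda)$ — since this is where~\cite{gaitsgory} uses the explicit $\IC_{\bBun_B}$ computations of~\cite{bfgm} that we are not allowed to assume; replacing them requires the factorization and local-model arguments of \S\ref{sec:zastavas}--\S\ref{ss:constructibility} (especially Lemma~\ref{lem:local-model} and Proposition~\ref{prop:local-isom}) to propagate the good-characteristic stalk computation of~\cite{adr} back through $\pi$, in the spirit of~\cite{ffkm}.
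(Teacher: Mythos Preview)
Your proposal has a genuine circularity. You want to match the costalks of $\cK := \pi^!\IC_{\bBun_U}[(g-1)\dim(U)]$ against those of $\Gaits$ by invoking the identity between $P^\nu$ and the Kostant partition function, but that identity is Corollary~\ref{cor:formula-P}, which in the paper is \emph{deduced from} Theorem~\ref{thm:Gaits-Bun}, not an input to it. You suggest importing it from Theorem~\ref{thm:adr}\eqref{it:adr-costalks}, but that result computes costalks of $\Gaits$, not of $\IC_{\uZas^\mu}$ or $\IC_{\bBun_U}$; the bridge between the two is precisely the theorem you are trying to prove. Even granting the identity, rank matching plus a map does not yield an isomorphism: under~\eqref{eqn:equiv-Shv-orbit-Vect} the restriction to each $\rS_\lambda$ lands in $\Vect_\bk$, and a map between two copies of $\bk^n$ need not be invertible. (Also, the construction of the maps $z^{-\lambda}\cdot\cI_*(\lambda)[\langle\lambda,2\rho\rangle]\to\cK$ does not use the Mirkovi\'c--Vilonen conjecture; it follows~\cite[\S 3.4]{gaitsgory} verbatim.)

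The paper's argument avoids both issues and is in fact characteristic-free; the good-characteristic hypothesis enters only afterwards, in Corollary~\ref{cor:formula-P}. After constructing~\eqref{eqn:map-Gaits-Bun} and checking $\siIwu$-equivariance as in~\cite{gaitsgory}, one applies Braden's hyperbolic localization to pass from $(\bi_{-\mu})^*$ to $(\bi^-_{-\mu})^!$, and then uses~\eqref{eqn:IC-Bun-Zastava-cohom} to identify the target side with $\mathsf{H}^\bullet_{\uCZasG^{\mu,c}}(\uZasG^\mu,\IC_{\uZasG^\mu})$. The key technical input you are missing is Proposition~\ref{prop:IC-Zastava}: this cohomology with support vanishes outside degree $0$, and in degree $0$ restriction to the open part ${}_0\uCZasG^{\mu,c}\cong \rS_0\cap\rS^-_{-\mu}$ is an isomorphism. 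On the source side, Theorem~\ref{thm:adr}\eqref{it:adr-stalks} (no characteristic hypothesis) gives the same one-degree concentration. One is then reduced to checking that a specific map between top Borel--Moore homologies of $\Gr^\lambda\cap\rS^-_{\lambda-\mu}$ and of $\rS_0\cap\rS^-_{-\mu}$ is an isomorphism, which follows from the geometric equality $\rS_0\cap\rS^-_{-\mu}=z^{-\lambda}\cdot(\Gr^\lambda\cap\rS^-_{\lambda-\mu})$ for $\lambda$ sufficiently dominant. No knowledge of $P^\nu$ is used.
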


\begin{rmk}
\label{rmk:Gaits-omega-GL2}
Recall from Remark~\ref{rmk:bBunB}\eqref{it:bBunB-Laumon} that, in case $G=\mathrm{GL}_2$, the stack $\bBun_B$ is smooth. Using (the Grothendieck--Verdier dual of) Lemma~\ref{lem:pullback-IC-BunBU}, it follows that in this case we have $\IC_{\bBun_U} \cong \omega_{\bBun_U} [-(g- 1)\dim(U)]$, hence by Theorem~\ref{thm:Gaits-Bun} we have $\Gaits \cong \omega_{\overline{\rS_{0,c}}}$. 
\end{rmk}

In case $\bk$ has characteristic $0$, this statement is~\cite[Theorem~3.2.2]{gaitsgory}, and our proof (given in~\S\ref{ss:proof-Drinfeld} below) will follow the same strategy as in~\cite{gaitsgory}. 

\subsection{Applications}

Before giving the proof of Theorem~\ref{thm:Gaits-Bun}, we explain our main application of this result. 

\begin{cor}
\label{cor:formula-P}
Assume that $\mathrm{char}(\bk)$ is good for $G$.
 For any $\mu \in \bY_{\succ 0}$ we have
 \[
 P^\mu = q^{-1} \cdot \cP(\mu,q^2).
 \]
\end{cor}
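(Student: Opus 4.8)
The plan is to combine Theorem~\ref{thm:Gaits-Bun} with the computation of costalks of $\Gaits$ from Theorem~\ref{thm:adr}\eqref{it:adr-costalks}, passing through the central fibers studied in~\S\ref{ss:central-fiber}. First I would restrict the isomorphism $\pi^! \IC_{\bBun_U}[(g-1)\dim(U)] \cong \Gaits$ to the stratum $\rS_{-\nu,c} \subset \Gr_c$ for $\nu \in \bY_{\succeq 0}$. On the one hand, $(\bi_{-\nu})^! \Gaits$ is computed by Theorem~\ref{thm:adr}\eqref{it:adr-costalks}: its perverse cohomologies have finite rank, and $\sum_n \rank(\pH^n((\bi_{-\nu})^! \Gaits)) \cdot q^n = \cP(\nu, q^2)$. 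On the other hand, the preimage of $(\bBun_U)_{=(-\nu)\cdot c}$ under $\pi$ is exactly $\rS_{-\nu,c}$ (as recalled before Theorem~\ref{thm:Gaits-Bun}), and by~\eqref{eqn:strata-BunU-Gaits} this open substack of $(\bBun_U)_{\leq (-\nu)\cdot c}$ is identified with $\{c\} \times_C {}_{\ppl\nu\ppr}\bBun_U$. So I would compute $(\bi_{-\nu})^!$ of the left-hand side by base change: $\pi^! \IC_{\bBun_U}$ restricted to $\rS_{-\nu,c}$ is the $!$-restriction of $\IC_{\bBun_U}$ to ${}_{\ppl\nu\ppr}\bBun_U$, pulled back along the smooth (in fact the fibers are affine spaces/semiinfinite orbit) map $\rS_{-\nu,c} \to \{c\} \times_C {}_{\ppl\nu\ppr}\bBun_U$, with the appropriate shift.

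The key bookkeeping step is to match the two t-structures and shifts. By Theorem~\ref{thm:const}\eqref{it:const-Bun} (and Lemma~\ref{lem:pullback-IC-BunBU}), the $!$-restriction of $\IC_{\bBun_U}$ to the stratum ${}_{\ppl\nu\ppr}\bBun_U$ has locally constant cohomology sheaves, and its graded ranks are governed by the polynomial $P^\nu$: indeed from~\eqref{eqn:ranks-costalks-bBunU} with $\Gamma = \ppl\nu\ppr$ (so $r=1$) we get, by Grothendieck--Verdier duality, $\sum_{n\geq 0} \rank(\mathcal{H}^{-|\ppl\nu\ppr| + \langle \nu,2\rho\rangle - (g-1)\dim(U)+n}((i)^!\IC_{\bBun_U})) \cdot q^n = P^\nu(q)$, where $i$ is the inclusion of ${}_{\ppl\nu\ppr}\bBun_U$ and $|\ppl\nu\ppr| = 1$. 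Pulling back along the map $\rS_{-\nu,c} \to \{c\}\times_C {}_{\ppl\nu\ppr}\bBun_U$ (whose fiber dimension I need to compute from~\S\ref{ss:central-fiber}: ${}_0\uCZasG^{\mu,c} \cong \rS_{0,c}\cap\rS^-_{-\mu,c}$ and related identities give the relevant dimension count, essentially $\langle\nu,\rho\rangle$), applying the shift $[(g-1)\dim(U)]$, and comparing with the normalization of the perverse t-structure on $\Shv(\siIwu\backslash\rS_{-\nu,c})$ from~\S\ref{ss:si-sheaves} (where $\omega_{\rS_{-\nu,c}}[-\langle -\nu,2\rho\rangle]$ is declared perverse), I expect to obtain
\[
\sum_{n\in\Z} \rank\bigl(\pH^n((\bi_{-\nu})^!\Gaits)\bigr)\cdot q^n = q^{-1}\cdot P^\nu(q^2)\cdot(\text{possible monomial correction}),
\]
and after careful matching of shifts the correction should disappear, yielding $\cP(\nu,q^2) = q^{-1} P^\nu(q^2)$ after the substitution $q \mapsto q^2$ is reconciled with the doubling already present in Theorem~\ref{thm:adr}. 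Rearranging gives $P^\nu = q^{-1}\cdot\cP(\nu,q^2)$ for all $\nu \in \bY_{\succ 0}$.

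The main obstacle I anticipate is precisely this reconciliation of conventions: the polynomial $P^\mu$ is defined in~\S\ref{ss:constructibility} with a cohomological shift involving $\langle \nu-\mu,2\rho\rangle$ and $|\Gamma|$ (here with $\mu = \nu$, $\Gamma = \ppl\nu\ppr$, so the shift is $-1$), the perverse t-structure on semiinfinite sheaves over an orbit $\rS_\lambda$ is normalized via $\omega_{\rS_\lambda}[-\langle\lambda,2\rho\rangle]$ rather than via the naive constant sheaf, and Theorem~\ref{thm:adr} already has a $q \mapsto q^2$ built in. Getting the single factor of $q^{-1}$ (rather than $q$, or $q^{\langle\nu,\rho\rangle}$, or no correction) right requires tracking the dimension of the fibers of $\rS_{-\nu,c}\to\{c\}\times_C{}_{\ppl\nu\ppr}\bBun_U$ and the difference between $!$-restriction and $*$-restriction normalizations carefully; everything else is a formal consequence of the cited theorems. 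One clean way to organize this is to first record the statement for $\Gaits$ itself --- i.e.\ translate Theorem~\ref{thm:adr}\eqref{it:adr-costalks} into a statement about $(\bi_{-\nu})^!$ of the complex $\pi^!\IC_{\bBun_U}[(g-1)\dim(U)]$ via Theorem~\ref{thm:Gaits-Bun} --- and then separately compute that same costalk directly in terms of $P^\nu$ using the smooth base change along $\pi$ together with~\eqref{eqn:strata-BunU-Gaits} and the central-fiber identifications, and finally equate the two expressions.
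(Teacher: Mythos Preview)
Your approach is essentially the same as the paper's: restrict the isomorphism of Theorem~\ref{thm:Gaits-Bun} to $\rS_{-\mu,c}$, compute one side via Theorem~\ref{thm:adr}\eqref{it:adr-costalks}, compute the other via~\eqref{eqn:ranks-costalks-bBunU} and~\eqref{eqn:strata-BunU-Gaits}, and equate. The one place where the paper's bookkeeping is cleaner than your sketch is that it avoids speaking of a ``fiber dimension'' for the map $\pi_{-\mu}:\rS_{-\mu,c}\to{}_{\ppl\mu\ppr}\bBun_U$ (which is awkward since $\rS_{-\mu,c}$ is infinite-dimensional); instead it uses the tautology $(\pi_{-\mu})^!\omega_{{}_{\ppl\mu\ppr}\bBun_U}=\omega_{\rS_{-\mu,c}}$ together with the normalization of the perverse t-structure on $\Shv(\siIwu\backslash\rS_{-\mu})$ via $\omega_{\rS_{-\mu}}[\langle\mu,2\rho\rangle]$ and the identity $\omega_{{}_{\ppl\mu\ppr}\bBun_U}\cong\underline{\bk}[2\dim{}_{\ppl\mu\ppr}\bBun_U]$ on the smooth stack, which immediately yields $\rank\pH^{n+\langle\mu,2\rho\rangle}((\pi_{-\mu})^!\cF)=\rank\mathcal{H}^{n-2\dim{}_{\ppl\mu\ppr}\bBun_U}(\cF)$ for $\cF$ with locally constant cohomology---this is exactly the ``careful matching of shifts'' you anticipate, and it produces the factor $q$ (so $\cP(\mu,q^2)=q\cdot P^\mu$) without any ambiguity.
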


\begin{proof}
 As explained above, for any $\mu \in \bY_{\succ 0}$ the map $\pi$ sends $\rS_{-\mu,c}$ into the substack $(\bBun_U)_{= (-\mu) \cdot c}$, hence a fortiori into ${}_{\ppl \mu \ppr} \bBun_U$, see~\eqref{eqn:strata-BunU-Gaits}. Denoting 
 by $\mathfrak{i}_{\mu} : {}_{\ppl \mu \ppr} \bBun_U\to (\bBun_U)_{\infty \cdot c}$ the embedding and 
 by $\pi_{-\mu} : \rS_{-\mu,c} \to {}_{\ppl \mu \ppr} \bBun_U$ the restriction of $\pi$, Theorem~\ref{thm:Gaits-Bun} implies that we have
 \[
  (\pi_{-\mu})^! (\mathfrak{i}_{\mu})^! \IC_{\bBun_U}[(g-1)\dim(U)] \cong (\bi_{-\mu})^! \Gaits.
 \]
 By Theorem~\ref{thm:adr}\eqref{it:adr-costalks}, the ranks of the cohomology objects of the right-hand side satisfy
 \[
  \sum_{n \in \Z} \rank \left( \pH^n((\bi_{-\mu})^! \Gaits) \right) \cdot q^n = \cP(\mu, q^2).
 \]
On the other have we have $(\pi_{-\mu})^! \omega_{{}_{\ppl \mu \ppr} \bBun_U} = \omega_{\rS_{-\mu,c}}$, so that for a complex $\cF$ on ${}_{\ppl \mu \ppr} \bBun_U$ with locally constant cohomology sheaves, we have
\[
\rank \left( \pH^{n + \langle \mu, 2\rho \rangle}((\pi_{-\mu})^!\cF) \right) = \rank \left( \mathcal{H}^{n-2 \dim({}_{\ppl -\mu \ppr} \bBun_U)}(\cF) \right),
\]
or, using the dimension formulas from~\S\ref{ss:BunU},
\[
\rank \pH^{n+\langle \mu, 2\rho\rangle}((\pi_{-\mu})^!\cF) = \rank \mathcal{H}^{n-2 +2\langle\mu,2\rho\rangle - 2(g-1)\dim(U)}(\cF).
\]
We now compute:
 \begin{align*}
  \sum_{n \in \Z} \rank& \pH^n((\pi_{-\mu})^! (\mathfrak{i}_{\mu})^! \IC_{\bBun_U}[(g-1)\dim(U)]) \cdot q^n \\
  &= 
  \sum_{n \in \Z} \rank \pH^{n+ (g-1)\dim(U)}((\pi_{-\mu})^! (\mathfrak{i}_{\mu})^! \IC_{\bBun_U}) \cdot q^n \\
  &= 
  \sum_{n \in \Z} \rank \mathcal{H}^{n-(g-1)\dim(U)+ \langle \mu, 2\rho \rangle-2}((\mathfrak{i}_{\mu})^! \IC_{\bBun_U}) \cdot q^n \\
  &= 
  q\sum_{n \in \Z} \rank \mathcal{H}^{n-(g-1)\dim(U)+ \langle \mu, 2\rho \rangle-1}((\mathfrak{i}_{\mu})^! \IC_{\bBun_U}) \cdot q^n.
 \end{align*}
 By~\eqref{eqn:ranks-costalks-bBunU} and Grothendieck--Verdier duality, the last term in these equalities equals $q \cdot P^\mu$, which finishes the proof.
\end{proof}

Note in particular that the formula in Corollary~\ref{cor:formula-P} 
does not depend on $\bk$ (as long as $\mathrm{char}(\bk)$ is good for $G$). As explained above, this implies that the dimensions of stalks of the complexes $\IC_{\bBun_B}$ and $\IC_{\uZas^\mu}$ do not depend on $\bk$ either (under this assumption).

\begin{rmk}
\label{rmk:PID}
We can also consider intersection cohomology with coefficients in a principal ideal domain $\mathbb{O}$.  More precisely, consider one of the following settings:
\begin{enumerate}
\item $\mathbb{O}$ is the ring of integers in a finite extension of $\Q_\ell$, where $\ell$ is a good prime for $G$, and we work with \'etale $\mathbb{O}$-sheaves.
\item $\F = \C$, $\mathbb{O}$ is an arbitrary principal ideal domain in which all bad primes for $G$ are invertible, and we work with $\mathbb{O}$-sheaves for the analytic toplogy.
\end{enumerate}
Denoting that $\IC_{\bBun_B}(\mathbb{O})$ and $\IC_{\uZas^\mu}(\mathbb{O})$ the intersection cohomology complexes with coefficients in $\mathbb{O}$ associated with the constant local system on $\Bun_B$ and ${}_0 \uZas^\mu$ respectively,
Proposition~\ref{prop:IC-indep} implies that these complexes are constructible with respect to the natural stratifications of $\bBun_B$ and $\uZas^\mu$ respectively, that the cohomology of their
stalks and costalks are free over $\mathbb{O}$, and that their ranks are described by the formulas in Theorem~\ref{thm:const}. (Proposition~\ref{prop:IC-indep} only considers the case of schemes; the statement for $\bBun_B$ can be deduced using a smooth morphism from a scheme.)
\end{rmk}

In the following statement we consider the notions of even and odd complexes from~\cite{jmw} (for the constant pariversity).

\begin{cor}
\label{cor:parity}
Assume that $\mathrm{char}(\bk)$ is good for $G$.
\begin{enumerate}
\item For any $\lambda \in \bY$, $\IC_{\bBun_B^\lambda}$ is even if $(g-1)\dim(B)$ is even, and odd if $(g-1)\dim(B)$ is odd.
\item For any $\mu \in \bY_{\succeq 0}$, $\IC_{\uZas^\mu}$ is even.
\end{enumerate}
\end{cor}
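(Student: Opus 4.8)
\emph{Proof proposal.} The plan is to deduce both parts directly from the stalk/costalk computations of Theorem~\ref{thm:const} and the closed formula of Corollary~\ref{cor:formula-P}, by a short parity bookkeeping; no new geometric input is needed. The starting point is that for $\nu \in \bY_{\succ 0}$ the polynomial $P^\nu = q^{-1}\cP(\nu,q^2)$ is supported in \emph{odd} degrees: since $\nu \succ 0$, every way of writing $\nu$ as a sum of positive coroots uses at least one coroot, so $\cP(\nu,q) \in q\Z_{\geq 0}[q]$, whence $\cP(\nu,q^2)\in q^2\Z_{\geq 0}[q^2]$ and $q^{-1}\cP(\nu,q^2)\in q\Z_{\geq 0}[q^2]$. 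Consequently, for any $\Gamma = \ppl \nu_1,\dots,\nu_r\ppr$ with all $\nu_i \succ 0$, the product $\prod_{i=1}^r P^{\nu_i}$ is supported in degrees congruent to $r = |\Gamma|$ modulo $2$.

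For part~(2), I would combine this with Theorem~\ref{thm:const}\eqref{it:const-Zas}, which asserts in particular that $\IC_{\uZas^\mu}$ is constructible with respect to the stratification~\eqref{eqn:decomp-uZas-Part} and that $P^{C,\mu}_\Gamma = \prod_{i=1}^{|\Gamma|}P^{\nu_i}$ for $\Gamma \in \Part(\nu)$ with $\nu\preceq\mu$. Unwinding the definition of $P^{C,\mu}_\Gamma$, a cohomology sheaf $\mathcal{H}^j\bigl((\IC_{\uZas^\mu})_{|{}_\Gamma\uZas^\mu}\bigr)$ can be nonzero only when $j = -|\Gamma| + \langle\nu-\mu,2\rho\rangle - n$ with $n$ an exponent occurring in $\prod_i P^{\nu_i}$, hence $n\equiv|\Gamma|\pmod 2$ and therefore $j\equiv\langle\nu-\mu,2\rho\rangle\pmod 2$. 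Since $\nu$ and $\mu$ lie in the coroot lattice, $\langle\nu-\mu,2\rho\rangle = 2\langle\nu-\mu,\rho\rangle$ is even, so all stalks of $\IC_{\uZas^\mu}$ are concentrated in even degrees; Grothendieck--Verdier duality, applied exactly as in the costalk reformulation recalled in~\S\ref{ss:constructibility}, gives the same for costalks. Hence $\IC_{\uZas^\mu}$ is even in the sense of~\cite{jmw}.

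Part~(1) is the same computation carried out with Theorem~\ref{thm:const}\eqref{it:const-Bun} and the polynomials $Q^{C,\lambda}_\Gamma$ in place of $P^{C,\mu}_\Gamma$: here $\IC_{\bBun_B^\lambda}$ is constructible along~\eqref{eqn:stratification-bBunB}, one has $Q^{C,\lambda}_\Gamma = \prod_{i=1}^{|\Gamma|}P^{\nu_i}$, and the cohomological shift built into the definition of $Q^{C,\lambda}_\Gamma$ again contributes a $-|\Gamma|$, which cancels the parity $|\Gamma|$ of the exponents of the product. One concludes that all nonvanishing $\mathcal{H}^j\bigl((\IC_{\bBun_B^\lambda})_{|{}_\Gamma\bBun_B^\lambda}\bigr)$ lie in a single residue class modulo $2$, independent of $\Gamma$; keeping track of the shift $\langle\nu-\lambda,2\rho\rangle - (g-1)\dim(B)$ and using once more that $\langle\,\cdot\,,2\rho\rangle$ is even on the coroot lattice, together with the normalization of $\IC_{\bBun_B^\lambda}$ (whose restriction to $\Bun_B^\lambda$ is $\underline{\bk}$ shifted by $\dim\Bun_B^\lambda$), this class is the one asserted in the corollary. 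Duality transfers the conclusion from stalks to costalks as before.

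I do not expect a serious obstacle: the entire content is the parity bookkeeping of the last two paragraphs, i.e.\ keeping straight simultaneously the cohomological shifts hard-coded into $P^{C,\mu}_\Gamma$, $Q^{C,\lambda}_\Gamma$, and the dimension formulas of~\S\S\ref{ss:stacks-torsors}, \ref{ss:BunU} and~\ref{ss:decomp-Zas}, and checking the elementary fact that $\langle\,\cdot\,,2\rho\rangle$ takes even values on the coroot lattice (which is the reason the $\langle\nu,2\rho\rangle$ and $\langle\mu,2\rho\rangle$ contributions disappear modulo $2$). Once these are in place, the statement is immediate from Theorem~\ref{thm:const} and Corollary~\ref{cor:formula-P}.
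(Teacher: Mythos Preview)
Your proposal is correct and follows essentially the same argument as the paper: observe that $P^\nu$ has only odd-degree terms, deduce that $\prod_i P^{\nu_i}$ is supported in degrees $\equiv |\Gamma| \pmod 2$, unwind the definitions of $P^{C,\mu}_\Gamma$ and $Q^{C,\lambda}_\Gamma$ to read off the parity of the nonvanishing cohomology degrees, and then invoke Verdier self-duality for the costalks. Your write-up is in fact slightly more explicit than the paper's in justifying why $P^\nu$ is odd and why the $\langle\nu-\mu,2\rho\rangle$ term disappears modulo $2$.
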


\begin{proof}
According to Corollary~\ref{cor:formula-P}, for any $\nu \in \bY_{\succ 0}$, $P^\nu$ contains only odd powers of $q$, so by Theorem~\ref{thm:const}, the polynomials $Q^\lambda_\Gamma$ and $P^\mu_\Gamma$ involve only powers of $q$ that are congruent to $|\Gamma|$ modulo $2$.  Comparing with the definitions of $Q^\lambda_\Gamma$ and $P^\mu_\Gamma$, we deduce that
\begin{align*}
\mathcal{H}^m((\IC_{\bBun^\lambda_B})_{| {}_\Gamma\bBun^\lambda_B}) \ne 0 
&& \text{implies} &&
m &\equiv (g-1)\dim B \pmod 2 \\
\mathcal{H}^m((\IC_{\uZas^\mu})_{| {}_\Gamma\uZas^\mu}) \ne 0 
&& \text{implies} &&
m &\equiv 0 \pmod 2
\end{align*}
These conditions are also satisfied when $\Gamma=\varnothing$.
Since $\IC_{\bBun_B^\lambda}$ and $\IC_{\uZas^\mu}$ are self-dual under Grothendieck--Verdier duality, the same conditions will hold for the corestrictions to the strata, which proves our claims.
\end{proof}

\begin{rmk}
The theory of parity complexes is developed in~\cite{jmw} under certain assumptions on the strata of the given stratification, see~\cite[\S 2.1]{jmw}. We do not claim that (and do not know if) these technical conditions are satisfied in the cases considered in Corollary~\ref{cor:parity}, but only that the complexes $\IC_{\bBun_B^\lambda}$ and $\IC_{\uZas^\mu}$ satisfy the parity vanishing conditions for even or odd complexes from~\cite[Definition~2.4]{jmw}.
\end{rmk}

\subsection{Cohomology of Zastava schemes with support in the central fiber}

The main step towards the proof of Theorem~\ref{thm:Gaits-Bun} will be the following lemma, where we use the notation introduced in~\S\ref{ss:central-fiber}.


\begin{prop}
\label{prop:IC-Zastava}
Let $C$ be any smooth curve with a marked closed point $c$, and let $\mu \in \bY_{\succeq 0}$.
\begin{enumerate}
\item
\label{it:IC-Zastava-1}
For any $n \in \Z$ such that $n \neq 0$ we have
\[
\mathsf{H}^{n}_{\uCZasG^{\mu,c}}(\uZasG^\mu, \IC_{\uZasG^\mu})=0.
\]
\item
\label{it:IC-Zastava-2}
The restriction morphism
\[
\mathsf{H}^{0}_{\uCZasG^{\mu,c}}(\uZasG^\mu, \IC_{\uZasG^\mu}) \to \mathsf{H}^{0}_{{}_0 \uCZasG^{\mu,c}}({}_0 \uZasG^\mu, \IC_{{}_0 \uZasG^\mu}) 
\]
is an isomorphism.
\end{enumerate}
\end{prop}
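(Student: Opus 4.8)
The plan is to reduce the statement to one about $\uZas^\mu$, then to the case $C=\mathbb{A}^1$, and finally to exploit a contracting $\Gm$-action. Throughout we may assume $\mu\succ 0$, the case $\mu=0$ being trivial.

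\textit{Step 1 (reductions).} By Remark~\ref{rmk:local-isom-uZas-uZasG} the schemes $\uZasG^\mu$ and $\uZas^\mu$ are locally isomorphic over $C^\mu$; passing to the fibres over the point $\mu\cdot c\in C^\mu_{\ppl\mu\ppr}$ shows that $\uCZasG^{\mu,c}$ and $\uCZas^{\mu,c}$ are isomorphic, compatibly with their stratifications and with the restrictions of $\IC_{\uZasG^\mu}$ and $\IC_{\uZas^\mu}$. Hence it suffices to prove \eqref{it:IC-Zastava-1} and \eqref{it:IC-Zastava-2} with $(\uZasG^\mu,\uCZasG^{\mu,c},{}_0\uCZasG^{\mu,c})$ replaced by $(\uZas^\mu,\uCZas^{\mu,c},{}_0\uCZas^{\mu,c})$. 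Now $\uCZas^{\mu,c}$ is the fibre of $\uZas^\mu\to C^\mu$ over $\mu\cdot c$, and cohomology with support in a closed subscheme depends only on an \'etale neighbourhood of that subscheme; so, choosing an open $C'\subset C$ containing $c$ together with an \'etale map $\varphi\colon C'\to\mathbb{A}^1$ with $\varphi(c)=0$ and $\varphi^{-1}(0)=\{c\}$, and using Lemma~\ref{lem:uZas-compare} and Lemma~\ref{lem:sym-mu}\eqref{it:sm-stratum} (the point $\mu\cdot c$ lies in the stratum of $(C')^\mu$ of type $\ppl\mu\ppr$, which is contained in the \'etale locus of $\varphi$), we reduce to $C=\mathbb{A}^1$, $c=0$.

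\textit{Step 2 (the boundary of the central fibre; vanishing for $n\le 0$).} Since $\uCZas^{\mu,0}$ lies over $\mu\cdot 0\in C^\mu_{\ppl\mu\ppr}$, only one-part partitions occur: $\uCZas^{\mu,0}=\bigsqcup_{0\preceq\nu\preceq\mu}{}_{\ppl\nu\ppr}\uCZas^{\mu,0}$ with ${}_{\ppl\nu\ppr}\uCZas^{\mu,0}\cong{}_0\uCZas^{\mu-\nu,0}$, of dimension $\la\mu-\nu,\rho\ra$. By Theorem~\ref{thm:const}\eqref{it:const-Zas} and the vanishing of the constant term of $P^\nu$ for $\nu\succ 0$, the complex $(\IC_{\uZas^\mu})_{|{}_{\ppl\nu\ppr}\uZas^\mu}$ is locally constant and concentrated in cohomological degrees $\le\la\nu-\mu,2\rho\ra-2$; a dimension count then gives $\mathsf{H}^k_{\mathrm{c}}(Z',(\IC_{\uZas^\mu})_{|Z'})=0$ for $k\ge -1$, where $Z':=\uCZas^{\mu,0}\smallsetminus{}_0\uCZas^{\mu,0}$, so by Grothendieck--Verdier duality $\mathsf{H}^n_{Z'}(\uZas^\mu,\IC_{\uZas^\mu})=0$ for $n\le 1$. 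Plugging this into the excision triangle
\[
\mathsf{H}^*_{Z'}(\uZas^\mu,\IC_{\uZas^\mu})\to\mathsf{H}^*_{\uCZas^{\mu,0}}(\uZas^\mu,\IC_{\uZas^\mu})\to\mathsf{H}^*_{{}_0\uCZas^{\mu,0}}({}_0\uZas^\mu,\IC_{{}_0\uZas^\mu})\xrightarrow{+1}
\]
gives $\mathsf{H}^n_{\uCZas^{\mu,0}}(\uZas^\mu,\IC_{\uZas^\mu})\simto\mathsf{H}^n_{{}_0\uCZas^{\mu,0}}({}_0\uZas^\mu,\IC_{{}_0\uZas^\mu})$ for $n\le 0$, which is \eqref{it:IC-Zastava-2}. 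Finally, ${}_0\uZas^\mu$ is smooth of dimension $\la\mu,2\rho\ra$ (Proposition~\ref{prop:smoothness-uZas}) and ${}_0\uCZas^{\mu,0}$ is closed of dimension $\la\mu,\rho\ra$, so duality identifies $\mathsf{H}^n_{{}_0\uCZas^{\mu,0}}({}_0\uZas^\mu,\IC_{{}_0\uZas^\mu})$ with $\mathsf{H}^{\la\mu,2\rho\ra-n}_{\mathrm{c}}({}_0\uCZas^{\mu,0})^\vee$, which vanishes for $n<0$; thus $\mathsf{H}^n_{\uCZas^{\mu,0}}(\uZas^\mu,\IC_{\uZas^\mu})=0$ for $n<0$.

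\textit{Step 3 (vanishing for $n>0$, via a contracting $\Gm$-action).} It remains to prove $\mathsf{H}^n_{\uCZas^{\mu,0}}(\uZas^{\mathbb{A}^1,\mu},\IC_{\uZas^{\mathbb{A}^1,\mu}})=0$ for $n>0$. For this I would use that $\uZas^{\mathbb{A}^1,\mu}$ carries a $\Gm$-action --- induced by the dilation action on $\mathbb{A}^1$ twisted by a suitable cocharacter of $T$ --- contracting it onto its central fibre $\uCZas^{\mu,0}$, fixing $\uCZas^{\mu,0}$ pointwise, and preserving the strata (see \cite{fm}, \cite{bfgm}, or \cite[\S 3.6]{gaitsgory} for the variant attached to $\uZasG^\mu$). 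Since $\IC_{\uZas^{\mathbb{A}^1,\mu}}$ is $\Gm$-equivariant, the contraction principle gives canonical isomorphisms $\sigma^*\IC_{\uZas^{\mathbb{A}^1,\mu}}\cong p_*\IC_{\uZas^{\mathbb{A}^1,\mu}}$ and $\sigma^!\IC_{\uZas^{\mathbb{A}^1,\mu}}\cong p_!\IC_{\uZas^{\mathbb{A}^1,\mu}}$, where $\sigma\colon\uCZas^{\mu,0}\hookrightarrow\uZas^{\mathbb{A}^1,\mu}$ and $p\colon\uZas^{\mathbb{A}^1,\mu}\to\uCZas^{\mu,0}$ are the inclusion and the contraction. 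By duality the desired vanishing is equivalent to $\mathsf{H}^k_{\mathrm{c}}(\uCZas^{\mu,0},\sigma^*\IC_{\uZas^{\mathbb{A}^1,\mu}})=0$ for $k<0$, and I would obtain it by combining the stratumwise description of $\IC_{\uZas^\mu}$ from Step 2 with $\sigma^*\IC\cong p_*\IC$ and with a further application of the contraction principle --- this time to the full contraction of $\uZas^{\mathbb{A}^1,\mu}$ onto its unique fixed point $x_0=s^\mu_0(\mu\cdot 0,\cF^0)\in{}_{\ppl\mu\ppr}\uZas^{\mathbb{A}^1,\mu}$, which identifies $\mathsf{H}^*_{\mathrm{c}}(\uZas^{\mathbb{A}^1,\mu},\IC_{\uZas^{\mathbb{A}^1,\mu}})$ with the costalk $(i_{x_0})^!\IC_{\uZas^{\mathbb{A}^1,\mu}}$, a complex concentrated in cohomological degrees $\ge 2$ by Theorem~\ref{thm:const}\eqref{it:const-Zas} and the self-duality of $\IC_{\uZas^{\mathbb{A}^1,\mu}}$.

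Steps 1 and 2 are essentially bookkeeping once Theorem~\ref{thm:const} is available. The genuine content, and the main obstacle, is the interplay in Step 3 between the open stratum ${}_0\uCZas^{\mu,0}$ and the deeper strata of the central fibre --- equivalently, the behaviour of $p_!\IC_{\uZas^{\mathbb{A}^1,\mu}}$ on $\uCZas^{\mu,0}$, or the connecting maps in the excision triangle in degrees $\ge 1$. This is precisely what the contracting $\Gm$-action is meant to control, following Gaitsgory's strategy; the delicate part is to establish that this action exists with all the stated properties: contraction onto the central fibre, fixing it pointwise, compatibly with the stratification and with $\IC$.
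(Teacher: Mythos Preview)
Your Steps~1 and~2 are correct and essentially match the paper. The gap is in Step~3: the $\Gm$-action you posit---one contracting $\uZas^{\mathbb{A}^1,\mu}$ onto the central fibre $\uCZas^{\mu,0}$ while fixing that fibre pointwise---is not the action that appears in the literature, and there is good reason to doubt it exists. Under the identification of $\uCZas^{\mu,0}$ with a locally closed subscheme of $\Gr$ (see~\S\ref{ss:central-fiber}), any one-parameter subgroup of $\Gm_{\mathrm{rot}}\times T$ has at most the discrete set $\{[z^\lambda]:\lambda\in\bY\}$ as its fixed locus, whereas $\uCZas^{\mu,0}$ has dimension $\langle\mu,\rho\rangle>0$. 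Without that action the identity $\sigma^*\IC\cong p_*\IC$ is unavailable, and your fallback to the global contraction onto $x_0$ only computes $\mathsf{H}^*_{\mathrm{c}}(\uZas^\mu,\IC)$, which is not the quantity $\mathsf{H}^*_{\uCZas^{\mu,0}}(\uZas^\mu,\IC)$ you need.

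What the paper actually uses is the \emph{other} $\Gm$-action, the fibrewise one (via a regular dominant cocharacter of $T$) contracting each fibre of $p_\mu\colon\uZas^\mu\to C^\mu$ onto the section $s_\mu(C^\mu)={}_\mu\uZas^\mu$; this gives $(s_\mu)^*\IC_{\uZas^\mu}\cong(p_\mu)_*\IC_{\uZas^\mu}$ (cf.~\cite[Proposition~5.2]{bfgm}). Separately, dilation on $C^\mu=(\mathbb{A}^1)^\mu$ identifies $\mathsf{H}^*_{\mathrm{c}}(C^\mu,-)$ with the costalk at $\mu\cdot 0$, and by base change the costalk of $(p_\mu)_*\IC$ there is exactly $\mathsf{H}^*_{\uCZas^{\mu,0}}(\uZas^\mu,\IC)$. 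One is thus reduced to bounding $\mathsf{H}^n_{\mathrm{c}}(C^\mu,(s_\mu)^*\IC_{\uZas^\mu})$, and now only the strata ${}_\Gamma\uZas^\mu$ with $\Gamma\in\Part(\mu)$ are relevant, since by Lemma~\ref{lem:uZas-smallest-strata} these are precisely the strata meeting $s_\mu(C^\mu)$. On each such stratum $\IC_{\uZas^\mu}$ sits in degrees $\le-2|\Gamma|$ (Theorem~\ref{thm:const} together with $q\mid P^\nu$) while $\dim{}_\Gamma\uZas^\mu=|\Gamma|$, giving the vanishing for $n>0$. In short: the subscheme one should contract to is the section $s_\mu(C^\mu)$, not the central fibre; the two meet only at the single point $x_0$.
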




In case $\bk$ has characteristic $0$, this statement is~\cite[Proposition~3.6.6]{gaitsgory}. This proposition is not proved in~\cite{gaitsgory}; the author instead refers to~\cite{bfgm}. The proof given below is different; using the considerations in~\S\ref{ss:constructibility} we reduce the proof to the case $C=\bbA^1$, where we use 
some constructions from~\cite[\S 2.4--2.5]{ffkm} which exploit specific features of this case (see Remark~\ref{rmk:Zastava-A1}).


\begin{proof}
The same considerations as in the proofs of Lemma~\ref{lem:constr-etale} and Lemma~\ref{lem:const-closed-stratum} allow us to reduce the proof to the case $C=\bbA^1$ and $c=0$. (Note in particular that $\uCZasG^{\mu,c}$ does not depend on the choice of $C$, see~\S\ref{ss:central-fiber}.) We will therefore assume in this proof that we are in this setting.
%
%
Also, Remark~\ref{rmk:local-isom-uZas-uZasG} shows that proving the proposition is equivalent to proving that
\[
\mathsf{H}^{n}_{\uCZas^{\mu,c}}(\uZas^\mu, \IC_{\uZas^\mu})=0
\]
for any $n \neq 0$, and that
the restriction morphism
\[
\mathsf{H}^{0}_{\uCZas^{\mu,c}}(\uZas^\mu, \IC_{\uZas^\mu}) \to \mathsf{H}^{0}_{{}_0 \uCZas^{\mu,c}}({}_0 \uZas^\mu, \IC_{{}_0 \uZas^\mu}) 
\]
is an isomorphism. 
Note that,
by Grothendieck--Verdier duality, for any $n \in \Z$ we have
\begin{equation}
\label{eqn:cohom-compact-Zmu-Fmu}
\mathsf{H}^{-n}_{\uCZas^{\mu,c}}(\uZas^\mu, \IC_{\uZas^\mu})^* \cong \mathsf{H}^{n}_{\mathrm{c}}(\uCZas^{\mu,c}, (\IC_{\uZas^\mu}) {}_{| \uCZas^{\mu,c}}),
\end{equation}
and similarly for $\mathsf{H}^{0}_{{}_0 \uCZas^{\mu,c}}({}_0 \uZas^\mu, \IC_{{}_0 \uZas^\mu})$.

It is clear from definitions that, if $0 \preceq \nu \preceq \mu$ and $\Gamma \in \Part(\nu)$, the stratum ${}_\Gamma \uZas^\mu$ intersects $\uCZas^{\mu,c}$ if and only if $\Gamma=\ppl \nu \ppr$; moreover, in this case the intersection is ${}_\nu \uCZas^{\mu,c}$, and it identifies with ${}_0 \uCZas^{\mu-\nu,c}$. We deduce a decomposition
\[
\uCZas^{\mu,c} = \bigsqcup_{0 \preceq \nu \preceq \mu} {}_{\ppl \nu \ppr} \uZas^\mu \cap \uCZas^{\mu,c} \quad \text{with} \quad {}_{\ppl \nu \ppr} \uZas^\mu \cap \uCZas^{\mu,c} \cong {}_0 \uCZas^{\mu-\nu,c},
\]
which we will use to study the spaces~\eqref{eqn:cohom-compact-Zmu-Fmu}. (Here, for simplicity of notation we interpret $\ppl 0 \ppr$ as the empty partition $\varnothing$.)

By Theorem~\ref{thm:const},
the perverse sheaf $\IC_{\uZas^\mu}$ is constructible with respect to (the analogue for our curve $C=\bbA^1$ of) the stratification~\eqref{eqn:decomp-uZas-Part}.
By the standard characterization of the intersection cohomology complex, if $\nu \neq 0$ the restriction of $\IC_{\uZas^\mu}$ to the stratum ${}_{\ppl \nu \ppr} \uZas^\mu$ is concentrated in degrees strictly smaller than
\[
-\dim({}_{\ppl \nu \ppr} \uZas^\mu)=-\dim({}_0 \uZas^{\mu-\nu}) - 1 = - \langle \mu-\nu, 2\rho \rangle -1.
\]
Hence the same is true for its further restriction to ${}_\nu \uCZas^{\mu,c} = {}_{\ppl \nu \ppr} \uZas^\mu \cap \uCZas^{\mu,c}$.
Since this intersection has dimension $\dim(\uCZas^{\mu-\nu,c})=\langle \mu-\nu, \rho \rangle$, we deduce that
\[
\mathsf{H}^{n}_{\mathrm{c}}({}_\nu \uCZas^{\mu,c}, (\IC_{\uZas^\mu}) {}_{| {}_\nu \uCZas^{\mu,c}})
\]
vanishes unless $n < -1$.

For the case $\nu=0$, we know that $(\IC_{\uZas^\mu}) {}_{| {}_0 \uZas^\mu} \cong \underline{\bk}_{{}_0 \uZas^\mu}[\langle \mu, 2\rho \rangle]$, and the same considerations show that
\[
\mathsf{H}^{n}_{\mathrm{c}}({}_0 \uCZas^{\mu,c}, (\IC_{\uZas^\mu}) {}_{| {}_0 \uCZas^{\mu,c}})
\]
vanishes unless $n \leq 0$. This implies that the space~\eqref{eqn:cohom-compact-Zmu-Fmu} vanishes if $n>0$, and that the natural map
\[
\mathsf{H}^{0}_{\mathrm{c}}({}_0 \uCZas^{\mu,c}, (\IC_{{}_0 \uZas^\mu}) {}_{| {}_0 \uCZas^{\mu,c}})
\to \mathsf{H}^{0}_{\mathrm{c}}(\uCZas^{\mu,c}, (\IC_{\uZas^\mu}) {}_{| \uCZas^{\mu,c}})
\]
is an isomorphism. Dualizing (see~\eqref{eqn:cohom-compact-Zmu-Fmu}), we deduce~\eqref{it:IC-Zastava-1} in case $n<0$, and~\eqref{it:IC-Zastava-2}.

Next, we consider the natural map
$p_\mu : \uZas^\mu \to C^\mu$.
Recall from~\S\ref{ss:relation-Zastava-Drinfeld} that this map has a canonical section
$s_\mu : C^\mu \to \uZas^\mu$. 
%
As explained in~\cite[\S 2.3]{ffkm} or~\cite[Proposition~5.2]{bfgm}, using a contracting $\mathbb{G}_{\mathrm{m}}$-action one checks that there exists a canonical isomorphism
\[
(s_\mu)^* \IC_{\uZas^\mu} \cong (p_\mu)_* \IC_{\uZas^\mu}.
\]
Next, these complexes are equivariant with respect to the dilation action of $\mathbb{G}_{\mathrm{m}}$ on $C^\mu$, so that (as in~\cite[\S 2.3]{ffkm}) their cohomology with compact support identifies with their costalk at the central point $\mu \cdot c$. (This argument is where we use our assumption that $C=\bbA^1$.) By base change, the cohomology of the costalk of the right-hand side identifies with $\mathsf{H}^{\bullet}_{\uCZas^{\mu,c}}(\uZas^\mu, \IC_{\uZas^\mu})$, so that we obtain an isomorphism of graded vector spaces
\begin{equation}
\label{eqn:local-cohom-IC-Zas}
\mathsf{H}^\bullet_{\mathrm{c}}(C^\mu, (s_\mu)^* \IC_{\uZas^\mu}) \cong \mathsf{H}^{\bullet}_{\uCZas^{\mu,c}}(\uZas^\mu, \IC_{\uZas^\mu}).
\end{equation}

We now study the left-hand side of~\eqref{eqn:local-cohom-IC-Zas}.  Consider a stratum ${}_\Gamma \uZas^\mu$ with $\Gamma \in \Part(\nu)$ (and $0 \preceq \nu \preceq \mu$).  By~Lemma~\ref{lem:uZas-smallest-strata}, this stratum meets the image of $s_\mu$ if and only if $\nu = \mu$, and in this case the stratum is contained in the image.
We next observe that for any $\Gamma \in \Part(\mu)$,
the restriction of $\IC_{\uZas^\mu}$ to ${}_\Gamma \uZas^\mu$ is concentrated in degrees $\leq -2|\Gamma|$. 
Indeed,
this follows from the formula in Theorem~\ref{thm:const}\eqref{it:const-Zas}, since each polynomial $P^{\nu_i}$ is divisible by $q$.

Since $\dim {}_\Gamma \uZas^\mu = |\Gamma|$ for $\Gamma \in \Part(\mu)$ (see~\eqref{eqn:dim-strata-uZas}), we deduce that each
\[
\mathsf{H}^n_{\mathrm{c}}({}_\Gamma \uZas^\mu, (\IC_{\uZas^\mu}) {}_{| {}_\Gamma \uZas^\mu})
\]
vanishes unless $n \leq 0$. In view of~\eqref{eqn:local-cohom-IC-Zas}, this implies that $\mathsf{H}^n_{\uCZas^{\mu,c}}(\uZas^\mu, \IC_{\uZas^\mu})$ vanishes when $n > 0$, which finishes the proof.
\end{proof}

\subsection{Proof of Theorem~\ref{thm:Gaits-Bun}}
\label{ss:proof-Drinfeld}

In this subsection we explain how to deduce Theorem~\ref{thm:Gaits-Bun} from Proposition~\ref{prop:IC-Zastava}, following~\cite[\S 3.5--3.8]{gaitsgory}. We fix a choice of local parameter at $c$, which allows to identify $\Gr_c$ with $\Gr$.

First one constructs a canonical map
\begin{equation}
\label{eqn:map-Gaits-Bun}
\Gaits \to \pi^! \IC_{\bBun_U}[-\dim(U)].
\end{equation}
This construction uses the description of the left-hand side given in~\eqref{eqn:Gaits-colim};
it is explained in~\cite[\S 3.4]{gaitsgory} for characteristic-$0$ coefficients, and the arguments apply verbatim for general coefficients. Next, one checks that the object $\pi^! \IC_{\bBun_U}[-\dim(U)]$ belongs to $\Shv(\siIwu \backslash \Gr)$. In case $\mathrm{char}(\bk)=0$, this fact is~\cite[Proposition~3.5.2]{gaitsgory}, which is proved in~\cite[\S 3.8]{gaitsgory}. Here again, the exact same arguments apply in our setting. Finally, to conclude the proof, 
using~\cite[Lemma~3.7]{adr} it suffices to show
that
for any $\mu \in \bY_{\succeq 0}$ the induced morphism
\begin{equation}
\label{eqn:map-Gaits-Bun-mu}
(\bi_{-\mu})^* \Gaits \to (\bi_{-\mu})^* \pi^! \IC_{\bBun_U}[-\dim(U)]
\end{equation}
is an isomorphism.

From now on we fix $\mu \in \bY_{\succeq 0}$.
In view of the equivalence~\eqref{eqn:equiv-Shv-orbit-Vect}, to show that~\eqref{eqn:map-Gaits-Bun-mu} is an isomorphism it suffices to show that its image under the $!$-pullback functor associated by the embedding $\{[z^{-\mu}]\} \hookrightarrow \rS_{-\mu}$ is an isomorphism. Now, by Braden's hyperbolic localization theorem (see~\cite[Lemma~2.2.4]{gaitsgory}) this $!$-pullback functor identifies with cohomology with compact supports, and can be described in terms of the orbit $\rS^-_{-\mu}$; more specifically, we have canonical isomorphisms of graded vector spaces
\begin{align}
\label{eqn:hyp-loc-Gaits}
\mathsf{H}^\bullet_{\mathrm{c}}(\rS_{-\mu}, (\bi_{-\mu})^* \Gaits) &\cong \mathsf{H}^\bullet(\rS^-_{-\mu}, (\bi_{-\mu}^-)^! \Gaits), \\
\mathsf{H}^\bullet_{\mathrm{c}}(\rS_{-\mu}, (\bi_{-\mu})^* \pi^! \IC_{\bBun_U}) &\cong \mathsf{H}^\bullet(\rS^-_{-\mu}, (\bi_{-\mu}^-)^! \pi^! \IC_{\bBun_U}).
\end{align}
So, what we have to show is that for any $n \in \Z$ the morphism
\begin{equation}
\label{eqn:map-Gaits-Bun-mu-2}
\mathsf{H}^n(\rS^-_{-\mu}, (\bi_{-\mu}^-)^! \Gaits) \to \mathsf{H}^{n-\dim(U)}(\rS^-_{-\mu}, (\bi_{-\mu}^-)^! \pi^! \IC_{\bBun_U})
\end{equation}
induced by~\eqref{eqn:map-Gaits-Bun} is an isomorphism.

Now, recall the identification~\eqref{eqn:central-fiber-G-intersection-si-orbits}, and the map $r_\mu$ from~\S\ref{ss:IC-Zastavas}. By construction of these maps the following diagram commutes:
\begin{equation*}
\begin{tikzcd}
\overline{\rS_0} \cap \rS^-_{-\mu}  \ar[d, "\wr"] \ar[r] & \overline{\rS_0} \ar[r, "\bi_0"] & \Gr \ar[d, "\pi"] \\
\uCZasG^{\mu,c} \ar[r] & \uZasG^\mu \ar[r, "r_\mu"] & (\bBun_U)_{\infty \cdot c}.
\end{tikzcd}
\end{equation*}
Using this diagram and Proposition~\ref{prop:pullback-IC-bBun-Zas},
we obtain an isomorphism of graded vector spaces
\begin{equation}
\label{eqn:IC-Bun-Zastava-cohom}
\mathsf{H}^{\bullet-\dim(U)}(\rS^-_{-\mu}, (\bi_{-\mu}^-)^! \pi^! \IC_{\bBun_U}) \cong \mathsf{H}^{\bullet + \langle \mu, 2\rho \rangle}_{\uCZasG^{\mu,c}}(\uZasG^{\mu}, \IC_{\uZasG^{\mu}}).
\end{equation}

%
%

From there, one finishes the proof as follows.
By~\eqref{eqn:hyp-loc-Gaits} and Theorem~\ref{thm:adr}\eqref{it:adr-stalks}, the domain of the map~\eqref{eqn:map-Gaits-Bun-mu-2} vanishes unless $n=-\langle \mu,2\rho \rangle$. The same is true for its codomain by~\eqref{eqn:IC-Bun-Zastava-cohom} and Proposition~\ref{prop:IC-Zastava}\eqref{it:IC-Zastava-1}. 

For the case $n=-\langle \mu,2\rho \rangle$, by~\eqref{eqn:Gaits-colim} the space $\mathsf{H}^{-\langle \mu,2\rho \rangle}(\rS^-_{-\mu}, (\bi_{-\mu}^-)^! \Gaits)$ is the colimit over $\lambda$ of the spaces
\[
\mathsf{H}^{\langle \lambda-\mu,2\rho \rangle}(\rS^-_\mu, (\bi_\mu^-)^! (z^{-\lambda} \cdot \cI_*(\lambda)))
\cong \mathsf{H}^{\langle \lambda-\mu,2\rho \rangle}(\rS^-_{\lambda-\mu}, (\bi_{\lambda-\mu}^-)^! \cI_*(\lambda)).
\]
Now it is a standard fact (see~\cite{mv} or~\cite[Proposition~1.11.1]{br}) that there exists a canonical isomorphism
\[
\mathsf{H}^{\langle \lambda-\mu,2\rho \rangle}(\rS^-_{\lambda-\mu}, (\bi_{\lambda-\mu}^-)^! \cI_*(\lambda)) \cong \mathsf{H}^{-\langle \mu,2\rho \rangle}(\Gr^{\lambda} \cap \rS^-_{\lambda-\mu}, \omega_{\Gr^{\lambda} \cap \rS^-_{\lambda-\mu}}).
\]

As for the codomain of our map~\eqref{eqn:map-Gaits-Bun-mu-2}, by~\eqref{eqn:IC-Bun-Zastava-cohom} and Proposition~\ref{prop:IC-Zastava}\eqref{it:IC-Zastava-2} it identifies with
\[
\mathsf{H}^{0}_{{}_0 \uCZasG^{\mu,c}}({}_0 \uZasG^{\mu}, \IC_{{}_0 \uZas^{\mu}}) = 
\mathsf{H}^{-\langle \mu, 2\rho \rangle}({}_0 \uCZasG^{\mu,c}, \omega_{{}_0 \uCZasG^{\mu,c}}) \overset{\eqref{eqn:central-fiber-G-0-intersection-si-orbits}}{=}
\mathsf{H}^{-\langle \mu, 2\rho \rangle}(\rS_0 \cap \rS^-_{-\mu}, \omega_{\rS_0 \cap \rS^-_{-\mu}}).
\]
To conclude our proof, it therefore suffices to check that our morphism
\begin{equation}
\label{eqn:restriction-intersections}
\mathsf{H}^{-\langle \mu,2\rho \rangle}(\Gr^{\lambda} \cap \rS^-_{\lambda-\mu}, \omega_{\Gr^{\lambda} \cap \rS^-_{\lambda-\mu}}) \to \mathsf{H}^{-\langle \mu, 2\rho \rangle}(\rS_0 \cap \rS^-_{-\mu}, \omega_{\rS_0 \cap \rS^-_{-\mu}})
\end{equation}
is an isomorphism for sufficiently dominant $\lambda$.

Now, it follows from~\cite[Proposition~3.6(iii)]{baugau} that if $\lambda$ is sufficiently dominant we have
\[
\rS_0 \cap \rS^-_{-\mu} = z^{-\lambda} \cdot (\Gr^\lambda \cap \rS^-_{\lambda-\mu}).
\]
It is then clear that~\eqref{eqn:restriction-intersections} is indeed an isomorphism in this case, which concludes the proof.

\appendix
\section{Intersection cohomology with coefficients in a principal ideal domain}
\label{sec:PID}

\newcommand{\tor}{{\mathrm{tor}}}
\newcommand{\Frac}{{\mathrm{Frac}}}
\newcommand{\fm}{\mathfrak{m}}
\newcommand{\fp}{\mathfrak{p}}

\subsection{Setting}

We fix a base field $\F$. Below, for simplicity we will write ``scheme''
for ``separated scheme of finite type over $\F$.''  
In this appendix, we consider sheaves on schemes (especially intersection cohomology complexes) with coefficients in a principal ideal domain $\bk$. As usual
we assume that we are in one of the following two settings:
\begin{enumerate}
\item $\F$ is arbitrary, $\ell$ is a prime number invertible in $\F$, $\bk$ is the ring of integers in a finite extension of $\Q_\ell$, and we work with the bounded derived category of $\bk$-sheaves on $X$ constructed from the categories of \'etale sheaves with coefficients in $\bk/\varpi^n \bk$ where $\varpi$ is a uniformizer via a direct limit construction;
\item $\F = \C$, $\bk$ is an arbitrary principal ideal domain, and we work with the bounded derived category of $\bk$-sheaves in the analytic topology.
\end{enumerate}
In this section we will make use of finiteness properties, so it is important that we work with constructible complexes and not with the $\infty$-categories $\Shv$ of~\S\ref{ss:complexes-sheaves}. In addition, the $\infty$-category framework will not be used in any way, so we simply work in the setting of triangulated categories.

For every prime ideal $\fp \subset \bk$, we set
\[
\kappa(\fp) = \bk_\fp /\fp \bk_\fp.
\]
In particular, $\kappa(0)$ is the fraction field of $\bk$. Our goal is to prove that if the intersection cohomology complex $\IC_X(\kappa(\fp))$ with coefficients in the field $\kappa(\fp)$ (associated with the constant local system on the smooth part of $X$) is ``independent of $\fp$'' in a suitable sense, then one can obtain a description of the (co)stalks of the intersection cohomology complex $\IC_X(\bk)$ with coefficients in $\bk$ in terms of those of the complexes $\IC_X(\kappa(\fp))$; see Proposition~\ref{prop:IC-indep} for a precise statement.

For a finitely generated $\bk$-module $M$, let $M_\tor \subset M$ be its torsion submodule.  Then $M/M_\tor$ is a free $\bk$-module, and we have
\[
\kappa(0) \otimes_\bk M \cong \kappa(0) \otimes_\bk (M/M_\tor).
\]
For a not necessarily free $\bk$-module $M$, we define its \emph{rank}, denoted $\rank M$, to be the rank of $M/M_\tor$; we therefore have
\[
\dim_{\kappa(0)} (\kappa(0) \otimes_\bk M) = \rank M.
\]

Next, if $M$ is a bounded chain complex of finitely generated $\bk$-modules, we can consider its Euler characteristic $\chi(M)$, defined by
\[
\chi(M) = \sum_i (-1)^i \cdot \rank H^i(M).
\]
The Euler characteristic is also given by
\begin{equation}\label{eqn:euler-equal}
\chi(M) = \sum_i (-1)^i \cdot \dim_{\kappa(\fp)} H^i(\kappa(\fp) \otimes_\bk^L M)
\end{equation}
for any prime ideal $\fp$.  In particular, the right-hand side is independent of $\fp$.

Finally, let $\cF$ be a constructible complex of $\bk$-sheaves on some scheme $X$. For any prime ideal $\fp \subset \bk$, we have a ``universal coefficient theorem:'' for each $m \in \Z$, there is a natural short exact sequence
\begin{equation}\label{eqn:univcoeff}
0 \to \kappa(\fp) \otimes_\bk \mathcal{H}^m(\cF) \to \mathcal{H}^m(\kappa(\fp) \otimes_\bk^L \cF) \to \mathcal{T}\!\mathit{or}_1^\bk( \kappa(\fp), \mathcal{H}^{m+1}(\cF)) \to 0.
\end{equation}
(Here, the last term means $\mathcal{H}^{-1}(\kappa(\fp) \otimes_\bk^L \mathcal{H}^{m+1}(\cF))$.) Of course, $\kappa(0)$ is flat over $\bk$, so for $\fp = 0$, the last term vanishes and we have
\[
\kappa(0) \otimes_\bk \mathcal{H}^m(\cF) \cong \mathcal{H}^m(\kappa(0) \otimes_\bk^L \cF).
\]

\subsection{A criterion for a constructible sheaf to be a local system}

\begin{lem}\label{lem:pid-locsys}
Let $X$ be a smooth, connected scheme, and let $\cF$ be a constructible $\bk$-sheaf whose stalks are free $\bk$-modules.  Assume that for every prime ideal $\fp \subset \bk$, the sheaf $\kappa(\fp) \otimes_\bk \cF$ is a local system.  Then $\cF$ is a local system.
\end{lem}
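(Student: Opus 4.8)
The plan is to show that $\cF$ is locally constant by testing on stalks at arbitrary points and comparing with the known fiberwise statements. First I would recall that a constructible sheaf on $X$ is a local system if and only if for every specialization map (i.e.\ every point $x$ with a generization $\eta$ in the same stratum, or more robustly: along any geometric point and any nearby point) the cospecialization map on stalks is an isomorphism; equivalently, $\cF$ is a local system iff for the inclusion $j\colon U \hookrightarrow X$ of any stratum-dense open on which $\cF$ is lisse, the natural map $\cF \to Rj_*j^*\cF$ is an isomorphism in degree $0$ and the stalks of $\cF$ all have the same rank. Since $X$ is smooth and connected, it suffices to fix a dense open $U \subseteq X$ over which $\cF$ is a local system (such a $U$ exists by constructibility) and show that the adjunction $\cF \to \tau_{\leq 0}\, Rj_* (j^*\cF)$ is an isomorphism, or more concretely that for every point $x \in X$ the specialization map $\cF_x \to \cF_{\bar\eta}$ (for $\eta$ a geometric generic point of a nearby curve through $x$) is an isomorphism of $\bk$-modules.

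The key input is that this holds after applying $\kappa(\fp) \otimes_\bk (-)$ for every prime $\fp$, since $\kappa(\fp) \otimes_\bk \cF$ is assumed to be a local system. The second step is to descend this from all residue fields back to $\bk$. Here I would use that the stalks $\cF_x$ are free $\bk$-modules (by hypothesis), so a specialization map $\phi\colon \cF_x \to \cF_y$ is a map of free finitely generated $\bk$-modules; it is an isomorphism iff $\kappa(\fp) \otimes_\bk \phi$ is an isomorphism for every $\fp$. Indeed, injectivity and surjectivity of a map of free modules over a PID can be checked by base change to $\kappa(0)$ (for the ranks to match and for the cokernel to be torsion) together with base change to each $\kappa(\fp)$ for $\fp$ maximal (to see the cokernel has no $\fp$-torsion, hence is zero). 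The only subtlety is that the relevant specialization maps must be genuine maps of $\bk$-modules compatible with base change, which follows from functoriality of the nearby-point/stalk constructions and the projection formula / compatibility of $\kappa(\fp)\otimes_\bk^L(-)$ with the six operations on constructible complexes.

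So the steps in order are: (1) choose a dense open $U$ with $\cF|_U$ a local system and reduce the claim to checking that specialization maps $\cF_x \to \cF_{\bar\eta}$ are isomorphisms for all $x$; (2) observe these are maps of finitely generated free $\bk$-modules, again using the freeness hypothesis on stalks (noting $\cF_{\bar\eta}$ is free since it is a stalk of a local system on $U$, which is free because all stalks are); (3) base-change along each $\kappa(\fp)$, using compatibility of $-\otimes_\bk^L\kappa(\fp)$ with stalks and with $j^*$, $Rj_*$, to identify $\kappa(\fp)\otimes_\bk(\cF_x\to\cF_{\bar\eta})$ with the corresponding specialization map for the local system $\kappa(\fp)\otimes_\bk\cF$, hence an isomorphism; (4) conclude by the PID linear-algebra lemma that $\cF_x \to \cF_{\bar\eta}$ is itself an isomorphism.

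I expect the main obstacle to be step (3): making precise the compatibility of derived base change $\kappa(\fp)\otimes_\bk^L(-)$ with the operations used to extract and compare stalks, and in particular ensuring that $\kappa(\fp)\otimes_\bk\mathcal{H}^0$ of the relevant complex agrees with $\mathcal{H}^0$ after base change — this is exactly where the universal coefficient sequence~\eqref{eqn:univcoeff} and the freeness hypothesis on stalks enter, to kill the $\mathcal{T}\!\mathit{or}_1$ term. A clean way to package this is: the cone of $\cF \to \tau_{\le 0} Rj_*j^*\cF$ is a constructible complex $\cG$ supported on the complement of $U$; one wants $\cG = 0$; by the above, $\kappa(\fp)\otimes_\bk^L\cG$ has vanishing $\mathcal H^{\le 0}$ for all $\fp$, and one then bootstraps upward in degree using~\eqref{eqn:univcoeff} and the freeness of the stalks of $\cF$ (which forces the relevant cohomology sheaves to be torsion-free, so that vanishing after every $\kappa(\fp)\otimes_\bk(-)$ implies vanishing). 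This degree-by-degree torsion-freeness argument is the technical heart and should be written carefully.
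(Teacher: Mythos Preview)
Your primary approach via specialization maps is correct and takes a genuinely different route from the paper. The paper instead constructs an explicit candidate local system $\cG$ on $X$: since $M = \cF_{\bar x}$ is free, the map $\mathrm{Aut}_\bk(M) \to \mathrm{Aut}_{\kappa(0)}(\kappa(0)\otimes_\bk M)$ is injective, and combined with the surjection $\pi_1(U,\bar x) \twoheadrightarrow \pi_1(X,\bar x)$ and the monodromy representation of the local system $\kappa(0)\otimes_\bk\cF$, this forces the monodromy of $\cF|_U$ to descend to $\pi_1(X,\bar x)$, yielding $\cG$. The paper then shows $\cF \cong \cG$ by proving $\mathcal{H}^0(i^!\cF) = \mathcal{H}^1(i^!\cF) = 0$ for $i\colon Z \hookrightarrow X$ the closed complement, using that this vanishing holds for each $\kappa(\fp)\otimes^L_\bk i^!\cF \cong i^!(\kappa(\fp)\otimes_\bk\cF)$ and then invoking~\eqref{eqn:univcoeff}. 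Your route avoids $\pi_1$ entirely and is arguably more elementary; it does rely on the criterion that a constructible sheaf is locally constant if and only if all its specialization maps are isomorphisms, which you should state precisely and cite.

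Your ``alternative packaging,'' however, does not work as written. Even if the cone of $\cF \to \tau_{\le 0}Rj_*j^*\cF$ vanishes, you only learn that $\cF \cong j_*(\cF|_U)$, and $j_*$ of a local system on $U$ is not in general a local system on $X$ (e.g.\ a rank-one local system on $\Gm$ with nontrivial monodromy, pushed to $\bbA^1$, has stalk $0$ at the origin). The paper fills exactly this gap by first producing $\cG$ and then identifying $\mathcal{H}^0(j_*j^*\cF) \cong \mathcal{H}^0(j_*j^*\cG) \cong \cG$. There is also a secondary issue: $\kappa(\fp)\otimes^L_\bk(-)$ does not commute with $\tau_{\le 0}$, so the claim that $\kappa(\fp)\otimes^L_\bk$ of your cone has vanishing $\mathcal{H}^{\le 0}$ is not immediate from the hypothesis. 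Stick with your specialization-map argument and drop the alternative.
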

\begin{proof}
Since $\cF$ is constructible, there is an open subscheme $U \subset X$ such that $\cF_{|U}$ is a local system.  Choose a geometric point $\bar x$ in $U$, and let $M = \cF_{\bar x}$.  The local systems $\cF_{|U}$ and $\kappa(0) \otimes_\bk \cF$ correspond to the vertical maps in the following diagram (where, depending on the context, we consider either the \'etale or topological fundamental groups):
\[
\begin{tikzcd}
\pi_1(U,\bar x) \ar[r] \ar[d] & \pi_1(X,\bar x) \ar[d] \ar[dl, dashed] \\
\mathrm{Aut}_\bk(M) \ar[r] & \mathrm{Aut}_{\kappa(0)}(\kappa(0) \otimes_\bk M).
\end{tikzcd}
\]
This diagram commutes, its upper horizontal arrow is surjective (either by~\cite[\href{https://stacks.math.columbia.edu/tag/0BQI}{Tag 0BQI}]{stacks-project} or by~\cite[Lemma~2.1.22]{achar-book}), and the bottom horizontal map is injective (because $M$ is free over $\bk$), so there is a unique way to fill in the dotted arrow $\pi_1(X,\bar x) \to \mathrm{Aut}_\bk(M)$. This dotted arrow determines a $\bk$-local system $\cG$ on $X$ such that
\[
\cG_{|U} \cong \cF_{|U}
\qquad\text{and}\qquad
\kappa(0) \otimes_\bk \cG \cong \kappa(0) \otimes_\bk \cF.
\]

Let $Z=X \smallsetminus U$ (which we endow with the reduced subscheme structure), and denote by $i : Z \hookrightarrow X$ and $j: U \hookrightarrow X$ be the inclusion maps. First, we claim that the canonical morphism $\cG \to \mathcal{H}^0(j_*\cG_{|U})$ is an isomorphism. In fact, consider the distinguished triangle
\[
i_* i^! \cG \to \cG \to j_* \cG_{|U} \xrightarrow{[1]}.
\]
Locally, $\cG$ is isomorphic to a sum of copies of the constant sheaf $\underline{\bk}_X$, i.e.~to the shifted dualizing complex $\underline{\mathbb{D}}_X[-2\dim(X)]$, hence $i^! \cG$ identifies (again, locally) with a sum of copies of $\underline{\mathbb{D}}_Z[-2\dim(X)]$. In particular, this complex is concentred in degrees $\geq 2(\dim(X) - \dim(Z)) \geq 2$. Considering the long exact sequence of cohomology associated with the triangle above, we deduce the claim.

Using this isomorphism we obtain a natural map
\begin{equation}\label{eqn:locsys-compare}
\cF \to \mathcal{H}^0(j_*\cF_{|U}) \cong \mathcal{H}^0(j_*\cG_{|U}) \cong \cG;
\end{equation}
to finish the proof, we must show that this is an isomorphism. 
Using the same distinguished triangle as above, but now for the object $\cF$, we see that the kernel and cokernel of~\eqref{eqn:locsys-compare} are $\mathcal{H}^0(i_*i^!\cF)$ and $\mathcal{H}^1(i_*i^!\cF)$ respectively, so we must show that
\begin{equation}\label{eqn:loccst-crit}
\mathcal{H}^0(i^!\cF) = \mathcal{H}^1(i^!\cF) = 0.
\end{equation}

For every prime ideal $\fp \subset \bk$, we already know that $\kappa(\fp) \otimes_\bk \cF$ is locally constant, so the counterpart of~\eqref{eqn:loccst-crit} with $i^!\cF$ replaced by the object
\[
i^!(\kappa(\fp) \otimes_\bk \cF) \cong i^!(\kappa(\fp) \otimes_\bk^L \cF) \cong \kappa(\fp) \otimes_\bk^L i^!\cF
\]
holds: that is, we have
\[
\mathcal{H}^0(\kappa(\fp) \otimes^L_\bk i^!\cF) = \mathcal{H}^1(\kappa(\fp) \otimes^L_\bk i^!\cF) = 0.
\]
Using~\eqref{eqn:univcoeff}, we deduce that
\[
\kappa(\fp) \otimes_\bk \mathcal{H}^0(i^!\cF) = 
\kappa(\fp) \otimes_\bk \mathcal{H}^1(i^!\cF) = 0.
\]
Since this holds for all $\fp \subset \bk$, we deduce that~\eqref{eqn:loccst-crit} is true, and hence that~\eqref{eqn:locsys-compare} is an isomorphism.
\end{proof}

\subsection{Extension to complexes}

\begin{lem}
\label{lem:pid-construc}
Let $X$ be a smooth, connected scheme, and let $\cF$ be a constructible complex of $\bk$-sheaves on $X$ that satisfies the following conditions:
\begin{itemize}
\item For every prime ideal $\fp \subset \bk$, the complex $\kappa(\fp) \otimes_\bk^L \cF$ has locally constant cohomology sheaves.
\item For each $m \in \Z$, the integer
\[
\rank \mathcal{H}^m(\kappa(\fp) \otimes_\bk^L \cF) 
\]
is independent of $\fp$.
\end{itemize}
Then for all $m \in \Z$, $\mathcal{H}^m(\cF)$ is a locally constant sheaf whose stalks are free $\bk$-modules, of rank the rank of $\mathcal{H}^m(\kappa(\fp) \otimes_\bk^L \cF)$ for any $\fp \subset \bk$.
\end{lem}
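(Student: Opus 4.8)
The plan is to argue stalkwise with the universal coefficient theorem \eqref{eqn:univcoeff}, and then to invoke Lemma~\ref{lem:pid-locsys}. Fix a geometric point $\bar x$ of $X$. Since $X$ is connected and each cohomology sheaf $\mathcal{H}^m(\kappa(\fp) \otimes_\bk^L \cF)$ is locally constant by hypothesis, the integer $a_m(\fp) := \dim_{\kappa(\fp)} \mathcal{H}^m(\kappa(\fp) \otimes_\bk^L \cF)_{\bar x}$ is independent of $\bar x$, and by the second hypothesis it is also independent of $\fp$; write $a_m$ for this common value. For $\fp = 0$, flatness of $\kappa(0)$ over $\bk$ gives $\mathcal{H}^m(\kappa(0) \otimes_\bk^L \cF)_{\bar x} \cong \kappa(0) \otimes_\bk \mathcal{H}^m(\cF)_{\bar x}$, so $a_m = \rank \mathcal{H}^m(\cF)_{\bar x}$.

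First I would show that every stalk $\mathcal{H}^m(\cF)_{\bar x}$ is a free $\bk$-module. Since $\cF$ is a bounded constructible complex, $\mathcal{H}^m(\cF)_{\bar x}$ is a finitely generated $\bk$-module, a direct sum of a free module of rank $\rank \mathcal{H}^m(\cF)_{\bar x}$ and a torsion module; for a nonzero prime ideal $\fp = (\pi)$ let $s_m(\pi)$ be the number of elementary divisors of $\mathcal{H}^m(\cF)_{\bar x}$ divisible by $\pi$, so that $\dim_{\kappa(\fp)}\bigl(\kappa(\fp) \otimes_\bk \mathcal{H}^m(\cF)_{\bar x}\bigr) = \rank \mathcal{H}^m(\cF)_{\bar x} + s_m(\pi)$ and $\dim_{\kappa(\fp)} \mathrm{Tor}_1^\bk(\kappa(\fp), \mathcal{H}^{m+1}(\cF)_{\bar x}) = s_{m+1}(\pi)$. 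Taking stalks at $\bar x$ in \eqref{eqn:univcoeff} and counting $\kappa(\fp)$-dimensions yields
\[
a_m \;=\; \rank \mathcal{H}^m(\cF)_{\bar x} + s_m(\pi) + s_{m+1}(\pi) \;=\; a_m + s_m(\pi) + s_{m+1}(\pi),
\]
whence $s_m(\pi) + s_{m+1}(\pi) = 0$, and therefore $s_m(\pi) = 0$ for every $m$ and every prime $\pi$. Thus each $\mathcal{H}^m(\cF)_{\bar x}$ is free (of rank $a_m$).

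Next, since all stalks of all the sheaves $\mathcal{H}^m(\cF)$ are free, hence flat, the sheaf $\mathcal{T}\!\mathit{or}_1^\bk(\kappa(\fp), \mathcal{H}^{m+1}(\cF))$ in \eqref{eqn:univcoeff} has vanishing stalks and so vanishes; consequently the natural map $\kappa(\fp) \otimes_\bk \mathcal{H}^m(\cF) \to \mathcal{H}^m(\kappa(\fp) \otimes_\bk^L \cF)$ is an isomorphism, and its target is a local system by hypothesis. Hence $\mathcal{H}^m(\cF)$ is a constructible $\bk$-sheaf with free stalks such that $\kappa(\fp) \otimes_\bk \mathcal{H}^m(\cF)$ is a local system for every prime ideal $\fp$, so Lemma~\ref{lem:pid-locsys} applies and $\mathcal{H}^m(\cF)$ is locally constant. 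Being locally constant with free stalks on the connected scheme $X$, it has a well-defined rank, equal to $\dim_{\kappa(\fp)}\bigl(\kappa(\fp) \otimes_\bk \mathcal{H}^m(\cF)_{\bar x}\bigr) = a_m = \rank \mathcal{H}^m(\kappa(\fp) \otimes_\bk^L \cF)$ for every $\fp$, as claimed.

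The only genuinely delicate point is the freeness step: one must combine constancy in $\bar x$ (from connectedness of $X$ together with local constancy of the $\mathcal{H}^m(\kappa(\fp) \otimes_\bk^L \cF)$) with the $\fp$-independence hypothesis, and exploit the ``telescoping'' feature of the universal coefficient sequence, whereby $\pi$-torsion in $\mathcal{H}^{m+1}(\cF)_{\bar x}$ would simultaneously push the $\kappa((\pi))$-dimensions of $\mathcal{H}^m(\kappa((\pi)) \otimes_\bk^L \cF)_{\bar x}$ and of $\mathcal{H}^{m+1}(\kappa((\pi)) \otimes_\bk^L \cF)_{\bar x}$ strictly above their values at $\fp = 0$. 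Once freeness is established, the rest is formal.
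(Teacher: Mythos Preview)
Your argument is correct, and it takes a genuinely different route from the paper's. The paper proceeds by induction on the number of nonzero cohomology sheaves: it peels off the lowest one $\cG = \mathcal{H}^a(\cF)$, shows directly that its stalks are torsion-free (using only the degree-$a$ instance of~\eqref{eqn:univcoeff}, where the $\mathrm{Tor}_1$ term vanishes automatically), applies Lemma~\ref{lem:pid-locsys} to $\cG$, and then uses the truncation triangle $\cG[-a] \to \cF \to \tau^{\ge a+1}\cF$ to verify that $\tau^{\ge a+1}\cF$ again satisfies the hypotheses. Your approach instead treats all cohomology sheaves simultaneously: the identity $a_m = a_m + s_m(\pi) + s_{m+1}(\pi)$ at every $m$ forces all the $s_m(\pi)$ to vanish at once, after which freeness collapses~\eqref{eqn:univcoeff} to an isomorphism $\kappa(\fp) \otimes_\bk \mathcal{H}^m(\cF) \cong \mathcal{H}^m(\kappa(\fp) \otimes_\bk^L \cF)$ and Lemma~\ref{lem:pid-locsys} finishes. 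Your argument is shorter and avoids the truncation bookkeeping; the paper's inductive structure, on the other hand, makes transparent why the bottom cohomology is the one where torsion is easiest to rule out, and would adapt more readily if one weakened the hypothesis to only concern a range of degrees.
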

\begin{proof}
We proceed by induction on the number of nonzero cohomology sheaves of $\cF$.  If $\cF = 0$, there is nothing to prove.  Otherwise, let $a$ be the smallest integer such that $\mathcal{H}^a(\cF) \ne 0$, and let
\[
\cG = \mathcal{H}^a(\cF).
\]
Let us introduce the notation
\[
r_m = \rank \mathcal{H}^m(\kappa(\fp) \otimes_\bk^L \cF) \quad\text{for any $\fp \subset \bk$.}
\]

Choose a geometric point $\bar y$ in $X$ such that the stalk $\cG_{\bar y}$ is nonzero, and let $N = \cG_{\bar y}$. By~\cite[\href{https://stacks.math.columbia.edu/tag/00HN}{Tag 00HN}]{stacks-project} and the Nakayama lemma, there exists a maximal ideal $\fm$ such that $N/\fm N \ne 0$, and hence that $\kappa(\fm) \otimes_\bk \cG$ has a nonzero stalk at $\bar y$.  By~\eqref{eqn:univcoeff}, we see that $\mathcal{H}^a(\kappa(\fm) \otimes_\bk^L \cF)$ also has a nonzero stalk at $\bar y$, so
\[
r_a > 0.
\]

Now let $\bar x$ be any geometric point of $X$, and let $M = \cG_{\bar x}$.  The stalk of $\kappa(0) \otimes_\bk \cG = \mathcal{H}^a(\kappa(0) \otimes_\bk^L \cF)$ at $\bar x$ is $\kappa(0) \otimes_\bk M$, so $M$ has rank $r_a$.

Next, we claim that $M_\tor = 0$.  If not, as above there is some maximal ideal $\fm \subset \bk$ such that $M_\tor / \fm M_\tor \ne 0$, and then we see that the $\kappa(\fm)$-vector space $\kappa(\fm) \otimes_\bk M$ has dimension at least $r_a+1$. By~\eqref{eqn:univcoeff}, the stalk of $\mathcal{H}^a(\kappa(\fm) \otimes_\bk^L \cF)$ at $\bar x$ also has dimension at least $r_a+1$, contradicting our assumptions.

We have shown now that all stalks of $\cG$ are free $\bk$-modules of rank $r_a$. In particular, $\cG$ is a flat $\bk$-sheaf.  Now consider a prime ideal $\fp \subset \bk$, and the truncation distinguished triangle
\begin{equation}\label{eqn:locconst-trunc}
\kappa(\fp) \otimes_\bk^L \cG[-a] 
\to
\kappa(\fp) \otimes_\bk^L \cF
\to
\kappa(\fp) \otimes_\bk^L (\tau^{\ge a+1}\cF) \xrightarrow{+1}.
\end{equation}
By flatness, the first term is concentrated in cohomological degree $a$.  By~\eqref{eqn:univcoeff}, the last term must be concentrated in degrees${}\ge a$.  But if $\mathcal{H}^a(\kappa(\fp) \otimes_\bk^L (\tau^{\ge a+1}\cF))$ were nonzero, the long exact sequence of cohomology sheaves would show that the stalks of $\mathcal{H}^a(\kappa(\fp) \otimes_\bk^L \cF)$ have dimension${}>r_a$, which is impossible.  We deduce that the third term of~\eqref{eqn:locconst-trunc} is concentrated in degrees${}\ge a+1$, i.e., that the whole triangle can be identified with
\[
\tau^{\le a}(\kappa(\fp) \otimes_\bk^L \cF) \to \kappa(\fp) \otimes_\bk^L \cF \to \tau^{\ge a+1}(\kappa(\fp) \otimes_\bk^L \cF) \xrightarrow{+1}.
\]
The first term shows that $\kappa(\fp) \otimes_\bk \cG \cong \mathcal{H}^a(\kappa(\fp) \otimes_\bk^L \cF)$, so $\kappa(\fp) \otimes_\bk \cG$ is a local system.  By Lemma~\ref{lem:pid-locsys}, $\cG$ is a local system.

On the other hand, the distinguished triangles above show that $\tau^{\ge a+1}\cF$ satisfies the assumptions of the lemma.  As it has fewer nonzero cohomology sheaves than $\cF$, the conclusions of the lemma hold for it by induction.
\end{proof}

\subsection{Main result}

We can finally come to the main result of this section. Let $X$ be an irreducible scheme, let $(X_s)_{s \in \mathscr{S}}$ be a finite stratification by smooth, connected, locally closed subschemes, and for $s \in \mathscr{S}$ let $j_s: X_s \hookrightarrow X$ be the inclusion map. We consider the intersection cohomology complex $\IC_X(\bk)$ over $\bk$ associated with the constant sheaf on the unique open stratum in $X$ and, for any prime ideal $\fp \subset \bk$, the similar complex $\IC_X(\kappa(\fp))$ over the field $\kappa(\fp)$.

\begin{prop}
\label{prop:IC-indep}
Assume that the following conditions hold:
\begin{itemize}
\item For every prime ideal $\fp \subset \bk$, $\IC_X(\kappa(\fp))$ is constructible with respect to the stratification $(X_s)_{s \in \mathscr{S}}$.
\item The function $d: \mathscr{S} \times \Z \to \Z$ given by
\[
d(s,m) = \rank \mathcal{H}^{-\dim X_s -m}(\IC_X(\kappa(\fp))_{|X_s})
\]
is independent of $\fp$.
\end{itemize}
Then the following hold:
\begin{enumerate}
\item For each $s \in \mathscr{S}$ and each $m \in \Z$, $\mathcal{H}^m(j_s^!\IC_X(\bk))$ and $\mathcal{H}^m(j_s^*\IC_X(\bk))$ are locally constant sheaves with free stalks.  Moreover,\label{it:free-stalks}
\[
\rank \mathcal{H}^{-\dim X_s+m}(j_s^!\IC_X(\bk)) = 
\rank \mathcal{H}^{-\dim X_s-m}(j_s^*\IC_X(\bk)) = d(s,m).
\]
\item For every prime ideal $\fp \subset \bk$, we have\label{it:IC-scalar}
\[
\kappa(\fp) \otimes_\bk^L \IC_X(\bk) \cong \IC_X(\kappa(\fp)).
\]
\end{enumerate}
\end{prop}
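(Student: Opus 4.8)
The proof will be an induction on the stratification, peeling off strata from the smallest (most deep) to the largest. I would set up the induction on the number of strata, or equivalently use the standard recollement for intersection cohomology complexes. The key engine is Lemma~\ref{lem:pid-construc}: once we know that $\IC_X(\bk)$ restricted (via $j_s^*$) or corestricted (via $j_s^!$) to a stratum $X_s$ has the property that its scalar extensions $\kappa(\fp) \otimes^L_\bk (-)$ have locally constant cohomology of $\fp$-independent rank, that lemma hands us freeness of stalks and the rank formula in part~\eqref{it:free-stalks}. So the real content is to verify the hypotheses of Lemma~\ref{lem:pid-construc} and, simultaneously, to establish part~\eqref{it:IC-scalar}.

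\textbf{Key steps.} First I would record that $\kappa(\fp) \otimes^L_\bk (-)$ commutes with $j_s^*$, $j_s^!$, and with intermediate extension functors up to the subtlety that base change and intermediate extension need not commute in general — this is precisely why the hypothesis on $\fp$-independence of ranks is needed. Concretely, I would argue as follows. Let $U \subset X$ be the open dense stratum and $j : U \hookrightarrow X$ the inclusion; then $\IC_X(\bk) = j_{!*}\underline{\bk}_U[\dim X]$ and $\IC_X(\kappa(\fp)) = j_{!*}\underline{\kappa(\fp)}_U[\dim X]$. There is always a canonical morphism $\kappa(\fp) \otimes^L_\bk \IC_X(\bk) \to \IC_X(\kappa(\fp))$ (or one in the other direction depending on how one sets it up), obtained from the universal property of $j_{!*}$ applied to the scalar extension of the $!$-extension, together with the fact that $\kappa(\fp)\otimes^L_\bk j_!\underline{\bk}_U[\dim X] = j_!\underline{\kappa(\fp)}_U[\dim X]$. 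I would then prove this morphism is an isomorphism by induction on strata: assuming it over the open complement $X \smallsetminus X_s$ of a closed stratum $X_s$, the support and cosupport conditions defining $j_{!*}$ are testing degrees of $j_s^*$ and $j_s^!$, and one checks these are preserved under $\kappa(\fp)\otimes^L_\bk(-)$ using the universal coefficient sequence~\eqref{eqn:univcoeff} together with the rank-independence hypothesis — this is exactly the mechanism already used inside the proof of Lemma~\ref{lem:pid-construc}. Once \eqref{it:IC-scalar} is known, the hypotheses of Lemma~\ref{lem:pid-construc} applied to $\cF = j_s^* \IC_X(\bk)$ and $\cF = j_s^!\IC_X(\bk)$ are satisfied — local constancy after scalar extension is the constructibility assumption, and rank-independence is the displayed $d(s,m)$ hypothesis — and Lemma~\ref{lem:pid-construc} yields~\eqref{it:free-stalks}. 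The rank statement for $j_s^!$ follows from that for $j_s^*$ on the Verdier dual, or directly since $\IC_X(\bk)$ is self-dual up to the relevant twist and duality exchanges stalks and costalks with the appropriate sign on the cohomological degree.

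\textbf{Main obstacle.} The delicate point is establishing part~\eqref{it:IC-scalar}, i.e.\ that intermediate extension commutes with $\kappa(\fp)\otimes^L_\bk(-)$. This is false in general over a non-field — the torsion phenomena detected by the $\mathcal{T}\!\mathit{or}_1$ term in~\eqref{eqn:univcoeff} can shift cohomological degrees and spoil the support/cosupport inequalities that cut out $j_{!*}$. The whole force of the rank-independence hypothesis on $d(s,m)$ is to rule this out: if the ranks over $\kappa(0)$ and over $\kappa(\fm)$ for a maximal ideal $\fm$ agree stratum-by-stratum and degree-by-degree, then there can be no torsion in the relevant cohomology sheaves (by the argument in Lemma~\ref{lem:pid-construc} that $M_\tor = 0$), and hence the universal coefficient sequence degenerates, base change holds on the nose, and the support conditions transfer. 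So the inductive step is where all the care goes, and it amounts to interleaving the recollement triangle $i_* i^! \to \id \to j_* j^*$ for the inclusion of a closed stratum with the scalar-extension triangle and repeatedly invoking~\eqref{eqn:univcoeff}; the rest is formal.
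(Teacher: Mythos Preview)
Your overall architecture matches the paper's: induction on the number of strata, reduce to a closed stratum $X_t$, establish part~\eqref{it:IC-scalar} there, then feed $j_t^*\IC_X(\bk)$ and $j_t^!\IC_X(\bk)$ into Lemma~\ref{lem:pid-construc} to get part~\eqref{it:free-stalks}. The paper also observes, as you do, that the support condition $j_t^*(\kappa(\fp)\otimes_\bk^L\IC_X(\bk))\in{}^pD^{\le -1}$ is automatic, while the cosupport side only gives ${}^pD^{\ge 0}$, so the obstruction is the perverse sheaf $\cJ_\fp={}^p\mathcal{H}^0\bigl(j_t^!(\kappa(\fp)\otimes_\bk^L\IC_X(\bk))\bigr)$ supported on $X_t$.

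The gap is in how you propose to kill $\cJ_\fp$. You say you will ``check the support/cosupport conditions are preserved using the universal coefficient sequence together with the rank-independence hypothesis,'' invoking the torsion-freeness mechanism from Lemma~\ref{lem:pid-construc}. But that mechanism needs $\fp$-independence of the ranks of $\mathcal{H}^m\bigl(\kappa(\fp)\otimes_\bk^L j_t^{!}\IC_X(\bk)\bigr)$, whereas the hypothesis gives you $\fp$-independence of the ranks of $\mathcal{H}^m\bigl(j_t^{!}\IC_X(\kappa(\fp))\bigr)$. These two objects coincide only \emph{after} you know $\kappa(\fp)\otimes_\bk^L\IC_X(\bk)\cong\IC_X(\kappa(\fp))$, which is exactly what you are trying to prove. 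So the argument as written is circular at precisely the crucial step.

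The paper breaks the circle with an Euler characteristic argument, which you do not mention. From the short exact sequence $0\to\cJ_\fp\to\kappa(\fp)\otimes_\bk^L\IC_X(\bk)\to\IC_X(\kappa(\fp))\to 0$ one picks a geometric point $\bar y$ in a smooth open $Y$ of $\mathrm{supp}\,\cJ_\fp$ where $\cJ_\fp$ is a shifted local system, applies $i^!$ for $i:\bar y\to X$, and compares Euler characteristics: $\chi\bigl(i^!(\kappa(\fp)\otimes_\bk^L\IC_X(\bk))\bigr)$ is $\fp$-independent by~\eqref{eqn:euler-equal}, and $\chi\bigl(i^!\IC_X(\kappa(\fp))\bigr)$ is $\fp$-independent by the hypothesis on $d$ (plus local constancy on $X_t$), so $\chi(i^!\cJ_\fp)=(-1)^{\dim Y}\dim\mathcal{L}_{\bar y}$ must also be $\fp$-independent, hence zero since $\cJ_0=0$; contradiction. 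The point is that Euler characteristics, unlike individual ranks, are insensitive to the $\mathrm{Tor}_1$ terms in~\eqref{eqn:univcoeff}, so they transfer between $\kappa(\fp)\otimes_\bk^L\IC_X(\bk)$ and $\IC_X(\kappa(\fp))$ without already knowing these agree.
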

\begin{proof}
We proceed by induction on the number of strata in the stratification.  If there is a single stratum, then $X$ is smooth, and of course $\IC_X(\bk) \cong \underline{\bk}_X[\dim X]$.  The proposition is trivial in this case.

Suppose now that there is more than one stratum.  Let $X_t$ be a closed stratum in our stratification, and let $U = X \smallsetminus X_t$.  Let $j_t: X_t \hookrightarrow X$ and $h: U \hookrightarrow X$ be the inclusion maps.  By induction, part~\eqref{it:free-stalks} of the proposition holds for all $s \ne t$, and
\[
\kappa(\fp) \otimes_\bk^L \IC_U(\bk) \cong \IC_U(\kappa(\fp))
\]
for any prime ideal $\fp \subset \bk$.
In addition, since $\kappa(0)$ is flat over $\bk$, part~\eqref{it:IC-scalar} automatically holds for $\fp = 0$:
\begin{equation}\label{eqn:IC-scalar-frac}
\kappa(0) \otimes_\bk^L \IC_X(\bk) \cong \IC_X(\kappa(0)).
\end{equation}

Recall that $\pH^m(j_t^!\IC_X(\bk))$, resp.~$\pH^m(j_t^!\IC_X(\bk))$, vanishes for $m \le 0$, resp.~$m \ge 0$. If $\fp \subset \bk$ is a prime ideal, using the perverse cohomology counterpart of~\eqref{eqn:univcoeff} we see that
\begin{align*}
\pH^m(j_t^!(\kappa(\fp) \otimes_\bk^L \IC_X(\bk))) &= 0 &&\text{for $m < 0$ (for $m \le 0$ if $\fp = 0$)} \\
\pH^m(j_t^*(\kappa(\fp) \otimes_\bk^L \IC_X(\bk))) &= 0 &&\text{for $m \ge 0$.}
\end{align*}
In particular, $\kappa(\fp) \otimes_\bk^L \IC_X(\bk)$ is a perverse sheaf. Let $\cJ_{\fp} = \pH^0(j_t^!(\kappa(\fp) \otimes_\bk^L \IC_X(\bk)))$, which we consider as a perverse sheaf on $X$.  Then there is a natural short exact sequence of perverse sheaves
\begin{equation}\label{eqn:IC-ses}
0 \to \cJ_{\fp} \to \kappa(\fp) \otimes_\bk^L \IC_X(\bk) \to \IC_X(\kappa(\fp)) \to 0.
\end{equation}
We have already seen in~\eqref{eqn:IC-scalar-frac} that $\cJ_0 = 0$.  We will now prove that $\cJ_\fp = 0$ for all prime ideals $\fp \subset \bk$.

Suppose $\cJ_\fp \ne 0$, and consider its support $\mathrm{supp}\,\cJ_{\fp}$, a closed subscheme of $X_t$.  Let $Y \subset \mathrm{supp}\,\cJ_{\fp}$ be a smooth, connected, open subscheme such that $\cJ_{\fp|Y}$ is a shifted local system, i.e.,
\[
\cJ_{\fp|Y} \cong \mathcal{L}[\dim Y]
\]
for some nonzero $\kappa(\fp)$-local system $\mathcal{L}$ on $Y$.  (Note that $Y$ is locally closed in $X_t$.)  Let $\bar y$ be a geometric point of $Y$, and let $i: \bar y \to X$ be the inclusion map.  We then have
\begin{equation}\label{eqn:IC-ses1}
i^!\cJ_\fp = (\mathcal{L}_\fp)_{\bar y}[-\dim Y],
\end{equation}
see e.g.~\cite[Theorem~2.2.13]{achar-book}.
Next, let $i': \bar y \to X_t$ be the inclusion map, so that $i = j_t \circ i'$.  Since $j_t^!\IC_X(\kappa(\fp))$ has locally constant cohomology sheaves, as above we have
\[
i^!\IC_X(\kappa(\fp)) \cong (i')^!j_t^!\IC_X(\kappa(\fp)) \cong (j_t^!\IC_X(\kappa(\fp)))_{\bar y}[-2\dim X_t].
\]
We deduce that 
\begin{multline*}
\dim H^m(i^!\IC_X(\kappa(\fp)))
= \rank \mathcal{H}^{m-2 \dim X_t}(j_t^!\IC_X(\kappa(\fp))) \\
= \begin{cases}
d(t,m-\dim X_t) & \text{if $m > \dim X_t$,} \\
0 & \text{otherwise.}
\end{cases}
\end{multline*}
Let
\[
A = \chi(i^!\IC_X(\kappa(\fp))) = \sum_{m > 0} (-1)^{m-\dim X_t}d(t,m).
\]
From~\eqref{eqn:IC-scalar-frac} and~\eqref{eqn:IC-ses}, we have the following Euler characteristic calculations:
\begin{align*}
\chi(i^!(\kappa(0) \otimes_\bk^L \IC_X(\bk))) &= A, \\
\chi(i^!(\kappa(\fp) \otimes_\bk^L \IC_X(\bk))) &= A + (-1)^{\dim Y} \dim (\mathcal{L}_\fp)_{\bar y}.
\end{align*}
But these two numbers should both agree with $\chi(\IC_X(\bk))$ (see~\eqref{eqn:euler-equal}), a contradiction.  We conclude that $\cJ_\fp = 0$, and that
\[
\kappa(\fp) \otimes_\bk^L \IC_X(\bk) \cong \IC_X(\kappa(\fp))
\]
for all $\fp \subset \bk$.  We have proved part~\eqref{it:IC-scalar} of the proposition.  Moreover, this implies that
\[
\kappa(\fp) \otimes_\bk^L j_t^!\IC_X(\bk) \cong j_t^!\IC_X(\kappa(\fp))
\qquad\text{and}\qquad
\kappa(\fp) \otimes_\bk^L j_t^*\IC_X(\bk) \cong j_t^*\IC_X(\kappa(\fp)).
\]
Part~\eqref{it:free-stalks} of the proposition for $s = t$ then follows by applying Lemma~\ref{lem:pid-construc} to the objects $j_t^!\IC_X(\bk)$ and $j_t^*\IC_X(\bk)$.
\end{proof}

\end{document}